\newcommand{\CC}{\mathbb{C}}
\newcommand{\DD}{\mathbb{D}}
\newcommand{\RR}{\mathbb{R}}
\newcommand{\NN}{\mathbb{N}}
\newcommand{\OO}{\mathcal{O}}
\newcommand{\TT}{\mathbb{T}}
\newcommand{\im}{\operatorname{Im}}
\newcommand{\psh}{\operatorname{Psh}}
\newcommand{\C}{\mathcal{C}}
\newcommand{\Pl}{\mathcal{P}}
\newcommand{\F}{\mathcal{F}}
\newcommand{\HH}{\mathcal{H}}
\newcommand{\LL}{\mathcal{L}}
\newcommand{\re}{\operatorname{Re}}
\newcommand{\bff}{\mathfrak{b}}
\newcommand{\ZZ}{\mathbb{Z}}
\newcommand{\diam}{\operatorname{diam}}
\newcommand{\tr}{\operatorname{tr}}
\newcommand{\dist}{\operatorname{dist}}
\newcommand{\A}{\mathcal{A}}
\newcommand{\B}{\mathcal{B}}
\newcommand{\D}{\mathcal{D}}
\newcommand{\BMO}{\mathrm{BMO}}
\newcommand{\id}{\operatorname{id}}
\newcommand{\Hilb}{\operatorname{Hil}}
\newcommand{\ssubset}{\subset\joinrel\subset}
\newtheorem{thm}{Theorem}[section]
\newtheorem{pro}[thm]{Proposition}
\newtheorem{cor}[thm]{Corollary}
\newtheorem{lem}[thm]{Lemma}
\newcounter{mtheorem}
\newtheorem{mtheorem}[mtheorem]{Theorem}
\theoremstyle{definition}
\newtheorem{define}[thm]{Definition}
\newtheorem{rem}[thm]{Remark}
\newtheorem*{ac}{Acknowledgements}
\numberwithin{equation}{section}
\title[Asymptotic Behavior of HCMA Equations on ALE K\"ahler manifolds]%
      {Asymptotic Behavior of Homogeneous Complex Monge-Amp\`ere Equations on ALE K\"ahler manifolds}
\author{Qi Yao}
\date{}
\begin{document}
 
\maketitle
\thispagestyle{empty}

\begin{abstract}
This paper is a sequel to the author’s earlier work~\cite{yao2024geodesicequationsasymptoticallylocally}, and investigates the homogeneous complex Monge–Ampère equation (HCMA) on the product space \( X \times D \), where \( X \) is an asymptotically locally Euclidean (ALE) K\"ahler manifold and \( D \subset \mathbb{C} \) is the unit disc. We establish precise asymptotic behavior of the solution to the HCMA equation, showing that the decay rate of the solution matches that of the prescribed boundary data, and that uniform control in weighted Hölder norms can be achieved.

The analysis combines two main ingredients: a redevelopment of pluripotential theory on the noncompact space \( X \times D \), and a PDE-based construction of holomorphic disc foliations on the end of \( X \), inspired by the works of Semmes and Donaldson. As an application in the general K\"ahler manifolds, the techniques developed in this paper also imply a local regularity result for the HCMA equation.
\end{abstract}

\section{introduction}

Initiated by the works of Mabuchi \cite{mabuchi1987some}, Semmes \cite{semmes1992complex}, and Donaldson \cite{donaldson1999symmetric}, the HCMA equation has emerged as a central object in the study of canonical K\"ahler metrics, particularly in relation to the constant scalar curvature K\"ahler (cscK) problem on compact K\"ahler manifolds.
Fixing a reference K\"ahler form $\omega$ on a compact K\"ahler manifold, the space of all K\"ahler metrics in $[\omega]$ can be identified with the space of K\"ahler potential,
\begin{align*}
\HH(X, \omega) = \{\varphi \in \C^\infty (X); \ \omega + i \partial \overline{\partial} \varphi > 0\},
\end{align*}
modulo constants. Given two potential functions in $\HH(X, \omega)$, $\psi_0$, $\psi_1$,
we connect $\psi_0$ and $\psi_1$ with a path $\varphi(t): [0,1] \rightarrow \HH(X,\omega)$ with $\varphi(0) = \psi_0$, $\varphi(1) = \psi_1$. We say $\varphi(t)$ is a geodesic in $\HH(X,\omega)$, if 
\begin{align} \label{geoequintro}
    \ddot{\varphi} - \frac{1}{2} |\nabla_{\omega_{\varphi(t)}} \dot{\varphi}|_{\omega_{\varphi(t)}}^2 = 0.
\end{align}
The above equation \eqref{geoequintro} is called the geodesic equation in the space of K\"ahler potentials.
As observed by Donaldson \cite{donaldson1999symmetric} and Semmes \cite{semmes1992complex}, the geodesic equation is equivalent to the HCMA equation in the product space $X \times \Sigma$, where $\Sigma \cong [0,1]\times S^1$ can be embedded as an annulus in $\CC$. 
Notice that any path in $\HH(X,\omega)$ can viewed as a function defined in $X \times \Sigma$, $\Phi (\cdot, t, e^{is}) = \varphi(t)$. Let $\Omega =\pi^* \omega $, where $\pi$ is the natural projection from $X\times \Sigma$ to $X$. Then the equation (\ref{geoequintro}) can be rewritten on $X \times \Sigma$ as follows,
\begin{align*}
    (&\Omega +i \partial \overline{\partial} \Phi)^{n+1} = 0, \\
    &\Omega +i \partial \overline{\partial} \Phi \geq 0, \\
    &\Phi|_{X \times \partial\Sigma } =  \psi_{0,1}. 
\end{align*}

In the cases of compact K\"ahler manifolds, Chen \cite{chen2000space} showed that for any $\psi_0$, $\psi_1 \in \HH$, the geodesic equation has a unique solution up to $dd^c$-regularity. Blocki \cite{blocki2009gradient} and He \cite{he2015space} built up direct calculations to prove the gradient estimate. The full $\C^{1,1}$ estimate was proved by Chu-Tossati-Weinkove in \cite{chu2017regularity}. In the other direction, Lempert-Vivas \cite{lempert2013geodesics} and Darvas-Lempert \cite{darvas2012weak} constructed counterexamples to assert that $\partial \overline{\partial} \varphi$ is not continuous in general, hence the $\C^{1,1}$ regularity is optimal in general. 

While the main focus of the paper is to deal with the HCMA equation on ALE K\"ahler manifolds, the techniques developed here also yield consequences in more general settings. In particular, the techniques can be adapted to prove a local regularity result for the HCMA equation on arbitrary K\"ahler manifolds. If the boundary data is sufficiently small in suitable Hölder norms on a local coordinate chart, then the global weak solution becomes smooth in the corresponding region of the product space. Although this result is not among the main contributions of the paper, we present it first, as it provides a simple but interesting example of the techniques that we later develop for the ALE cases.

\begin{mtheorem}[see Theorem \ref{thmlocalreg}]\label{mthmlocalreg}
 Let $(X, \omega)$ be a general K\"ahler manifold, and let $D$ be a unit disc in $\CC$.
Let $\Phi$ be a bounded (weak) solution to the HCMA equation \eqref{introHCMA} on $X \times D$, given by the upper envelope of some class of $\Omega$-psh subsolutions. Suppose the boundary data $\Psi$ is sufficiently small in a weighted Hölder norm on $N' \times \partial D$, with $N'$, a local chart $N' \subseteq X$. Then, for a smaller domain $N\ssubset N'$, the solution $\Phi$ is smooth on $N \times D$.
\end{mtheorem}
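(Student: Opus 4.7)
The plan is to construct, over $N\times D$, a smooth candidate $\tilde{\Phi}$ solving the HCMA by means of a Semmes--Donaldson style holomorphic disc foliation, and then to identify $\tilde{\Phi}$ with $\Phi$ via a leafwise comparison together with the envelope characterization of $\Phi$. Working in local holomorphic coordinates, identify $N'\times D$ with a domain in $\CC^{n+1}$. When $\Psi\equiv 0$, the trivial vertical foliation $\{x\}\times D$ is the Monge--Amp\`ere foliation of $\Phi\equiv 0$. For small $\Psi$ I perturb: for each $x\in N$, I seek a holomorphic map $f_x:D\to N'\times D$ of the form $f_x(\zeta)=(x+g_x(\zeta),\zeta)$ with $g_x:D\to\CC^n$ holomorphic and small, such that the leaves $f_x(D)$ foliate $N\times D$ smoothly in $x$ and the leafwise solutions $h_x(\zeta)$ to $i\partial\overline{\partial}h_x=-f_x^*\Omega$ with boundary data $\Psi\circ f_x|_{\partial D}$ fit together into a globally smooth function $\tilde{\Phi}(f_x(\zeta)):=h_x(\zeta)$. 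This is a coupled nonlinear Riemann--Hilbert problem whose linearization at the trivial foliation reduces to standard Dirichlet and $\overline{\partial}$-problems on the disc, and I propose to solve it by the implicit function theorem in the weighted H\"older space appearing in the hypothesis; smallness of $\Psi$ provides the contraction.

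\textbf{HCMA and comparison.} Once the foliation is built, $\tilde{\Phi}\in C^\infty(N\times D)$ agrees with $\Psi$ on $N\times\partial D$ by construction, and $\Omega+i\partial\overline{\partial}\tilde{\Phi}$ annihilates the leafwise tangent planes, so it has rank at most $n$ and $(\Omega+i\partial\overline{\partial}\tilde{\Phi})^{n+1}=0$; positivity $\Omega+i\partial\overline{\partial}\tilde{\Phi}\geq 0$ follows from the $C^2$-smallness of $\tilde{\Phi}$ given by the elliptic estimates in Step~1. The envelope definition gives $\Phi\geq\tilde{\Phi}$ on $N\times D$ once $\tilde{\Phi}$ is extended to an admissible global $\Omega$-psh competitor via a cutoff supported in $N'\setminus N$, which is possible exactly because $N\ssubset N'$. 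For the reverse inequality, restrict both functions to a leaf: pulling back the inequalities $\Omega+i\partial\overline{\partial}\Phi\geq 0$ and $f_x^*(\Omega+i\partial\overline{\partial}\tilde{\Phi})=0$ and subtracting, the $f_x^*\Omega$ contributions cancel and yield $i\partial\overline{\partial}((\Phi-\tilde{\Phi})\circ f_x)\geq 0$ as currents on $D$; since this difference vanishes on $\partial D$ and is bounded, the maximum principle forces $\Phi\circ f_x\leq\tilde{\Phi}\circ f_x$. As the leaves cover $N\times D$, we conclude $\Phi=\tilde{\Phi}\in C^\infty(N\times D)$.

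\textbf{Expected obstacle.} The analytic heart of the proof is Step~1: identifying the weighted H\"older space in which the linearized Riemann--Hilbert operator is an isomorphism and the nonlinear remainder is quadratically small. The weighting in the hypothesis is dictated precisely by this requirement, so the argument cannot proceed in the naive H\"older scale without a loss of derivatives. A secondary, more technical subtlety is producing the global $\Omega$-psh extension of $\tilde{\Phi}$ used in the envelope lower bound, which is the reason one must shrink from $N'$ to a strictly smaller $N\ssubset N'$.
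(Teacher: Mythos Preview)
Your overall architecture---build a holomorphic disc foliation near the trivial one, define a local smooth HCMA solution leafwise, then identify it with the envelope via a two-sided comparison---matches the paper exactly. The leafwise inequality $\Phi\le\tilde\Phi$ is also argued the same way. Two points, however, deserve correction.

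\textbf{The global extension.} Your proposal to ``extend $\tilde\Phi$ to an admissible global $\Omega$-psh competitor via a cutoff supported in $N'\setminus N$'' does not work as stated: multiplying by a cutoff destroys plurisubharmonicity, and $\tilde\Phi$ itself is $C^0$-small on all of $N'$, so it never drops to the global lower bound $-M$ and cannot be glued to the constant $-M$ by a simple maximum. The paper's device is different and uses the uniform convexity $D^2\rho\ge\lambda I$ of the local potential in an essential way. For each base point $x\in N$ one introduces the pluriharmonic function $L_x$ on $N'\times D$ that is harmonic along the leaf through $x$ with boundary values $\rho+\psi_\tau$, and linear in the spatial direction with slope $\xi(x,\tau)$. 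Then $L_x-\rho$ is $\Omega$-psh (in fact $\Omega$-pluriharmonic), equals $\tilde\Phi$ along the leaf $\mathcal L_x$, and the convexity forces $L_x(z,\tau)-\rho(z)\le -\tfrac{\lambda}{3}\,d(z,z(x,\tau))^2$. Choosing the annulus width $R_0\gtrsim\sqrt{M/\lambda}$ guarantees $L_x-\rho\le -M$ on $(N'\setminus\widetilde N)\times D$, so
\[
F(z,\tau)=\max\Bigl\{-M,\ \sup_{x\in N}\bigl(L_x(z,\tau)-\rho(z)\bigr)\Bigr\}
\]
is a bona fide bounded continuous $\Omega$-psh function on all of $X\times D$, lies in the admissible class, and equals $\tilde\Phi$ on $N\times D$. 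This is why the quantitative gap $N\Subset N'$ is needed (it must accommodate $\sqrt{M/\lambda}$), and why the hypothesis in the precise statement requires a convex local potential.

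\textbf{The foliation step.} You anticipate a loss of derivatives, but the fix you suggest---finding the right weighted H\"older scale for a standard implicit function theorem---is not what works. The paper shows explicitly (with a counterexample) that the Hilbert transform in the $\partial D$-variable is \emph{not} bounded on $C^{k,\alpha}$ in the parameter direction $N$; one only has $C^{k,\alpha}\to C^{k,\beta}$ for $\beta<\alpha$ with constant $\sim(\alpha-\beta)^{-1}$. No spatial weight repairs this. The paper handles it by BMO estimates for the iterated linearized problem combined with a Nash--Moser--Zehnder scheme along a scale $\beta_n\downarrow\beta$. Your misreading of the ``weighted'' in the hypothesis as addressing this issue is off: in the local theorem the norm is an ordinary H\"older norm, and the weighting elsewhere in the paper concerns spatial decay on ALE ends, not derivative loss.
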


A quick implication of Theorem \ref{mthmlocalreg} is the global regularity of the HCMA equation under the assumption that $\Psi$ is small globally in $X \times \partial D$, which recovers the results from \cite{donaldson2002holo, CHEN2020108603, hu2024metric} when $D$ is a disk.

We now introduce the basic setup for ALE Kähler manifolds. 
Let $(X, J, g)$ be a complete non-compact K\"ahler manifold of complex dimension $n\geq 2$. We say $(X, J, g)$ is asymptotically locally Euclidean (ALE) K\"ahler if there is a compact subset $K \subseteq X$ such that $ \psi: X-K \rightarrow (\CC^n - B_R)/ \Gamma$ is a diffeomorphism, where $B_R$ is a closed ball in $\RR^n$ with radius $R$, and $\Gamma$ is a finite subset of $U(n)$ acting freely on the unit sphere. (Note that any ALE K\"ahler manifold has only one end, as proved in Hein-LeBrun \cite[Prop 1.5, Prop 3.2]{hein2016mass}.) Let $r$ be a function defined on $X- K$ as the pull-back of the Euclidean radius function in $(\CC^n - B_R)$ via $\psi$. The metric $g$ is said to be asymptotic to the Euclidean metric at infinity with decay rate $-\mu < 1-n$, in the sense that:
\begin{itemize}
    \item In the asymptotic complex coordinates provided by $\psi$, the components of $g$ satisfy
    \begin{align}\label{decayaleintro}
        g_{ij} = \delta_{ij} + O(r^{-\mu}), \qquad |\nabla^k ((\psi^{-1})^*g)|_{g_0} = O(r^{-\mu - k}),
    \end{align}
    where $g_0$ is the standard Euclidean metric on $\mathbb{C}^n$, and $\nabla$ denotes its Levi-Civita connection.
\end{itemize}

Suppose that $X$ is an ALE K\"ahler manifolds with complex dimension $n \geq 2$.  
Let $D$ be a bounded domain in $\CC$ with at least $\C^1$ boundaries. The choice of \( D \) may vary depending on the specific context—for instance, a unit disc, an annulus, or a general domain. We denote by \( \tau \) the standard complex coordinate on \( D \).
In this paper, we study the homogeneous complex Monge–Amp\`{e}re (HCMA) equation on the product space $X \times D$, with boundary data prescribed along $X \times \partial D$. Let $ \omega(\cdot, \cdot) = g(J\cdot, \cdot)$ be the reference K\"ahler form on $X$, and let $\Omega = \pi^* \omega$ be the pullback of $\omega$ to $X\times D$, where $\pi: X \times D \rightarrow X$ is the natural projection. We denote by $\partial$ and $\overline{\partial}$ the complex differential operators on the product space $X \times D$. We consider the following Dirichlet type problem for the HCMA equation:
\begin{align}\label{introHCMA}
    \begin{cases}
        (\Omega + i\partial \overline{\partial} \Phi)^{n+1} = 0, \hspace{1cm} \text{ in } X \times D;\\
        \Omega + i\partial \overline{\partial} \Phi \geq 0, \hspace{1.9cm} \text{ in } X \times D;\\
        \Phi(x, \tau) = \Psi(x, \tau), \hspace{1.35cm} \text{ on } X \times \partial D.
    \end{cases}
\end{align}
The focus of this work is to understand the solvability, regularity, and asymptotic behavior of solutions to this equation under suitable conditions on the boundary data along $X \times \partial D$. Here, we shall specify the regularity and asymptotic conditions of $\Psi$ on $X \times \partial D$. 

Consider the space of K\"ahler potential with prescribed decay rates:
\begin{align} \label{ALEKpotentialintro}
    \mathcal{H}_{-\gamma} (\omega) = \{\varphi \in \C^\infty_{ -\gamma}(X), \ \omega + i\partial \overline{\partial} \varphi\geq 0\},
\end{align}
where we assume $-\gamma < \min\{2-\mu, 0\}$,a and the space $\C^\infty_{ -\gamma} (X)$ is defined to be 
\begin{align*}
    \C^{\infty}_{-\gamma} (X) = \{\psi\in \C^{\infty} (X), \ |\nabla^{i} \psi|_{g_0} = O(r^{ -\gamma -i}) \text{ on } X-K, \ \text{for } i = 0,1,\ldots    \}.
\end{align*}
Let $v$ be the unit vector in the direction of $\partial D$, and let $D_v$ be the differential operator of the directional derivative by the vector $v$. We define the class of functions with prescribed decay rates for mixed derivatives:
\begin{align*}
\begin{split}
    \C^{\infty}_{-\gamma}(X \times \partial D)= \{\Psi \in &\C^\infty(X \times \partial D), 
    \\ 
    &|\nabla^i (\mathrm{D}_v^k \Psi) |_{g_0} = O(r^{-\gamma-i}) \text { on } X -K, \text{ for } i,k = 0,1,\ldots \}.
\end{split}
\end{align*}
Now, we assume the prescribed boundary function in the HCMA equation \eqref{introHCMA}, $\Psi$, satisfies 
\begin{itemize}
    \item $\Psi \in  \C^{\infty}_{-\gamma}(X \times \partial D)$;
    \item $\psi_\tau (\cdot) = \Psi (\cdot, \tau) \in \mathcal{H}_{-\gamma} (\omega)$, for $\tau \in \partial D$.
\end{itemize}

Now, let $(X, g, J)$ be an ALE K\"ahler manifold satisfying the metric decay condition~\eqref{decayaleintro}. Given the asymptotic behavior of ALE K\"ahler metrics, it is natural to impose corresponding decay conditions on the potential functions. In the context of scalar-flat K\"ahler metrics, \cite[Theorem D]{yao2022mass} shows that the natural decay rates for the potential functions are typically taken to be $\mu = n - 1 + \varepsilon$ and $\gamma = 2n - 4 + \varepsilon'$, for some parameters, $\varepsilon, \varepsilon' > 0$.

In solving the HCMA equations in \eqref{introHCMA}, we do not impose precise assumptions on the decay rates \(-\mu\) and \(-\gamma\), apart from requiring that \(-\gamma \leq \min\{2-\mu, 0\}\). 

Under the setting of ALE K\"ahler manifolds, the global $\C^{1,1}$ regularity of HCMA on ALE K\"ahler manifolds was proved in \cite{yao2024geodesicequationsasymptoticallylocally}. That is, given a family of potential functions $\psi_\tau \in \HH_{-\gamma} (X, \omega)$ for $\tau \in \partial D$. Let $\Phi$ be the solution to the HCMA equation \eqref{introHCMA}. Then, there is a uniform constant $C$ depending only on $n$, $ ||\Psi ||_{\C^{1,1} (X \times \partial D)}$ and the geometry of $(X, \omega)$ such that
\begin{align} \label{introglobalC11esti}
||\Phi ||_{\C^{1,1}(X \times D)} = \sup_{X \times D}\big( |\Phi| + |\widetilde{\nabla} \Phi|_{\tilde{g}} + |\widetilde{\nabla}^2 \Phi|_{\tilde{g}}  \big)  \leq C
\end{align}
where $\tilde{g} = g+ |d\tau|^2$ is a K\"ahler metric defined on $X \times D$ and $\widetilde{\nabla}$ is the Levi-Civita connection of $\tilde{g}$. In particular, if we pick $D$ to be the annulus $\Sigma$ in $\CC$, the estimate \eqref{introglobalC11esti} implies the $\C^{1,1}$-regularity of weak geodesics on ALE K\"ahler manifolds. 

In this paper, we aim to discuss the asymptotic behavior of the solution $\Phi$ to the HCMA equations in the special case where $D$ is a disk in $\CC$. The main result is the following:

\begin{mtheorem} \label{mthmasymbehHCMA}
    Let $(X, J, g)$ be an ALE K\"ahler manifold, and let $D$ be the unit disk in $\CC$. Assume that $\Psi$ is a function on $X \times \partial D$ satisfying $\Psi \in \C^\infty_{-\gamma} (X \times \partial D)$ and $\psi_\tau = \Psi(\cdot, \tau) \in\HH_{-\gamma} (X, \omega)$. Then, there is a unique bounded solution $\Phi$ to the HCMA equation \eqref{introHCMA} satisfying the $\C^{1,1}$ estimates \eqref{introglobalC11esti}. For a sufficiently large uniform constant $l$ depending only on $n$, $\Psi$ and the geometry of $(X, J, g)$, the solution $\Phi$ satisfies the weighted estimates:
    \begin{align} \label{introasymbehavior}
        \sup_{X_l \times D}| r^{\gamma + k} \nabla^k \mathrm{D}^m \Phi |_{g_0} \leq C_{k,m}, \qquad \text{on } X_l = \{x \in X, \ r(x)> l\},
    \end{align}
    where $\nabla$ is the Levi-Civita connection of the Euclidean metric in asymptotic coordinates $(X_\infty, g_0)$, $\mathrm{D}$ is the standard derivative in $D\subseteq \CC$, and $C_{k,m}$ is a uniform constant depending on $k$, $m$, $n$, $\Psi$ and the geometry of $(X, J, g)$.  Furthermore, the form $\Omega + i \partial \overline{\partial} \Phi$ is nondegenerate in the direction of $X$ on $X_l \times D$ such that the following holds:
\begin{align*}
   \frac{1}{C} \omega  \leq \omega + i \partial \overline{\partial} \varphi_\tau \leq C \omega, \qquad \text{ on } X_l , \quad \text{ for each }\tau \in D.
\end{align*}
where $C$ is a uniform constant very close to $1$.
\end{mtheorem}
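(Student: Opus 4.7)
The existence and uniqueness of a bounded solution $\Phi$ with the global $\C^{1,1}$ bound \eqref{introglobalC11esti} is already provided by \cite{yao2024geodesicequationsasymptoticallylocally}; what remains is the weighted decay \eqref{introasymbehavior} and the nondegeneracy of $\omega + i\partial\overline{\partial}\varphi_\tau$. My plan is to reduce the decay of $\Phi$ near infinity to the local regularity Theorem~\ref{mthmlocalreg} via a standard ALE rescaling, and then to use the holomorphic disc foliation construction alluded to in the introduction to upgrade local smoothness to the sharp weighted rate.

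First, I would fix a base point $x_0$ in the asymptotic chart with $R = r(x_0) \gg 1$ and consider the dilation $T_R(y) = x_0 + Ry$ from the Euclidean unit ball $B_1 \subset \CC^n$ into $B_R(x_0) \subset X_\infty$. The ALE decay \eqref{decayaleintro} gives that the rescaled reference form $\tilde\Omega = R^{-2} T_R^*\Omega$ converges smoothly to the flat K\"ahler form on $B_1$ at rate $O(R^{-\mu})$. With $\tilde\Phi(y,\tau) = R^{-2}\Phi(T_R(y),\tau)$ and $\tilde\Psi(y,\tau) = R^{-2}\Psi(T_R(y),\tau)$, the function $\tilde\Phi$ solves HCMA on $B_1\times D$ with reference form $\tilde\Omega$ and boundary data $\tilde\Psi$. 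The assumption $\Psi \in \C^\infty_{-\gamma}(X\times\partial D)$ yields, uniformly in $x_0$,
\begin{equation*}
\|\tilde\Psi\|_{\C^{k,\alpha}(B_1\times\partial D)} \leq C_{k,\alpha}\, R^{-\gamma-2},
\end{equation*}
so that $\tilde\Psi$ is arbitrarily small once $R$ is sufficiently large.

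Second, Theorem~\ref{mthmlocalreg} applied to the rescaled HCMA then yields smoothness of $\tilde\Phi$ on $B_{1/2}\times D$, and I would propagate the $R^{-\gamma-2}$ smallness of $\tilde\Psi$ into the interior via the holomorphic disc foliation of $B_{1/2}\times D$ built in the spirit of Semmes--Donaldson: each leaf is a holomorphic disc along which $\tilde\Phi$ is harmonic in $\tau$, with boundary values bounded by $CR^{-\gamma-2}$ in every H\"older norm. Combining the Poisson representation on each leaf with elliptic Schauder estimates transverse to the foliation should give
\begin{equation*}
|\nabla_y^k \mathrm{D}^m \tilde\Phi|_{g_0} \leq C_{k,m}\, R^{-\gamma-2} \qquad \text{on } B_{1/4}\times D.
\end{equation*}
Rescaling back via $\nabla_x^k = R^{-k}\nabla_y^k$ applied to $\Phi = R^2\,\tilde\Phi\circ T_R^{-1}$ produces $|\nabla^k \mathrm{D}^m \Phi|_{g_0} \leq C_{k,m}\, R^{-\gamma-k} \sim r^{-\gamma-k}$ on $B_{R/4}(x_0)$, which is exactly \eqref{introasymbehavior}. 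Covering $X_l$ by such balls gives the estimate globally on the end.

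The nondegeneracy of $\omega + i\partial\overline{\partial}\varphi_\tau$ on $X_l$ follows as an immediate corollary: taking $k=2$, $m=0$ in \eqref{introasymbehavior} yields $|\nabla^2\varphi_\tau|_{g_0} \leq C\, r^{-\gamma-2} \to 0$ as $r\to\infty$, so for $l$ large the form $i\partial\overline{\partial}\varphi_\tau$ is an arbitrarily small perturbation of zero relative to $\omega$ (itself close to the flat form), yielding the two-sided bound with $C$ arbitrarily close to $1$. The step I expect to be the main obstacle is the second one: constructing the holomorphic disc foliation uniformly in the base point $x_0$, showing that its leaves remain within the region where the rescaled metric is close to flat, and verifying that the constants in the Poisson and Schauder estimates depend on the metric only through its smallness, requires a careful iterative open--closed argument in the spirit of Donaldson's construction. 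An alternative route would be to establish a quantitative version of Theorem~\ref{mthmlocalreg} controlling the solution linearly in the size of the boundary data, a refinement that may already be latent in the proof of that theorem.
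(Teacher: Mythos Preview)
Your rescaling framework is exactly right and is in fact how the paper proceeds (via Lemma~2.6 and the proof of Theorem~9.1). But you are leaning on Theorem~\ref{mthmlocalreg} as a black box, and that is where the circularity lies: in the paper, Theorem~A is proved \emph{after} Theorem~B, using the very machinery that constitutes the proof of Theorem~B. Specifically, the proof of Theorem~A (see Theorem~9.2) requires (i) constructing the holomorphic disc foliation on the chart, (ii) building from it a global bounded $\Omega$-psh subsolution $F$ that extends by $-M$ outside the chart, and (iii) invoking the envelope characterization $\Phi = \Phi_{\Omega,\Psi}$ (Theorem~\ref{mthmHCMAc11}) to conclude $F \leq \Phi$, while subharmonicity of $\Phi + \rho$ along each leaf forces $\Phi \leq F$ there. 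Your line ``each leaf is a holomorphic disc along which $\tilde\Phi$ is harmonic in $\tau$'' is precisely the content of (ii)--(iii), and it is \emph{not} a consequence of $\tilde\Phi$ solving HCMA; it needs the subsolution construction and the envelope identity. So your proposal is correct as a high-level outline, but all the substance is hidden inside the invocation of Theorem~A.

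Two further points. First, the foliation step you flag as the main obstacle is genuinely delicate: the paper explains (Remark~6.3) that the linearized Riemann--Hilbert problem loses H\"older regularity in the parameter direction because the Hilbert transform is unbounded on $\C^{k,\alpha}(N\times\partial D)$, so Donaldson's open--closed argument does not close as written; the paper uses a Nash--Moser iteration with BMO control to repair this. Second, the ``quantitative version of Theorem~\ref{mthmlocalreg}'' you ask for at the end is exactly the estimate \eqref{estilocal} in Theorem~9.2, and its proof is the local, unweighted analogue of Theorem~9.1. The paper's route is simply to do the weighted argument directly on $X_l$ (Theorems~E, F, and 9.1) rather than reduce to a local statement and rescale; the two are equivalent reorganizations of the same proof.
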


A function $u$ is called $\Omega$-plurisubharmonic ($\Omega$-psh) on $X\times D$ if $u$ is upper-semicontinous and $\Omega + i\partial \overline{\partial} u \geq 0$ in the sense of current.
The proof of Theorem~\ref{mthmasymbehHCMA} is constructive in nature. The main goal is to construct a global \(\Omega\)-psh subsolution \( F \) to the HCMA equation such that \( F \) coincides with the actual solution \( \Phi \) of the HCMA equation \eqref{introHCMA} on the region \( X_l \times D \), and in addition, \( F \) satisfies the desired asymptotic behavior prescribed by \eqref{introasymbehavior}.
The construction can be broadly divided into the following three steps:
\begin{itemize}
    \item Step 1: Build up the pluripotential theory for bounded $\Omega$-psh functions on $X \times D$.
    \item Step 2: Establish the existence and uniqueness of the holomorphic disk foliation on $X_{l_0} \times D$ under a small perturbation of boundary data.
    \item Step 3: Construct a global $\Omega$-psh function $F$ on $X\times D$ such that $F\in \C^\infty_{-\gamma}(X_{l_0} \times D)$ and $F$ agrees with the solution $\Phi$ to HCMA equations on $X_{l} \times D$, where $l\gg l_0$. The construction of $F$ highly depends on the structure of the holomorphic discs foliation in step 2.
\end{itemize}

The Bedford-Taylor's pluripotential theory for bounded psh functions can be generalized to $X \times D$. The main objective of Step 1 is to prove that the solution to the HCMA equation is given by the upper envelope of a certain class of $\Omega$-psh functions. Let $\psh_\Omega(X \times D)$ denote the set of all $\Omega$-psh functions on $X \times D$. The class of $\Omega$-psh functions associated with the boundary function $\Psi$ on $X \times \partial D$ is defined as follows:
\begin{align*}
    \B_{\Omega, \Psi} = \{u \in &\psh_{\Omega}(X \times D);\  u \text{ is bounded, and } 
    \\
    &\limsup_{(x',\tau')\rightarrow (x, \tau)} u(x', \tau') \leq \Psi (x, \tau), \text{ for } (x', \tau') \in X \times  D, \ (x, \tau) \in X \times \partial D   \}.
\end{align*}
The upper envelope of $\B_{\Omega, \Psi}$, $\Phi_{\Omega, \Psi}$, is defined to be
$\displaystyle \Phi_{\Omega, \Psi} (x, \tau) = \sup_{u \in \B_{\Omega, \Psi}} u(x, \tau)$. Before discussing the relation between the global $\C^{1,1}$ solution $\Phi$ to the HCMA equation and upper envelope $\Phi_{\Omega, \Psi}$, we introduce a version of the maximal principle for complex Monge-Amp\`{e}re operator in the class of bounded continuous function on $X \times D$. 

\begin{mtheorem}[see Theorem \ref{mainthmmaxp}]\label{intromainmp}
    Let $X$ be an ALE K\"ahler manifold, and $D$, a bounded domain in $\CC$ with $\C^1 $ boundaries. Let $u$, $v$ be bounded continuous $\Omega$-psh functions satisfying $ (\Omega + i\partial \overline{\partial} u )^{n+1} \leq (\Omega + i\partial \overline{\partial} v)^{n+1} $ and $(\Omega + i\partial \overline{\partial} u) \leq E_0 \Theta^{n+1} $ on $X \times D$, where $\Theta = \Omega + i d\tau \wedge d\bar{\tau}$ and $E_0 >0$ is a constant. Moreover, we assume $u \leq v$ on $X \times \partial D$.
    Then, we have $u \leq v$ on $X \times D$.
\end{mtheorem}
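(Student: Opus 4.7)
The strategy is to reduce this noncompact comparison principle to the classical Bedford--Taylor comparison on bounded subdomains of $X \times D$, via an exhaustion of $X$ combined with a plurisubharmonic perturbation of $v$ propagating the boundary control $u \leq v$ from $X \times \partial D$ across the entire boundary of each $X_R \times D$, where $X_R := \{r < R\}$.

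First, I would construct a smooth plurisubharmonic exhaustion $\phi \colon X \to [0, \infty)$ with $\phi \to \infty$ on the ALE end. Using the asymptotic coordinates provided by $\psi$, one takes $\phi$ to be a convex increasing function of $|z|^2$ outside a large ball, glued smoothly to a bounded model on the compact core; a large multiplier absorbs any negativity of $i\partial\overline{\partial}\phi$ produced in the transition region, using the strict positivity of $\omega$ on compact sets. Pulled back to $X \times D$, $\phi$ is then plurisubharmonic, nonnegative, and exhausts $X \times D$ in the $X$-direction. For each $\epsilon > 0$ set $v_\epsilon := v + \epsilon\phi$. By positivity of $i\partial\overline{\partial}\phi$, $v_\epsilon$ is bounded continuous $\Omega$-plurisubharmonic, and expansion of the Monge--Amp\`ere operator gives $(\Omega + i\partial\overline{\partial} v_\epsilon)^{n+1} \geq (\Omega + i\partial\overline{\partial} v)^{n+1}$, so that the hypothesised Monge--Amp\`ere inequality persists when $v$ is replaced by $v_\epsilon$.

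For $R > 0$ large (and chosen so that $X_R$ has $\C^1$ boundary, which holds for a.e.\ $R$ by Sard's theorem), $X_R \times D$ is a relatively compact subdomain of $X \times D$. On $\overline{X_R} \times \partial D$, $v_\epsilon \geq v \geq u$ by hypothesis; on $\partial X_R \times \overline{D}$, the boundedness of $u, v$ together with $\phi \to \infty$ yields $v_\epsilon - u \geq \epsilon \inf_{\partial X_R}\phi - \sup |u| - \sup |v| \geq 0$ once $R = R(\epsilon)$ is sufficiently large. The classical Bedford--Taylor comparison principle for bounded continuous $\Omega$-plurisubharmonic functions, applied on the bounded domain $X_R \times D$ to the pair $(u, v_\epsilon)$, then yields $u \leq v_\epsilon$ on $X_R \times D$. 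Fixing any $(x,\tau) \in X \times D$ and taking $R > r(x)$ sufficiently large, this gives $u(x,\tau) \leq v(x,\tau) + \epsilon\phi(x,\tau)$, and letting $\epsilon \to 0$ finishes the proof.

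The main obstacle is the construction of the global plurisubharmonic exhaustion $\phi$ on an ALE K\"ahler manifold, which need not be Stein; the gluing through the transition region is delicate, although a cleaner variant is to allow $\phi$ only to be $\omega$-plurisubharmonic on $X$, at the cost of a harmless $O(\epsilon)$ correction in the Monge--Amp\`ere inequality that disappears in the final limit. The technical hypothesis $(\Omega + i\partial\overline{\partial}u)^{n+1} \leq E_0 \Theta^{n+1}$ enters by guaranteeing that the Monge--Amp\`ere current of $u$ is a Radon measure with density bounded by $E_0$ with respect to the reference volume form $\Theta^{n+1}$; this is what legitimises the integration-by-parts step in the classical Bedford--Taylor argument carried out uniformly on each piece $X_R \times D$ of the exhaustion.
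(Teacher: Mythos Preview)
Your reduction to Bedford--Taylor on the bounded domains $X_R \times D$ invokes the comparison principle in the wrong direction. The classical statement reads: if $A,B$ are bounded plurisubharmonic with $(dd^c A)^{n+1} \le (dd^c B)^{n+1}$ and $A \ge B$ near the boundary, then $A \ge B$ inside. In your setup you have arranged $(\Omega + dd^c u)^{n+1} \le (\Omega + dd^c v_\epsilon)^{n+1}$ together with $u \le v_\epsilon$ on $\partial(X_R \times D)$; that combination yields nothing. To conclude $u \le v_\epsilon$ you would need $(\Omega + dd^c v_\epsilon)^{n+1} \le (\Omega + dd^c u)^{n+1}$, and your perturbation $v_\epsilon = v + \epsilon\phi$ with $i\partial\overline{\partial}\phi \ge 0$ moves this the wrong way. (There is in fact a sign slip in the introductory statement; the version actually proved, Theorem~\ref{mainthmmaxp}, has $(\Omega+dd^c v)^{n+1} \le (\Omega+dd^c u)^{n+1}$ with the $E_0$ bound on $v$. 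Even with that corrected hypothesis, adding $\epsilon\phi$ to $v$ still \emph{increases} its Monge--Amp\`ere measure and destroys the inequality you need.) No plurisubharmonic barrier can simultaneously force $v_\epsilon \ge u$ on the artificial boundary $\partial X_R \times \overline D$ and preserve the required Monge--Amp\`ere ordering; these two demands pull in opposite directions.

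The paper's argument is genuinely different and does not attempt a global reduction. One assumes $\sup(u-v) > 0$, applies a weak Wu--Yau maximum principle (Lemma~\ref{WuYauweak}, using the auxiliary function $-\delta\log r$ only to locate interior near-maximum points, not as a psh perturbation) to find points $x_k$, and then works \emph{locally} in a small ball $B_s(x_k)$. There one sets $\tilde u = \theta u_E - \delta|z|^2$ and $\tilde v = v_E + \text{const}$ (with $u_E = u - E|\tau|^2$), applies the integral comparison Lemma~\ref{intcompare} on $\{\tilde u > \tilde v\}$, and uses the determinant inequality of Lemma~\ref{MPcomputationlem} together with the strict positivity of $(1-\theta)\Theta_E - \delta\,dd^c|z|^2$ to produce a uniform gap $\delta'>0$, giving a contradiction. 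The hypothesis $(\Omega + dd^c v)^{n+1} \le E_0\Theta^{n+1}$ is not there to make the Monge--Amp\`ere current a Radon measure (it always is for bounded psh functions) or to justify integration by parts; its role is to guarantee the strict inequality $\det A_{\delta/(1-\theta)} > \det A_v$ feeding Lemma~\ref{MPcomputationlem}, and to write $(\Omega+dd^c v)^{n+1} = f\,d\mu$ with $f \in L^\infty$ so that the regularization Lemma~\ref{lemregu} can pass from the $\C^2$ case to merely bounded continuous $u,v$.
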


Theorem \ref{intromainmp} indicates the uniqueness of bounded continuous solutions to HCMA equations. Then we have the following theorem:

\begin{mtheorem}[see Theorem \ref{thmc11soluasupenvelope}]\label{mthmHCMAc11}
    Suppose that $\Psi$ is a {bounded uniformly continuous function} defined in $X \times \partial D$, and $\psi_\tau(\cdot) = \Psi(\cdot, \tau)$ is $\omega$-psh function for $\tau \in \partial D$. There exists a {unique} {bounded continuous solution}, $\Phi$, to 
    \begin{align} \label{introHCMA}
    \begin{aligned}
        &\big(\Omega + dd^c \Phi\big)^{n+1} = 0, & \text{in } X\times D;\\
        &\Omega + dd^c \Phi \geq 0, & \text{in } X \times D;\\
        & \Phi = \Psi, & \text{in }  X \times \partial{D},
    \end{aligned}
    \end{align}
    and $\Phi = \Phi_{\Omega, \Psi} $, the {upper envelope of $\B_{\Omega, \Psi}$}.
    Furthermore, If $\Psi$ is bounded, smooth and $\psi_\tau= \Psi(\cdot, \tau) \in \HH(X, \omega)$, then the solution, $\Phi$, is the $\C^{1,1}$ solution satisfies \eqref{introglobalC11esti}.
\end{mtheorem}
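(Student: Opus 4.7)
The plan is to run Perron's method on the noncompact product $X \times D$ and identify the resulting envelope with the $\C^{1,1}$ solution from \cite{yao2024geodesicequationsasymptoticallylocally}, with uniqueness supplied by Theorem \ref{intromainmp}. First I would check that $\B_{\Omega, \Psi}$ is nonempty and admits a uniform upper bound: a sufficiently negative constant provides a trivial member of the class, while applying Theorem \ref{intromainmp} (comparing against the trivial $\Omega$-psh function $v \equiv \sup \Psi$) shows every $u \in \B_{\Omega,\Psi}$ satisfies $u \leq \sup_{X \times \partial D} \Psi$. Hence $\Phi_{\Omega, \Psi}$ is well-defined and uniformly bounded on $X \times D$.

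Next I would run the standard Bedford--Taylor machinery, now justified on $X \times D$ by Step~1 of the paper. The upper semicontinuous envelope $\Phi^*_{\Omega, \Psi}$ is $\Omega$-psh by general theory and belongs to $\B_{\Omega, \Psi}$ by its maximality property. To pin down the boundary behavior, at each $(x_0, \tau_0) \in X \times \partial D$ I would construct a Walsh-type local barrier using the $\C^1$-regularity of $\partial D$ and the assumption $\psi_{\tau_0} \in \psh_\omega(X)$, plus uniform continuity of $\Psi$; this forces
\[
\limsup_{(x,\tau) \to (x_0, \tau_0)} \Phi^*_{\Omega, \Psi}(x, \tau) \leq \Psi(x_0, \tau_0) \leq \liminf_{(x,\tau) \to (x_0, \tau_0)} \Phi_{\Omega, \Psi}(x, \tau),
\]
yielding continuity of $\Phi_{\Omega, \Psi}$ up to $X \times \partial D$ with the correct trace. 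For the vanishing of the Monge--Amp\`ere mass in the interior, I would use a local balayage argument on small coordinate balls $B \ssubset X \times D$: if $(\Omega + dd^c \Phi_{\Omega, \Psi})^{n+1}$ charged $B$, solving the Dirichlet problem on $B$ with the same boundary values would produce a competitor in $\B_{\Omega, \Psi}$ strictly above $\Phi_{\Omega, \Psi}$ at the center, contradicting its envelope property.

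Uniqueness of the bounded continuous solution is now immediate from Theorem \ref{intromainmp} applied to two putative solutions in both directions (the hypothesis $(\Omega + dd^c u) \leq E_0 \Theta^{n+1}$ is trivially available from the HCMA equation and an obvious extraction of the transverse Monge--Amp\`ere component). For the final claim, when $\Psi$ is smooth with $\psi_\tau \in \HH(X, \omega)$, the global $\C^{1,1}$ solution constructed in \cite{yao2024geodesicequationsasymptoticallylocally} is in particular a bounded continuous solution to \eqref{introHCMA}, so by uniqueness it coincides with $\Phi_{\Omega, \Psi}$ and inherits the bound \eqref{introglobalC11esti}.

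The main obstacle will be justifying the Bedford--Taylor steps on the noncompact $X \times D$, chiefly the balayage-style proof that the envelope is Monge--Amp\`ere null and the barrier construction near $X \times \partial D$ that is uniform in the unbounded $X$-direction. The boundedness of $\Psi$ combined with Theorem \ref{intromainmp}, rather than any decay at infinity, is what lets the argument close: the global comparison turns noncompactness from an obstacle into a boundary condition at infinity that is handled by a single application of the maximum principle. All remaining steps are local in nature and mirror the classical theory.
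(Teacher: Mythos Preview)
Your overall architecture matches the paper's: Perron envelope, barriers for boundary continuity, local balayage for vanishing Monge--Amp\`ere mass, Theorem~\ref{intromainmp} for uniqueness, and identification with the $\C^{1,1}$ solution. However, you have misidentified where the genuine difficulty lies, and there is a real gap.

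The gap is \emph{interior continuity} of $\Phi_{\Omega,\Psi}$. Your barrier construction only yields continuity at $X\times\partial D$, and your balayage step presupposes continuous Dirichlet data on the small ball $B$. Nothing in your outline establishes lower semicontinuity of the envelope in the interior. On a bounded strictly pseudoconvex domain this follows from Walsh's translation trick; on a compact K\"ahler manifold it uses B\l ocki--Ko\l odziej approximation. Neither is directly available on the noncompact product $X\times D$. The paper's actual technical input here is Proposition~\ref{regpsh}, which extends the B\l ocki--Ko\l odziej smooth approximation theorem to $\Omega$-psh functions on $X\times D$ using a carefully chosen locally finite cover with uniformly controlled transition maps coming from the ALE structure. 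Once that is in hand, lower semicontinuity is proved (Proposition~\ref{contiofupperenve}) by approximating $\Phi_{\Omega,\Psi}$ from above by smooth $\Omega$-psh $\varphi_k$ and gluing $\varphi_k-2\varepsilon$ back into the envelope near the boundary. You gesture at ``Step~1 of the paper'' but flag balayage and barriers as the obstacles; those are routine once continuity is known, whereas Proposition~\ref{regpsh} is the substantive piece.

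One minor technical slip: your invocation of Theorem~\ref{intromainmp} to bound members of $\B_{\Omega,\Psi}$ above is illicit, since that theorem requires \emph{continuous} $u,v$ and general members of $\B_{\Omega,\Psi}$ are only upper semicontinuous. The paper instead obtains the upper bound (and the upper boundary barrier) by observing that any $u\in\B_{\Omega,\Psi}$ is subharmonic on each vertical slice $\{x\}\times D$ and comparing with the slice-wise superharmonic function $u_+(x,\tau)=\Psi(x,\tau_0)+\varepsilon+kw(\tau)$.
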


From Step 2, we should assume $D$ to be a unit disk in $\CC$. 
The primary tool for constructing the global $\Omega$-psh function $F$ is the holomorphic disk foliation on $X_{l_0} \times D$. 
A short description of holomorphic disc foliation is provided in the following paragraphs; the readers can refer to Section \ref{secholodiscHCMA} (local description) and Section \ref{subsecpatthm} (global version by gluing) for more details.

The holomorphic disc foliation on $X_{l_0} \times D$ is based on the fundamental work by Semmes \cite{semmes1992complex}. Let $X_{l_0}$ be the subset of the end $X_\infty$ defined in \eqref{introasymbehavior}. In~\cite{semmes1992complex}, Semmes constructed a complex manifold \( \mathcal{W} \), associated to the data \( (X_{l_0}, \omega) \), by gluing together the holomorphic cotangent bundles over an open covering of holomorphic charts \( \{U_i\}_{i \in \mathcal{I}} \) of \( X_{l_0} \). In each chart \( U_i \), let \( \rho_i \) be a local K\"ahler potential of \( \omega \). On overlaps \( U_i \cap U_j \), the transition functions are given by fiberwise translations using \( \partial(\rho_i - \rho_j) \). This gluing yields a holomorphic fiber bundle \( \mathcal{W} \) over \( X \), with a natural projection \( p: \mathcal{W} \to X \).
Given \( \psi \in \HH_{-\gamma}(\omega) \), the differential \( \partial(\rho + \psi) \) defines an exact Lagrangian submanifold in \( \mathcal{W} \). For each \( \tau \in \partial D \), define \( \Lambda_\tau = \partial(\rho + \psi_\tau) \subseteq \mathcal{W} \) as an exact Lagrangian submanifold.
A family of holomorphic discs over \( X_{l_0} \) is a smooth map \( G: X_{l_0} \times D \to \mathcal{W} \), holomorphic in the \( D \)-direction, such that \( G(x, \tau) \in \Lambda_\tau \) for \( (x, \tau) \in X_{l_0} \times \partial D \), and satisfying the normalization condition \( \pi \circ G(x, -i) = x \). The projection \( H = \pi \circ G \) then defines a holomorphic disc foliation of \( X_{l_0} \times D \).

The following Theorem discusses the existence and local uniqueness of the family of holomorphic discs under a small perturbation of a trivial foliation. Denote by $\rho$ the K\"ahler potential of $\omega$ on each horizontal chart of $X_{l_0}$ for $\tau \in \partial D$, there exists a trivial family of holomorphic discs $G_0 (w, \tau)$ 
such that in each local holomorphic chart $U$ with K\"ahler potential $\rho$, $ G_0 (x, \tau )= (x, \partial \rho)$. Now, if we perturb the boundary data by a very small potential function $ \Psi (\cdot, \tau) = \psi_\tau (\cdot)$, we have the following theorem: 

\begin{mtheorem}[see Theorem \ref{thmexistbypertb} and Theorem \ref{thmweightestifoli}] \label{mthmexistholodiskfoli}
    Let $\rho$ be the potential function of reference K\"ahler metric $\omega$ in each local holomorphic coordinates of $N' \subseteq X_l$, and let $G_0(w, \tau)$ be the trivial family of holomorphic discs associated with $ \Lambda_\rho$. There exists a uniform constant $\varepsilon > 0$, depending only on $n$, $k$, $\alpha$, and the geometry of $(X, J, g)$. If ${\Psi} \in \C^{\infty} (N' \times \partial D, \RR )$ and 
\begin{align*}
    || \partial \Psi ||_{k+1,\alpha; N' \times \partial D} \leq \varepsilon,
\end{align*}
then, there is a unique smooth family of holomorphic discs $G: N \times D \rightarrow E$ with
\begin{align*}
    ||G(x, \tau)-G_0(x, \tau)||_{k,\beta; N \times D} \leq C (\alpha-\beta)^{-2} ||\partial \widetilde{\Psi}-\partial \widetilde{\Psi}_0||_{k, \alpha; N'\times \partial D}
\end{align*}
where $C$ is a constant depending on $n,~ k,~\alpha,~ \varepsilon$ and the geometry of $(X, J, g)$. Moreover, $G$ satisfies the boundary condition: for each $\tau \in \partial D $ and each $x\in N'$, $G(x, \tau) \in \Lambda_{\tau}$, where $\Lambda_\tau$ is the graph of $\partial (\rho + \psi_\tau)$ on $\mathcal{W}$, and a fixed point condtion: $\tau \in \partial D$, and $\displaystyle  \pi\circ G (x, -i) = x$.

If we further assume that,
\begin{align*}
    ||  \partial \Psi ||_{-\gamma; k+1,\alpha; X \times \partial D} \leq C_0,
\end{align*}
then, we have the following weighted estimates on the displacement of holomorphic discs:
\begin{align} \label{estimeasureperturb}
    ||G(x, \tau) - G_0 (x, \tau) ||_{-\gamma, k,\beta; X_l \times D} \leq C ||\partial \Psi||_{-\gamma, k, \alpha; X_\infty\times \partial D}, 
\end{align}
where $G_0$ is the trivial foliation with respect to $ \Lambda_\rho$, and $C$ is a constant depending on $n, k,~\alpha,~(\alpha-\beta)^{-1},~ C_0$ and the geometry of $(X, J, g)$.
\end{mtheorem}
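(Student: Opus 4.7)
The plan is to formulate the existence of the perturbed holomorphic disc foliation as a nonlinear fixed-point problem on a Banach space of fiberwise perturbations of $G_0$, invert the linearization by a Cauchy-type integral on each disc $\{x\}\times D$, close the nonlinear problem by contraction mapping, and then repeat the argument in a weighted Hölder space to obtain \eqref{estimeasureperturb}.

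Concretely, I would write any candidate family in fiber coordinates over the chart as $G(x,\tau) = G_0(x,\tau) + v(x,\tau)$, where $v$ is a section of the pulled-back cotangent bundle over $N'$. The three conditions on $G$ translate into: (i) $\overline{\partial}_\tau v = 0$ on $D$, (ii) a boundary condition $v|_{\partial D} = \partial \psi_\tau + Q(x,\tau,v)$ obtained by unwinding the graph description of $\Lambda_\tau$, where $Q$ is smooth and vanishes to second order at $(v,\psi)=(0,0)$, and (iii) a normalization on $\pi\circ v$ at $\tau = -i$. The linearization at $v = 0$ is the classical Riemann--Hilbert problem for $\CC^n$-valued holomorphic functions on the disc with totally real boundary condition modeled on the tangent Lagrangian to $\Lambda_\rho$; choosing a local trivialization adapted to the K\"ahler structure reduces it to the $\RR^n$-boundary condition on $\partial D$, whose unique normalized solution is given by a Schwarz--Cauchy integral operator $L^{-1}$ applied to the boundary datum.

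Standard Plemelj--Privalov estimates give $\C^{k,\beta}$-boundedness of $L^{-1}$, with the endpoint losses that accumulate from evaluating derivatives of a Cauchy integral on a slightly smaller disc producing the factor $(\alpha-\beta)^{-2}$ in the statement. Because $Q$ is quadratic in $v$ and its boundary values, the composed map $v \mapsto L^{-1}\bigl(\partial\psi_\tau + Q(x,\tau,v)\bigr)$ is a contraction on a small ball in $\C^{k,\beta}(N\times D)$ provided $\| \partial \Psi \|_{k+1,\alpha; N'\times \partial D} \leq \varepsilon$ is sufficiently small, with $\varepsilon$ depending only on $n$, $k$, $\alpha$, and the local geometry. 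Banach's fixed-point theorem then produces the unique $v$, and the quantitative bound on $\|G - G_0\|_{k,\beta;N\times D}$ falls out of the contraction estimate.

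For the weighted estimate, I would rerun the contraction in the weighted Hölder space $\C^{k,\beta}_{-\gamma}(X_l\times D)$. The operator $L^{-1}$ integrates only in $\tau\in \partial D$ and is pointwise in $x$, so it commutes with multiplication by $r^\gamma$ and is automatically bounded on the weighted spaces. The main technical obstacle I anticipate is accounting for the ALE deviation of $\Lambda_\rho$ from the Euclidean model: in the asymptotic chart $\partial \rho$ differs from the flat model by an $O(r^{1-\mu})$ term by \eqref{decayaleintro}, so the linear boundary Lagrangian that $L^{-1}$ actually inverts is itself an $r^{-\mu}$ perturbation of the flat Lagrangian. Verifying that this deviation, together with every term of the quadratic remainder $Q$, carries either an extra factor of $r^{-\mu}$ or an extra factor of $v$ — so that the assumption $-\gamma \leq 2-\mu$ suffices to absorb these errors and the iteration closes in the weighted norm rather than degenerating into a merely bounded correction at infinity — is the delicate bookkeeping at the heart of the argument. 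Once this is in hand, the weighted contraction mapping yields \eqref{estimeasureperturb}.
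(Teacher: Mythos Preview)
Your proposal has a genuine gap at the linear step. You assert that ``standard Plemelj--Privalov estimates give $\C^{k,\beta}$-boundedness of $L^{-1}$'' and then close by contraction mapping in a fixed Banach space. But the solution operator for the linearized Riemann--Hilbert problem involves the Hilbert transform $\Hilb_\tau$ acting fiberwise in $\tau$, and while this operator is bounded on $\C^{k,\alpha}(\TT^1)$ in the circle direction, it is \emph{not} bounded on $\C^{k,\alpha}(N\times\TT^1)$ in the parameter direction~$x$. The paper gives an explicit counterexample (Remark~\ref{remcounterex}): one can produce $f\in\C^{0,\alpha}(N\times\TT^1)$ for which $[\Hilb_\tau f]_{0,\alpha}$ blows up logarithmically in~$x$. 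The best one has is the tame estimate (Lemma~\ref{lemhilbtransesti})
\[
\|\Hilb_\tau f\|_{k,\beta;N\times\partial D}\leq C(\alpha-\beta)^{-1}\|f\|_{k,\alpha;N\times\partial D},\qquad \beta<\alpha,
\]
so each application of $L^{-1}$ costs a fixed drop $\alpha\to\beta$ in Hölder exponent. This is precisely the obstruction to Banach contraction: your iteration $v\mapsto L^{-1}(\partial\psi_\tau+Q(v))$ cannot take place in a single $\C^{k,\beta}$ space. Your attribution of the factor $(\alpha-\beta)^{-2}$ to ``evaluating derivatives of a Cauchy integral on a slightly smaller disc'' misidentifies the source of the loss --- it is in the $x$-direction Hölder seminorm, not the $\tau$-direction.

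The paper handles this by Nash--Moser--Zehnder iteration (proof of Theorem~\ref{thmexistbypertb}), using the tame loss above together with a quadratic error estimate \eqref{estiTalorformula}. To invert the linearization not only at $G_0$ but at nearby base points (which Nash--Moser requires), the paper develops BMO machinery on $\partial D$ (Lemmas~\ref{lembddBMOitera}--\ref{lemBMOHolder} and Lemma~\ref{lemlinearproblemperturb}): the key observation is that while $h^{\mathbf{k},\alpha}_{\Hilb_\tau f}$ is unbounded in $L^\infty$, it is bounded in $\BMO(\partial D)$ uniformly in $x,x'$, and John--Nirenberg then yields the $(\alpha-\beta)^{-2}$ bound for the perturbed linear inverse. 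The weighted estimate (Theorem~\ref{thmweightestifoli}) is then obtained by rescaling each ball $N_i'$ to unit size and invoking the unweighted result; your observation that $L^{-1}$ commutes with multiplication by $r^\gamma$ is morally the same reduction, but the contraction you envision still fails for the reason above.
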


To place Theorem~\ref{mthmexistholodiskfoli} in context, let us briefly recall the history of related results. Donaldson sketched such a result in \cite[Theorem 1]{donaldson2002holo} by attempting to solve a family of Riemann–Hilbert problems via the classical inverse function theorem. A more detailed exposition of this approach was later given in Chen–Feldman–Hu \cite[Section 2]{CHEN2020108603}. However, to our understanding, there is a gap in \cite[Lemma A.3]{CHEN2020108603} concerning the regularity of the Hilbert transform. The Hilbert transform is known to be a bounded operator in Hölder spaces in the $S^1$ direction, but we will show that it is not a bounded operator in Hölder spaces in the parameter directions. Thus, the standard inverse function theorem cannot be used. Instead, we will use the Nash-Moser-Zehnder version of the inverse function theorem to prove Theorem \ref{mthmexistholodiskfoli}, crucially also using the theory of BMO spaces. 

The close relation between HCMA equations and holomorphic foliation was discussed in many earlier works \cite{bedford1977Foli, lempert1985sym, semmes1992complex, donaldson2002holo, Chen2008geofoli, CHEN2020108603}.
The essential result was proved by Semmes \cite{semmes1992complex}, and later reformulated by Donaldson \cite{donaldson2002holo}: provided the same boundary data, the existence of the smooth nondegenerate solution to the HCMA equation is equivalent to the existence of the holomorphic discs foliation on a compact K\"ahler. In Chen-Tian \cite{Chen2008geofoli} and Chen-Feldman-Hu \cite{CHEN2020108603} (also see Proposition \ref{propexplicitconstructionsolu}), a local version of equivalence was proved. That is to say, provided boundary data on some part of a given K\"ahler manifold, the local existence of the holomorphic discs foliation is equivalent to the existence of a local solution to the HCMA equation. 

However, a key issue is that such local holomorphic disc foliations are \emph{not uniquely determined} by the boundary K\"ahler forms \( \omega_\tau \) for \( \tau \in \partial D \). Consequently, even when a local solution to the HCMA equation exists, it does not necessarily coincide with the global solution. One of the main contributions of this paper is to clarify this point and provide a mechanism to ensure consistency between local and global solutions: by choosing local potential functions that are \emph{restrictions of a globally defined potential}, one can guarantee that the local foliation aligns with the global structure. In the ALE K\"ahler case, the global potential is uniquely determined by a decay condition. This allows us to confirm, in Step~3, that the local solution near infinity constructed via holomorphic disc foliations on \( X_{l_0} \) (Theorem~\ref{mthmexistholodiskfoli}) indeed agrees with the global solution.

The following theorem describes the last step:

\begin{mtheorem}[see Theorem \ref{thmconstpshsolu} and Theorem \ref{thmweiestisolutiontoHCMA}] \label{mthmconstrucsubsol}
Let $\Phi$ be the global $\C^{1,1}$ solution to the HCMA equation \eqref{introHCMA}. There exists a bounded, continuous, $\Omega$-plurisubharmonic subsolution $F$ to the HCMA equation. Moreover, there exists a large uniform constant $l$ such that 
\begin{align*}
    F= \Phi \qquad \text{ on } X_l.
\end{align*}
If the boundary function $\Psi $ in \eqref{introHCMA} belongs to $ \C^{\infty}_{-\gamma} (X \times \partial D)$, then there exists a uniform constant $C$ depending only on $n$, $k$, $\alpha$, $||\Psi||_{-\gamma; k+3, \alpha; X \times D}$, and the geometry of $(X, J, g)$ such that
\begin{align*}
    ||F||_{-\gamma; k,\beta; X_l \times D} \leq C.
\end{align*}
\end{mtheorem}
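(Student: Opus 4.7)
The plan is to turn the holomorphic disc foliation of Theorem~\ref{mthmexistholodiskfoli} into a smooth local HCMA solution $u$ on the end $X_{l_0}\times D$ with the required weighted decay, glue $u$ to a globally defined $\Omega$-psh extension of $\Psi$ to obtain a bounded continuous subsolution $F$ on all of $X\times D$ that equals $u$ on $X_l\times D$ for some $l\gg l_0$, and identify $F$ with $\Phi$ on $X_l\times D$ by combining the upper-envelope characterization of Theorem~\ref{mthmHCMAc11} with a disc-wise maximum principle. For the local solution, I would first pick $l_0$ large enough that $\|\partial\Psi\|_{k+1,\alpha;\,X_{l_0}\times\partial D}$ lies below the smallness threshold of Theorem~\ref{mthmexistholodiskfoli}, which is possible since $\Psi\in\C^\infty_{-\gamma}(X\times\partial D)$. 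The theorem then supplies a unique smooth family of holomorphic discs $G:X_{l_0}\times D\to\mathcal{W}$ close to the trivial family $G_0$, together with the weighted bound \eqref{estimeasureperturb}. Via the Semmes--Donaldson correspondence, $G$ determines a smooth local solution $u\in\C^\infty(X_{l_0}\times D)$ of the HCMA with $u=\Psi$ on $X_{l_0}\times\partial D$ such that $\Omega+i\partial\overline{\partial}u$ is semipositive with exactly one null direction tangent to the discs; integrating \eqref{estimeasureperturb} along the discs transfers the displacement bound into the weighted Hölder estimate $\|u\|_{-\gamma;\,k,\beta;\,X_{l_0}\times D}\le C$.

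Next, I would fix any bounded continuous $\Omega$-psh extension $w\in\B_{\Omega,\Psi}$ of $\Psi$ to $X\times D$ (for instance a $\tau$-fiberwise harmonic extension plus a small $\Omega$-psh correction), choose $l\gg l_0$, and pick a smooth nonincreasing radial function $\delta(r)\ge 0$ vanishing for $r\ge l$ and sufficiently large on $\{r\le l_0\}$ so that, using the weighted decay of $u-\Psi$ from Step~1 together with the boundedness of $w-\Psi$, one has $u-\delta<w$ on $\{l_0\le r\le l\}\times D$ and $u-\delta=u>w$ on $X_l\times D$. Define
\begin{align*}
F := \max(u-\delta,\ w)\ \text{on } X_{l_0}\times D, \qquad F := w\ \text{on } (X\setminus X_{l_0})\times D.
\end{align*}
Since the pointwise maximum of two $\Omega$-psh functions is $\Omega$-psh, the two formulas agree on a neighborhood of the interface $\{r=l_0\}$, and $F=u$ on $X_l\times D$ by construction, the function $F$ is bounded, continuous, globally $\Omega$-psh on $X\times D$, belongs to $\B_{\Omega,\Psi}$, and inherits the weighted Hölder estimate of $u$ on $X_l\times D$.

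For the identification with $\Phi$, the fact that $F\in\B_{\Omega,\Psi}$ together with Theorem~\ref{mthmHCMAc11} yields $F\le\Phi$ on $X\times D$. For the reverse inequality, let $v\in\B_{\Omega,\Psi}$ and restrict $v-u$ to an arbitrary holomorphic disc $\tau\mapsto(H_\tau(x_0),\tau)$ of the foliation, with $H=\pi\circ G$. The HCMA identity for $u$ forces $(\Omega+i\partial\overline{\partial}u)|_{\mathrm{disc}}=0$, while the $\Omega$-psh condition on $v$ gives $(\Omega+i\partial\overline{\partial}v)|_{\mathrm{disc}}\ge 0$; hence $i\partial\overline{\partial}(v-u)|_{\mathrm{disc}}\ge 0$, so $(v-u)|_{\mathrm{disc}}$ is subharmonic on $D$. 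Since $\limsup v\le\Psi=u$ on the boundary circle of the disc, the one-dimensional maximum principle forces $v\le u$ along each disc, hence on all of $X_{l_0}\times D$; taking the supremum over $v\in\B_{\Omega,\Psi}$ gives $\Phi\le u$ on $X_{l_0}\times D$. Combined with $F\le\Phi$ and $F=u$ on $X_l\times D$, we conclude $F=\Phi=u$ there, and the stated weighted estimate on $F$ follows from Step~1.

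The hardest part will be Step~3, where one must check that the foliation built from a \emph{globally defined} ALE potential aligns with the global upper envelope $\Phi$. This is the local-versus-global consistency issue emphasized in the introduction, and the disc-wise comparison crucially relies on the boundary matching $u=\Psi$ provided by the fixed-point normalization $\pi\circ G(\cdot,-i)=\mathrm{id}$ in Theorem~\ref{mthmexistholodiskfoli}. A secondary subtlety is the calibration of the cutoff $\delta$ in Step~2 against the weighted decay in \eqref{estimeasureperturb}: convex combinations of $\Omega$-psh functions are unavailable in the $D$-direction, so the gluing must proceed through the max construction, and only the weighted decay rate $-\gamma<0$ guarantees that $\delta$ can be tuned so that $F=u$ on a full neighborhood $X_l\times D$ rather than on a single radial slice.
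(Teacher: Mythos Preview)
Your Steps~1 and~3 are essentially the paper's argument: the local smooth solution $u$ on the end is exactly the function constructed in Proposition~\ref{propexplicitconstructionsolu}, the weighted estimate comes from the displacement bound as in Section~\ref{secweiesti}, and the disc-wise comparison $v\le u$ for every $v\in\B_{\Omega,\Psi}$ is precisely how the paper obtains $\Phi\le F$ on the foliated region.

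The gap is in Step~2. First, the stated conditions are inconsistent: you ask for $u-\delta<w$ on the closed annulus $\{l_0\le r\le l\}$ and $u>w$ on $X_l=\{r>l\}$, but at $r=l$ you have $\delta=0$, hence $u\le w$ and $u>w$ simultaneously. Second, and more seriously, $u-\delta$ is \emph{not} $\Omega$-psh in general. The form $\Omega+i\partial\overline{\partial}u$ has a one-dimensional kernel spanned by the disc direction $v=\partial_\tau+a^i\partial_{z_i}$ with $a^i=\partial_\tau z_i\neq 0$; since $\delta$ depends only on $x$, one has $(\Omega+i\partial\overline{\partial}(u-\delta))(v,\bar v)=-\delta_{i\bar j}a^i\bar a^j$, and a smooth nonincreasing radial cutoff with $\delta\equiv 0$ for $r\ge l$ cannot be plurisuperharmonic near $r=l$ (if $f(s)=\delta(\sqrt s)$ then $sf'\to 0^-$ as $s\uparrow l^2$, forcing $(sf')'>0$ there). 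Hence on the part of the transition zone where $u-\delta>w$, the function $F=\max(u-\delta,w)=u-\delta$ fails to be $\Omega$-psh.

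The paper circumvents this by \emph{not} cutting off $u$. Instead it writes $u$ on each chart as a supremum of $\Omega$-pluriharmonic ``support functions'': for every leaf base point $x_0$ one defines $L_{x_0}$ to be harmonic in $\tau$ along $\mathcal L_{x_0}$ with boundary value $\rho+\psi_\tau$, and linear in $z$ with slope $\xi(x_0,\tau)$; then $L_{x_0}$ is pluriharmonic on $N'\times D$, so $L_{x_0}-\rho$ is $\Omega$-pluriharmonic. Crucially, the \emph{uniform convexity} of $\rho$ (Proposition~\ref{proconvexpotentialN}) gives $L_{x_0}-\rho\le -\tfrac{\lambda}{6}\,d(z,z(x_0,\tau))^2$ on the boundary and hence (by harmonicity in $\tau$) $L_{x_0}-\rho\le -M$ once $d(z,x_0)\gtrsim\sqrt{M/\lambda}$. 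Thus $F:=\max\{\sup_{x_0}(L_{x_0}-\rho),\,-M\}$ is automatically $\Omega$-psh, equals $u$ on the foliated region (Lemma~\ref{lemconstructpshlocal}), and glues to the constant $-M$ without any cutoff. This convexity-based ``tangent plane'' construction is the missing idea in your Step~2.
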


The construction of $F$ relies essentially on the holomorphic disc foliation. We now briefly outline the construction of the auxiliary function $F$ in Step~3.
Along each holomorphic disc 
\(
   \mathcal{L}_{x_0} = \{\pi \circ G(x_0,\tau) \; ; \; \tau \in D\},
\)
we define a function $L_{x_0}$ in local holomorphic coordinates around 
$\mathcal{L}_{x_0} \subseteq N' \times D$. 
On each leaf $\mathcal{L}_{x_0}$, the function $L_{x_0}$ is taken to be harmonic in the $\tau$-variable, 
with boundary values given by ${\psi}_\tau + \rho$. 
In the spatial directions of $N' \times D$, we define $L_{x_0}$ to be linear, so that the derivative of 
$L_{x_0}$ in the spatial direction coincides with the bundle components of the holomorphic disc $G(x_0,\tau)$. Please see (\ref{pshconst1})-(\ref{pshconst3}) for details.
One then verifies that $L_{x_0}$ is pluriharmonic on $N' \times D$ (see Lemma \ref{lempshLcheck}). 

The auxiliary function $F$ is then roughly defined by taking the supremum of $L_{x_0}-\rho$ over 
$x_0 \in X_{2l}$ together with the constant function $-M$, where $M$ denotes the a priori global 
$C^0$ bound of the solution $\Phi$ to HCMA equation. 
The smallness condition on the prescribed boundary potential $\Psi$ over $X_{2l}$ ensures the 
convexity of the local potential $\rho+\Psi$, and hence guarantees that $F$ is globally well-defined $\Omega$-psh.  
We refer to Section~\ref{secconstructF} for the detailed construction.

The main contributions of this paper are twofold. 
On the one hand, Theorem~\ref{mthmexistholodiskfoli} fills a gap in the works of Donaldson~\cite{donaldson2002holo} and Chen--Feldman--Hu~\cite{CHEN2020108603}, while Theorem~\ref{mthmconstrucsubsol} provides the construction of a global $\Omega$-plurisubharmonic function $F$, which is an exact solution on the end $X_{2l}$ and extends globally as a bounded subsolution, giving the asymptotic control needed for Theorem~\ref{mthmasymbehHCMA}. 
On the other hand, these results establish a Bedford--Taylor type pluripotential theory in the ALE setting and lead to a local strong regularity theorem for the HCMA equation.

The paper is organized as follows. In Section~\ref{secpre}, we review preliminary definitions and results that will be used repeatedly throughout the paper. Many of these are known to experts, but we include them here to keep the presentation self-contained. Sections~\ref{secupenvelope} and~\ref{secMPofMAALE} complete Step~1: we study the regularity of the upper envelope of bounded $\Omega$-plurisubharmonic subsolutions and relate it to the existence of weak solutions to the HCMA equation on $X \times D$. We also prove a version of the maximum principle for the Monge-Ampère operator (Theorem~\ref{intromainmp}), which ensures the uniqueness of bounded weak solutions. Section~\ref{secC11esti} is devoted to establishing $C^{1,1}$ regularity of the global solution and completing the proof of Theorem~\ref{mthmHCMAc11}. 
In Sections~\ref{secholodiscfoliexist} and~\ref{secuniqholodiscfoli}, we carry out Step~2 by giving a PDE-based proof of the existence of holomorphic disc foliations via a small perturbation argument using the Nash–Moser technique, discuss uniqueness, and prove a patching theorem; thus, we complete the proof of Theorem~\ref{mthmexistholodiskfoli}. In Section~\ref{secmaxrkthm}, we complete Step~3 by constructing a global bounded continuous $\Omega$-plurisubharmonic subsolution that connects the local and global solutions, thereby proving Theorem~\ref{mthmconstrucsubsol}. Section~\ref{secweiesti} finishes the proof of the main result, Theorem~\ref{mthmasymbehHCMA}, by establishing weighted estimates. Finally, we prove a local regularity result for the HCMA equation, stated as Theorem~\ref{mthmlocalreg}.

\begin{ac} \textup{
The author thanks Professors Xiuxiong Chen and Claude LeBrun for their interest in this problem and for several stimulating conversations, as well as the colleagues Jingrui Cheng, Jian Wang, and Charles Cifarelli for their encouragement. The author also thanks Professor Xiuxiong Chen for helpful comments on this work.
}
\end{ac}

\section{Preliminaries} \label{secpre}

\subsection{Inhomogeneous weighted H\"older norms}\label{secwholdernorm} Let $X$ be an ALE K\"ahler manifold with the end $X_\infty$. In the subsection, we discuss the H\"older norm and the weighted H\"older norm of a tensor field on certain open subsets of the universal covering $\widetilde{X}_\infty$ of the end, defined with respect to the Euclidean metric $g_0$. Since $\widetilde{X}_\infty$ is diffeomorphic to $\RR^{2n} - B_R$, an open subset of $\widetilde{X}_\infty$ can be viewed as an open set in $\CC^n$ with the standard Euclidean coordinates. 
\begin{define}
    Let $f$ be a function defined on a domain $N$ in $\CC^n$. We say $f$ is of the class $\C^{k,\alpha} (N)$, if 
    \begin{align*}
        ||f||_{k,\alpha; N} = \sum_{|\beta|=0}^k \sup_{N}|D^\beta f|_{g_0} + [f]_{k,\alpha; N} \leq  C.
    \end{align*}
     where the H\"older norm $[f]_{k,\alpha; N}$ is defined as 
     \begin{align*}
         [f]_{k,\alpha; N} = \sum_{|\beta| = k} \sup_{x,y \in N} \frac{|D^\beta f (y) - D^\beta f(x)|}{|y-x|^\alpha}.
     \end{align*}
\end{define}
A tensor field $\mathbf{T}$ defined in $N$ can be represented in the standard coordinate frame of $\RR^{2n}$, 
\begin{align*}
    \mathbf{T} = \sum_{\tiny{\substack{i_1,\ldots, i_q\\j_1,\ldots, j_p} }}T^{j_1\ldots j_p}_{i_1\ldots i_q} \frac{\partial}{\partial x^{j_1}} \otimes \ldots \otimes \frac{\partial}{\partial x^{j_p}} \otimes d x^{i_1} \otimes \ldots \otimes d x^{i_q}.
\end{align*}
Then, we say $\mathbf{T} \in \C^{k,\alpha}(N) $ if each coefficient function, $T^{j_1\ldots j_p}_{i_1\ldots i_q} $, is of class $\C^{k,\alpha}(N)$. 
 \begin{lem}\label{ckalphaesticomposfun}
     Let $N$, $N'$ be domains in $\RR^n$. If $f \in \C^{k,\alpha} (N')$ and $u \in \C^{k,\alpha} (N, N')$, then $g(x) = f(u(x)) \in \C^{k,\alpha} $ with $k \geq 1$, $\alpha \in (0,1)$, then we have
     \begin{align*}
         ||g||_{k,\alpha; N} \leq  ||f||_{k,\alpha; N'} (1 + \sup_{N}|D u|^\alpha|) P(||D u||_{k-1,\alpha; N}).
     \end{align*}
     where $P$ is an order $k$ polynomial with positive coefficients depending only on $n$ and $k$. 
 \end{lem}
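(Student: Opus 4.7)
The plan is to combine Fa\`a di Bruno's chain rule for higher-order derivatives of a composition with the standard product rule for H\"older seminorms. For any multi-index $\beta$ with $|\beta| = m \leq k$, Fa\`a di Bruno's formula yields
\[
D^\beta g(x) \;=\; \sum_{j=1}^{m} \sum_{\pi \vdash \beta,\, |\pi| = j} c_\pi \, (D^j f)(u(x)) \prod_{B \in \pi} D^{B} u(x),
\]
where the inner sum runs over partitions $\pi$ of the indices of $\beta$ into $j$ nonempty blocks; each term is a product of $(D^j f)\circ u$ with $j$ derivatives of $u$ whose orders sum to $m$. Since $\|D^l u\|_{0; N} \leq \|Du\|_{k-1,\alpha; N}$ for every $1 \leq l \leq k$, this already gives $\sup_N |D^\beta g| \leq \|f\|_{k,\alpha; N'} \cdot Q_m(\|Du\|_{k-1,\alpha; N})$ with $Q_m$ a polynomial of degree $\leq m$, which controls the $\C^k$ portion of $\|g\|_{k,\alpha;N}$.

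For the top-order H\"older seminorm $[D^\beta g]_\alpha$ with $|\beta|=k$, I would apply the product rule $[AB]_\alpha \leq [A]_\alpha \|B\|_0 + \|A\|_0 [B]_\alpha$ to each Fa\`a di Bruno term, writing $A = (D^j f)\circ u$ and $B = \prod_{B \in \pi} D^B u$. The composition factor obeys the pullback bound $[(D^j f)\circ u]_\alpha \leq [D^j f]_\alpha (\sup_N|Du|)^\alpha$, which follows directly from $|u(y)-u(x)| \leq \sup|Du| \cdot |y-x|$. When $j = k$, $[D^k f]_\alpha$ is a direct summand of $\|f\|_{k,\alpha; N'}$; when $j < k$, the two-sided bound $|D^j f(p)-D^j f(q)| \leq \min(\|D^{j+1} f\|_0 |p-q|,\, 2\|D^j f\|_0)$, split via $|p-q|\leq 1$ versus $|p-q|>1$, produces $[D^j f]_\alpha \leq C(\alpha)\|f\|_{k,\alpha; N'}$. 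Consequently, the first piece of the product rule contributes $C\|f\|_{k,\alpha} (\sup|Du|)^\alpha \cdot \|B\|_0$, while the second piece $\|A\|_0 [B]_\alpha \leq \|f\|_{k,\alpha}\,[B]_\alpha$ supplies the "$+1$" in the final factor $(1+\sup|Du|^\alpha)$.

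For $[B]_\alpha$ itself, iterating the product rule reduces the task to estimating individual factors $[D^l u]_\alpha$ with $1 \leq l \leq k$. When $l = k$ this is $\leq \|Du\|_{k-1,\alpha; N}$ by definition; when $l < k$ the same split-in-$|p-q|$ argument with $\|D^{l+1} u\|_0 \leq \|Du\|_{k-1,\alpha; N}$ gives $[D^l u]_\alpha \leq C\|Du\|_{k-1,\alpha; N}$. Since a partition of $\beta$ with $|\beta|=k$ has at most $k$ blocks, both $\|B\|_0$ and $[B]_\alpha$ are polynomials of degree $\leq k$ in $\|Du\|_{k-1,\alpha; N}$. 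Summing over all Fa\`a di Bruno terms and absorbing the combinatorial constants into $P$ then produces the claimed inequality, with $P$ an order $k$ polynomial whose positive coefficients depend only on $n$ and $k$.

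The main obstacle, and essentially the only nontrivial point, is combinatorial bookkeeping: one must ensure that the factor $(1+\sup|Du|^\alpha)$ multiplies only the polynomial $P$ and is not replicated into higher powers of $\sup|Du|^\alpha$, and that the dependence on $\|f\|_{k,\alpha; N'}$ remains exactly linear. The clean split of the product rule into a $[A]_\alpha \|B\|_0$ piece (which carries all of the $\sup|Du|^\alpha$ dependence) and a $\|A\|_0 [B]_\alpha$ piece (which carries the unit "$+1$") is the organizational step that achieves this; the rest is a routine accumulation of constants.
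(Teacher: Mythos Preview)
Your proposal is correct and follows essentially the same approach as the paper: write down Fa\`a di Bruno's formula for $D^\beta g$, then bound the sup norm and the top-order H\"older seminorm term by term. The paper's proof is in fact terser than yours---it simply records the chain-rule formula \eqref{calckdericomposfun} and then states the two resulting inequalities without spelling out the product-rule decomposition or the pullback bound $[(D^j f)\circ u]_\alpha \le [D^j f]_\alpha(\sup|Du|)^\alpha$ that you make explicit. One minor remark: your interpolation constant $C(\alpha)$ is actually just $2$ with the split you describe (at $|p-q|=1$), so $P$ does end up depending only on $n$ and $k$ as claimed.
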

 \begin{proof}
     The proof is based on direct calculation. Assuming $y = u(x)$, the $k$-th derivatives of $g$ is given by the following formula:
     \begin{align} \label{calckdericomposfun}
         \begin{split}
    D_{x_{i_1}} \cdots D_{x_{i_k}}g(x) =\sum^{1 \leq t \leq k}_{\scaleto{1\leq k_1 < k_2 <\ldots < k_t = k}{5pt}} & \sum_{j_1, \ldots j_t} f_{y_{j_1}\ldots y_{j_t}}(y) 
    \\ &
    \cdot \frac{\partial^{k_1} y_{j_1}}{\partial x_{i_1} \ldots \partial x_{i_{k_1}}} (x) \cdots \frac{\partial^{k_t-k_{t-1}} y_{j_t}}{\partial x_{i_{k_{t-1}+1}}  \ldots \partial x_{i_{k_{t}}}} (x),
\end{split}
     \end{align}
     Hence, we have 
     \begin{align*}
         \sup_{N} \big| D^k_x g(x) \big| \leq ||f||_{k; N'} P_k \big(||D u||_{k-1; N}\big),
     \end{align*}
     where $P_k $ is an order $k$ polynomial with positive coefficients depending only on $k$. For $\alpha$-H\"older norm of the $k$-th derivative of $g$, 
     \begin{align*}
         \sup_{x,x' \in N} \frac{|D^k_x g(x') - D^k_x g(x)|}{|x'-x|^\alpha} \leq ||f||_{k,\alpha; N'} (1 +\sup_N |D u|^\alpha|) P_k(||D u||_{k-1,\alpha; N}).
     \end{align*}
     Therefore, we complete the proof of the lemma.
 \end{proof}

Throughout this paper, we work with the H\"older norm including weights. Consider the weighted function $w(x) = (1+ r(x)^2)^{\frac{1}{2}}$, where $r$ denotes the standard radial function in $\RR^n$. The weighted H\"older norm is defined as follows:

\begin{define}\label{defweinorm}
    Let $f$ be a function defined on an open subset $N$ in $\RR^n$. We say $f$ is of the class $\C^{k,\alpha}_{-\gamma} (N)$, if 
    \begin{align*}
        ||f||_{-\gamma; k,\alpha; N} = \sum_{|\beta|=0}^k \sup_{N}| w^{\gamma +|\beta|} D^\beta f|_{g_0} + \sup_{x \in N } |w(x)|^{\gamma+k+\alpha} [f]_{k,\alpha; N \cap B_{r(x)/10}(x)} \leq  C.
    \end{align*}
\end{define}
Since we consider solutions on the product space $X \times D$, it is necessary to introduce the H\"older norms for joint derivatives on $X \times D$, with the weight applied in the direction of $X$.

  \begin{define}\label{defweinormprosp}
  Let $N$ be an open subset of $\RR^n$, and let $D\subseteq \RR^s$ be a bounded domain. Let $F$ be a function defined on $N \times D$. We say a function $F$ is of class $\C^{k,\alpha}_{-\gamma} (N \times D)$ with the weight applied in the direction of $N$ if 
  \begin{align*}
      \sup_{x \in N} w(x)^{\gamma}||F (x, \cdot)||_{k,\alpha; D} \leq C
  \end{align*}
  and all derivatives of $F$ in $D$, $D^{\eta}_\tau F(\cdot, \tau)$ with multi-indices $|\eta|\leq k$, is uniformly of the class $\C^{k-|\eta|,\alpha}_{-\gamma}(N)$ for all $\tau \in D$; precisely,
\begin{align*}
    \sup_{\tau \in D} ||D^\eta_{\tau} F(\cdot, \tau)||_{-\gamma; k-|\eta|,\alpha; N} \leq C_\eta.
\end{align*}
If we denote $||\cdot||_{-\gamma;\ k,\alpha; \ N\times D} $ to be the $\C^{k,\alpha}$ norm with inhomogeneous weight applied in $N$, then $F$ is of the class $\C^{k,\alpha}_{-\gamma}(N \times D)$ if 
\begin{align*}
    ||F||_{-\gamma;\ k,\alpha; \ N \times D} =  \sup_{x \in N} w(x)^{\gamma}||F (x, \cdot)||_{k,\alpha; D} + \sup_{\tau \in D} ||D^\eta_{\tau} F(\cdot, \tau)||_{-\gamma; k-|\eta|,\alpha; N} < + \infty.
\end{align*}
  \end{define}

The following lemma provides an equivalent characterization of the inhomogeneous weighted H\"older norms. To verify $F \in \C^{k,\alpha}_{-\gamma} (N \times D)$, one can avoid directly checking the definition by instead showing that $F$ admits a uniform bound in the usual $\C^{k,\alpha}$ norm after being pulled back via the scaling map, $s_x: B_1 \times D \rightarrow B_{r(x)/10} (x) \times D $, at each point $x \in N$. This approach is particularly useful in applications involving scaling techniques.
\begin{lem}\label{lemscalweightedholder}
    Given a point $x \in N \subseteq \RR^n-B_{10}$, consider a scaling map $s_x: B_1 \times D \rightarrow B_{r(x)/10} (x) \times D$. Then, $F \in \C^{k,\alpha}_{-\gamma} (N \times D)$ if and only if for each $x\in N$, there is a uniform constant $C$ such that
    $$w(x)^{\gamma} ||s_x^* F||_{k,\alpha; B_1\cap (s_x^* N) \times D} \leq C. $$
    Moreover, we have the equivalence in norms:
    \begin{align*}
        \sup_{x\in N} w(x)^{\gamma} ||s_x^* F||_{k,\alpha; B_1\cap (s_x^* N) \times D} \leq ||F||_{-\gamma; k,\alpha; N \times D} \leq \kappa \sup_{x\in N} w(x)^{\gamma} ||s_x^* F||_{k,\alpha; B_1\cap (s_x^* N) \times D}.
    \end{align*}
    for some $\kappa>1$ only depending on $k,\alpha$.
\end{lem}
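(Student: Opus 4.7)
The plan is to reduce the equivalence to a direct computation of how partial derivatives transform under the anisotropic scaling $s_x(y,\tau) = (x + \lambda y, \tau)$ with $\lambda = r(x)/10$, and then to exploit the two-sided comparability $w \sim w(x) \sim \lambda$ on the ball $B_{r(x)/10}(x)$ (valid once $r(x) \geq 10$; in the compact regime $r(x) \leq 10$ the weight $w$ is bounded above and below, so both norms degenerate to the standard $\C^{k,\alpha}$ norm and the equivalence is trivial). The key identity to record at the start is
\begin{align*}
D^{\beta}_y D^{\eta}_\tau (s_x^* F)(y,\tau) = \lambda^{|\beta|}\, (D^\beta_x D^\eta_\tau F)(x+\lambda y,\tau),
\end{align*}
so that pull-back multiplies each spatial derivative by one factor of $\lambda$ while leaving $\tau$-derivatives intact.

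For the forward direction, I would fix $\eta$ with $|\eta| \leq k$, apply the control on $D^\eta_\tau F(\cdot,\tau)$ coming from the weighted piece $\|\cdot\|_{-\gamma; k-|\eta|,\alpha; N}$ of the split norm on each ball $B_{r(x)/10}(x)$, multiply by $w(x)^\gamma$, and convert the $\lambda^{|\beta|}$ factors into weights via $w(x) \sim \lambda$; the estimate on the supremum then follows because $w \sim w(x)$ on the ball. The Hölder seminorm of top-order partials is handled by the same mechanism, gaining an additional $\lambda^\alpha$ from the scaling of the distance and matching the weight $w(x)^{\gamma+|\beta|+\alpha} \leq w(x)^{\gamma+k+\alpha}$. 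The reverse direction covers $N$ by the family $\{B_{r(x)/10}(x)\}_{x \in N}$ and reads off each sup and each localized Hölder seminorm appearing in $\|F\|_{-\gamma; k,\alpha; N \times D}$ from the scaled norm $\|s_x^* F\|_{k,\alpha; B_1 \cap (s_x^* N) \times D}$ by the same identity run backwards; the universal constant $\kappa$ arises as the product of the combinatorial constants produced when converting between pulled-back and intrinsic distances.

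The main technical obstacle is the behavior of the Hölder seminorm on the product: the Euclidean metric on $B_1 \times D$ pulls back under $s_x \times \mathrm{id}_D$ to the non-isotropic metric on $B_{r(x)/10}(x) \times D$ that is stretched by $\lambda \geq 1$ only in the $N$-direction. Thus a Hölder difference at points $(y,\tau),(y',\tau')$ separated by a unit distance in $B_1 \times D$ corresponds to points in $B_{r(x)/10}(x) \times D$ whose separation depends on how the unit of separation was split between the two factors. Since $\lambda \geq 1$, the pull-back distance is bounded above by $\lambda$ times the $B_1 \times D$ distance, so all the inequalities go the right way, but one must separately track the cases $|\beta|=0$ (no spatial derivatives, pure $\tau$-Hölder, no $\lambda$ gained) and $|\beta|>0$ in order to insert the correct power of $w(x)$. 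A related book-keeping step, but a minor one, is to reconcile the ``split'' form of $\|F\|_{-\gamma; k,\alpha; N \times D}$ given in Definition~\ref{defweinormprosp} with the ``combined'' $\C^{k,\alpha}$ norm of $s_x^* F$ on $B_1 \times D$: every mixed partial $D^\beta_y D^\eta_\tau F$ of total order at most $k$ is captured by the term of the split norm corresponding to $\eta$, and conversely every term of the split norm appears among the partials of the combined norm, with only combinatorial loss.
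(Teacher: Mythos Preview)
Your proposal is correct and follows essentially the same approach as the paper. The paper's proof is much terser—after localizing via $\|F\|_{-\gamma;k,\alpha;N\times D}=\sup_x\|F\|_{-\gamma;k,\alpha;N_x\times D}$ with $N_x=N\cap B_{r(x)/10}(x)$, it simply records the two-sided inequality and reads off $\kappa=12^{k+\alpha}$ from $\sup_{x'\in B_{r(x)/10}(x)}\bigl(10\,w(x')/r(x)\bigr)^{k+\alpha}$—so your careful tracking of the anisotropic scaling in the $D$-factor and the split-versus-combined Hölder bookkeeping is more detail than the paper supplies, but the underlying mechanism is identical.
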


\begin{proof}
    Let $N_x = N \bigcap B_{r(x)/10} (x)$. Notice that $||F||_{-\gamma; k,\alpha; N \times D} = \sup_x ||F||_{-\gamma; k,\alpha; N_x \times D}$. It is easy to observe that
    \begin{align*}
         w(x)^{\gamma} ||s_x^* F||_{k,\alpha; B_1\cap (s_x^* N) \times D} 
         &\leq ||F||_{-\gamma; k,\alpha; N_x \times D}\\ 
         &\leq \sup_{x'\in B_{r(x)/10}(x)}\bigg|\frac{10w(x')}{r(x)} \bigg|^{k+\alpha} w(x)^{\gamma} ||s_x^* F||_{k,\alpha; B_1\cap (s_x^* N) \times D}.
    \end{align*}
    Let $\kappa = 12^{k+\alpha}$; hence, we complete the proof. 
\end{proof}

From Lemma \ref{lemscalweightedholder}, we can easily derive the following inhomogeneous weighted H\"older norm of a product: for $f \in \C^{k,\alpha}_{-\gamma_1} (N \times D)$ and $g \in \C^{k,\alpha}_{-\gamma_2} (N \times D)$
\begin{align} \label{estiproductweiholder}
    ||f\cdot g||_{-\gamma; k,\alpha; N \times D} \leq  ||f||_{-\gamma_1; k, \alpha; N \times D}\cdot ||g||_{-\gamma_2; k, \alpha; N \times D},
\end{align}
where the weighted orders satisfy $\gamma_1 + \gamma_2 = \gamma$. Lemma \ref{ckalphaesticomposfun} can also be generalized to the setting of the inhomogeneous weighted H\"older norms.

 \begin{lem}\label{ckalphaweiesticomposfun}
     Let $N$, $N'$ be domains in $\RR^n$ with $N\subseteq N'$.
     Assume $u\in \C^{k,\alpha} (N \times D, N')$, and that for each $\tau \in D$, $u(\cdot,\tau): N \rightarrow N'$ is a diffeomorphism onto its image. Suppose that 
     $$||u-\id ||_{-\varsigma;k,\alpha; N \times D} \leq C_0,$$
     where $\id(\cdot, \tau)$ denotes the identity map for each $\tau \in D$, and $\varsigma \geq 0$.
     If $F \in \C_{-\gamma}^{k,\alpha} (N'\times D)$,
     assuming $k \geq 1$ and $\alpha \in (0,1)$,
     then we have $G(x, \tau) = F(u(x, \tau), \tau) \in \C_{-\gamma}^{k,\alpha}(N \times D) $ and
     \begin{align} \label{estiweikalphacomposfunprosp}
         ||G||_{-\gamma; k,\alpha; N \times D} \leq C ||F||_{-\gamma; k,\alpha;N'\times D}.
     \end{align}
     where $P$ is a polynomial of order $k$ with positive coefficients depending only on $k$, and $C$ is a uniform constant depending on $n$, $k$, $\alpha$, $\gamma$ and $C_0$.
 \end{lem}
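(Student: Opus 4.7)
The plan is to reduce this weighted composition estimate to the unweighted chain-rule estimate of Lemma~\ref{ckalphaesticomposfun} via the scaling characterization of weighted H\"older norms in Lemma~\ref{lemscalweightedholder}. The underlying point is that once everything is pulled back to the fixed unit ball $B_1$, the problem becomes a uniform unweighted $\C^{k,\alpha}$ estimate, provided one knows a priori that the weight $w$ is comparable on both sides of the composition.

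First I would use the smallness of $u-\id$ in the weighted norm to control the Euclidean displacement: the hypothesis $\|u-\id\|_{-\varsigma; k,\alpha; N \times D}\leq C_0$ with $\varsigma\geq 0$ yields the pointwise bound $|u(x,\tau)-x|\leq C_0 w(x)^{-\varsigma}\leq C_0$. Consequently, when $r(x)\geq R_0:=100C_0$, the weights satisfy $\tfrac12 w(x)\leq w(u(x,\tau))\leq 2w(x)$ and the image $u\bigl(B_{r(x)/10}(x)\cap N,\tau\bigr)$ lies inside $B_{r(x)/5}(x)\cap N'$. On the complementary relatively compact region $\{r(x)\leq R_0\}\cap N$ the weighted norm is equivalent to the unweighted $\C^{k,\alpha}$ norm and the estimate follows immediately from Lemma~\ref{ckalphaesticomposfun}; it therefore suffices to treat the asymptotic region $r(x)\geq R_0$.

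Next, for such $x$ I would introduce the affine scaling $s_x:B_1\to B_{r(x)/10}(x)$, $s_x(y)=x+\tfrac{r(x)}{10}y$, and write
\begin{align*}
\tilde u(y,\tau):= s_x^{-1}\circ u\circ s_x(y,\tau) = y + \tfrac{10}{r(x)}\bigl(u(s_x(y),\tau)-s_x(y)\bigr).
\end{align*}
By Lemma~\ref{lemscalweightedholder} the remainder term has $\C^{k,\alpha}$-norm on $B_1\times D$ bounded by $Cr(x)^{-1}w(x)^{-\varsigma}\leq C$, so $\tilde u$ is uniformly close to $\id$, maps $B_1$ into $B_2$, and has $\|D\tilde u\|_{k-1,\alpha}$ bounded in terms of $C_0,\varsigma,k$ alone. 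Similarly the scaled target $s_x^{*}F(z,\tau):=F(s_x(z),\tau)$ satisfies $w(x)^{\gamma}\|s_x^{*}F\|_{k,\alpha;B_2\cap s_x^{-1}(N')\times D}\leq C\|F\|_{-\gamma;k,\alpha;N'\times D}$, using the weight comparability above to pass between centers. The scaled composition then reads $s_x^{*}G(y,\tau)=(s_x^{*}F)(\tilde u(y,\tau),\tau)$, and Lemma~\ref{ckalphaesticomposfun}, applied in the spatial variable $y$ at each fixed $\tau$, provides the key bound. The joint $(y,\tau)$-derivatives required by Definition~\ref{defweinormprosp} are handled by the obvious two-variable extension of the chain rule \eqref{calckdericomposfun}: each resulting term is a product of a mixed derivative of $s_x^{*}F$ at $(\tilde u,\tau)$ with mixed derivatives of $\tilde u$, and the product estimate \eqref{estiproductweiholder} in the scaled setting absorbs all the factors.

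Combining these estimates gives $w(x)^{\gamma}\|s_x^{*}G\|_{k,\alpha;B_1\cap s_x^{-1}(N)\times D}\leq C\|F\|_{-\gamma;k,\alpha;N'\times D}$; taking the supremum over $x\in N$ and invoking the reverse direction of Lemma~\ref{lemscalweightedholder} yields \eqref{estiweikalphacomposfunprosp}. The main obstacle is precisely the bookkeeping of the weight compatibility $w(u(x,\tau))\sim w(x)$, and this is where the hypothesis $\varsigma\geq 0$ is essential: if $u$ could displace $x$ by an amount growing with $r(x)$, the weights on the two sides of the composition would no longer match and the reduction would collapse. Everything else reduces to a careful combination of chain-rule expansions and the scaling characterization.
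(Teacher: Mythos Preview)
Your proposal is correct and follows essentially the same route as the paper: rescale via Lemma~\ref{lemscalweightedholder}, apply the unweighted chain-rule estimate of Lemma~\ref{ckalphaesticomposfun} on the unit ball, and scale back. Your write-up is in fact slightly more careful than the paper's in two respects: you explicitly verify the weight comparability $w(u(x,\tau))\sim w(x)$ and the containment $u(B_{r(x)/10}(x)\cap N,\tau)\subset B_{r(x)/5}(x)\cap N'$ needed to control $s_x^*F$ on the image of $\tilde u$, and you note that Lemma~\ref{ckalphaesticomposfun} must be extended to the two-variable composition $(y,\tau)\mapsto F(\tilde u(y,\tau),\tau)$ to capture the mixed derivatives required by Definition~\ref{defweinormprosp}. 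The paper glosses over both points.
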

 \begin{proof}
The proof of this lemma is based on the scaling technique, Lemma \ref{lemscalweightedholder}, and Lemma \ref{ckalphaesticomposfun}. Consider $x \in N\subseteq N'$ and the scaling map $s_x: B_1 \rightarrow B_{\delta r(x)} (x)$. 
Let $u^* = \big( \delta r(x)\big)^{-1} s_x^* u $, $F^* = s_x^* F$ and $G^* = s_x^* G$. The $\C^{k,\alpha}$ map $u^*: B_\frac{1}{2} \cap s_x^* N \rightarrow B_1 \cap s_x^* N' $ satisfies,
\begin{align} \label{esticomposicondiscal}
w(x)^{\varsigma} ||u^* - \id||_{k,\alpha; (B_\frac{1}{2} \cap s_x^* N) \times D} \leq C_0.
\end{align}
If we take derivatives in the coordinates of $B_1$, we have $w(x)^{\varsigma}||D^* (u^*-\id)||_{k-1,\alpha} \leq C$. By the definition of $u^*$, $F^*$ and $G^*$, it's easy to derive that $G^*(x', \tau) = F^* (u^* (x',\tau), \tau)$ for $x' \in B_\frac{1}{2} \cap s_x^* N$. Lemma \ref{ckalphaesticomposfun} indicates that
\begin{align*}
    ||G^*||_{k,\alpha; (B_{\frac{1}{2}} \cap s_x^* N ) \times D} \leq ||F^*||_{k,\alpha; (B_1 \cap s_x^* N') \times D} (1+ \sup_{(B_{\frac{1}{2}} \cap s_x^* N) \times D}{|D^* u^*|^{\alpha}}) P(||D^*u^*||_{k-1,\alpha;B_{\frac{1}{2}} \cap s_x^* N }),
\end{align*}
where $P$ is a polynomial of order $k$ with positive coefficients depending only on $k$. We derive from \eqref{esticomposicondiscal} that
\begin{align*}
    ||D^*u^* ||_{k-1,\alpha; B_{\frac{1}{2}} \cap s_x^* N}   \leq C_0 \big( 1+ w(x)^{-\varsigma} \big).
\end{align*}
Inserting the above estimate for $D^* u^*$ into the inequality for $G^*$, and  multiplying by the weight on both sides, we obtain
\begin{align*}
    w(x)^\gamma ||G^*||_{k,\alpha; (B_{\frac{1}{2}}\cap s_x^* N) \times D} \leq C ||F||_{-\gamma; k,\alpha;N'\times D},
\end{align*}
where $C$ is a uniform constant depending on $n$, $k$, $\alpha$, $\gamma$ and $C_0$.
Applying Lemma \ref{lemscalweightedholder} once again, we complete the proof of \eqref{estiweikalphacomposfunprosp}.
\end{proof}

 Throughout this paper, the regularity of functions on product spaces will be a recurring topic of discussion. The following lemma provides a useful tool for this purpose.
 The result is due to Bernstein \cite{Bernstern1958tranconstructive}, and please also refer to \cite{krantz2008ontology}.

 \begin{lem} \label{lemfamilylaplace}
     Let $U $ and $V$ be open subsets of $\RR^n$ and $\RR^m$ respectively. Let $k$ be a nonnegtive inetger and $\alpha \in (0,1)$. Consider a function $f(x, y)$ defined on $U \times V$. Suppose that for each $x \in U$, $f_x: y \rightarrow f(x, y)$ is a $\C^{k,\alpha}$ function on $y$ and for each $y \in V$, $f_x: y \rightarrow f(x, y)$ is a $\C^{k,\alpha}$ function on $x$. Additionally, we assume that there exists a uniform constant $C$ such that
     \begin{align*}
         ||f(\cdot, y)||_{k,\alpha; U \times \{y\}} \leq C, \qquad ||f(x, \cdot)||_{k,\alpha; \{x\} \times V} \leq C.
     \end{align*}
     Then, $f(x, y)$ is a $\C^{k,\alpha}$ function on $U \times V$ and 
     \begin{align*}
        ||f(x, y)||_{k,\alpha; U \times V} \leq C.
     \end{align*}
 \end{lem}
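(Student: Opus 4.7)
My plan is to prove the lemma by induction on $k$.

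\textbf{Base case} ($k=0$): Joint $\alpha$-H\"older continuity follows from the separate hypotheses by the triangle inequality along coordinate paths,
\[
|f(x',y') - f(x,y)| \leq |f(x',y') - f(x',y)| + |f(x',y) - f(x,y)| \leq C\bigl(|y'-y|^\alpha + |x'-x|^\alpha\bigr),
\]
together with the elementary inequality $a^\alpha + b^\alpha \leq 2^{1-\alpha/2}(a^2+b^2)^{\alpha/2}$ valid for $a,b \geq 0$ and $\alpha \in (0,1)$, which converts the right-hand side into the joint H\"older form.

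\textbf{Inductive step} ($k\geq 1$): Assuming the lemma for $k-1$, I would reduce matters to a crossover regularity statement for first partials. Set $g := \partial_{x_i} f$; by hypothesis $g(\cdot, y) \in \C^{k-1,\alpha}(U)$ uniformly in $y$, and the remaining task is to show $g(x,\cdot) \in \C^{k-1,\alpha}(V)$ uniformly in $x$ (with a symmetric statement for each $\partial_{y_j} f$). Granted this, the inductive hypothesis applied to each first partial delivers joint $\C^{k-1,\alpha}$ regularity of the gradient of $f$, and hence $f\in \C^{k,\alpha}(U\times V)$ with the claimed bound. To attack the crossover regularity I would work with the difference quotient $g_t(x,y) := t^{-1}[f(x+te_i,y)-f(x,y)]$: the $\C^{k,\alpha}$ hypothesis in $x$ gives $g_t \to g$ uniformly in $y$ at rate $O(t^\alpha)$, while the $y$-hypothesis yields $\|g_t(x,\cdot)\|_{\C^{k,\alpha}(V)} \leq C/t$.

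\textbf{Main obstacle.} A naive interpolation between the $O(t^\alpha)$ approximation error and the $O(t^{-1})$ growth of $\|g_t\|_{\C^{k,\alpha}(V)}$ produces only a suboptimal H\"older exponent in $y$, well short of the required $\C^{k-1,\alpha}$ regularity. To recover the sharp estimate I would mollify $f$ in the $x$-variable by a standard compactly supported mollifier $\rho_\epsilon$ with vanishing moments against polynomials of degree less than $|\gamma|$, and exploit the identity
\[
\partial_x^\gamma \partial_y^\beta f_\epsilon(x,y) = \int \partial_y^\beta f(x-z,y)\,\partial_z^\gamma \rho_\epsilon(z)\,dz,
\]
subtracting an appropriate Taylor polynomial of $\partial_y^\beta f(\cdot,y)$ in the $x$-direction so that the $O(\epsilon^{-|\gamma|})$ growth of $\|\partial_z^\gamma \rho_\epsilon\|_{L^1}$ is cancelled by an $O(|z|^{|\gamma|})$ remainder. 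A bootstrap argument successively upgrades the $x$-regularity of each $\partial_y^\beta f$ until the sharp $\C^{k-|\beta|,\alpha}$ bound is reached; an Arzel\`a--Ascoli passage $\epsilon \to 0$ then transfers the uniform $\C^{k,\alpha}$ bound on $f_\epsilon$ to $f$ itself. This delicate interplay between the two directions with matched H\"older scales is the technical heart of Bernstein's argument.
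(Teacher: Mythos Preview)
The paper does not supply a proof of this lemma; it is stated as a known result attributed to Bernstein, with references to \cite{Bernstern1958tranconstructive} and \cite{krantz2008ontology}, and the text moves on immediately. So there is no in-paper argument against which to compare your proposal.

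On the merits: your base case $k=0$ is correct and elementary. For $k\geq 1$ you correctly isolate the crux --- that $\partial_{x_i}f$ must acquire $\C^{k-1,\alpha}$ regularity in $y$ from hypotheses that say nothing directly about mixed derivatives --- and you are right that naive difference-quotient interpolation only yields exponent $\alpha/(1+\alpha)$ in $y$, not $\alpha$. The mollification-with-vanishing-moments scheme you outline is precisely the approximation-theoretic mechanism behind Bernstein's theorem: one characterizes $\C^{k,\alpha}$ by the rate at which a smooth approximate identity converges (equivalently, by bounds on high derivatives of the mollified function), and then the product structure $\rho_\epsilon^x\otimes\rho_\epsilon^y$ lets the separate hypotheses combine into a joint estimate at the sharp scale. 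One caution: when you write ``subtracting an appropriate Taylor polynomial of $\partial_y^\beta f(\cdot,y)$ in the $x$-direction,'' note that a priori $\partial_y^\beta f$ has no known $x$-regularity, so the Taylor polynomial in $x$ is not yet available; the bootstrap has to start from the $x$-regularity of $f$ itself (which \emph{is} given) and work upward through the $y$-derivatives, or else be phrased entirely via the mollified function $f_\epsilon$ where all derivatives make sense. With that adjustment your plan matches the standard proof in the references the paper cites.
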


We discuss H\"older estimates related to the Dirichlet problem of the Laplacian equation,
\begin{align} \label{equationDirichlet}
\begin{split}
    \Delta u = 0, \qquad &\text{ in } D,\\
    u = f, \qquad & \text{ in } \partial D.
\end{split}
\end{align}
It is known that $u$ is an analytic function in $D$ and continuous on the boundary if we assume $\psi$ is continuous on $\partial D$. Given a better regularity of the boundary function, $\psi$, the corresponding H\"older norm of $u$ can be controlled by the H\"older norm of $\psi$. The family of Dirichlet problems with parameter $x \in N$ is given the following system:
 \begin{align}\label{equationDiripara}
 \begin{split}
       &\  \Delta_\tau \tilde{u}(x, \tau) = 0, \hspace{1cm}(x, \tau)\in N \times D;\\
        &\tilde{u}(x,\tau) = \tilde{f}(x,\tau), \hspace{0.65cm} (x, \tau) \in N \times \partial D,
\end{split}
    \end{align}
    where $N$ is a domain in $\RR^n$. Now, if we assume the boundary function $\tilde{\psi}$ is bounded in the inhomogeneous weighted H\"older norm, then we will show that the solution to the above equation is also bounded in the inhomogeneous weighted H\"older norm of the same weight and regularity on the total space. In the following lemma, we summarize the estimates for the solutions to both \eqref{equationDirichlet} and \eqref{equationDiripara}.

\begin{lem} \label{lemplaestiwrtbdry} Let $u$ be the solution to the Dirichlet problem (\ref{equationDirichlet}). Assume that $k \geq 0$ and $0< \alpha < 1 $.
    If $\psi \in \C^{k,\alpha}(\partial D)$, then $ u \in \C^{\infty} (D) \bigcap \C^{k,\alpha} (\overline{D})$ with
    \begin{align} \label{estilakalpha}
        ||u||_{k,\alpha; \overline{D}} \leq C ||f||_{k,\alpha; \partial D},
    \end{align}
    where $C$ is the uniform constant depending on $k$, $\alpha$. Furthermore, consider a family of Dirichlet problems \eqref{equationDiripara}. If $\tilde{\psi} \in \C^{k,\alpha}_{-\gamma}(N \times \partial D)$ with $k \geq 0$ and $0< \alpha< 1$, then $\tilde{u}  \in \C^{k,\alpha}_{-\gamma} (N \times D)$ satisfying 
    \begin{align}\label{estilaweikalpha}
        ||\tilde{u}||_{-\gamma; k,\alpha; N \times D} \leq C ||\tilde{\psi}||_{-\gamma; k,\alpha; N \times \partial D},
    \end{align}
    where $C$ is the uniform constant depending on $k$, $\alpha$.
\end{lem}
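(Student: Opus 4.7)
The plan is to prove the two estimates in turn, with the second following from the first combined with the scaling characterization of the weighted norms and Bernstein's lemma (Lemma~\ref{lemfamilylaplace}).

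For the first estimate \eqref{estilakalpha}, I would split into cases. When $k=0$, the Poisson integral representation $u(\tau) = \int_{\partial D} P(\tau,\sigma)\psi(\sigma)\,d\sigma$ and the standard $C^{0,\alpha}$ regularity theory for the Poisson kernel on a sufficiently regular bounded domain $D$ give $\|u\|_{0,\alpha;\overline D}\leq C\|\psi\|_{0,\alpha;\partial D}$ (Hardy--Littlewood type estimate). For $k\geq 1$, I would extend the boundary data to a function $\bar\psi\in C^{k,\alpha}(\overline D)$ with $\|\bar\psi\|_{k,\alpha;\overline D}\leq C\|\psi\|_{k,\alpha;\partial D}$ using a standard boundary extension operator, set $v=u-\bar\psi$, which solves $\Delta v = -\Delta\bar\psi$ in $D$ with $v=0$ on $\partial D$, and invoke global boundary Schauder estimates (Gilbarg--Trudinger) to conclude $\|v\|_{k,\alpha;\overline D}\leq C\|\bar\psi\|_{k,\alpha;\overline D}$, from which \eqref{estilakalpha} follows.

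For the parametrized estimate \eqref{estilaweikalpha}, the key observation is that differentiation in the parameter $x$ commutes with $\Delta_\tau$. For any multi-index $\beta$ in $x$ with $|\beta|\leq k$, the function $D^\beta_x \tilde u$ is harmonic in $\tau$ with boundary data $D^\beta_x \tilde\psi$, so \eqref{estilakalpha} applied fiberwise gives
\begin{align*}
\|D^\beta_x \tilde u(x,\cdot)\|_{k-|\beta|,\alpha;\overline D}\;\leq\;C\,\|D^\beta_x \tilde\psi(x,\cdot)\|_{k-|\beta|,\alpha;\partial D}.
\end{align*}
Multiplying by the weight $w(x)^{\gamma+|\beta|}$ and taking the supremum over $x\in N$ controls the $\tau$-direction pieces of $\|\tilde u\|_{-\gamma;k,\alpha;N\times D}$ by $\|\tilde\psi\|_{-\gamma;k,\alpha;N\times\partial D}$. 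To obtain H\"older control in the $x$-direction, I would use Lemma~\ref{lemscalweightedholder}: for each $x\in N$ pull back by the scaling $s_x:B_1\times D\to B_{r(x)/10}(x)\times D$. Because $s_x$ acts only in the $x$-variable, the rescaled function $s_x^*\tilde u$ still satisfies $\Delta_\tau(s_x^*\tilde u)=0$ with boundary data $s_x^*\tilde\psi$, whose unweighted $C^{k,\alpha}$ norm on $B_1\times\partial D$ is bounded by $w(x)^{-\gamma}\|\tilde\psi\|_{-\gamma;k,\alpha;N\times\partial D}$. Applying \eqref{estilakalpha} fiberwise in the rescaled variables and then Bernstein's Lemma~\ref{lemfamilylaplace} to combine the separate $C^{k,\alpha}$ regularity in $x'\in B_1$ and in $\tau\in D$ produces a joint $C^{k,\alpha}$ bound on $s_x^*\tilde u$; converting back via Lemma~\ref{lemscalweightedholder} gives \eqref{estilaweikalpha}.

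The main obstacle is the bookkeeping at the interface of the weighted and unweighted norms, in particular keeping the scaling homogeneity correct so that derivatives in $x$ pick up the expected power of $w(x)$ while derivatives in $\tau$ do not. The scaling only in the $x$-factor is harmless for the Laplacian $\Delta_\tau$, which is what makes the commutativity argument and the pull-back reduction work simultaneously; once this is set up properly, Bernstein's lemma supplies the upgrade from separate-variable to joint H\"older regularity, and the argument closes.
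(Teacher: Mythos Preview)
Your proposal is correct and follows essentially the same route as the paper: reduce \eqref{estilakalpha} to known boundary regularity (the paper carries out the $k=0$ Poisson-kernel estimate by hand and singles out $k=1$ via Gilbarg--Trudinger Theorem~8.43, whereas you cite these as standard, but the substance is identical), and then obtain \eqref{estilaweikalpha} by pulling back under the scaling map $s_x$, applying the unweighted estimate fiberwise after differentiating in the parameter, and converting back via Lemma~\ref{lemscalweightedholder}. Your explicit invocation of Bernstein's Lemma~\ref{lemfamilylaplace} to pass from separate-variable to joint $\C^{k,\alpha}$ regularity is in fact slightly more careful than the paper's one-line claim at that step.
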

\begin{proof}
    We first prove the H\"older regularity of the Dirichlet problem. The case $k\geq 2$ and $0<\alpha<1$ follows directly from the classic boundary Schauder estimates and the maximal principle. For the case $k=1$ and $0< \alpha < 1$, see \cite[Theorem 8.43]{gilbarg2001elliptic}. When $k=0$ and $0<\alpha<1$, the H\"older estimates can be proved directly using the Poisson integral,
    \begin{align*}
        u (r, \eta) = \int_{\partial D} P(r, \theta) f(\theta) d\theta. \hspace{0.5cm} P(r,\eta-\theta) =  \frac{1}{2\pi} \frac{1-r^2}{1-2r \cos \theta + r^2} 
    \end{align*}

We first consider the $\alpha$-H\"older norm near the boundary, namely, 
\begin{align*}
    \frac{|u(x)- u(x')|}{|x-x'|^\alpha}, \hspace{0.8cm} \text{ for } x \in \partial D, \ x \in D \text{ and } |x-x'|< \frac{1}{2}.
\end{align*}
The H\"older norm in the angular direction is from the following calculation:
\begin{align*}
    u(r, \eta ) - u(r, \eta_0) =\int_{\partial D} P(r, \eta - \theta) \big( f(\theta -\delta \eta) - f(\theta)\big)( d\theta,
\end{align*}
where $\delta \eta = \eta - \eta_0$. Then, we have
\begin{align*}
    |u(r, \eta ) - u(r, \eta_0)| \leq ||f||_{0,\alpha; \partial D} |\delta\eta|^\alpha.
\end{align*}
Let $x'= (r, \eta_0)$ and $y = (r, \eta)$, where $1/2< r <1$. Then, we have
\begin{align} \label{estiholdernearbdry1}
    \frac{|u(x')- u(y)|}{|x'-y|^\alpha} \leq \pi^{\alpha} ||f||_{0,\alpha; \partial D}
\end{align}
The key point is to estimate the H\"older norm in the radial direction. Let $x= (1, \eta)$ and $y= (r, \eta)$. Then, we observe that
\begin{align*}
    u(r,\eta)-u(1,\eta) = \bigg\{ \int_{0<|\theta|< 1-r} +   \int_{1-r<|\theta|<\pi} \bigg\}
 P(r, \theta ) \big(f(\eta-\theta) - f(\eta)\big) d\theta 
    \end{align*} 
The following two inequalities for the Poisson kernel will be applied to the above integrals separately:
$$|P(r, \theta)| \leq \frac{2}{1-r}$$
$$|P(r, \theta)| \leq \frac{1-r}{\theta^2}, \hspace{0.8cm} \text{ for } \frac{1}{2} < r< 1. $$
Then, we have
\begin{align}
    |u(x)-u(y)| &\leq 2\int_{0}^{1-r}\frac{2||f||_{0,\alpha;\partial D }}{1-r}  \theta^\alpha d\theta 
    + 
    2\int_{1-r}^\pi \frac{2(1-r) ||f||_{0,\alpha; \partial D}}{\theta^{2-\alpha}} d\theta \nonumber \\
    & \leq \frac{4}{1-\alpha} (1-r)^\alpha ||f||_{0,\alpha; \partial D}. \label{estiHoldernearbdry2}
\end{align}
Combining \eqref{estiholdernearbdry1} and \eqref{estiHoldernearbdry2}, we obtain
\begin{align*}
      \frac{|u(x)- u(x')|}{|x-x'|^\alpha} \leq C||f||_{0,\alpha; \partial D}, \hspace{0.8cm} \text{ for } x \in \partial D, \ x \in D \text{ and } |x-x'|< \frac{1}{2}.
\end{align*}

Now, for $x,x' \in D$ with $|x-x'| < \frac{1}{2}$, let $t = x-x' $. The function $u(x) - u(x-t)$ is a harmonic function in $x$ defined in $D \cap D_t$, where $D_t$ is the unit disc centered at $t$. Then, by the maximal principle, we have
\begin{align*}
    \sup_{D\cap D_t} |u(x)- u(x-t)|\leq \sup_{\partial (D \cap D_t) } |u(x)- u(x-t)| \leq C |t|^{\alpha} ||f||_{0,\alpha; \partial D}.
\end{align*}
In conclusion, we proved \eqref{estilakalpha}. 

To show \eqref{estilaweikalpha}, we apply the scaling technique to the parameter space $N$. Let $s_x: B_1 \rightarrow B_{\delta r(x)} (x)$ be the scaling map. By Lemma \ref{lemscalweightedholder}, we have 
\begin{align*}
    \kappa^{-1}||f||_{-\gamma; k,\alpha; N \times \partial D} \leq 
    \sup_{x\in N } r(x)^{\gamma}||s_x \tilde{f}||_{k,\alpha; B_1 \cap (s_x^* N \times \partial D)}\leq ||f||_{-\gamma; k,\alpha; N \times \partial D}.
\end{align*}
The family of Dirichlet problems after scaling is given by
\begin{align*}
    &  \Delta_\tau \tilde{v} = 0, \hspace{1cm}\text{ in }  B_1 \cap (s_x^* N \times  D);\\
        &\tilde{v} = s^*\tilde{f}, \hspace{1.15cm} \text{ in } B_1 \cap (s_x^* N \times \partial  D).
\end{align*}
Note the $\tilde{u} (x', \tau) = \tilde{v} ((\delta r(x))^{-1}x', \tau)$ gives the solution to the original Dirichlet problem \eqref{equationDiripara}.
By taking derivatives in the parameter direction after scaling and applying the H\"older estimates for the Dirichlet problem \eqref{estilakalpha}, we have
\begin{align*}
    ||\tilde{v}||_{k,\alpha;B_1 \cap (s_x^* N \times  D) } \leq C ||s^*_x\tilde{f}||_{k,\alpha; B_1 \cap (s_x^* N \times \partial D)}.
\end{align*}
Noting that $\tilde{v} = s^* \tilde{u}$, we obtain
\begin{align*}
    ||\tilde{u}||_{-\gamma; k,\alpha; N\times D} \leq \kappa C ||\tilde{f}||_{-\gamma; k,\alpha; N \times \partial D},
\end{align*}
which completes the proof.
\end{proof}

\subsection{The asymptotic charts of ALE K\"ahler manifolds} \label{secasympchartALEK}
Many works have been dedicated to finding the optimal asymptotic charts of ALE K\"ahler manifolds.
In this subsection, we summarize the remarkable results regarding the optimal asymptotic charts of ALE K\"ahler manifolds. The main reference of this subsection is \cite{hein2016mass}.

Let $(X, J, g)$ be an ALE K\"ahler manifold satisfying the fall-off conditions \eqref{decayaleintro} and let $X_\infty$ be the end of $X$.  The diffeomorphism $I: X_\infty \rightarrow (\CC^n- B_R) /\Gamma $ gives a real asymptotic coodrinate system $\{x^1, \cdots, x^{2n}\}$ on the universal covering $\widetilde{X}_\infty$ of $X$. The asymptotic coordinates admits a standard Euclidean complex structure $J_0$ by setting $z^j = x^j + i x^{j+n}$,, $j = 1, \ldots, n$. In general, the complex structure $J$ decays to $J_0$ with the same rate as the metrics decay,
\begin{align*}
    |J-J_0| = O(r^{-\tau }), \qquad |\nabla^k_0 J| = O(r^{-\tau -k}).
\end{align*}
However, the decay rate of $J$ can be improved by choosing ``better'' asymptotic coordinates as follows: 

\begin{pro}\label{propcxasymcoordinates}
    Let $(X, g, J)$ be an ALE K\"ahler manifolds satisfying the fall-off conditions \eqref{decayaleintro}. When the complex dimension $n \geq 3$, there are asymptotic complex coordinates $(z^1, \ldots, z^n)$ on the universal covering $\widetilde{X}_\infty$ of the end $X_\infty$, in which the complex structure $J$ becomes the standard one $J_0$, and in which the metric $g$ decays to $g_0$
    \begin{align*}
        |g-g_0| = O(|z|^{-\tau}), \qquad |\nabla^k_0 g| = O(|z|^{-\tau -k}).
    \end{align*}
    When the complex dimension $n =2$, there are asymptotic real coordinates $(x^1, x^2, x^3, x^4)$ on the universal covering $\widetilde{X}_\infty$ in which the metric and the complex structure decay as follows:
    \begin{align*}
        |J-J_0| = O(|x|^{-3}), \qquad |\nabla^k_0 J| = O(|x|^{-3-k})
    \end{align*}
    and 
    \begin{align*}
        |g-g_0| = O(|x|^{-\tau}), \qquad |\nabla^k_0 g| = O(|x|^{-\tau-k}).
    \end{align*}
\end{pro}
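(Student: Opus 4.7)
The plan is to upgrade the given asymptotic real chart to an asymptotic complex chart by constructing $n$ independent $J$-holomorphic coordinate functions on the end with controlled decay. In the initial real coordinates $(x^1,\ldots,x^{2n})$ the tentative combinations $w^j = x^j + i x^{j+n}$ satisfy $\bar{\partial}_J w^j = O(r^{-\tau})$, since $J - J_0 = O(r^{-\tau})$. First I would seek corrections $z^j = w^j + f^j$ solving $\bar{\partial}_J f^j = -\bar{\partial}_J w^j$ with $f^j \to 0$ at infinity; because $J$ is integrable (as $X$ is K\"ahler), a Newlander--Nirenberg argument then guarantees that the $z^j$ define a genuine complex chart on the end in which $J$ becomes the standard $J_0$. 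The metric decay in the new chart is inherited from the old one because the coordinate change is close to the identity at infinity and its derivatives decay at least as fast as $J - J_0$.

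The analytic core is the weighted solvability of the $\bar{\partial}$-equation on the Euclidean end $\CC^n \setminus B_R$. By the K\"ahler identity $\Delta_{\bar{\partial}} = \tfrac{1}{2}\Delta_d$ one reduces, after peeling off the lower-order terms coming from $J - J_0$, to inverting the flat Laplacian in the weighted H\"older spaces of Section~\ref{secwholdernorm}. A classical indicial analysis shows that $\Delta_0 : \C^{k+2,\alpha}_{-\delta}(\CC^n \setminus B_R)\to \C^{k,\alpha}_{-\delta-2}(\CC^n \setminus B_R)$ is Fredholm for $\delta$ away from an explicit discrete set of indicial roots, with cokernels identified via harmonic spherical-harmonic modes. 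This is coupled to a contraction-mapping iteration in which the lower-order $J$-dependent errors are absorbed, producing the corrected $f^j$ at the desired rate provided that the weight of the datum avoids the obstruction spectrum.

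The genuinely hard point, and the source of the dimension split in the conclusion, is the precise location of that obstruction spectrum. For $n \geq 3$ the relevant indicial root lies strictly below the decay rate of $\bar{\partial}_J w^j$, so no harmonic obstruction appears; one solves cleanly in the weighted class determined by $r^{-\tau}$, and an iteration upgrades the coordinates so that $J = J_0$ exactly in the new chart. For $n = 2$ the decisive weight $r^{-2}$ coincides with a harmonic mode on the Euclidean end that is tied to a topological invariant of the end (essentially a $b_2$/first Chern contribution) and therefore cannot be removed by any coordinate change. The plan in that case is to extract this harmonic leading term of $J - J_0$, absorb it into a canonical model correction, and then solve the residual $\bar{\partial}$-equation in the next admissible weight window; this produces a coordinate change after which $J - J_0 = O(r^{-3})$, and the stated metric decay follows tensorially because the underlying diffeomorphism differs from the identity by terms whose derivatives vanish strictly faster than $r^{-\tau}$.
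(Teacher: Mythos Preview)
The paper does not give its own argument here: the entire proof reads ``See \cite[Lemma 1.3, Proposition 3.5]{hein2016mass}.'' So there is nothing in the paper to compare against directly; your outline already goes well beyond what the paper records.

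That said, your sketch does not match the cited Hein--LeBrun argument in its mechanism, and the $n=2$ story as you tell it is not quite right. Hein--LeBrun do not obtain the $|J-J_0|=O(r^{-3})$ bound by running a weighted $\bar\partial$-scheme and hitting a cokernel at weight $r^{-2}$; rather, they exploit the K\"ahler condition $\nabla J=0$ together with the curvature decay to get a differential relation for $J$ that can be integrated from infinity, and a gauge-fixing step (their Lemma~1.3) to arrange the metric decay. The obstruction to making $J=J_0$ exactly in real dimension four is not a harmonic mode tied to ``$b_2$/first Chern'' of the end in the way you describe; the sharpness of $O(r^{-3})$ is witnessed by Honda's explicit examples of genuinely non-standard asymptotically Euclidean complex structures on $\RR^4\setminus B$, as the paper notes immediately after the proposition. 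Your $\bar\partial$ outline, taken literally, would suggest one \emph{can} solve for holomorphic coordinates in $n=2$ as well (the weight $1-\tau$ lies in the Fredholm gap $(-2,0)$ of the scalar Laplacian on $\RR^4$), which is exactly what Honda's examples rule out; the missing ingredient is that constructing $n$ global $J$-holomorphic functions that are close to linear is a stronger statement than scalar weighted solvability of $\Delta$.

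For $n\ge3$ your plan is closer to what actually happens---indeed the paper itself carries out a local version of precisely this $\bar\partial$ construction in Lemma~\ref{lemholocornearinfty}---but the global biholomorphism with $\CC^n\setminus B_R$ in Hein--LeBrun again uses more than a naive indicial-root count.
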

\begin{proof}
    See \cite[Lemma 1.3, Proposition 3.5]{hein2016mass}.
\end{proof}

According to Proposition \ref{propcxasymcoordinates}, when the complex dimension $n\geq 3$, there is a biholomorphism between the universal covering of the end $\widetilde{X}_\infty$ and $\CC^n-B_R$. However, in the case of complex dimension $n=2$, we cannot find a biholomorphism in general. The decay rate that $J$ approaches $J_0$ is optimal due to the examples of Honda \cite{honda2014scalar}. 

For the analysis that follows, particularly in the constructive arguments, we will frequently work in holomorphic coordinates rather than general diffeomorphisms. To overcome the absence of asymptotic complex coordinates in complex surfaces, we will show that there exists a ``good" covering $\mathcal{U} = \{U_i| \ i \in \mathcal{I} \}$ of the end $X_\infty$. For each point in the end $ x\in X_\infty $, there is a biholomorphism $I_x: U_x \rightarrow B_{\tilde{r}_x} \subseteq \CC^n$. In the following lemma, we prove that the radius $R_x$ can be a sufficiently large constant with $R_x = \theta_0 r(x)$, $0<\theta_0<1$, where $\theta_0$ is a uniform constant independent of $x$.

According to Proposition \ref{propcxasymcoordinates}, there are asymptotic complex coordinates $(z_1, z_2, \ldots, z_n)$, on the universal covering $\widetilde{X}_\infty$ of the end $X_\infty$ such that the complex structure $J$ decays to the standard complex structure $J_0$ as the following:
\begin{align}\label{inproofdecaycxstruc}
    |J-J_0| \leq A r^{-\gamma},  \qquad |\nabla^k_0 J | \leq A r^{-\gamma-k}
\end{align}
and 
    \begin{align}\label{inproofdecaymetric}
        |g-g_0| \leq A r^{-\tau}, \qquad |\nabla^k_0 g| \leq A r^{-\tau-k},
    \end{align}
where $\gamma\geq n+1$ and $\gamma \geq \tau$. 

\begin{lem} \label{lemholocornearinfty}
    Let $(X,J,g)$ be an ALE K\"ahler manifold and let $X_\infty$ be the end of $X$ with complex asymptotic coordinates $(z_1,\ldots, z_n)$. If $l$ is a large constant and $X_l \subseteq X_\infty$, then for any $x \in X_l$, there is an open neighborhood $U$ of $x$ and a biholomorphism $I_x: (U, J) \rightarrow B_{R_x} \subseteq \CC^n$, where $R_x = \kappa r(x)$ with a constant $0<\kappa<1$ independent of $x$. 
    Furthermore, $I_x$ can be written in the standard coordinate system of $B_R \subseteq \CC^n$, $I_x = (\tilde{z}^1, \ldots, \tilde{z}^n)$, and let $(z^1, \ldots, z^n)$ be the complex asymptotic coordinates by shifting them such that the origin corresponds to $x$, then we have
    \begin{align}\label{weightesticoordinatesholocball}
 \qquad |\nabla_0^k (\tilde{z}^i- z^i)| = O(r(x)^{1-\gamma - k}), \qquad \text{for } k\geq 0.
    \end{align}
    where $-\gamma$ is the decay rate of $J-J_0$ given in (\ref{inproofdecaycxstruc}).
\end{lem}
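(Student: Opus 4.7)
The plan is to construct the biholomorphism $I_x$ via a quantitative Newlander-Nirenberg argument carried out after rescaling. Shift the asymptotic complex coordinates so that the origin corresponds to $x$ and set $\zeta = z/r(x)$; the Euclidean ball $B_{\kappa r(x)}(x)$ then becomes the fixed-scale ball $B_\kappa \subseteq \CC^n$, where $\kappa \in (0,1)$ is a small constant to be chosen. Because rescaling contributes a factor of $r(x)^k$ at the $k$-th derivative, and $r \sim r(x)$ throughout $B_{\kappa r(x)}(x)$, the pulled-back complex structure $\hat J$ on $B_\kappa$ satisfies
\begin{align*}
    |\hat\nabla_0^k(\hat J - J_0)| \leq C_k\, r(x)^{-\gamma}, \qquad k = 0, 1, 2, \ldots,
\end{align*}
and is integrable since $J$ is. For $r(x)$ large, $\hat J$ is a small perturbation of $J_0$.

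Next, solve on $B_\kappa$ the system $\bar\partial_{\hat J}\hat{\tilde z}^i = 0$ for $i = 1, \ldots, n$, with $\hat{\tilde z}^i$ close to the background $\zeta^i$. Writing $\hat{\tilde z}^i = \zeta^i + u^i$ and expressing $\bar\partial_{\hat J}$ in terms of $\bar\partial_{J_0}$ via the deformation tensor $\phi: T^{0,1}_{J_0} \to T^{1,0}_{J_0}$ with $|\phi| \sim |\hat J - J_0|$, this becomes the perturbed Cauchy-Riemann equation
\begin{align*}
    \bar\partial_{J_0} u^i = -\partial_{J_0}(\zeta^i + u^i) \circ \phi \qquad \text{on } B_\kappa.
\end{align*}
The integrability of $\hat J$ corresponds to $\bar\partial_{J_0}\phi + \tfrac12[\phi, \phi] = 0$, which supplies the compatibility needed for the right-hand side to be $\bar\partial_{J_0}$-closed at each step of a Picard iteration. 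Fixing a bounded right inverse for $\bar\partial_{J_0}$ on $(0,1)$-forms over $B_\kappa$ (for instance from H\"ormander's $L^2$ theory on this strictly pseudoconvex domain, followed by interior elliptic regularity), the iteration converges in $C^{k,\alpha}(B_{\kappa/2})$ and produces a unique small solution with
\begin{align*}
    \|u^i\|_{C^{k,\alpha}(B_{\kappa/2})} \leq C\, r(x)^{-\gamma},
\end{align*}
provided $r(x) > l$ is sufficiently large.

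The $\hat{\tilde z}^i$ are then $\hat J$-holomorphic on $B_{\kappa/2}$, and their Jacobian with respect to $\zeta$ is $I + O(r(x)^{-\gamma})$, hence invertible. Setting $\tilde z^i(z) = r(x)\,\hat{\tilde z}^i(z/r(x))$ and $U = \tilde z^{-1}(B_{R_x})$ for a slightly smaller $R_x = \kappa' r(x)$ yields a biholomorphism $I_x = (\tilde z^1, \ldots, \tilde z^n): (U, J) \to B_{R_x} \subseteq \CC^n$. Undoing the rescaling gives
\begin{align*}
    |\nabla_0^k(\tilde z^i - z^i)|(z) = r(x)^{1-k}\, |\hat\nabla_0^k u^i(z/r(x))| \leq C\, r(x)^{1-\gamma-k},
\end{align*}
which is exactly the weighted estimate (\ref{weightesticoordinatesholocball}). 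The principal technical ingredient is the uniform right-inverse for $\bar\partial_{J_0}$ on $B_\kappa$ with operator norm independent of $x$; once this is in place the remainder is a routine small-parameter contraction. The same proof applies in both cases of Proposition \ref{propcxasymcoordinates}, since only the polynomial decay of $\hat J - J_0$ is used: for $n \geq 3$ one has $\gamma \geq n+1$, and for $n = 2$ one has $\gamma = 3$.
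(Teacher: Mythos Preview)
Your overall strategy---rescale to a fixed ball, correct the asymptotic coordinate functions to genuine $J$-holomorphic ones, rescale back---is the same as the paper's, and your final scaling computation for (\ref{weightesticoordinatesholocball}) is exactly right. The difference, and the place where your argument has a gap, is in how the correction $u^i$ is produced.

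You propose a Picard iteration for $\bar\partial_{J_0} u^i = -\partial_{J_0}(\zeta^i+u^i)\circ\phi$ and assert that integrability of $\hat J$ makes the right-hand side $\bar\partial_{J_0}$-closed at each step. This is not correct: already at the zeroth step the $(0,1)$-form $\phi^i_{\bar j}\,d\bar\zeta^j$ has $\bar\partial_{J_0}$-derivative equal to the quadratic expression in $\phi$ coming from the integrability relation $\bar\partial_{J_0}\phi = -\tfrac12[\phi,\phi]$, which is small but nonzero. H\"ormander's $L^2$ theory requires the form to be $\bar\partial$-closed, so as written the iteration does not run. It can be repaired (e.g.\ via a homotopy operator for $\bar\partial_{J_0}$, or a Nijenhuis--Woolf scheme), but this is more delicate than you suggest.

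The paper sidesteps the issue by solving a single \emph{linear} equation for the actual operator $\bar\partial_{\hat J}$ rather than iterating with $\bar\partial_{J_0}$. After rescaling to $B_1$ one sets $f_i=\bar\partial_{\hat J}\zeta^i$, which is automatically $\bar\partial_{\hat J}$-closed since $\bar\partial_{\hat J}^2=0$, and has $|\nabla_0^k f_i|=O(r(x)^{1-\gamma})$. The function $|\zeta|^2$ remains strictly $\hat J$-plurisubharmonic on $B_1$ because $\hat J$ is close to $J_0$, so H\"ormander's $L^2$ estimate applies directly with respect to $\hat J$ and produces $v_i$ with $\bar\partial_{\hat J}v_i=f_i$; then $\tilde\zeta^i=\zeta^i-v_i$ is $\hat J$-holomorphic in one step. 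For the H\"older bounds the paper observes that $\bar\partial_{\hat J}v_i=f_i$ implies $\Delta_0 v_i + \mathbf{b}\cdot\nabla_0 v_i=\operatorname{tr}_{g_0}\partial f_i$ with $|\mathbf{b}|\lesssim|\nabla_0\hat J|$ small, and combines Sobolev $L^\infty$ control with interior Schauder estimates on nested balls to get $\|v_i\|_{C^{k,\alpha}(B_{1/4})}=O(r(x)^{1-\gamma})$. Undoing the rescaling yields (\ref{weightesticoordinatesholocball}), and since the Jacobian is $I+O(r(x)^{-\gamma})$ the image contains a ball of definite fraction of $r(x)$ (the paper takes $\kappa=1/9$). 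This one-shot approach is both simpler than an iteration and avoids the compatibility issue entirely.
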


\begin{proof}
Given $X_l \subseteq X_\infty$, we need to prove that, for each point $x \in X_l$, there exist $n$ holomorphic coordinate functions $(\tilde{z}_1, \ldots, \tilde{z}_n)$ in a large ball centered at $x$ such that $d\tilde{z}_1, d\tilde{z}_2, \ldots, d\tilde{z}_n$ are linearly independent. Now, fixing $x \in X_l$, let $B \subseteq B'$ in $X_\infty$ be two balls centered at $x$ with radius $R$, $R' \gg 1$ respectively in terms of Euclidean distance, where the large constants $c, R, R'$ will be determined later. In the following, we will prove the existence of holomorphic coordinates in $B'$. 

Let $(\partial, \overline{\partial})$ and $(\partial_0, \overline{\partial}_0)$ be the complex differential operators of $J$ and $J_0$ respectively. Starting from $dz_1, \ldots, dz_n$, we can derive a $\C^\infty$ basis $w^1, \ldots,w^n$ of $(1,0)$ form with respect to $(J,g)$ by projecting:
\begin{align*}
    w^j = \frac{1}{2} (d z^j + i J dz^j) =   dz^j + \frac{i}{2} (J-J_0) d z^j
\end{align*}
The decay of complex structure \eqref{inproofdecaycxstruc} implies that 
$ w^i = d z^i + O(r^{-\gamma})$. In the dual case,
the corresponding $(1,0)$ vector fields $\frac{\partial}{\partial w^1}, \ldots,  \frac{\partial}{\partial w^n}$ satisfy $ \frac{\partial}{\partial w^j}= \frac{\partial}{\partial z^j} + O(r^{-\gamma})$ for all $i =1, \ldots n$.
Now , we define a family of $(0,1)$-forms with respect to $J$ by 
\begin{align} \label{proofdericoordinates}
     f_i = \overline{\partial} z_i = \sum_j \Big(\frac{\partial}{\partial \bar{w}^j} z^i \Big) \bar{w}^j = O(r^{-\gamma}) , \qquad i=1,\ldots, n.
\end{align} 
To find the holomorphic coordinate functions $(\tilde{z}_1, \tilde{z}_2, \ldots, \tilde{z}_n)$ in $B'$, we solve the $\overline{\partial}$ problem $\overline{\partial} v_i = f_i$. If there is a solution $v_i$ to the $\overline{\partial}$-problem such that $v_i$ is as small as $f_i$, then $\{\tilde{z}_i =z_i- v_i; i = 1,\ldots,n \}$ is the class of holomorphic functions with respect to $J$ in $B'$. It suffices to show the existence of $v_i$ and the weighted estimates \eqref{weightesticoordinatesholocball}. 

To prove the weighted estimates for $v_i$, we apply the scaling technique. 
Fix $x \in X$ with $r(x) \geq 3 R_0$ and a large ball $B' =B_{\tilde{r}_x}$ centered at $x$ and radius $\tilde{r}_x = r(x)/2$ in terms of the Euclidean distance. Introducing the scaling map, $s_{\tilde{r}_x}: B_1 \rightarrow B'$, we will rewrite the $\overline{\partial}$ equation $\overline{\partial} v_i = f_i$ in $B_1$. Let $\widetilde{J}$ be the pull-back of $J$ under the scaling map and define $\tilde{f}_i $ as the pull-back of one-form given by $\tilde{f}_i  = \tilde{r}_x^{*} f_i$. For the sake of simplicity, we abuse the notation by letting $(z^1,\ldots, z^n)$ denote the standard Euclidean coordinates on $B_1$, and $w^1, \ldots, w^n$ represent the $(1,0)$ forms with respect to $\widetilde{J}$, defined as $w^j = (dz^j + i \widetilde{J} dz^j)/2$. Expressing $\widetilde{J}$ in the Euclidean coordinate frame, we have 
\begin{align*}
|\widetilde{J} - J_0| = O(\tilde{r}_x^{-\gamma}), \hspace{1.5cm} |\nabla^k_0 \tilde{J}|  = O(\tilde{r}_x ^{-\gamma}) \quad \text{ for } k\geq 1. 
\end{align*}
Then, the $\overline{\partial}$ equation becomes
\begin{align*}
    \overline{\partial} \tilde{v}_i = \tilde{f}_i,
\end{align*}
where $\overline{\partial}$ denotes the complex differential operator associated with $\widetilde{J}$ in $B_1$ and $|\nabla_0^k \tilde{f}_i| = O(r^{1-\gamma}_x)$, $k \geq 0$. 

The existence of $\tilde{v}_i$ directly follows from the classic H\"ormander's $L_2$ estimates \cite{hormander1973introduction}. By choosing the function $\mu = |z|^2$ in $B_1$, it can be verified that $\mu$ is a strictly plurisubharmonic function with respect to $\widetilde{J}$. Consider the weight function $ \nu = (1- \mu)^{-1}$. H\"{o}mander's $L^2 $ estimates imply that there exists $\tilde{v}_i \in L^2_{loc} (B_1)$ and 
\begin{align*}
     \int_{B_1} |\tilde{v}_i|^2 e^{-\nu} dx \leq C \int_{B_1} |\tilde{f}_i|^2 e^{-\nu} dx.
\end{align*}
By restricting to a smaller ball, the above inequality implies $\int_{B_{3/4}} |\tilde{v}_i|^2 dx  \leq C \int_{B_1} |\tilde{f}|^2 d x.$
According to regularity estimates on Sobolev norms, we have
\begin{align*}
    \sum_{|\alpha| \leq k+1} \int_{B_{1/2}} \big|D^\alpha \tilde{v}_i \big|^2 dx \leq C \bigg(\sum_{|\alpha|\leq k} \int_{B_1} \big|D^\alpha \tilde{f}_{i}  \big|^2 dx + \int_{B_{3/4}} |\tilde{v}_i |^2 dx \bigg),
\end{align*}
where $D^\alpha$ is the derivatives in terms of the Euclidean coordinates with multi-indices $\alpha$.
Hence, $\tilde{v}_i$ belongs to $W^{k}$ for all integers $k \geq 1$ in $B_{1/2}$. Sobolev embedding theorem implies that $\tilde{v}_i \in \C^\infty(B_{1/2})$.
The Sobolev inequality also indicates the following $L^\infty$ estimates:
\begin{align} \label{Linftyesti}
    \sup_{{B_{1/2}}}|\tilde{v}_i| \leq C \sum_{|\alpha|\leq n+1} \Big(\int_{B_1} \big|D^\alpha \tilde{f}_{i}  \big|^2 dx \Big)^{1/2}.
\end{align}
Therefore, we have $\sup_{B_{1/2}} |\tilde{v}_i| = O(r(x)^{1-\gamma})$.

We now discuss the weighted estimates for $v_i$. By applying derivatives to the equation $\overline{\partial} \tilde{v}_i = \tilde{f}_i$, we obtain 
 \begin{align*}
     \Delta_0 \tilde{v}_i + \mathbf{b} \cdot \nabla_0\tilde{v}_i = \tilde{u}_i, \qquad \text{ in } B_1,
 \end{align*}
where $|\mathbf{b}| \leq |\nabla_0 \tilde{J}|$ sufficiently small, and $\tilde{u}_i = \tr_{g_0} \partial \tilde{f}_i $ with $||\tilde{u}_i||_{k-2, \alpha; B_{1/2}} = O(r(x)^{1-\gamma})$. By the classic Schauder estimates, we have
\begin{align} \label{SchauderestiB1}
    ||\tilde{v}_i||_{k,\alpha; B_{1/4} } \leq C \big( ||\tilde{u}_i||_{k-2,\alpha; B_{1/2}} + ||\tilde{v}_i||_{0; B_{1/2} } \big)
\end{align}
The scaling map gives the relation:
\begin{align*}
    \sup_{B_1}|\nabla_0^m \tilde{v}_i| = \tilde{r}_x^m \sup_{B'} |\nabla_0^m {v_i}|.
\end{align*}
The Schauder estimates in \eqref{SchauderestiB1} imply that $||\tilde{v}_i||_{k,\alpha; B_{1/4} } = O(r(x)^{1-\gamma})$. Let $B'' = s_{\tilde{r}_x} (B_{1/2})$, $B=s_{\tilde{r}_x} (B_{1/4})$, and $u_i (z) = \tilde{u}_i (z/\tilde{r}_x)$. Then, we have
\begin{align*}
    ||v_i||_{1-\gamma;\ k,\alpha;\ B} \leq C \big( ||{u}_i||_{1-\gamma;\ k-2,\alpha;\ B''} + ||{v}_i||_{1-\gamma;\ 0;\ B'' }  \big)
\end{align*}
The above inequality, together with \eqref{Linftyesti} and the definition of $\tilde{f}_i$, $\tilde{u}_i$, we have $||v_i||_{1-\gamma; \ k,\alpha; \ B} \leq C r(x)^{1-\gamma}$. Then, the estimates of coordinate transform \eqref{weightesticoordinatesholocball} follow immediately. The image of $B$ under the biholomophism, given by $I_x (z) = \tilde{z}$, contains a holomorphic ball with radius at least $\frac{1}{9} r(x)$, where we can take $\kappa= \frac{1}{9}$.
\end{proof}

\begin{cor} \label{corcxasymptoticcovering}
    Let $(X, J, g)$ be an ALE K\"ahler manifold with the end $X_\infty$ and $X_l \subset X_\infty$. Then there exists an locally finite and countable open covering $\{U_i|\ i \in \mathcal{I} \}$ of $X_l$ and each open set $U_i$ admits a biholomorphism $I_i : (U_i, J) \rightarrow B_{R_i} \subseteq \CC^n$ where $R_i = \kappa r(x)$, with $x$, $I(x) =0$, and the coordinate transformation under $I_i$ satisfies (\ref{weightesticoordinatesholocball}).
\end{cor}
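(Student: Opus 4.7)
The plan is to apply Lemma \ref{lemholocornearinfty} pointwise to obtain an uncountable cover of $X_l$ by holomorphic coordinate balls, and then extract a locally finite countable subcover by exploiting the fact that the radii of these balls grow linearly with $r(x)$.

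First, for each $x \in X_l$, Lemma \ref{lemholocornearinfty} produces an open neighborhood $U_x \ni x$ together with a biholomorphism $I_x:(U_x,J)\to B_{\kappa r(x)}\subseteq \CC^n$ satisfying the weighted coordinate estimate \eqref{weightesticoordinatesholocball}, where $\kappa\in(0,1)$ is a uniform constant independent of $x$. Note that $U_x$ contains the metric (Euclidean) ball centered at $x$ of radius $\kappa r(x)$, which occupies a definite fraction of the distance to the compact core. This uniform lower bound on the size of each chart relative to $r(x)$ is the essential input for the subsequent covering argument.

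Next, I would decompose $X_l$ into dyadic annular shells
\begin{align*}
A_k = \{x\in X_l : 2^k l \leq r(x) < 2^{k+1} l\}, \qquad k = 0,1,2,\ldots,
\end{align*}
so that $X_l = \bigcup_{k\geq 0} A_k$. Using the asymptotic identification $X_\infty \cong (\CC^n - B_R)/\Gamma$, each $A_k$ is relatively compact in $X$, and in the asymptotic real coordinates on $\widetilde{X}_\infty$ it corresponds to a bounded Euclidean shell. For every $x \in A_k$, the chart $U_x$ contains a Euclidean ball of radius at least $\kappa \cdot 2^k l$. A standard Vitali-type argument (or simple compactness of the closed shell $\overline{A_k}$) then yields a finite subcollection $\{U_{x_{k,1}},\ldots,U_{x_{k,N_k}}\}$ that covers $A_k$, where $N_k$ is uniformly bounded independent of $k$, since after rescaling by $2^k l$ each shell has the same Euclidean geometry.

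Finally, take $\{U_i\}_{i\in\mathcal{I}} = \bigcup_{k\geq 0}\{U_{x_{k,j}} : 1\leq j\leq N_k\}$, which is a countable open cover of $X_l$. Local finiteness is immediate: since the Euclidean diameter of $U_{x_{k,j}}$ is comparable to $2^k l$, this chart can only intersect charts drawn from shells $A_{k'}$ with $|k'-k|\leq 2$, and within each of those neighboring shells only finitely many $U_{x_{k',j'}}$ were selected. Each $U_i$ inherits the biholomorphism and the coordinate estimate \eqref{weightesticoordinatesholocball} directly from Lemma \ref{lemholocornearinfty}, completing the proof. The argument is essentially bookkeeping; the only substantive point is the linear growth of the chart radius $\kappa r(x)$, which is exactly what Lemma \ref{lemholocornearinfty} guarantees.
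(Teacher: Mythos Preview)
Your proposal is correct and follows the same approach as the paper, which simply states that the corollary is immediate from Lemma~\ref{lemholocornearinfty}. You have merely spelled out the standard dyadic-shell covering argument that the paper leaves implicit.
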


\begin{proof}
    The proof is immediately from Lemma \ref{lemholocornearinfty}.
\end{proof}

\subsection{H\"older Estimates for $\overline{\partial}$ Equation} 
In this subsection, we discuss the H\"older estimates for the $\overline{\partial}$ equation, $\overline{\partial} u = f$, for a $(0,1)$ form $f$ with $\overline{\partial} f = 0$ on a domain $D$ with $\C^1$ boundary. The following Bochner-Martinelli-Koppelman integral representation generalizes the Cauchy integral formula to any $(p,q)$ form defined on a domain $D\subseteq \CC^n$ with $\C^1$ boundary. Let $f$ be a $(0,q)$ form on $D$
\begin{align}\label{BMKformula}
    f(z) = \int_{\partial D} f(w) \wedge K_q(w,z) dw + \int_D \overline{\partial} f(w)\wedge K_q (w, z) dw - \overline{\partial}_z \int_{D} f(w) \wedge K_{q-1} (w, z)dw,
\end{align}
where $K_q $, $0\leq q \leq n-1$ are Bochner-Martinelli kernels defined to be 
\begin{align*}
    K_q (w, z) = \sum_{|J| =q} \big(- *_{w} \partial_w \Gamma(w, z) dw^{J}\big) \wedge d\bar{z}^J, 
\end{align*}
where $*_{w}$ is the Hodge star operator in  the Euclidean coordinates $w$ in $\CC^n$, and $\Gamma(w, z)$ is the \textit{Newtonian potential} of real dimension $2n$. The kernel $K_{q} (w, z)$ is a differential form defined on $\CC^n \times \CC^n$, smooth away from the diagonal $\{w=z\}$, and of type $(n, n-q-1)$ in $w$ and $(0,q)$ in $z$. The BMK equality \eqref{BMKformula} can be proved easily from the definition of $K_q$. In this paper, we only care for the case where $f$ is a $(0,1)$ form in $\Omega$. The kernel $K_0 (w,z)$ can be written explicitly as follows:
\begin{align*}
    K_0 (w, z) = &\frac{1}{\sigma_{2n-1}} \sum_{k} \frac{(\bar{w}_k-\bar{z}_k) }{|w-z|^{2n}}\frac{i}{2^{n-1}} dw^k \bigwedge_{j\ne k} \big( i dw^j \wedge d\bar{w}^j \big),
    \\
    K_1 (w, z) = &\frac{1}{\sigma_{2n-2}} \sum_{k<l} \frac{(\bar{w}_k-\bar{z}_k) }{|w-z|^{2n}}\frac{1}{2^{n-1}} dw^k \wedge dw^l \bigwedge_{j\ne k,l} \big( i dw^j \wedge d\bar{w}^j \big) \wedge d\bar{z}^l,
\end{align*}
where $\sigma_{2n-1}$ is the volume of the unit sphere of dimension $2n-1$.
Applying the BMK equality, together with the explicit formula for $K_q$, we have the following results on the H\"older estimates for $\overline{\partial}$ equation:

\begin{pro}\label{prodbaresti}
    Let $f$ be a $(0,1)$ form defined on the unit ball $B_1 \subseteq \CC^n$ satisfying $\overline{\partial} f =0$. If $f \in \C^{k,\alpha} (B_1)$ for $k\geq 1$ and $0< \alpha<1$, then the equation $\overline{\partial} u = f$ on $B_{1/2}$ has a solution $u \in \C^{k+1,\alpha}(B_{1/2})$ satisfying 
    \begin{align}\label{dbaresti}
        ||u||_{k+1,\alpha; B_{1/2}} \leq C ||f||_{k,\alpha; B_1},
    \end{align}
    where $C$ is a uniform constant depending only on $n,~k,~\alpha$.
\end{pro}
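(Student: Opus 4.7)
The plan is to construct an explicit solution using the Bochner--Martinelli--Koppelman formula (\ref{BMKformula}) and then establish H\"older regularity of the resulting volume integral operator. I would organize the argument in three stages.

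For the construction, fix a radius $r_0 \in (1/2, 1)$, say $r_0 = 3/4$, and apply (\ref{BMKformula}) to $f$ on $B_{r_0}$. Since $\overline{\partial} f = 0$, the middle term on the right-hand side vanishes, giving
\begin{align*}
    f(z) = -\overline{\partial}_z \int_{B_{r_0}} f(w) \wedge K_0(w, z) + \int_{\partial B_{r_0}} f(w) \wedge K_1(w, z), \qquad z \in B_{r_0}.
\end{align*}
Set $v(z) = -\int_{B_{r_0}} f(w) \wedge K_0(w, z)$ and $g(z) = \int_{\partial B_{r_0}} f(w) \wedge K_1(w, z)$, so that $\overline{\partial} v = f - g$ on $B_{r_0}$, and note that $g$ is smooth and $\overline{\partial}$-closed on $B_{r_0}^\circ$ since $\overline{\partial}(\overline{\partial} v) = 0$. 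I would then solve $\overline{\partial} w = g$ on $B_{5/8}$ by a second application of (\ref{BMKformula}) (or by H\"ormander's $L^2$-method on a strictly pseudoconvex shrinkage); because $K_1(\cdot, z)$ stays uniformly smooth for $(w, z) \in \partial B_{r_0} \times B_{5/8}$, the form $g$ is smooth with $\|g\|_{\C^m(B_{5/8})}$ controlled by $\sup_{\partial B_{r_0}} |f|$ for every $m$, so $w \in \C^\infty(B_{1/2})$ with $\|w\|_{k+1,\alpha; B_{1/2}} \leq C\,\|f\|_{0; B_1}$. The candidate solution is $u = v + w$, which satisfies $\overline{\partial} u = f$ on $B_{1/2}$.

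The core of the proof is the estimate
\begin{align*}
    \|v\|_{k+1,\alpha;\, B_{1/2}} \leq C \|f\|_{k,\alpha;\, B_1}.
\end{align*}
The kernel $K_0$ is a sum of terms of the form $(\bar w_j - \bar z_j)|w-z|^{-2n}$, which has singularity of order $2n-1$ in real dimension $2n$: one degree better than the Newtonian potential, so convolution against $K_0$ gains one derivative in H\"older spaces. To exploit the $k \geq 1$ regularity of $f$ I would differentiate $v$ up to $k$ times by using $\partial_{z_i}|w-z|^{-2n} = -\partial_{w_i}|w-z|^{-2n}$ and then integrating by parts in $w$. This converts up to $k$ derivatives on $z$ into the same number of derivatives on $f$ (the boundary pieces on $\partial B_{r_0}$ remain smooth in $z$ for $z \in B_{1/2}$), producing
\begin{align*}
    D_z^{\beta} v(z) = \sum \int_{B_{r_0}} D^{\gamma} f(w) \wedge K_0(w,z) + (\text{smooth boundary terms}), \quad |\beta| = |\gamma| = k.
\end{align*}
It then suffices to bound, for scalar $h \in \C^{0,\alpha}(B_{r_0})$, the one-derivative gain $\bigl\| \int_{B_{r_0}} h \cdot K_0(\cdot, z)\bigr\|_{1,\alpha; B_{1/2}} \leq C\,\|h\|_{0,\alpha; B_{r_0}}$. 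This I would do by the classical splitting $h(w) = (h(w) - h(z)) + h(z)$: the constant-in-$z$ piece gives a smooth tail, while the oscillating piece produces, after one differentiation in $z$, a kernel of order $2n$ multiplied by a factor $O(|w-z|^\alpha)$ which is absolutely integrable. For the H\"older seminorm of the $(k+1)$-st derivative I would perform the standard dyadic decomposition $B_{2|z-z'|}(z) \cup (B_{r_0}\setminus B_{2|z-z'|}(z))$, using the size bound on the inner region and the kernel-difference estimate $|\nabla_z K_0(w,z) - \nabla_z K_0(w,z')| \leq C|z-z'|\,|w-z|^{-2n-1}$ on the outer region.

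The main technical obstacle is this final estimate: a H\"older continuity bound for a singular integral of order $2n$, which is only borderline integrable and requires the $\alpha$-cancellation from $h(w) - h(z)$ to succeed. This is classical potential theory (going back to Korn and Calder\'on--Zygmund in the real-variable setting and to Henkin and Kerzman in the $\overline{\partial}$ setting), but the bookkeeping of kernel derivatives and the precise role of the $r_0$-cutoff must be handled carefully. Once this estimate is in place, combining it with the smooth bound for $w$ yields $\|u\|_{k+1,\alpha; B_{1/2}} \leq C\|f\|_{k,\alpha; B_1}$, and the dependence of $C$ on $n$, $k$, $\alpha$ is visible from the explicit kernel estimates.
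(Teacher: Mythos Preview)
Your approach is correct and essentially classical, but it differs from the paper's in two respects worth noting.

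First, the decomposition: you apply BMK directly to $f$ on a fixed ball $B_{r_0}$, so your remainder $g$ is a genuine boundary integral over $\partial B_{r_0}$, automatically smooth for $z$ in any compact subset. The paper instead multiplies $f$ by a cutoff $\chi$ supported in $B_1$, so there is no boundary term but rather a volume integral $g=\int_{B_1}\overline{\partial}(\chi f)\wedge K_1$. That $g$ is not immediately smooth; the paper handles it by a complexification trick (replacing $\bar z$ by an independent variable $\zeta$ in the kernel, then applying the Poincar\'e lemma in $\zeta$), which produces an explicit $u_2$ with $\overline{\partial}u_2=g$. Your route to the correction term is cleaner; the paper's route gives explicit formulas for both pieces.

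Second, and more significantly, the H\"older estimate: you propose to prove $\|v\|_{k+1,\alpha}\leq C\|f\|_{k,\alpha}$ directly, by differentiating under the integral, integrating by parts to throw $k$ derivatives onto $f$, and then running the standard Calder\'on--Zygmund dyadic splitting for the last derivative. This works, but is the heavy part of your argument. The paper avoids it entirely: it only proves an $L^\infty$ bound $\sup_{B_{3/4}}|u|\leq C\sup_{B_1}|f|$ from the explicit kernel formulas, then observes that $\overline{\partial}u=f$ implies $\tr i\partial\overline{\partial}u=\tr i\partial f$, i.e.\ $\Delta_0 u$ is controlled in $\C^{k-1,\alpha}$, and invokes interior Schauder estimates to obtain $\|u\|_{k+1,\alpha;B_{1/2}}\leq C(\|f\|_{k,\alpha}+\|u\|_{L^\infty})$. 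This is considerably shorter and offloads the singular-integral work to the classical Schauder theory. Your approach is more self-contained at the level of kernel analysis; the paper's is more efficient if Schauder is taken as a black box.
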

\begin{proof}
    Let $\chi$ be a cutoff function so that $\chi \equiv 1 $ on $B_{3/4}$ and has a compact support in $B_1$. Additionally, we can assume that the gradient of $\chi$ satisfies $|D\chi| \leq 4$. If we apply BMK integral representation to $\chi f$ on $B_1$, we have
    \begin{align*}
        \chi f (z) = \int_{B_1} \overline{\partial} \big(\chi f(w)\big) \wedge K_1(w, z) dw - \overline{\partial}_z \int_{B_1} \chi f (w) \wedge K_0 (w,z) dw, \qquad z \in B_1.
    \end{align*}
    Let $u_1 = -\int_{B_1} \chi f (w) \wedge K_0 (w,z) dw $ and $g=\int_{B_1} \overline{\partial} \big(\chi f(w)\big) \wedge K_1(w, z) dw$. Then, we have $g +\overline{\partial} u_1 = \chi f$ and $\overline{\partial} g =0$ on $B_{3/4}$. It suffices to construct $u_2$ such that $\overline{\partial} u_2 = g$ on $B_{3/4}$. The key observation is that $g$, viewed as a section of the cotangent bundle of over $B_{3/4}$,  can be factored through the space $B_{3/4} \times B_{3/4}$ as $g = \tilde{g} \circ h$, where $h (z) = (z, \bar{z})$ and $\tilde{g}(z, \zeta) = \int_{B_1} \overline{\partial} \big(\chi f(w)\big) \wedge \widetilde{K}_1(w, z, \zeta) dw$. The revised kernel $\widetilde{K}_1$ defined on $\CC^n \times \CC^n \times \CC^n$ is
    \begin{align*}
         \widetilde{K}_1 (w, z) = &\frac{1}{\sigma_{2n-1}} \sum_{k<l} \frac{(\bar{w}^k-\zeta^k) }{\big((w-z) \cdot (\bar{w}-\zeta)\big)^n}\frac{1}{2^{n-2}} dw^k \wedge dw^l \bigwedge_{j\ne k,l} \big( i dw^j \wedge d\bar{w}^j \big) \wedge d\zeta^l.
    \end{align*}
    Notice that $\overline{\partial} g = h^* (d_{\zeta} \tilde{g}) =0 $. By Poincar\'{e} lemma, for each fixed $z$ in $B_{3/4}$,  there exists a $\tilde{u}_2 (z, \zeta)$ such that $d_{\zeta} \tilde{u}_2 = \tilde{g}$ in $B_{3/4}$. Let $u_2 = h^* \tilde{u}_2$. Then, we have $ \overline{\partial} {u}_2  = h^* (d_{\zeta} \tilde{u}_2) = g$. In conclusion, we find a solution to $\overline{\partial}$ equation, $\overline{\partial} (u_1 + u_2) = g$. 

    The functions $u_1$ and $u_2$ can be written explicitly. Let us write the $(0,1)$ form $f$ as $f_{\bar{k}} d\bar{z}^k$. Inserting the formula of $K_0$ into the integral representation of $u_1$, we have
    \begin{align}\label{formuu1dbar}
        u_1 = \frac{2}{\sigma_{2n-1}} \sum_{k} \int_{B_1} \chi f_{\bar{k}}(w) \frac{(\bar{w}^k-\bar{z}^k) }{|w-z|^{2n}} dw
    \end{align}
    The explicit representation of $u_2$ is from the constructive proof of Poincar\'{e} lemma through the homotopy formula and we have
    \begin{align}\label{formuu2dbar}
        u_2 = \frac{2}{\sigma_{2n-1}} \sum_{k\ne l} \int_0^1 \int_{B_1} \big(\chi_{\bar{l}} f_{\bar{k}}- f_{\bar{l}} \chi_{\bar{k}} \big) (w)  \frac{(\bar{w}^k-t\bar{z}^k) \bar{z}^l }{\big((w-z)\cdot(\bar{w} - t\bar{z})\big)^n} dw dt.
    \end{align}

    We first derive the $L^\infty$ estimates for $u_1$, $u_2$.
 For $z \in B_{3/4}$, $u_1(z)$ satisfies
 \begin{align*}
     |u_1 (z)| &\leq \frac{2}{ \sigma_{2n-1}} \sum_{j}\sup_{B_1} |f_{\bar{j}}|  \int_{B_{1}} \frac{1}{|w|^{2n-1}} dw
     \\
     &\leq 2 \sum_{j} \sup_{B_1} |f_{\bar{j}}|. 
 \end{align*}
 For $u_2 (z)$, $z\in B_{3/4}$, we have
 \begin{align*}
     |u_2(z)| &\leq \frac{16 n}{ \sigma_{2n-1}} \sum_{j} \sup_{B_1} |f _{\bar{j}}| \int_0^1 |z| \int_{B_1} \frac{1}{|z-w|^n} \frac{1}{|tz-w|^{n-1}} d w dt.
 \end{align*}
Notice that
\begin{align*}
    \int_{B_1} \frac{1}{|z-w|^n} \frac{1}{|tz-w|^{n-1}} d w &\leq \int_{B_1} \frac{1}{|z-w|^{2n-1}} + \frac{1}{|tz-w|^{2n-1}} d w \\
    & \leq 2 \int_{B_{1}} \frac{1}{|w|^{2n-1}} dw 
\end{align*}
Hence, we obtain that
\begin{align*} 
|u_2 (z)|\leq 16 n \sum_j \sup_{B_1} |f_{\bar{j}}|.    
\end{align*}
In conclusion, we have the $L^{\infty}$ estimates of $u=u_1+u_2$: 
\begin{align} \label{Linftyesti}
\sup_{B_{3/4}}|u| \leq C_n  \sum_{i,j} \sup_{B_1} |f_{\bar{j}}|.
\end{align}

By taking derivative to $\overline{\partial} u = f$ on $B_1$, we have $\tr i \partial \overline{\partial} u = \tr i\partial f$. The classic interior Schauder estimates, together with $L^\infty$ estimates \eqref{Linftyesti}, imply that
\begin{align*}
    ||u||_{k+1, \alpha; B_{1/2}} \leq C \big( ||f||_{k, \alpha; B_{3/4}} + ||u||_{L^\infty(B_{3/4})}  \big) \leq C ||f||_{k, \alpha; B_{1}}.
\end{align*}
\end{proof}

In the next lemma, we deal with the H\"older estimates for a family of $\overline{\partial}$-equations. Consider a family of $\overline{\partial}$-closed $(0,1)$ forms in $B_1\subseteq \CC^n$ depending differentiably on parameters in a space $N\subseteq \CC^m$. Write the form as $f = f_i(z, \tau) d\bar{\tau}^i$. We say that $f \in \C^{k,\alpha} ( N \times B_1 )$ if each coefficient $f_{i}(z, p) \in \C^{k,\alpha}(B_1 \times N)$.

\section{The upper envelope of continuous plurisubharmonic functions} \label{secupenvelope}
Let $(X, J, g)$ be an ALE K\"ahler manifold satisfying the metric decay condition \eqref{decayaleintro}. 
In this section, we aim to build up the basic pluripotential theory on the product space $X \times D$, where $D$ is a domain in $\CC$ with $C^1$ boundary. 
Now, we define the following classes of $\Omega$-psh functions which are subsolutions to the HCMA equation \ref{introHCMA}:
\begin{align*}
    \B_{\Omega, \Psi} = \{u \in \psh_{\Omega}(X \times D);\  u \text{ is bounded, and } \limsup_{(x',\tau')\rightarrow (x, \tau)} u(x', \tau') \leq \Psi (x, \tau), \text{ on } X \times \partial D   \}
\end{align*}
and
\begin{align*}
     \F_{\Omega, \Psi}=\{u \in \psh_\Omega (X\times D) \bigcap \C (X \times D); \  u \text{ is bounded,} \text{ and } u(x, \tau) \leq \Psi(x, \tau), \text{ on } X \times \partial D \}
\end{align*}
In Subsections \ref{subsecapproxpsh}-\ref{subsecregupperenve}, we develop an approximation theorem for $\Omega$-psh functions on $X \times D$. Then, we establish a regularity theorem for the upper envelope of $\B_{\Omega, \Psi}$ under a uniform continuity assumption on the boundary data, and we will see that the upper envelopes of $\B_{\Omega, \Psi}$ and $\F_{\Omega, \Psi} $ agree on $X \times D$.
In Subsection \ref{subsecgeneralsolu}, it will be shown that the global weak solution to \eqref{introHCMA} can be obtained as the upper envelopes of the above classes.

\subsection{Approximation of $\Omega$-Plurisubharmonic Functions}\label{subsecapproxpsh}

Psh functions are present in many problems in K\"ahler geometry, mainly related to the Monge-Amp\`ere equations. It is a well-known fact that any psh function in a domain of $\mathbb{C}^n$ can be approximated from above by a sequence of smooth psh functions. It is of significant interest to prove such an approximation in some classes of K\"ahler manifolds. Guedj and Zeriahi \cite{Guedj2005} applied methods developed by Demailly (see, \cite{demailly1992reg,demailly2004Num,demailly2000pseudoeff}) to prove that such an approximation can be established in the case of compact K\"ahler manifolds admitting a positive holomorphic line bundle. Blocki and Kolodziej \cite{Blocki2007reg} prove the approximation in the arbitrary compact K\"ahler case. 

In this subsection, we extend the result of Blocki and Kolodziej to the class of noncompact complex manifolds that admit a ``good'' covering of holomorphic balls near infinity. By a ``good'' covering here, we mean a locally finite and countable open covering, $\{V_j; \ j \in \mathcal{J} \}$, of $X_l \subseteq X_\infty$ for some sufficiently large constant $l$. For each $V_j$, $j \in \mathcal{J}$, the open set $V_j$ is biholomorphic to a unit ball $B_1 \subseteq \CC^n$, and the transition map $T_{j_1,j_2}$ bewteen $U_{j_1}$ and $U_{j_2}$ satisfies uniform bounds on its differential: $A^{-1} \leq |DT_{j_1,j_2}| \leq A$, for a uniform constant $A$ independent of $j \in \mathcal{J}$. Such a covering can be constructed using Lemma \ref{lemholocornearinfty} and Corollary \ref{corcxasymptoticcovering}, by pulling back unit balls in $ B_{R_i}$ via the biholomorphic map $I_i$ for each $i \in \mathcal{I} $. The uniform bounds on $|DT_{j_1, j_2}|$ then follow from the estimates \eqref{weightesticoordinatesholocball}.

Let us recall some techniques developed in the flat case. The smooth approximation can be achieved by convolution. Let $\eta (z) = \hat{\eta} (|z|) \in \C^{\infty}(\CC^n)$ such that $\hat{\eta} \geq 0$, $\hat{\eta}(r) = 0$ for $r \geq 1$, and $\int_{\CC^n} \eta d\lambda =1$. Let $\eta_\delta ( z) = \delta^{-2n} \eta (z/ \delta)$ for $\delta >0$. Then, the convolution is given by
\begin{align*}
    u_\delta (z) = (u * \eta_{\delta}) (x) = \int u(z- w) \eta_\delta (w) d\lambda(w).
\end{align*}
If $u$ is psh, then $u_\delta$ is decreasing to $u$ as $\delta \rightarrow 0$.
The key idea of \cite{Blocki2007reg} is to compare the approximation under biholomorphic coordinate transformations. Then the global regularization of psh functions can be obtained by patching the local psh functions based on the regularized maximum developed in \cite{demailly1992reg}.

\begin{lem}[Blocki-Kolodziej \cite{Blocki2007reg}, Lemma 4] \label{BKlemmacompare}
    Let $U, V \subseteq \CC^n$ be open sets and $F: U\rightarrow V$, a biholomorphic mapping satisfying
        $A^{-1} \leq|dF| \leq A$.
    Let $u$ be a bounded psh function in $U$. The smooth psh function transformed by $F$ is defined as $u_\delta^F = (u\circ F^{-1})_{\delta} \circ F$. Then $u_\delta - u_{\delta}^F$ tends locally uniformly to $0$ as $\delta \rightarrow 0$. In particular, there exists a uniform constant $C$ only depending on $n$ and $\displaystyle\sup_{x\in U} |u|$ such that
    \begin{align} \label{comparebddpshesti}
        |u_\delta- u_\delta^F| \leq  - \frac{1+ \log A }{\log \delta} C.
    \end{align}
\end{lem}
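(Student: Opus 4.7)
\emph{Approach.} The plan is to reduce $|u_\delta(z) - u_\delta^F(z)|$ to a modulus-of-continuity estimate for $\delta \mapsto u_\delta(z)$ on the logarithmic scale, exploiting the fact that for bounded plurisubharmonic $u$, the spherical means $M_s(u, z, r) := \sigma_{2n-1}^{-1} r^{1-2n} \int_{\partial B_r(z)} u\, dS$ are convex non-decreasing in $\log r$, and that the uniform bound $|u| \leq M := \sup_U |u|$ forces slow oscillation on any bounded logarithmic interval relative to $M$.

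\emph{Steps 1 and 2: Radial decomposition and convexity estimate.} Since $\eta(w) = \hat\eta(|w|)$ is radial, passing to polar coordinates and scaling $s = r/\delta$ give
\[
u_\delta(z) = \int_0^1 g(s)\, M_s(u, z, s\delta)\, ds, \qquad g(s) := \sigma_{2n-1}\, s^{2n-1}\, \hat\eta(s),
\]
where $g$ is a probability density on $(0,1]$ independent of $\delta$. For a ball $B_{R_0}(z) \subseteq U$, the function $\phi(t) := M_s(u, z, e^t)$ is convex and non-decreasing on $(-\infty, \log R_0]$ with $|\phi| \leq M$, so convexity gives the slope bound $\phi'(t) \leq 2M / (\log R_0 - t)$. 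Integrating on $[\log(\delta/A), \log(A\delta)]$ yields the core estimate
\[
\phi(\log(A\delta)) - \phi(\log(\delta/A)) \leq \frac{C\, M\, \log A}{|\log \delta|}
\]
for $\delta$ small enough (depending on $R_0$), where $C$ depends only on $n$.

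\emph{Step 3: Comparison via the biholomorphism.} The bi-Lipschitz bound $A^{-1} \leq |dF| \leq A$ forces $B_{\delta/A}(z) \subseteq F^{-1}(B_\delta(F(z))) \subseteq B_{A\delta}(z)$. For uniform linear scaling $F(y) = F(z) + \lambda(y - z)$, a direct change of variables in the defining integral gives $u_\delta^F(z) = u_{\delta/|\lambda|}(z)$ exactly, so both $u_\delta(z)$ and $u_\delta^F(z)$ appear as $g$-weighted integrals of spherical means of $u$ over log-scales differing by at most $\log A$; the estimate from Step 2 then closes the argument. For a general biholomorphism, one Taylor-expands $F^{-1}$ around $F(z)$: the linear part produces the same sandwich (up to a shift of log-scale by $\log A$), while the nonlinear $O(\delta^2)$ remainder contributes only a lower-order correction that is absorbed into the main bound $-C(1+\log A)/\log \delta$.

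\emph{Main obstacle.} The principal technical difficulty lies in Step 3 when $F$ is truly nonlinear: the averaging domain $F^{-1}(B_\delta(F(z)))$ is a distorted, non-round neighborhood of $z$, and the pulled-back weight $|\det_{\CC} dF|^2$ is non-uniform, so a naive pointwise comparison of $u_\delta^F(z)$ with spherical averages of $u$ around $z$ can fail -- $u$ is only bounded plurisubharmonic and may be discontinuous on polar sets. The resolution is to stay entirely within the log-scale framework: both mollifications are averages of $u$ against probability measures supported in $B_{A\delta}(z)$ whose ``effective scales'' lie in $[\delta/A, A\delta]$, and the slow variation of $\phi$ on this log-interval, combined with the uniform bi-Lipschitz bound, controls the difference uniformly in the nonlinear distortion, delivering the stated estimate \eqref{comparebddpshesti}.
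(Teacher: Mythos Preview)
Your overall strategy—reducing to the slow variation of spherical means $r\mapsto M_s(u,z,r)$ on the $\log r$ scale via convexity and the global bound $|u|\le M$—is the right engine, and your Step~2 is essentially the content of the paper's estimates (3.6)--(3.7). The gap is Step~3.

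First, even your ``linear'' case is not quite right: if $F(y)=F(z)+L(y-z)$ with $L$ a general invertible linear map, the change of variables gives
\[
u_\delta^F(z)=\int u(z+v)\,\eta_\delta(Lv)\,|\det L|^2\,dv,
\]
which is the convolution of $u$ against a \emph{non-radial} probability kernel unless $L$ is a scalar times a unitary; so $u_\delta^F(z)=u_{\delta/|\lambda|}(z)$ fails in general. For genuinely nonlinear $F$ the problem is worse: $u_\delta^F(z)$ is an average of $u$ against a probability measure on the distorted ball $F^{-1}(B_\delta(F(z)))$ with density $|\det_\CC dF|^2\,\eta_\delta(F(\cdot)-F(z))$. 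Your ``resolution'' asserts that any such average is close to a spherical mean at some scale in $[\delta/A,A\delta]$, but there is no mechanism for this: a bounded psh function need not be close to its spherical mean pointwise, and two probability averages over the same ball can differ by $2M$ absent further structure. The slow variation you proved is for the \emph{specific} one-parameter family of spherical means, not for arbitrary probability averages supported in $B_{A\delta}(z)$.

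The paper (following B\l{}ocki--Ko\l{}odziej) fixes exactly this by inserting the \emph{supremum} $\widehat u_\delta(z):=\sup_{\overline B(z,\delta)}u$ as an intermediate quantity. The point is that $\widehat u^F_\delta(z)=\sup_{F^{-1}(\overline B(F(z),\delta))}u$ depends only on the \emph{set} $F^{-1}(\overline B(F(z),\delta))$, so the bi-Lipschitz bound gives the exact sandwich $\widehat u_{\delta/A}\le \widehat u^F_\delta\le \widehat u_{A\delta}$ with no measure-theoretic distortion to worry about. One then shows, via Harnack and log-convexity, that $|u_\delta-\widehat u_\delta|\le C/|\log\delta|$ (and likewise in the $V$-chart), and that $\widehat u_{A\delta}-\widehat u_{\delta/A}\le C\log A/|\log\delta|$. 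Combining these three pieces yields \eqref{comparebddpshesti}. The missing idea in your argument is precisely this passage through $\widehat u_\delta$, which converts the hard comparison of distorted averages into a trivial comparison of suprema over nested sets.
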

\begin{proof}
    The proof is parallel to \cite{Blocki2007reg} based on the ideas developed from \cite{Kiselman1994}. Here, we only sketch how we get the estimate (\ref{comparebddpshesti}). We define two functions based on $u$,
    \begin{align}\label{defsuppsh}
        \widehat{u}_\delta (z) = \sup_{w\in\overline{B}(z, \delta)} u(w),
    \end{align}
    and 
    \begin{align} \label{defsphmeanpsh}
        \widetilde{u}_\delta (x) = \frac{1}{\sigma ( \partial B(z, \delta))} \int_{\partial B(z, \delta)} u d\sigma.
    \end{align}
     Both $\widehat{u}_\delta (z)$ and $\widetilde{u}_\delta (z)$ are increasing, and, according to Hadamard's 3-circles theorem,  $\widehat{u}_\delta (z)$ and $\widetilde{u}_\delta (z)$ are logarithmically convex in $\delta$. The logarithmic convexity of $u$ implies that,
    \begin{align*}
        0 \leq \widehat{u}_{A \delta} - \widehat{u}_{\delta} \leq \frac{\log A}{\log (r/ \delta)} (\widehat{u}_r - \widehat{u}_{\delta}) \leq  C \frac{\log A}{\log \delta^{-1}} 
    \end{align*}
    Let $\displaystyle \widehat{u}^F_\delta = (\widehat{u \circ F^{-1}})_\delta\circ F$. Hence, we have $\displaystyle \widehat{u}^F_\delta (z) = \max_{F^{-1}(\overline{B}(F(z), \delta) )} u$. The inclusion of sets, $\displaystyle \overline{B}(F(z), \delta) \subseteq F(\overline{B}(z, A \delta))$ and $F(\overline{B}(z, \delta)) \subset \overline{B}(F(z), A \delta)$, implies that
    \begin{align}\label{comparemaxdiffcoordinates}
        \big|\widehat{u}^F_\delta -\widehat{u}_\delta \big| \leq C \frac{\log A}{\log \delta^{-1}}
    \end{align}
    
    Let $h$ be the harmonic majorant of $u$ in $\overline{B} (z, r)$ (without loss of generality, we assume $h \leq 0$).
    Using Harnack's inequality for harmonic functions, we have that 
    \begin{align*}
        \widehat{u}_{s} \leq \sup_{\overline{B}(z,s) } h \leq \frac{1-s/r}{ (1+ s/r)^{2n-1}} h(z) = \frac{1-s/r}{ (1+ s/r)^{2n-1}} \widetilde{u}_{r}
    \end{align*}
    Now, we can compare $\widetilde{u}_\delta$ with $\widehat{u}_\delta$,
    \begin{align} \label{comparemaxsphmean}
        0 \leq \widehat{u}_\delta - \widetilde{u}_\delta \leq \frac{3^{2n-1}}{2^{2n-2}} (\widehat{u}_\delta - \widehat{u}_{\delta/2}) \leq \frac{C}{\log \delta^{-1}}. 
    \end{align}
    Let $\widetilde{\eta} (t) = \sigma(\partial B(0,1)) t^{2n-1}\hat{\eta} (t) $. Then, we have $\displaystyle \int_{0}^1 \widetilde{\eta} (t) dt =1$ and $\displaystyle u_\delta(z) = \int_{0}^1 \widetilde{u}_{t\delta} (z) \widetilde{\eta} (t) dt$. Using logarithmic convexity, 
    \begin{align} \label{comparesphmeaninv}
        0 \leq \widetilde{u}_\delta - u_\delta = \int_0^1 (\widetilde{u}_\delta- \widetilde{u}_{t \delta}) \widetilde{\eta} (t) dt \leq \int_{0}^1 \frac{\log (1/t)}{\log (r/t\delta)} (\widetilde{u}_r - \widetilde{u}_{t\delta}) \widetilde{\eta}(t) dt \leq \frac{C}{\log \delta^{-1}}.
    \end{align}
    Combining (\ref{comparemaxdiffcoordinates}), (\ref{comparemaxsphmean}) and (\ref{comparesphmeaninv}), we obtain (\ref{comparebddpshesti}). 
\end{proof}

\begin{pro} \label{regpsh}
    Let $(X, \omega)$ be an ALE K\"ahler manifold, and $D \subseteq \CC$, a bounded domain with $\C^1$ boundary. Let $\Omega = p^* \omega$. Then for every $\varphi \in \Pl_{\Omega} (X \times D)$, there exists a sequence $\varphi_{k} \in \Pl_{\Omega} (X \times D_{1/k}) $ decreasing to $\varphi$, where $D_{1/k} = \{ z\in D; ~ d_{g_{0}}(z, \partial D)  > 1/k\}$ and $d_{g_0}$ means the Euclidean distance.
\end{pro}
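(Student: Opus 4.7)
The plan is to adapt the Blocki-Kolodziej regularization theorem~\cite{Blocki2007reg} to the noncompact product space $X \times D$, using Demailly's regularized maximum to glue local convolution regularizations on a covering by product holomorphic charts. The approximants will be smooth $\Omega$-psh functions on $X \times D_{1/k}$, smoothness being the nontrivial content, since restriction alone would already produce a trivially decreasing sequence.

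\textbf{Step 1 (Good covering).} Combine Corollary~\ref{corcxasymptoticcovering}, which gives a countable locally finite covering $\{U_i\}_{i\in\mathcal{I}}$ of the end $X_l$ by open sets biholomorphic to Euclidean balls with uniformly bounded transition differentials thanks to~\eqref{weightesticoordinatesholocball}, with any finite holomorphic atlas on the relatively compact piece $X\setminus X_l$. Taking products with the single holomorphic coordinate on $D\subseteq\CC$ yields a locally finite covering $\{U_i\times D\}$ of $X\times D$ by product holomorphic charts, with uniform bilipschitz control on the transitions.

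\textbf{Step 2 (Local convolution regularization).} On each chart $U_i$, fix a smooth local K\"ahler potential $\rho_i$ for $\omega$ (independent of the disk variable $\tau$), so that $\Omega = i\partial\overline{\partial}\rho_i$ on $U_i\times D$. Then $u_i := \varphi + \rho_i$ is psh in the Euclidean coordinates of $U_i\times D\subseteq \CC^{n+1}$. Let $(u_i)_\delta = u_i*\eta_\delta$ be the standard convolution mollification in $\CC^{n+1}$; this is smooth psh on the set of points at Euclidean distance $>\delta$ from $\partial(U_i\times D)$, and decreases pointwise to $u_i$ as $\delta\searrow 0$. Set
\begin{align*}
\varphi^{(i)}_\delta := (u_i)_\delta - \rho_i,
\end{align*}
a smooth $\Omega$-psh function on the shrunken chart. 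Choosing $\delta<1/k$ ensures that each $\varphi^{(i)}_\delta$ is defined on the portion of $U_i\times D_{1/k}$ at distance $>\delta$ from $\partial U_i$.

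\textbf{Step 3 (Overlap comparison and patching).} On an overlap $(U_i\cap U_j)$, the local potentials satisfy $\rho_i - \rho_j\circ F_{ji} = \re(h_{ij})$ for a local holomorphic $h_{ij}$, so after a short calculation using Lemma~\ref{BKlemmacompare} applied to the biholomorphism $F_{ji}$ and the fact that smooth functions are approximated by their convolutions to order $O(\delta^2)$, the pluriharmonic contributions $\rho_i - \rho_j\circ F_{ji}$ cancel and one obtains the comparison
\begin{align*}
\bigl|\varphi^{(i)}_\delta - \varphi^{(j)}_\delta\bigr|\;\leq\;\frac{C_{\mathrm{loc}}}{\log(1/\delta)}
\end{align*}
on relatively compact sub-overlaps, with $C_{\mathrm{loc}}$ independent of $i,j$ thanks to the uniform bilipschitz control from Step~1. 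Apply Demailly's regularized maximum $M_\eta$ to the locally finite family $\{\varphi^{(i)}_\delta\}$ with $\eta=\eta(\delta)\to 0$ chosen slower than $1/\log(1/\delta)$, so that $M_\eta$ reduces to the genuine $\max$ on any region where a single chart dominates. This produces a global smooth $\Omega$-psh function on $X\times D_{1/k}$. A standard choice of decreasing correction constants $\varepsilon_k\searrow 0$ and a fast-enough decay $\delta_k\leq 1/k$ yields the desired decreasing sequence $\varphi_k$ converging pointwise to $\varphi$.

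\textbf{Main obstacle.} The principal difficulty is that Lemma~\ref{BKlemmacompare} requires a pointwise bound on $\sup|u_i|$, but a general $\Omega$-psh function can take the value $-\infty$ on polar sets. I would circumvent this by first applying the construction to the bounded truncations $\max(\varphi,-K)$ (which remain $\Omega$-psh), producing smooth $\Omega$-psh approximants $\varphi_{k,K}$, and then performing a diagonal extraction $\varphi_k := \varphi_{k,K(k)}$ with $K(k)\to\infty$ slowly enough that the monotonicity is preserved up to the decreasing corrections $\varepsilon_k$. A secondary care point is coordinating the convolution scale with both the $X$-covering structure near infinity (where the $U_i$ become large so the inner-radius loss is negligible) and the distance to $\partial D$, which is handled precisely by requiring $\delta_k<1/k$.
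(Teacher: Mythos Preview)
Your overall strategy---local convolution in product holomorphic charts, Blocki--Kolodziej comparison on overlaps, regularized-maximum gluing---is exactly the paper's approach. However, Step~3 as written has a genuine gap, and one further device from the paper is missing.

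\textbf{The gluing is incomplete.} Each $\varphi^{(i)}_\delta$ is only defined on a \emph{shrunken} product chart, so the regularized maximum $M_\eta\bigl(\varphi^{(i)}_\delta\bigr)$ is not a priori defined at points near the boundary of some $U_i$ where that term is missing. Choosing $\eta$ small does not repair this: $M_\eta$ needs all its arguments defined at the evaluation point. The paper's fix is to introduce, for each chart, a smooth damping function $\chi_j$ equal to $0$ on an inner shell and $-1$ near the chart boundary (with $dd^c\chi_j \ge -C\Omega$ uniformly), and to replace $\varphi^{(j)}_\delta$ by $\varphi^{(j)}_\delta + \tfrac{\varepsilon}{C}\chi_j$ inside the regularized max. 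Because the overlap comparison gives $|\varphi^{(i)}_\delta - \varphi^{(j)}_\delta| \ll \varepsilon$ for small $\delta$, the damped term is strictly dominated near $\partial U_j$ and hence irrelevant to the regularized max there; this is what makes the glued function globally defined and smooth.

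\textbf{A rescaling step is needed.} Adding $\tfrac{\varepsilon}{C}\chi_j$ costs positivity: the glued function lies only in $\operatorname{Psh}_{(1+\varepsilon)\Omega}$, not $\operatorname{Psh}_\Omega$. The paper corrects this at the end by (after subtracting a constant so the approximants are negative) dividing by $1+\varepsilon_q$ with $\varepsilon_q\to 0$, which lands the sequence in $\operatorname{Psh}_\Omega$ and still decreases to $\varphi$. Your write-up omits this.

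\textbf{On your ``main obstacle.''} Your concern is well-founded: Lemma~\ref{BKlemmacompare} is stated for bounded psh functions, and the paper invokes its quantitative rate \emph{uniformly over the infinite chart family}, which does require a uniform bound on the local psh data $u_j=\varphi+\tilde\rho_j$. The paper does not address the unbounded case; its only application (Proposition~\ref{contiofupperenve}) is to the bounded envelope $\Phi_{\Omega,\Psi}$. Note that your proposed fix via $\max(\varphi,-K)$ still requires $\varphi$ to be bounded \emph{above} to yield a bounded truncation, which is not automatic for $\Omega$-psh functions on the noncompact $X\times D$; so either restrict the statement to bounded $\varphi$ (which suffices for the paper), or argue more carefully that the local oscillation bounds needed in Lemma~\ref{BKlemmacompare} are uniform across the chosen charts.
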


\begin{proof}
Let $K$ be a compact set given by $K= \{x \in X;\ r(x)\leq l \}$ for some large constant $l$, and the complement of $K$ in $X$, $X_l$, is contained in the asymptotic chart of $X$. By the discussion at the beginning of the subsection, there exists a locally finite and countable open covering of $X_l$, $\{V_j;\ j \in \mathcal{J} \}$, such that each $V_j$ is biholomorphic to a unit ball in $\CC^n$. For the compact set $K$, we may choose a finite open covering $\{V_j, 1\leq j \leq N\}$ of $K$. Consequently, we obtain a locally finite and countable open covering of $X$, $\{V_j; \ 
j \in \mathcal{J}' \}$ with $\mathcal{J}' = \mathcal{J} \sqcup \{1,\ldots, N \}$.
    
    Then, it is straightforward to construct three locally finite and countable open coverings of $X$, $\{V_j;~ j \in \mathcal{J}'\} $, $\{U_j;~ j \in \mathcal{J}'\}$ and $\{W_j; ~j \in \mathcal{J}'\}$ with $V_j \subseteq U_j \subseteq W_j$, for each $j\in \mathcal{J}'$. Furthermore, we suppose that for each $j \in \mathcal{J}'$, there exists a biholomorphism $F_j : W_j \rightarrow B_3 \subseteq \CC^n$ satisfies, 
    \begin{align}
         B_1 \cong F_j(V_j) \subseteq B_{3/2} \subseteq\overline{B}_2 \subseteq F_j(U_j)\subseteq F_j(\overline{U}_j) \subseteq F_j (W_j) \cong B_3.
    \end{align}

    By the construction of open covering, the transition map between $W_{j_1}$ and $W_{j_2}$, $j_1, j_2 \in \mathcal{J}$, satisfies $A^{-1} \leq |DT_{j_1, j_2}| \leq A$.
    Since only finitely many open sets are added to $\{U_j;\ j \in \mathcal{J} \}$ and the full covering, $\{U_j;\ j\in \mathcal{J}' \}$, remains locally finite, the transformation maps between any pair $U_{j_1}, \ U_{j_2} \in \{U_j;~ j \in \mathcal{J}'\}$, $T_{j_1, j_2}$, also satisfy a uniform bound on their differential. By increasing the constant $A$ if necessary, we may assume $$A^{-1} \leq |DT_{j_1, j_2}| \leq A,\hspace{0.8cm} j_1, j_2 \in \mathcal{J}'.$$
    
    The decay condition, $|\omega-\omega_0|_{g_0} = O(r^{-\tau})$, implies $|(F_j^{-1})^*(\omega|_{W_j})|_{g_0} \leq C$, with a uniform constant $C$ independent of $j \in \mathcal{J}'$. Then, there is a local potential $\rho_j$ on $U_j$ such that 
    \begin{align*} 
        |\rho_j|,\quad |d\rho_j|_{g_0} \leq C, \hspace{0.6cm} \text{ for } j \in \mathcal{J}',
    \end{align*}
    where $C$ is a uniform constant.

    For the product manifold, $X \times D$, there are countably many, locally finite coverings of $X \times D$,  $\{\widetilde{U}_j = U_j \times D,\ j \in \mathcal{J}'\}$ and $\{\widetilde{W}_j = W_j \times D,\ j \in \mathcal{J}'\}$. 
    The transformation map between $\widetilde{U}_{j_1}$ and $ \widetilde{U}_{j_2}$, $\widetilde{T}_{j_1, j_2} = T_{j_1, j_2} \times \id$, also satisfies $ A^{-1} \leq |D \widetilde{T}_{j_1, j_2}| \leq A$. Let $k$ be a sufficiently large constant and $D_{1/k} = \{ z\in D; ~ d_{g_{0}}(z, \partial D)  > 1/k\}$. Then, $\widetilde{V}_j = V_j \times D_{1/k}$ is a proper subset of $\widetilde{U}_j$ and $\{\widetilde{V}_j; ~ j \in \mathcal{J}'\}$ forms an open covering of $X \times D_{1/k}$. 
    Let $\tilde{\rho}_j= p^* \rho_j$. Then $\tilde{\rho}_j$ is the local potential of $\Omega = p^*\omega$ on $\widetilde{U}_j = U_j\times D$ for each $j \in \mathcal{J}'$, and satisfies the estimates: $
        |\tilde{\rho}_j|$, $|d\tilde{\rho}_j|_{g_0} \leq C.$
    
    Since $\varphi \in \Pl_{\Omega} (X \times D)$, $ u_j = \varphi + \tilde{\rho}_j$ is plurisubharmonic in $\widetilde{W}_j$. By $u_{j, \delta}$, we denote the regularization of $u_j$ by taking convolution with $\eta_\delta$. Assuming that $\delta$ is small enough ($\delta \leq 1/k$), the function $u_{j, \delta}$ is smooth on the domain $\{\widetilde{V}_j\}$. In the following, we write $u^{ j_2 }_{j_1,\delta}$ to be the regularization of $u_{j_1}$ in $\widetilde{V}_{j_2}$; precisely, $u^{ j_2 }_{j_1,\delta} = (u_{j_1} \circ T_{j_1, j_2}^{-1})_{\delta} \circ T_{j_1, j_2}$. Then, in $U_{j_1} \cap U_{j_2}$,
    \begin{align*}
        u_{j_1, \delta}- u_{j_2, \delta} = (u_{j_1, \delta} - u^{j_2}_{j_1, \delta}) + (u_{j_1} - u_{j_2})_{\delta}^{j_2}.
    \end{align*}
    By Lemma \ref{BKlemmacompare}, we have 
    \begin{align}\label{uniformestiintersection}
    \begin{split}
        &|u_{j_1, \delta} - u^{j_2}_{j_1, \delta}| \leq  \frac{1 + \log A}{\log \delta^{-1}} C,\\
        &|(u_{j_1} - u_{j_2})^{j_2}_\delta - (\tilde{\rho}_{j_1}-\tilde{\rho}_{j_2})| \leq C\delta.
    \end{split}
    \end{align}
    Hence, $(u_{j_1, \delta}-\tilde{\rho}_{j_1})-(u_{j_2, \delta}-\tilde{\rho}_{j_2})$ uniformly approaches to $0$ as $\delta$ goes to $0$ (independent of $j_1, j_2 \in \mathcal{J}'$). 
    The following proof goes the same as Blocki-Kolodziej [Theorem 1, Theorem 2], and we briefly explain here. Let  $\chi =0$ in $B_{3/2}$ and $\chi = -1$ in $B_3 - \overline{B}_2$. And we can assume $dd^c \chi \geq -C \omega$.
    Then, we pick a smooth function $\chi_j$ on $U_j$, for each $j \in \mathcal{J}'$, by $\chi_j = F^*_j(\chi|_{U_j})$. Then we define a smooth function in $\widetilde{U}_j$ by $\widetilde{\chi}_j = p^* \chi_j$. There is a uniform constant $C$ such that $dd^c \widetilde{\chi}_j \geq -C \Omega$. For a sufficiently small $\varepsilon >0$, we define
    \begin{align*}
    \varphi_\delta = \mathop{\mathrm{reg}\max}\limits_{j \in \mathcal{J}'}\left\{ u_{j, \delta} - \tilde{\rho}_{j} + \frac{\varepsilon\widetilde{\chi}_j}{C} \right\}
    \end{align*}
    By (\ref{uniformestiintersection}), if $\delta$ is sufficiently small, the values on the sets $\{\widetilde{\chi}_j = -1\}$ do not contribute to the regularized maximum. Hence, we obtain $\varphi_\delta \in \Pl_{(1+ \varepsilon)\Omega} (X \times D_{\delta})\bigcap \C^\infty$ and $\varphi_\delta$ decreases to $\varphi$ as $\delta \rightarrow 0$. 
    Choose $\varepsilon_q \rightarrow 0$ and $\varphi_q \in \Pl_{(1+ \varepsilon_q)\Omega}(X \times D_{1/q})\bigcap \C^\infty$. Without loss of generality, we assume $\varphi_q$ is negative. Let $\psi_{q} = \varphi_q/(1+ \varepsilon_q)$. Then $\psi_{q}\in \Pl_{\Omega} (X \times D_{1/q})\bigcap \C^\infty$ decreases to $\varphi$ as $q \rightarrow \infty$.
\end{proof}

\subsection{The continuity of the upper envelopes}  \label{subsecregupperenve}
Let $\Phi_{\Omega, \Psi}$ be the upper envelope of $\B_{\Omega,\Psi}$,
\begin{align*}
    \Phi_{\Omega, \Psi} (x,\tau) = \sup_{u \in \B_{\Omega, \Psi}} u (x, \tau), \qquad (x, \tau) \in X \times D.
\end{align*}
Let $D$ be a domain in $\CC$. A point $\tau_0 \in \partial D$ is called strongly regular if there exists a local \textit{barrier function} $w$ at $\tau_0$. Precisely, a local barrier function at $\tau_0$, $w_{\tau_0}$, is a continuous superharmonic function defined in a neighborhood of $\tau_0$ in $\overline{D}$, $ B_\delta(\tau_0 )\cap \overline{D}$, satisfying $w(\tau_0) =0$ and $w(\tau)>0$ if $\tau \ne \tau_0$. In Bremermann \cite{Bremermann1959Diri} and Walsh \cite{Walsh1968Continuity},  the upper envelope is proved to be continuous for a bounded pseudoconvex domain in $\CC^n$ if the boundary function is continuous. The following proposition extends the continuity result to the product manifold $X \times D$,

\begin{pro}\label{contiofupperenve}
Let $X$ be an ALE K\"ahler manifold, and $D\subseteq \CC$, a bounded domain with strongly regular boundary. Suppose that $\Psi$ is a bounded uniformly continuous function on $X \times \partial D$ and $\psi_\tau 
 (\cdot)= \Psi(\cdot, \tau)$ is $\omega$-psh function for each $\tau \in \partial D$. Then the upper envelope $\Phi_{\Omega, \Psi}$ is bounded and continuous in $X \times \overline{D}$ with
\begin{align*}
    \Phi_{\Omega, \Psi} (\cdot, \tau) = \psi_{\tau}(\cdot), \qquad \tau \in \partial D
\end{align*}
Furthermore, under the assumption, the upper envelopes of $\B_{\Omega, \Psi}$ and $\F_{\Omega, \Psi}$ coincide,
\begin{align} \label{coincideenves}
    \Phi_{\Omega, \Psi} (x, \tau) = \sup_{u \in \F_{\Omega, \Psi} } u(x, \tau), \qquad (x, \tau) \in X \times D
\end{align}
\end{pro}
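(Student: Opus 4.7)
The plan is to follow the Bremermann--Walsh template, constructing a continuous upper barrier and a family of continuous lower barriers on $X\times\overline{D}$ that match $\Psi$ on $X\times\partial D$, and then using the approximation Proposition~\ref{regpsh} to upgrade interior continuity and to identify the bounded and continuous envelopes.

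For the upper barrier, I would take $h(x,\tau)$ to be the slicewise harmonic extension: for each $x$, let $h(x,\cdot)$ be the bounded harmonic function on $D$ with boundary values $\Psi(x,\cdot)$. Strong regularity of $\partial D$ combined with uniform continuity of $\Psi$ gives that $h$ is bounded continuous on $X\times\overline{D}$, equal to $\Psi$ on $X\times\partial D$, and harmonic in $\tau$ for each fixed $x$. For $u\in\B_{\Omega,\Psi}$ and any $x\in X$, the planar slice $u(x,\cdot)-h(x,\cdot)$ is subharmonic on $D$ (since $\Omega=\pi^*\omega$ forces $u(x,\cdot)$ to be subharmonic in $\tau$) with $\limsup\le 0$ at $\partial D$, so the one-variable maximum principle yields $u\le h$ throughout $X\times D$. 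Taking the supremum gives $\Phi_{\Omega,\Psi}\le h$; together with $-\sup|\Psi|\in\B_{\Omega,\Psi}$ this proves $|\Phi_{\Omega,\Psi}|\le\sup|\Psi|$ and $\limsup_{(x',\tau')\to(x_0,\tau_0)}\Phi_{\Omega,\Psi}\le\Psi(x_0,\tau_0)$ at every boundary point.

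For the lower side, given $(x_0,\tau_0)\in X\times\partial D$ and $\varepsilon>0$, I would let $w_{\tau_0}$ denote the continuous superharmonic barrier from strong regularity of $\tau_0$ and set
\[
    \underline{u}_\varepsilon(x,\tau)=\psi_{\tau_0}(x)-A\,w_{\tau_0}(\tau)-\varepsilon.
\]
This is continuous on $X\times\overline{D}$ and $\Omega$-psh ($\omega$-psh in $x$, subharmonic in $\tau$, with no mixed derivatives). The uniform continuity of $\Psi$ absorbs $|\psi_\tau(x)-\psi_{\tau_0}(x)|$ by $\varepsilon$ when $\tau$ is close to $\tau_0$, while the strict positivity of $w_{\tau_0}$ away from $\tau_0$ on $\partial D$ allows $A=A(\varepsilon)$ to be chosen large enough, uniformly in $x$, so that $\underline{u}_\varepsilon\le\Psi$ on $X\times\partial D$. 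Hence $\underline{u}_\varepsilon\in\F_{\Omega,\Psi}\subset\B_{\Omega,\Psi}$ and $\Phi_{\Omega,\Psi}(x_0,\tau_0)\ge\Psi(x_0,\tau_0)-\varepsilon$; sending $\varepsilon\to 0$ and combining with the upper-barrier estimate gives the continuous attainment of $\Psi$ by $\Phi_{\Omega,\Psi}$ on $X\times\partial D$, and the same family simultaneously confirms that $\sup\F_{\Omega,\Psi}$ attains $\Psi$ on $X\times\partial D$.

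The USC regularization $\Phi^*_{\Omega,\Psi}$ is $\Omega$-psh and still has $\limsup\le\Psi$ at $X\times\partial D$ (by the upper-barrier estimate), so $\Phi^*_{\Omega,\Psi}\in\B_{\Omega,\Psi}$ and $\Phi_{\Omega,\Psi}=\Phi^*_{\Omega,\Psi}$ is upper semicontinuous. To establish interior continuity and the identity~(\ref{coincideenves}) at the same time, I would show that every $u\in\B_{\Omega,\Psi}$ equals a pointwise supremum of continuous members of $\F_{\Omega,\Psi}$: apply Proposition~\ref{regpsh} to produce smooth $u_j\in\C^\infty\cap\psh_\Omega(X\times D_{1/j})$ with $u_j\downarrow u$, then glue the shifted approximants $u_j-\delta_j$ (with $\delta_j\downarrow 0$) to the lower-barrier family $\{\underline{u}_\varepsilon\}$ along a shrinking collar of $X\times\partial D$ via Demailly's regularized maximum, producing continuous $v_j\in\F_{\Omega,\Psi}$ that converge to $u$ on compact subsets of $X\times D$. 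This forces $\sup\F_{\Omega,\Psi}\ge\Phi_{\Omega,\Psi}$, proving~(\ref{coincideenves}), and exhibits $\Phi_{\Omega,\Psi}$ as a supremum of a continuous family, hence lower semicontinuous in the interior; combined with the USC property above, one obtains continuity on $X\times\overline{D}$. The main obstacle is this global gluing step: because $X$ is non-compact, Walsh's original translation/dilation argument is unavailable, and the regularized-max construction must be carried out uniformly in $x\in X$, relying essentially on the uniform-in-$x$ form of Proposition~\ref{regpsh} and on the uniform continuity of $\Psi$ in the $X$-direction.
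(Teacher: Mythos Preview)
Your barrier construction and the USC step match the paper's argument (the paper uses the simpler $u_+(x,\tau)=\Psi(x,\tau_0)+\varepsilon+kw(\tau)$ in place of your harmonic extension, but either works). The divergence is in the lower-semicontinuity step, and the obstacle you flag at the end is real for your route but entirely avoidable.

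The paper's simplification is twofold. First, there is no need to approximate every $u\in\B_{\Omega,\Psi}$: once $\Phi_{\Omega,\Psi}=\Phi^*_{\Omega,\Psi}$ is known to be $\Omega$-psh, apply Proposition~\ref{regpsh} directly to $\Phi_{\Omega,\Psi}$ itself, obtaining smooth $\varphi_k\downarrow\Phi_{\Omega,\Psi}$ on $X\times D_{1/k}$. Second, there is no need to land in $\F_{\Omega,\Psi}$ or to glue with barriers: the uniform boundary continuity from step one forces $\varphi_k\le\Phi_{\Omega,\Psi}+2\varepsilon$ on the collar $X\times(D_{1/k}\setminus D_{\delta/2})$ (the regularization averages at scale $\sim 1/k$ and $\Phi_{\Omega,\Psi}$ has a uniform modulus there, being uniformly close to the uniformly continuous $\Psi$), so the ordinary maximum $\tilde\varphi_k=\max(\Phi_{\Omega,\Psi},\varphi_k-2\varepsilon)$ extends by $\Phi_{\Omega,\Psi}$ across the collar to all of $X\times D$ and lies in $\B_{\Omega,\Psi}$. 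This gives $\varphi_k-2\varepsilon\le\Phi_{\Omega,\Psi}$ on $X\times D_{\delta/2}$, and continuity of $\varphi_k$ immediately yields lower semicontinuity of $\Phi_{\Omega,\Psi}$ there; the boundary-continuity estimate already covers the remaining collar. The identity~(\ref{coincideenves}) then follows for free: $\Phi_{\Omega,\Psi}$ is continuous, hence itself belongs to $\F_{\Omega,\Psi}$, so $\sup_{\F_{\Omega,\Psi}}\ge\Phi_{\Omega,\Psi}=\sup_{\B_{\Omega,\Psi}}$, and the reverse inequality is trivial.

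Your proposed gluing of $u_j-\delta_j$ with the barrier family $\{\underline u_\varepsilon\}$ is delicate because a single $\underline u_\varepsilon^{(\tau_0)}$ only dominates $u$ on $\partial D$ near $\tau_0$, so it cannot absorb $u_j-\delta_j$ on the full collar $X\times(D\setminus D_{1/j})$; taking a supremum over $\tau_0\in\partial D$ introduces its own continuity and uniformity issues. The paper's route sidesteps all of this by gluing with $\Phi_{\Omega,\Psi}$ itself rather than with external barriers, and by targeting $\B_{\Omega,\Psi}$ rather than $\F_{\Omega,\Psi}$.
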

\begin{proof} It is obvious to see that $\Phi_{\Omega, \Psi}$ is bounded in $X \times D$ and (\ref{coincideenves}) directly follows from the continuity of $\Phi_{\Omega, \Psi}$. In this proof, we only need to show $\Phi_{\Omega, \Psi}$ is continuous.

    The first step is to prove that $\Phi_{\Omega, \Psi}$ is uniformly continuous to $\Psi$ on $X \times \partial D$. In other words, we need to prove that for $\varepsilon >0$, there exists a $\delta>0$ such that for any $(x,\tau) \in X \times \partial {D}$ and $B'_{\delta}(x,\tau) = \{(x',\tau') \in X \times \overline{D}; d_{\Theta} ((x,\tau), (x', \tau')) \leq \delta\}$, we have
    \begin{align} \label{uniconofbdry}
            \big|\Phi_{\Omega, \Psi}(x', \tau') - \Psi(x, \tau)\big| \leq \varepsilon, \qquad (x,\tau) \in X\times \partial D, \quad (x', \tau') \in B'_\delta (x, \tau).
    \end{align}
   By virtue of the uniform continuity of $\Psi$, for each $\varepsilon >0$, there exists $\delta >0$ such that 
   \begin{align*}
   |\Psi(x_1, \tau_1)- \Psi(x_2,\tau_2)| \leq \varepsilon, \qquad (x_1,\tau_1), \ (x_2, \tau_2) \in X \times \partial D,
   \end{align*}
   if $d_\Theta ((x_1,\tau_1), (x_2, \tau_2)) < \delta$. Let $M$ be the $L^\infty$ bound of $ \Psi$. Fixing a point $ \tau_0 \in  \partial D$, let $w (\tau)$ be a barrier function at $\tau_0$ satisfying
    \begin{align*}
        w(\tau_0) =0, \qquad w(\tau) >0 \text{ if }  \tau \in \overline{D}-\{\tau\},
    \end{align*}
    and that $w$ is continuous super-harmonic.
    There exists a constant $k$ such that $k w(\tau) \geq 2M$ for $|\tau-\tau'| >\delta$. Consider the following functions in $X \times D$,
    \begin{align} \label{barrierfunctions}
       u_- (x, \tau) = \Psi(x,\tau_0) -\varepsilon- k w(\tau), \qquad u_+(x, \tau) = \Psi (x, \tau_0) + \varepsilon + k w (\tau)
    \end{align}
    
    To prove $u_- \leq \Phi_{\Omega, \Psi}$ in $X \times D$, notice that $-k w$ is a continuous subharmonic function in $D$. It is observed from the uniform continuity of $\Psi$ and the choice of $k$ in (\ref{barrierfunctions}) that $u_-(x, \tau) \leq \Psi(x, \tau)$ for $\tau \in \partial D$. Hence $u_- \in \B_{\Omega, \Psi}$. By definition of $\Phi_{\Omega, \Psi}$, we have $u_- \leq \Phi_{\Omega, \Psi}$ in $X \times D$. Hence, we have
    \begin{align*}
        \Psi(x, \tau_0) -\varepsilon \leq \Phi_{\Omega, \psi} (x, \tau_0),
    \end{align*}
    and by shrinking $\delta$ further (only depending on $\varepsilon$), for any $(x',\tau') \in B'_\delta(x,\tau_0)$, we have,
    \begin{align} \label{pfofuniconbdry1}
        \Phi_{\Omega, \Psi}(x',\tau')\geq \Psi(x', \tau_0) -2\varepsilon \geq \Psi(x, \tau_0) -3\varepsilon.
    \end{align}
    
    For each function $u \in \B_{\Omega, \Psi}$, notice that $\displaystyle u_+(x,\tau) \geq \Psi(x,\tau) \geq \limsup_{(x',\tau') \rightarrow (x, \tau)} u(x, \tau)$ for $(x,\tau) \in X \times \partial D$.  
    It is observed that $u$ is a subharmonic function restricted to each slice $\{x\} \times D$, and $u_{+}$ is a continuous superharmonic function on $\{x\} \times D$. The fundamental property of subharmonic functions implies that $u_{+}(x, \tau) \geq u(x,\tau)$ for $\tau \in D$. Then, for any $(x', \tau') \in B'_\delta(x, \tau_0)$, we have
    \begin{align}\label{pfofuniconbdry2}
    \begin{split}
        \Phi_{\Omega, \Psi} (x',\tau') = \sup_{u \in \B_{\Omega, \Psi}} u(x', \tau') &\leq u_+ (x', \tau') \\
        &\Psi(x', \tau_0) + 2\varepsilon \leq \Psi(x, \tau_0) + 3\varepsilon. 
    \end{split}
    \end{align}
    In conclusion, (\ref{pfofuniconbdry1}) and (\ref{pfofuniconbdry2}) imply the uniform continuity on boundary (\ref{uniconofbdry}).

    The second step is to show that $\Phi_{\Omega, \Psi}$ is continuous on $X \times \overline{D}$.  Let 
    \begin{align*}
        \big(\Phi_{\Omega, \Psi}\big)^*(x,\tau) = \limsup_{(x', \tau')\rightarrow (x, \tau)} \Phi_{\Omega, \Psi} (x', \tau'), \qquad (x,\tau),\ (x',\tau') \in X \times D.
    \end{align*}
    Then, $(\Phi_{\Omega, \Psi})^* $ is an $\Omega$-plurisubharmonic function with the boundary condition (\ref{uniconofbdry}); hence, $(\Phi_{\Omega, \Psi})^* \in \B_{\Omega, \Psi}$. We conclude that $ \Phi_{\Omega, \Psi}=(\Phi_{\Omega, \Psi})^*$, and $\Phi_{\Omega, \Psi}$ is upper-semicontinuous. 
    
    It suffices to prove that $\Phi_{\Omega,\Psi}$ is lower-semicontinuous. The inequalities, (\ref{pfofuniconbdry1}) and (\ref{pfofuniconbdry2}), imply that $\Psi_{\Omega, \Psi}$ is uniformly continuous to $\Psi$ on the boundary (see (\ref{uniconofbdry})).
    According to proposition \ref{regpsh} there exists a sequence of smooth $\Omega$-psh functions, $\{\varphi_k\}$, decreasing to $\Phi_{\Omega, \Psi}$. The uniform continuity of $\Phi_{\Omega, \Psi}$ on the boundary implies that by choosing $k$ large enough, we have that 
    \begin{align*}
        \Phi_{\Omega, \Psi} (x, \tau) \leq \varphi_{k} (x, \tau) \leq  \Phi_{\Omega, \Psi} (x, \tau) + 2\varepsilon,
    \end{align*}
    for $(x, \tau) \in X \times (D_{1/k} - D_{\delta/2})$. Consider the function 
    \begin{align*}
        \tilde{\varphi}_k (x, \tau) = 
        \begin{cases} \max \{\Phi_{\Omega, \Psi} (x, \tau), \varphi_k (x,\tau) -2\varepsilon\}, \quad & (x, \tau) \in X \times D_{1/k} \\
        \Phi_{\Omega, \Psi}(x,\tau), & (x,\tau) \in X \times (D - D_{1/k}). 
        \end{cases}
    \end{align*}
    Then, $\tilde{\varphi}_k $ is an $\Omega$-psh function defined in $X \times D$ satisfying the boundary condition $\tilde{\varphi}_k|_{X \times \partial D}  = \Psi $. Hence, we have that $\tilde{\varphi}_k  \leq \Phi_{\Omega, \Psi}$. For any $(x, \tau) \in X \times D_{\delta/2} \subseteq X \times D_{1/k}$, we have
    \begin{align*}
        \Phi_{\Omega, \Psi} (x, \tau)- 2\varepsilon \leq \varphi_k(x,\tau) -2\varepsilon \leq \Phi_{\Omega, \Psi} (x, \tau) 
    \end{align*}
    Since $\varphi_k$ is continuous at $(x, \tau)$, there is a small neighborhood $B_{\theta} (x, \tau) \subseteq X \times D_{1/k}$ such that $|\varphi_k (x', \tau') - \varphi_k (x, \tau)| \leq \varepsilon$, for $(x', \tau' )\in B_\theta (x, \tau)$. Hence, we have
    \begin{align} \label{contiinside}
        \Phi_{\Omega, \Psi} (x, \tau) -3 \varepsilon \leq \varphi_k (x, \tau)-3\varepsilon \leq \varphi_{k} (x', \tau') -2\varepsilon \leq \Phi_{\Omega, \Psi} (x', \tau'), \quad (x', \tau') \in B_\theta (x, \tau).
    \end{align}
    For any $(x, \tau) \in X \times (D-D_{\delta/2})$, there exists a neighborhood $B'_{\delta/2} (x, \tau) $ centered at $(x, \tau)$ and $(x, \tau_0) \in X \times \partial D$ such that $B'_{\delta/2} (x, \tau) \subseteq B'_{\delta} (x, \tau_0)$, . Based on (\ref{uniconofbdry}), we have that
    \begin{align} \label{continearbdry}
        \Phi_{\Omega, \Psi} (x, \tau) - 2\varepsilon \leq  \Psi(x,\tau_0) -\varepsilon \leq \Phi_{\Omega, \Psi} (x', \tau'), \quad (x', \tau') \in B'_{\delta/2} (x, \tau).
    \end{align}
    (\ref{contiinside}) and (\ref{continearbdry}) indicate the lower semi-continuity of $\Phi_{\Omega, \Psi}$.
    \end{proof}

\subsection{The generalized solution of Dirichlet problem on $X \times D$} \label{subsecgeneralsolu}

In Bedford-Taylor \cite{bedford1976dirchlet}, the existence of the generalized solution to the Monge-Amp\`er{e} equations was proved for the bounded domain in $\CC^n$. In this subsection, we generalize the result to the product space of an ALE K\"ahler manifold and a bounded domain in $\CC^n$. By Proposition \ref{contiofupperenve}, if we assume the boundary is strongly regular, the existence of the generalized solution can be proved without additional assumptions.

\begin{thm} \label{thmgsolHCMAupenv}
    Let $D$ be a bounded domain in $\CC$. Let $\Psi $ be a bounded uniformly continuous function on $X \times \partial D$ and $\psi_{\tau}(\cdot)$ is $\omega$-psh function for each $\tau \in\partial D$. If 
    \begin{enumerate}
        \item[(i)] $\Pl_{\Omega, \Psi} (X \times D)$ is nonempty,
        \item[(ii)] the upper envelope of $\Pl_{\Omega, \Psi}(X \times D)$, $\Phi$, is continuous with $\Phi_{\Omega, \Psi} = \Psi$ on $\partial \Omega$, 
    \end{enumerate}
     then $\Phi$ is the unique bounded continuous solution to the Dirichlet problem,
        \begin{align*}
            &(\Omega + dd^c \Phi)^{n+1} = 0, \quad \text{in } X \times D,\\
            &\Phi \in \Pl_{\Omega} (\Omega) \cap C(X \times \overline{D}), \\
            & \Phi = \Psi, \qquad \text{on } X \times \partial D.
        \end{align*}
        Furthermore, if $D$ is a bounded domain with strongly regular boundaries, then the upper envelope $\Phi$ is the unique bounded continuous solution to the above Dirichlet problem.
\end{thm}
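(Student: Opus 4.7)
My plan is to combine a local balayage (Perron-type) argument with the maximum principle of Theorem \ref{intromainmp} to handle existence and uniqueness separately, and then to invoke Proposition \ref{contiofupperenve} to dispatch the second assertion.

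Under hypotheses (i) and (ii), $\Phi:=\Phi_{\Omega,\Psi}$ is already a bounded continuous $\Omega$-psh function on $X\times D$ with $\Phi=\Psi$ on $X\times\partial D$. The only nontrivial point is to verify $(\Omega+i\partial\overline\partial \Phi)^{n+1}=0$ weakly in $X\times D$. For this, fix an arbitrary point $p\in X\times D$ and choose a small open set $U$ containing $p$ that is biholomorphic to a ball in $\CC^{n+1}$, lies compactly in $X\times D$, and on which $\Omega$ admits a smooth potential $\rho$, i.e.\ $\Omega|_U=i\partial\overline\partial\rho$. Then $v:=(\Phi+\rho)|_{\overline U}$ is a bounded continuous psh function on $\overline U$. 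Apply the classical Bedford--Taylor solvability of the HCMA Dirichlet problem on a ball in $\CC^{n+1}$ with continuous boundary data to obtain a bounded continuous psh function $w$ on $\overline U$ with $w|_{\partial U}=v|_{\partial U}$, $w\geq v$ on $U$, and $(i\partial\overline\partial w)^{n+1}=0$ on $U$.

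Now paste: define
\[
\tilde\Phi(x,\tau)=\begin{cases} w(x,\tau)-\rho(x,\tau), & (x,\tau)\in \overline U,\\ \Phi(x,\tau), & (x,\tau)\in X\times D\setminus U.\end{cases}
\]
Since $w-\rho\geq \Phi$ on $U$ and $w-\rho=\Phi$ on $\partial U$, the function $\tilde\Phi$ is continuous on $X\times D$, and the standard pasting lemma for $\Omega$-psh functions (applied locally after subtracting a potential of $\Omega$) shows $\tilde\Phi$ is $\Omega$-psh. As $\tilde\Phi$ agrees with $\Phi$ outside $U$, it inherits the boundary condition, so $\tilde\Phi\in\B_{\Omega,\Psi}$. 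By the maximality of $\Phi$ we get $\tilde\Phi\leq\Phi$, and combined with $\tilde\Phi\geq\Phi$ on $U$ this forces $\tilde\Phi=\Phi$ on $U$. Hence $(\Omega+i\partial\overline\partial\Phi)^{n+1}=(i\partial\overline\partial w)^{n+1}=0$ on $U$, and since $p$ was arbitrary the equation holds throughout $X\times D$.

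For uniqueness, let $\Phi'$ be any other bounded continuous solution with $\Phi'|_{X\times\partial D}=\Psi$. Both $\Phi$ and $\Phi'$ satisfy the hypotheses of Theorem \ref{intromainmp} (the required upper bound on the Monge--Amp\`ere measure is automatic, since this measure vanishes identically for HCMA solutions), and each dominates the other on the boundary. Applying Theorem \ref{intromainmp} symmetrically yields $\Phi\leq\Phi'$ and $\Phi'\leq\Phi$ on $X\times D$, so $\Phi=\Phi'$. Finally, the strongly regular boundary case follows at once: Proposition \ref{contiofupperenve} supplies both nonemptiness of $\B_{\Omega,\Psi}$ (via the explicit subsolution $u_-$ built from barriers) and the continuity of the upper envelope up to $X\times\partial D$ with the prescribed boundary values, so hypotheses (i) and (ii) hold and the previous case applies.

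The main obstacle is the balayage/pasting step: one must be careful that $U$ can be chosen small enough to be a coordinate ball supporting a smooth potential for $\Omega$ (granted here by Corollary \ref{corcxasymptoticcovering} on the end and by ordinary coordinate charts on the compact part), that the flat Bedford--Taylor solution $w$ matches $\Phi+\rho$ continuously on $\partial U$, and that pasting across $\partial U$ genuinely produces a globally $\Omega$-psh function in $\B_{\Omega,\Psi}$. The rest of the argument is essentially a mechanical appeal to previously established results.
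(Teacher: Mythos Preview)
Your proposal is correct and follows essentially the same approach as the paper: a local balayage argument using Bedford--Taylor solvability on a small coordinate ball, the pasting step to force $(\Omega+i\partial\overline\partial\Phi)^{n+1}=0$, uniqueness via Theorem~\ref{intromainmp}, and Proposition~\ref{contiofupperenve} for the strongly regular case. The only cosmetic difference is that the paper phrases the local Dirichlet problem directly for $\Omega+dd^c v$ and invokes the comparison principle on the ball (citing B{\l}ocki) to obtain $\Phi\leq v$, whereas you subtract a local potential and absorb this inequality into the Bedford--Taylor package; also, appealing to Corollary~\ref{corcxasymptoticcovering} for the existence of local charts is unnecessary, since ordinary holomorphic coordinate balls suffice at every point of $X\times D$.
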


\begin{proof} The uniqueness of the bounded continuous solution follows from the maximum principle, Theorem \ref{intromainmp}, whose proof will be completed in the next section. To prove the existence, we only have to prove $(\Omega +dd^c \Phi_{\Omega, \Psi})^{n+1} =0$ in $X \times D$, where $\Phi_{\Omega,\Psi}$ is the upper envelope of $\B_{\Omega, \Psi}$. Fixing $p \in X \times D$, there exists a local holomorphic chart, $\widetilde{U}$, in $X \times D$ around $p$. Choose a small positive $\varepsilon>0$ such that the Euclidean ball, $B_\varepsilon (p)$, has its closure contained in $\widetilde{U}$. It's the classic result by Bedfore-Taylor \cite{bedford1976dirchlet} that the continuous solution exists for Monge-Amp\`er{e} equation in Euclidean balls with continuous boundary conditions. Precisely, a continuous solution to the following equation exists,
    \begin{align*}
        &(\Omega + dd^c v )^{n+1} = 0, \quad \text{on } B_\varepsilon (p), \\
        &v = \Phi_{\Omega, \Psi},\qquad \qquad \quad \text{on } \partial B_\varepsilon(p),\\
        & v \in \Pl_\Omega(B_\varepsilon(p)).
    \end{align*}
    Since $\Phi_{\Omega, \Psi}$ is a continuous $\Omega$-psh function, we have $(\Omega +dd^c\Phi_{\Omega, \Psi})^{n+1} \geq 0 = (\Omega + dd^c v)^{n+1} $. By the maximum principle of Monge-Amp\`{e}re operator on $B_{\varepsilon} (p)$ (see B{\l}ocki \cite{blocki2009geodesics}), we have that $\Phi \leq v$. Set
    \begin{align*}
        \Phi^*(z) = \begin{cases}
            v(z),\quad & \text{on } B_\varepsilon(p) \\
            \Phi_{\Omega, \Psi}(z), & \text{on } (X \times D) \backslash B_\varepsilon(p).
        \end{cases}
    \end{align*}
    Then, $\Phi^*$ is bounded continuous psh function on $X \times D$ such that $\Phi^* (z) = \Psi_{\Omega, \Psi}(z)$ on $X \times \partial D $. Then, $\Phi^* \leq \Phi_{\Omega, \Psi}$ and $\Phi_{\Omega, \Psi} = v$ on $B_\varepsilon(p)$. Hence, we complete the proof. 

    In the case of $D$ with strongly regular boundaries, the conditions (i) and (ii) are satisfied due to Proposition \ref{contiofupperenve}.
\end{proof}

\begin{rem} \label{remgsolHCMAupenvcptcase}
Now assume that \( X \) is a compact K\"ahler manifold without boundary, and let \( D \subseteq \mathbb{C} \) be a bounded domain with strongly regular boundary. Suppose further that \( \Psi \in \mathcal{C}^0(X \times \partial D) \). Since \( X \) is compact, \( \Psi \) is in particular uniformly continuous and bounded.

In this setting, the HCMA equation on \( X \times D \) with boundary data given by \( \Psi \) admits a bounded continuous solution, which can be obtained as the upper envelope of bounded \( \Omega \)-plurisubharmonic subsolutions. The argument proceeds as follows: first, using the same method as in the proof of Proposition~\ref{contiofupperenve}, one shows that the upper envelope is continuous; then, by following the proof of Theorem~\ref{thmgsolHCMAupenv}, one concludes that the envelope solves the HCMA equation in the weak sense.
\end{rem}

\section{Maximal Principle} \label{secMPofMAALE}

The uniqueness parts of Theorem \ref{mthmasymbehHCMA} and Theorem \ref{mthmHCMAc11} follow immediately from the maximal principle, Theorem \ref{intromainmp}, applied in the setting of ALE K\"ahler manifolds. This section is dedicated to completing the proof of Theorem \ref{intromainmp}.
Let $X$ be an ALE K\"ahler manifold with a fixing ALE K\"ahler metric $\omega$ and $\Omega= p^* \omega$, the pullback form of $\omega$ on $X \times D$. Consider the class of bounded continuous $\Omega$-psh functions; precisely
$$\F_{\Omega} = \{u \in \Pl_\Omega (X \times D) \bigcap \C(X \times D); ~ u \text{ is bouned on } X \times D \}.$$
Each function $u \in \F_{\Omega}$ defines a positive current, $\Omega + dd^c u$, on $X \times D$. The complex Monge-Amp\`{e}re operator associated with a function $u \in \F_\Omega$, namely $(\Omega +dd^c u)^{n+1}$, is also well-defined in the sense of currents. Consider a smooth test $(p,p)$-form $v$ with a compact support $K \subseteq X \times D$. If $T$ is a closed positive current, $ dd^c u \wedge T$ is defined by
$$ \int_K dd^c u \wedge T \wedge v =  \int_K uT \wedge dd^c v. $$
This inductive procedure defines the complex Monge-Amp\`{e}re operator $(\Omega +dd^c u)^{n+1}$. The theory of positive currents and the generalized complex Monge-Amp\`{e}re operators has been 
extensively developed in tons of literature (for instance, see \cite{bedford1976dirchlet, bedford1976capacity, demailly1997complex, Klimek2023pluri}). In the following, we recall two useful results that will be applied in context of ALE K\"ahler manifolds. 

The generalized complex Monge-Amp\`{e}re operator extends the classic definition on $\F_\Omega \cap \C^\infty $ to bounded $\Omega$-psh functions via approximation by smooth functions. Let $\{u_1^k\}_{k \in\NN}, \ldots, \{u_{p}^k\}_{k \in \NN}$ be decreasing sequences in $\F_\Omega \cap \C^\infty (X \times D)$ and let $u_1, \ldots, u_{p} \in \F_\Omega$ such that $\displaystyle \lim_{k} u^k_j = u_j $ for $j=1, \ldots, p$. Then, we have
$$\big(\Omega+dd^c u_1^k\big) \wedge \ldots \wedge \big( \Omega + dd^c u_p^k \big)  \rightarrow  \big( \Omega + dd^c u_1 \big)  \wedge \ldots \wedge  \big( \Omega +  dd^c u_p \big),$$
in the sense of currents. Another useful result is the known Chern-Levine-Nirenberg estimates, which provide uniform control on wedge products of positive currents associated with plurisubharmonic functions. Here, we state the estimate in local coordinates. Let $K$ be a compact set in $\CC^{n+1}$ with an open neighborhood $K\subseteq U$, and let $u_1, \ldots u_{n+1}$ be psh functions on $U$. Then, we have
$$ \int_K dd^c u_1 \wedge \ldots \wedge dd^c u_{n+1} \leq C ||u_1||_{L^\infty (U)} \ldots ||u_{n+1}||_{L^\infty (U)},$$
where $C$ is a uniform constant depending only on $n$ and the geometry of $U \setminus K$, and can be explicitly bounded by
$$C = C (n, U\backslash K) \int_U \omega_{euc}^{n+1}.$$

The maximum principle for the generalized complex Monge-Amp\`{e}re operator was proved by Bedford-Taylor \cite{bedford1976dirchlet} in the setting of bounded domains in $\CC^n$. A generalization of the maximal principle to compact K\"ahler manifolds with boundary was developed by Blocki in \cite{blocki2009geodesics}. In Yao~\cite{yao2024geodesicequationsasymptoticallylocally}, a version of the maximal principle has been proved on $X \times D$, under the assumption of uniform positivity of the forms and at least $\C^2$-ragularity of the functions involved. In this section, we establish a version of the maximum principle that removes both the uniform positivity and $\C^2$-regularity assumptions.

The proof of the maximal principle is nontrivial compared to the compact version due to the following simple example:
consider functions $u \equiv 0$ and $v= 1.1- |\tau|^2$ defined on $X \times \DD$, where $\DD$ is the unit disc centered at the origin in $\CC$ and $\tau$ is the complex coordinate of $\DD$. It can be checked that $(\partial \overline{\partial} u )^{n+1} = (\partial \overline{\partial} v)^{n+1} =0$ and $v > u $ on $X \times \partial \DD$, but, $v \geq u$ does not hold on $X \times \DD$. However, the maximum principle still holds under the assumption that the reference K\"ahler form is positive in the directions tangent to $X$. In our setting, this is naturally satisfied by considering the pullback of the reference ALE K\"ahler form, $\Omega$, on $X \times D$.

Let $\Theta$ be the K\"ahler form induced by the standard product metric on $X \times D$, given by $\Theta = \Omega + i d\tau \wedge d\bar{\tau}$.
The main theorem of this section is the following:

\begin{thm}\label{mainthmmaxp}
    Let $X$ be an ALE K\"ahler manifold, and let $D$ be a bounded domain in $\CC$. Let  $\Omega$ be the pullback of the reference K\"ahler form on $X$ to $X \times D$. Suppose that $u$, $v$ are bounded continuous $\Omega$-psh functions on $X \times D$ such that for some constant $E_0>0$,
    \begin{align} \label{conmpbddvolume}
     (\Omega+ dd^c v)^{n+1} \leq E_0 \Theta^{n+1}
    \end{align}
    in the sense of currents, where $ \Theta = \Omega + i d\tau \wedge d\bar{\tau}$ is the product K\"ahler form on $X \times D$. Assume further that
    $$ (\Omega + dd^c v)^{n+1} \leq  (\Omega + dd^c u)^{n+1} \hspace{0.6cm}  \text{ on } X \times D,$$ 
    and $v \geq u$ on $ X \times \partial D$.
    Then, we have $v\geq u$ on $X \times D$.
\end{thm}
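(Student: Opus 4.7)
The strategy is a contradiction argument built on the Bedford--Taylor comparison principle for bounded $\Omega$-plurisubharmonic functions, adapted to the non-compactness of $X$ and the degeneracy of $\Omega$ in the $\tau$-direction. The new ingredients compared with the $C^2$ and uniformly-positive setting already treated in \cite{yao2024geodesicequationsasymptoticallylocally} will be the approximation theorem (Proposition~\ref{regpsh}) and a two-parameter perturbation of $v$.

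First, I would break the $\tau$-degeneracy of $\Omega$ by setting $v_\varepsilon := v + \varepsilon |\tau|^2$ for $\varepsilon > 0$. Then $v_\varepsilon \geq v \geq u$ on $X \times \partial D$, $v_\varepsilon$ remains $\Omega$-plurisubharmonic, and since $(i\, d\tau \wedge d\bar{\tau})^{2} = 0$ the expansion
\begin{align*}
(\Omega + dd^c v_\varepsilon)^{n+1} = (\Omega + dd^c v)^{n+1} + (n+1)\varepsilon\,(\Omega + dd^c v)^n \wedge i\, d\tau \wedge d\bar{\tau}
\end{align*}
holds in the sense of currents. It suffices to show $v_\varepsilon \geq u$ on $X \times D$ for every $\varepsilon > 0$ and then let $\varepsilon \to 0$.

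Next, the contact set $\Sigma_\varepsilon := \{v_\varepsilon < u\}$ is disjoint from $X \times \partial D$ but may extend to infinity in $X$, so I would compactify it using the ALE structure. By \eqref{decayaleintro}, $r^2$ is strictly $\omega_0$-plurisubharmonic on the end $X_\infty$ and $\omega - \omega_0 = O(r^{-\mu})$. Choosing a smooth convex function $\chi_R$ that vanishes on $X_R$ and grows at infinity, I would introduce the further perturbation $\tilde v := v_\varepsilon + \eta\, \chi_R(r)$; for $\eta$ small and $R$ large, $\tilde v$ remains $\Omega$-plurisubharmonic up to a multiplicative error absorbed by the metric decay, while $\tilde v \to +\infty$ at infinity in $X$. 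Since $u$ is bounded, $\tilde\Sigma := \{\tilde v < u\}$ is then relatively compact in $X \times D$. The approximation of $v$ by smooth $\Omega$-psh functions from Proposition~\ref{regpsh} will be used to justify the integral identities that follow.

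On $\tilde\Sigma$ the Bedford--Taylor comparison principle yields
\begin{align*}
\int_{\tilde\Sigma} (\Omega + dd^c u)^{n+1} \leq \int_{\tilde\Sigma} (\Omega + dd^c \tilde v)^{n+1}.
\end{align*}
Combining with the hypothesis $(\Omega + dd^c v)^{n+1} \leq (\Omega + dd^c u)^{n+1}$, expanding the right-hand side, and using the Chern--Levine--Nirenberg estimate driven by $(\Omega + dd^c v)^{n+1} \leq E_0 \Theta^{n+1}$ to control all of the integrals uniformly, one forces the strictly positive correction from the perturbation to vanish on $\tilde\Sigma$; taking $\eta \to 0$, then $R \to \infty$, and finally $\varepsilon \to 0$ gives $v \geq u$. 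The main obstacle will be extracting this strict inequality: because $\Omega$ is degenerate in $\tau$, the correction $(\Omega + dd^c v)^n \wedge i\, d\tau \wedge d\bar{\tau}$ need not admit a pointwise positive lower bound in terms of $\Theta^{n+1}$, and the argument must therefore be closed through integral estimates uniform in the approximation and perturbation parameters---which is exactly where the quantitative hypothesis $(\Omega + dd^c v)^{n+1} \leq E_0 \Theta^{n+1}$ is essential.
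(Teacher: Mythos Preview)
Your scheme does not close. On $\tilde\Sigma=\{\tilde v<u\}$ the comparison principle gives
\[
\int_{\tilde\Sigma}(\Omega+dd^c u)^{n+1}\;\le\;\int_{\tilde\Sigma}(\Omega+dd^c \tilde v)^{n+1},
\]
and combining with the hypothesis $(\Omega+dd^c v)^{n+1}\le(\Omega+dd^c u)^{n+1}$ yields only
\[
\int_{\tilde\Sigma}(\Omega+dd^c v)^{n+1}\;\le\;\int_{\tilde\Sigma}(\Omega+dd^c v)^{n+1}+\text{(positive perturbation terms)},
\]
which is vacuous. Your perturbations make the right-hand side \emph{larger}, not smaller, so no contradiction is forced; you yourself flag this (``the main obstacle will be extracting this strict inequality'') and leave it unresolved. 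That is the core gap.

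The paper's proof supplies the missing mechanism. Rather than perturbing $v$, it shifts the reference form to $\Theta_E=\Omega+E\,i\,d\tau\wedge d\bar\tau$ with $E=2E_0$ (so that \eqref{conmpbddvolume} becomes $(\Theta_E+dd^c v_E)^{n+1}\le\frac12\Theta_E^{n+1}$), normalizes $u_E,v_E\ge 0$, and then \emph{scales $u_E$ by a factor $\theta\in(0,1)$}. The point is the algebraic identity
\[
\Theta_E+dd^c(\theta u_E)=\theta\bigl(\Theta_E+dd^c u_E\bigr)+(1-\theta)\,\Theta_E,
\]
to which one applies the concavity of $P\mapsto(\det P)^{1/(n+1)}$ (Lemma~\ref{MPcomputationlem}). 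Since $\det(\Theta_E+dd^c u_E)\ge\det(\Theta_E+dd^c v_E)$ by hypothesis and $\det\Theta_E>\det(\Theta_E+dd^c v_E)$ from the $E_0$-bound, this produces a fixed gap $\det(\theta A+(1-\theta)\Theta_E)\ge\det C+\delta'$ with $\delta'>0$ independent of the point, which contradicts the comparison inequality. Non-compactness is handled not by a global radial barrier but by the weak Wu--Yau lemma (Lemma~\ref{WuYauweak}), which localizes the almost-maximum of $\theta u_E-v_E$ to a single coordinate ball; a further local quadratic correction $-\delta|z|^2$ then forces $\{\tilde u>\tilde v\}\Subset B_s(x_k)$ so that Lemma~\ref{intcompare} applies. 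Finally, the passage from $C^2$ to continuous bounded $\Omega$-psh functions uses not Proposition~\ref{regpsh} but Lemma~\ref{lemregu} (the equivalence of pluripotential, viscosity, and mollified subsolutions), which allows the determinant inequality to be applied at the level of local convolutions $u_\varepsilon$.
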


Before proving Theorem \ref{mainthmmaxp}, we recall the following two lemmas. One is the comparison theorem originally proved by Bedford–Taylor \cite{bedford1976dirchlet}. Another one is Wu-Yau's maximal principle (see \cite{cheng1980existence, wu2008kahler}) on open manifolds. Here, we prove a slightly refined version of Wu-Yau's maximal principle applicable to functions with weak regularity on open manifolds in our setting.

    \begin{lem} \label{intcompare}
    Let $U$ be a bounded domain with smooth boundary in $\CC^{n+1}$ and let $u,v$ be bounded psh functions defined on $U$. Suppose that for each $w\in \partial U$
    \begin{align*}
        \liminf_{\substack{z\rightarrow w\\ z \in U}} (u(z) - v(z)) \geq 0.
    \end{align*}
    Then, we have
    \begin{align*}
        \int_{\{u< v\}} (dd^c v)^{n+1} \leq \int_{\{u<v\}} (dd^c u)^{n+1}.
    \end{align*}
\end{lem}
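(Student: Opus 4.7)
The plan is to follow the classical Bedford--Taylor max-trick: perturb $u$ upward by a small constant $\varepsilon$, form the pointwise maximum $w_\varepsilon := \max(u+\varepsilon, v)$, apply a Stokes-type identity to equate its top Monge--Amp\`{e}re mass with that of $u$, split that mass across two natural open sets using the locality of the Monge--Amp\`{e}re operator, and finally pass to the limit as $\varepsilon\downarrow 0$.

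First I would set $u_\varepsilon := u+\varepsilon$. The boundary hypothesis $\liminf_{z\to w}(u-v)\geq 0$ guarantees that $u_\varepsilon - v \geq \varepsilon/2$ in some open neighborhood $V_\varepsilon$ of $\partial U$ inside $U$; consequently $\{u_\varepsilon < v\} \Subset U$ and $w_\varepsilon \equiv u_\varepsilon$ on $V_\varepsilon$. The technical core of the argument is the Stokes identity: if two bounded psh functions on $U$ coincide in a neighborhood of $\partial U$, then their top Monge--Amp\`{e}re masses on $U$ agree. Applied to $u_\varepsilon$ and $w_\varepsilon$ this yields
\begin{equation*}
\int_U (dd^c w_\varepsilon)^{n+1} = \int_U (dd^c u_\varepsilon)^{n+1} = \int_U (dd^c u)^{n+1}.
\end{equation*}

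Next I would use the locality of the Monge--Amp\`{e}re operator for bounded psh functions: on the open set $\{v > u_\varepsilon\}$ one has $w_\varepsilon \equiv v$, so $(dd^c w_\varepsilon)^{n+1}$ coincides there with $(dd^c v)^{n+1}$; symmetrically it equals $(dd^c u)^{n+1}$ on $\{u_\varepsilon > v\}$. Discarding any nonnegative remaining mass supported on the contact set $\{u_\varepsilon = v\}$ gives
\begin{equation*}
\int_U (dd^c u)^{n+1} \;\geq\; \int_{\{v > u_\varepsilon\}} (dd^c v)^{n+1} + \int_{\{u_\varepsilon > v\}} (dd^c u)^{n+1},
\end{equation*}
which rearranges to
\begin{equation*}
\int_{\{v > u_\varepsilon\}} (dd^c v)^{n+1} \;\leq\; \int_{\{u_\varepsilon \leq v\}} (dd^c u)^{n+1}.
\end{equation*}
Letting $\varepsilon\downarrow 0$, the sets $\{v > u_\varepsilon\} = \{u < v-\varepsilon\}$ and $\{u_\varepsilon \leq v\} = \{u\leq v-\varepsilon\}$ both increase to $\{u < v\}$, so monotone convergence applied to the finite Borel measures $(dd^c u)^{n+1}$ and $(dd^c v)^{n+1}$ (finiteness on the relatively compact region being ensured by Chern--Levine--Nirenberg) produces the claimed inequality.

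The principal obstacle is the Stokes identity for \emph{bounded} (rather than smooth or continuous) psh functions. I would establish it by approximating $u_\varepsilon$ and $w_\varepsilon$ from above by decreasing sequences of smooth psh functions via local convolution, combined with the Bedford--Taylor weak continuity of $(dd^c\,\cdot\,)^{n+1}$ along such sequences. One must arrange the approximants $u_\varepsilon^k$ and $w_\varepsilon^k$ so that their difference remains compactly supported in $U$, which can be done by gluing the convolutions to $u_\varepsilon$ itself on a slightly smaller subneighborhood of $\partial U$ using a smooth cutoff; then for smooth psh $u_\varepsilon^k \leq w_\varepsilon^k$ with compactly supported difference, expanding $(dd^c w_\varepsilon^k)^{n+1} - (dd^c u_\varepsilon^k)^{n+1}$ as a telescoping sum of exact forms $d\bigl[d^c(w_\varepsilon^k-u_\varepsilon^k)\wedge (dd^c w_\varepsilon^k)^j(dd^c u_\varepsilon^k)^{n-j}\bigr]$ and invoking Stokes gives the identity at the smooth level; passing to the limit completes the proof.
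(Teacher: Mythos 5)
The paper does not actually prove this lemma: it is recalled verbatim as the comparison theorem of Bedford--Taylor, with a citation and no argument, so there is no in-paper proof to compare against. Your proposal reconstructs the classical Bedford--Taylor argument, and the overall skeleton (perturb by $\varepsilon$, form $\max(u+\varepsilon,v)$, equate total masses via Stokes for functions agreeing near $\partial U$, split by locality, let $\varepsilon\downarrow 0$) is the right one.

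There is, however, a genuine gap in the locality step. You apply ``locality of the Monge--Amp\`ere operator'' on the sets $\{v>u_\varepsilon\}$ and $\{u_\varepsilon>v\}$, calling the former open. For merely bounded psh functions these sets are in general \emph{not} open: $u$ and $v$ are only upper semicontinuous, and $\{v-u_\varepsilon>0\}$ is open only when $v-u_\varepsilon$ is lower semicontinuous, which essentially requires $v$ to be continuous. Since locality of $(dd^c\,\cdot\,)^{n+1}$ in the Euclidean topology is a statement about open sets, the identification $(dd^c w_\varepsilon)^{n+1}=(dd^c v)^{n+1}$ on $\{v>u_\varepsilon\}$ does not follow as written. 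The standard repair is either (i) to first prove the inequality assuming $v$ continuous (so that $\{u_\varepsilon<v\}$ is open and your argument goes through), and then treat general $v$ by a decreasing approximation $v_j\downarrow v$ by continuous psh functions, using the Bedford--Taylor weak continuity of $(dd^c\,\cdot\,)^{n+1}$ along decreasing sequences together with the inclusion $\{u<v\}\subseteq\{u<v_j\}$; or (ii) to invoke locality with respect to the plurifine topology (equivalently, quasicontinuity of psh functions), in which $\{v>u_\varepsilon\}$ is open. A second, smaller issue: the rearrangement from $\int_U(dd^cu)^{n+1}\geq\int_{\{v>u_\varepsilon\}}(dd^cv)^{n+1}+\int_{\{u_\varepsilon>v\}}(dd^cu)^{n+1}$ to your displayed inequality subtracts $\int_{\{u_\varepsilon>v\}}(dd^cu)^{n+1}$ from both sides, which is legitimate only if that mass is finite; since a bounded psh function can have infinite total Monge--Amp\`ere mass on all of $U$, you should first localize the whole argument to a relatively compact subdomain $U'$ with $U\setminus V_\varepsilon\Subset U'\Subset U$, where Chern--Levine--Nirenberg guarantees finiteness.
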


\begin{lem}\label{WuYauweak} Let $X$ be an ALE K\"ahler manifold, and $D$, a bounded domain with smooth boundary in $\CC$. Let ${g}$ be a K\"ahler metric on $X$ and the product metric $\tilde{g}$ on $X \times D$ satisfying 
\begin{align*}
    \lambda \delta_{ij} \leq \tilde{g}_{ij} \leq \Lambda \delta_{ij}, \qquad \textup{ in } X_\infty \times D,
\end{align*}
where $X_\infty$ is the asymptotic chart of $X$, and $\lambda < \Lambda$ are two positive constants. If $u$ is a continuous function bounded from above satisfying $\sup_{X \times D} u > \sup_{X \times \partial D} u$, then there exists a sequence of points, $\{x_k\}$, in $X \times D$ such that
\begin{align}\label{WuYautosup}
    \lim_{k \rightarrow \infty} u(x_k) = \sup_{x \in X \times D} u(x), 
\end{align}
and in a small geodesic ball $B_{s} (x_k)$, we have 
\begin{align} \label{WuYauweekest}
    u(x) \leq u(x_k) + \frac{C}{k} \big(d(x, x_k)+ d(x, x_k)^2\big), \qquad x \in B_{{s}} (x_k)
\end{align}
where $C$ and $s$ are constants independent of $k$.
\end{lem}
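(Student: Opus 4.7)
The plan is to run an Omori--Yau type argument: introduce a weight function $\phi$ on $X\times D$ with \emph{uniformly} bounded first and second derivatives under $\tilde g$ but with $\phi\to\infty$ at infinity in $X$, and then localize the near-supremum of $u$ by subtracting $\phi/k$. The role of the bounded geometry hypothesis $\lambda\delta_{ij}\leq \tilde g_{ij}\leq\Lambda\delta_{ij}$ is precisely to make such a weight available.

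First, on the end $X_\infty$ I would set $\phi(x)=\sqrt{1+r(x)^2}$ with $r$ the ALE radial function, and extend $\phi$ smoothly over the compact core of $X$, then pull $\phi$ back to $X\times D$. A direct computation in the asymptotic Euclidean coordinates gives $|\nabla_0\phi|_{g_0}\leq 1$ and $|\nabla_0^2\phi|_{g_0}\leq C(1+r^2)^{-1/2}$, and the comparability $\lambda\delta_{ij}\leq \tilde g_{ij}\leq \Lambda\delta_{ij}$ converts these to uniform $\tilde g$-bounds $|\nabla\phi|_{\tilde g}+|\nabla^2\phi|_{\tilde g}\leq A_0$ on $X_\infty\times D$; compactness handles the core. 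Next, set $v_k=u-\phi/k$ and write $M=\sup_{X\times D}u$, $M_\partial=\sup_{X\times\partial D}u$. Because $u$ is bounded above and $\phi\to\infty$, $v_k$ attains a supremum at some $x_k\in X\times\overline D$. For any fixed $y\in X\times D$, $\sup v_k\geq v_k(y)\to u(y)$ as $k\to\infty$, so choosing $u(y)$ close to $M$ yields $\liminf_k\sup v_k\geq M$; together with $\sup v_k\leq M$ this gives $\sup v_k\to M$. Since $\phi\geq 0$, $M\geq u(x_k)\geq \sup v_k\to M$, which proves \eqref{WuYautosup}. The strict gap $M>M_\partial$ together with $v_k\leq M_\partial$ on $X\times\partial D$ forces $x_k$ into the interior $X\times D$ once $k$ is large.

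For the pointwise bound, the maximality $v_k(x)\leq v_k(x_k)$ rearranges to
\begin{equation*}
u(x)\leq u(x_k)+\frac{1}{k}\bigl(\phi(x)-\phi(x_k)\bigr),
\end{equation*}
so applying Taylor's formula to $\phi$ along the minimal $\tilde g$-geodesic from $x_k$ to $x$ and inserting the uniform bounds from the first step gives
\begin{equation*}
\phi(x)-\phi(x_k)\leq A_0\, d(x,x_k)+\tfrac{1}{2}A_0\, d(x,x_k)^2,
\end{equation*}
which yields \eqref{WuYauweekest} with $C=A_0$ on a geodesic ball $B_s(x_k)$; the radius $s>0$ can be fixed independent of $k$ because bounded geometry provides a uniform lower bound on the injectivity radius of $\tilde g$, and if $x_k$ is close to $X\times\partial D$ one simply restricts to $B_s(x_k)\cap (X\times D)$.

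The main obstacle in this plan is the construction of the weight $\phi$ with genuinely uniform $C^2$-control in $\tilde g$ across all of $X\times D$; without the hypothesis $\lambda\delta\leq\tilde g\leq\Lambda\delta$ on the end, the Hessian of any natural candidate could degenerate and the constant $C$ in \eqref{WuYauweekest} would blow up with $k$. A secondary, milder point is verifying that $x_k$ does not drift to $X\times\partial D$; this is resolved cleanly by the strict inequality $\sup_{X\times D}u>\sup_{X\times\partial D}u$ combined with the nonnegativity $\phi\geq 0$.
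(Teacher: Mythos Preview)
Your proposal is correct and follows essentially the same Omori--Yau scheme as the paper: the paper uses the weight $\log r$ (extended smoothly so $r\geq 1$) in place of your $\sqrt{1+r^2}$, sets $u_\delta=u-\delta\log r$, locates the maximum $x_\delta$, and then Taylor-expands $\log r$ to obtain the linear-plus-quadratic bound. Both weights are proper with uniformly bounded gradient and Hessian under $\tilde g$, so the arguments are interchangeable; the only cosmetic difference is that the paper writes the Taylor remainder in terms of $|r(x)-r(x_k)|$ and then implicitly converts to $d(x,x_k)$ via $|\nabla r|_{\tilde g}\leq C$, whereas you expand directly along a $\tilde g$-geodesic.
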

\begin{proof}
    Let $r$ be the radial function inherited from the asymptotic chart of $X$, and smoothly extended to a positive smooth function, $ r \geq 1$, in the whole manifold, $X \times D$. Notice that the function $r$ satisfies the following estimates,
    \begin{align*}
        |\nabla \log r|_{\tilde{g}} \leq C, \quad |\nabla^2 \log r|_{\tilde{g}} \leq C, \qquad \textup{in } X \times D,
    \end{align*}
    for some uniform constant $C$. Consider the function $u_{\delta} = u- \delta \log r$. Since, for each $\delta >0$, $u_\delta$ tends to negative infinity as $r $ goes to infinity, $u_\delta$ achieves its maximum at some point $x_\delta$. If $\delta$ is sufficiently small, $x_\delta$ is an interior point in $X \times D$ based on the assumption that $\sup_{X \times D} u  > \sup_{X \times \partial D} u$. Choose a sequence $(x_k)$, $x_k = x_{\delta_k}$ such that $\delta_k\rightarrow 0$ as $k \rightarrow \infty$.  Let $s = \min\{1, \inf_{k} \dist (x_{k} , X \times  \partial D) \} >0$. 
    Then, we have $B_{s} (x_k) \subseteq X \times D$, and 
    \begin{align*}
        u(x) &\leq u(x_k) - \delta_k \big(\log (r(x_k))- \log (r(x))\big)\\
        & \leq u(x_k) + C \delta_k |r(x) - r(x_k)| + C \delta_k |r(x) - r(x_k)|^2,
    \end{align*}
    and 
    \begin{align*}
    u_{\delta_k} (x_k) \geq u(x) - \delta_k \log r(x), \qquad \textup{ for all } x \in X \times D.
    \end{align*}
    By taking limits for both sides, we have $$\sup_{x\in X \times D} u(x) \geq \lim_{k \rightarrow \infty} u(x_k) \geq \lim_{k \rightarrow \infty} u_{\delta_k}(x_k) \geq \sup_{x \in X \times D} u(x).$$
    By requiring $\delta_k \leq 1/k$, we complete the proof of Lemma \ref{WuYauweak}.
\end{proof}

\begin{rem} \label{remWYmaxp}
    From the above proof, it is clear that if one assumes $u \in \C^2 (X \times D)$, the original version of Wu-Yau's maximal principle is recovered: there exists a sequence $\{x_k\}$ in $X \times D$ such that 
    \begin{align*}
        \lim_{k\rightarrow \infty} u(x_k) = \sup_{x \in X \times D} u(x),  \qquad \lim_{k\rightarrow \infty} |du(x_k)|_{\tilde{g}} = 0, \qquad \lim_{k\rightarrow \infty} \Delta_{\tilde{g}} u(x_k) \leq 0.
    \end{align*}
    This classical version of the maximal principle will be applied in the next section in the proof of the $\C^{1,1}$ regularity.
\end{rem}

The key observation of proving Theorem \ref{mainthmmaxp} is the following:

\begin{lem} \label{MPcomputationlem}
Let $A$, $B$ and $C$ be semi-positive definite $(n+1)\times ( n+1)$ Hermitian matrices satisfying  
    \begin{align*}
        \det A > \det C, \quad \text{and} \quad  \det B \geq \det C.
    \end{align*}
    Then, for each $ \theta\in (0,1)$,
    \begin{align*}
        \det (\theta A + (1-\theta) B ) \geq \delta + \det C,
    \end{align*}
    where $\delta$ is the positive constant with $\delta >  \big(\theta (\det A - \det C)\big)^{n+1} / \big((n+1)\det A\big)^{n}$.
\end{lem}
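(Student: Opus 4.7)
The plan is to reduce to a scalar inequality via Minkowski's determinant inequality and then apply a direct binomial expansion. Minkowski's inequality for positive semi-definite Hermitian $(n+1)\times(n+1)$ matrices gives
\begin{align*}
\det(\theta A + (1-\theta) B)^{1/(n+1)} \geq \theta\det(A)^{1/(n+1)} + (1-\theta)\det(B)^{1/(n+1)},
\end{align*}
and the hypothesis $\det B \geq \det C$ lets one replace $\det B$ by $\det C$ on the right. Writing $a = \det(A)^{1/(n+1)}$ and $c = \det(C)^{1/(n+1)}$, so that $a > c \geq 0$ by the hypothesis $\det A > \det C \geq 0$, the problem reduces to lower-bounding the scalar quantity $(\theta a + (1-\theta) c)^{n+1} - c^{n+1}$.

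To obtain such a lower bound, rewrite $\theta a + (1-\theta) c = c + \theta(a - c)$ and expand by the binomial theorem:
\begin{align*}
(c + \theta(a-c))^{n+1} - c^{n+1} = \sum_{k=1}^{n+1} \binom{n+1}{k} (\theta(a-c))^k c^{n+1-k} \geq (\theta(a-c))^{n+1} > 0,
\end{align*}
where the last inequality simply retains the $k=n+1$ term (valid since $c\geq 0$). This already furnishes a valid positive choice of $\delta$, namely $\delta = \theta^{n+1}(a-c)^{n+1}$. To cast the bound in the shape quoted in the statement, I would apply the elementary estimate $a^{n+1} - c^{n+1} \leq (n+1)\, a^n (a-c)$, immediate from the mean value theorem applied to $t \mapsto t^{n+1}$ on $[c,a]$. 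Rearranging yields $a - c \geq (\det A - \det C)/\bigl((n+1)\det(A)^{n/(n+1)}\bigr)$; raising to the $(n+1)$-st power and multiplying by $\theta^{n+1}$ produces a lower bound of essentially the desired form $\bigl(\theta(\det A - \det C)\bigr)^{n+1}/\bigl((n+1)^{n+1}\det(A)^n\bigr)$.

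I do not foresee a serious obstacle: the argument is Minkowski plus one elementary binomial estimate. The only point requiring care is the exact constant in the final denominator, since the natural outcome of this method puts $(n+1)^{n+1}\det(A)^n$ there, rather than the $\bigl((n+1)\det A\bigr)^n = (n+1)^n\det(A)^n$ appearing in the statement; this is a harmless $(n+1)$ discrepancy that does not affect the strict positivity of $\delta$, which is the only feature actually needed in the downstream application to the maximum principle.
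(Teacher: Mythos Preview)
Your proposal is correct and follows essentially the same approach as the paper: both use the concavity of $P \mapsto (\det P)^{1/(n+1)}$ (equivalently, Minkowski's determinant inequality) to reduce to the scalar inequality, then extract a positive gap. Your observation about the $(n+1)^{n+1}$ versus $(n+1)^{n}$ discrepancy is accurate and, as you note, irrelevant to the application.
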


\begin{proof} Consider the function $f(P) = (\det P)^{\frac{1}{n+1}}$, defined on the space of semi-positive definite $(n+1)\times ( n+1)$ Hermitian matrices. It is known that $f$ is a concave function. To see this, let $P$ be a positive definite Hermitian matrix, and let $Q$ be an arbitrary Hermitian matrix. The concavity follows from the following direct calculation:
$$D^2 f_P (Q, Q) = \frac{1}{n^2} f(Q) \big( \tr^2 (P^{-1} Q) - n \tr (P^{-1} Q)^2 \big) \leq 0.$$
For each $\theta \in (0,1)$, the concavity of $f$ implies that
\begin{align*}
    \big\{\det\big(\theta A + (1-\theta )B\big)\big\}^{\frac{1}{n+1}} 
    &\geq \theta (\det A )^{\frac{1}{n+1}} + (1-\theta) (\det B)^{\frac{1}{n+1}}\\
    & \geq \theta  \big\{(\det A )^{\frac{1}{n+1}}-(\det C )^{\frac{1}{n+1}} \big\} + (\det C)^{{\frac{1}{n+1}}}.
\end{align*}
Then, we have
\begin{align*}
    \det\big(\theta A + (1-\theta )B\big) \geq \delta + \det C,
\end{align*}
where $\delta> \big(\theta (\det A - \det C)\big)^{n+1} / \big((n+1)\det A\big)^{n}$.
\end{proof}

For the sake of simplicity, we introduce a new notation $\Theta_E$, given by $\Theta_E = \Omega + Ei d\tau\wedge d\bar\tau $, for an arbitrary positive constant $E$. Then, $u_E = u- E |\tau|^2$ and $v_E = v-E |\tau|^2$ are bounded continous $\Theta_E$-psh functions. If we specify $E = 2 E_0$, where $E_0$ is the constant appearing in \eqref{conmpbddvolume}, the condition \eqref{conmpbddvolume} can be interpreted as 
\begin{align*}
    (\Theta_E + dd^c v_E)^{n+1} \leq \frac{1}{2} \Theta_E^{n+1}.
\end{align*}
It suffices to prove $v_E \geq u_E $ on $X \times D$.

\begin{proof}[Proof of Theorem \ref{mainthmmaxp} in the case $ u,~ v \in \C^2$]
Assume that the theorem is false. That is, there exists $x_0 \in X \times D$ such that $u_E(x_0) > v_E (x_0) $. Since $u_E$, $v_E$ are bounded continuous functions on $X \times D$, by adding a positive constant, we may assume that $u_E$, $v_E$ are nonnegative. Then, there exists a $\theta \in (0,1)$, arbitrarily close to $1$, such that $\theta u_E  \leq v_E$ on $X \times \partial D$ and $\theta u_E (x_0) > v_E (x_0)$. If we apply the maximal principle, Lemma \ref{WuYauweak}, to $ \theta u_E - v_E$, there exists a sequence $\{x_k\}$ in $X \times D$ satisfying \eqref{WuYautosup} and \eqref{WuYauweekest}. Let $h_k = \theta u_E (x_k) - v_E (x_k) >0$.  Fixing a sufficiently large $k$, we locally modify $\theta u_E$ and $v_E$ in the ball $B_s(x_k)$ as follows:
\begin{align}\label{pfmodfpotens}
\begin{split}
    \tilde{u} &= \theta u_E - \delta |z|^2,\\
    \tilde{v} &= v_E + \tilde{h},
\end{split}
\end{align}
where $z$ represents the local holomorphic coordinates in $B_s (x_k)$ centered at $x_k$. By carefully choosing the coordinates (see Lemma \ref{lemholocornearinfty}), we may assume that $\lambda dd^c |z|^2 \leq \Theta \leq \Lambda dd^c |z|^2$ for some constant $0<\lambda<\Lambda $ independent of $k$, and hence, $\lambda |z| \leq d(x, x_k) \leq \Lambda |z|$ in $B_s (x_k)$. The constants $\delta $ and $\tilde{h}$ are explicitly given by: 
\begin{align*}
    \delta = \frac{8\Lambda^2 }{s}\frac{C}{k}, \quad \textup{and} \quad \tilde{h}  = h_k- s \frac{C}{k}. 
\end{align*}
By Lemma \ref{WuYauweak}, it is straightforward to verify that $ \tilde{u} (x_k) > \tilde{v} (x_k)$, and $\tilde{u} (x) \leq \tilde{v} (x) $ for $ \displaystyle x \in B_{{s}}(x_k) \backslash B_{\frac{s}{2}} (x_k)$. Note that
\begin{align*}
    \Theta_E + dd^c \tilde{u} = \theta (\Theta_E + dd^c u_E) + (1-\theta) \Theta_E - \delta dd^c |z|^2. 
\end{align*}
If we choose $k$ sufficiently large such that $\delta \leq \frac{ (1-\theta)\lambda}{3(n+1)}$, then by direct computation, we have 
\begin{align*}
    (1-\theta)\Theta_E -\delta dd^c |z|^2 >0, 
\end{align*}
and 
\begin{align} \label{pfcompareposipart}
    \big((1-\theta)\Theta_E - \delta dd^c |z|^2\big)^{n+1} > \frac{4}{3} (1-\theta)^{n+1} E_0\cdot\Theta^{n+1}.
\end{align}
In the local chart $B_s (x_k)$, $\Theta_E$ admits a local potential, $\Theta_E = dd^c \rho_E$.  According to Lemma \ref{intcompare}, we can compare the following integrals in $B_s (x_k)$:
\begin{align} \label{pfcompareintegral}
    \int_{\{\tilde{u} > \tilde{v}\}} \big( dd^c (\rho_E + \tilde{v})\big)^{n+1} \geq \int_{\{\tilde{u} > \tilde{v}\}} \big( dd^c (\rho_E + \tilde{u})\big)^{n+1}.
\end{align}

It suffices to derive a contradiction from the integral inequality above. If we assume $u,~ v \in \C^2$, then $\Theta_E + dd^c u_E$ and $\Theta_E + dd^c v_E$ can be regarded as families of semi-positive Hermitian matrices which vary continuously on $B_s (x_k)$. We denote by $A_u$ and $A_v$ the Hermitian matrices associated with $\Theta_E + dd^c u_E $ and $\Theta_E + dd^c v_E$ and by $A_\delta$ the Hermitian matrix associated with $\Theta_E - \delta dd^c |z|^2 $.
By the assumption of Theorem \ref{mainthmmaxp}, we have $\det (A_u) \geq \det( A_v)$, and by \ref{pfcompareposipart}, $ \det( A_{\delta/(1-\theta)}) > \det( A_v)$.
By \eqref{pfcompareintegral} and Lemma \ref{MPcomputationlem}, we have 
\begin{align*}
    \int_{\{\tilde{u} > \tilde{v}\}} \big( dd^c (\rho_E + \tilde{v})\big)^{n+1} 
    &\geq \int_{\{\tilde{u} > \tilde{v}\}} \big( dd^c (\rho_E + \tilde{u})\big)^{n+1}\\
    &\geq \int_{\{\tilde{u} > \tilde{v}\}} \Big\{ \theta (\Theta_E  + dd^c u_E) + (1-\theta) \Big(\Theta_E - \frac{\delta}{1-\theta} dd^c |z|^2\Big) \Big\}^{n+1}\\
    & = \int_{\{\tilde{u} > \tilde{v}\}}\det \big\{ \theta A_u + (1-\theta) A_{\delta/ (1-\theta)} \big\} d\mu(z)  \\
    &\geq \int_{\{\tilde{u} > \tilde{v}\}} \delta' d\mu(z) + \int_{\{\tilde{u} > \tilde{v}\}} \big( dd^c (\rho_E + \tilde{v})\big)^{n+1},
\end{align*}
where $d\mu(z)$ is the measure associated with the standard Euclidean metric in $B_s(x_k)$ and $\delta'$ is the positive constant depending only on $(1-\theta)$, $E_0$. Therefore, we complete the proof under the additional assumption that $u, ~ v \in \C^2$.
\end{proof}

The next lemma, due to Blocki \cite[Theorem 3.10]{Blocki1996} and further generalized by Guedj-Lu-Zeriahi \cite{Guedj2019weaksub}, is particularly useful for extending the above proof to the weak setting. Before stating the lemma, we recall some notation. Let $\chi$ be the standard smoothing kernel defined in $\CC^{n+1}$ and let $\chi_{\varepsilon} (z) = \varepsilon^{-2(n+1)} \chi(\varepsilon z)$. We define the regularization $u_\varepsilon = u \star \chi_\varepsilon$, and the operator $\Delta_H$ as
\begin{align*}
    \Delta_H  = \frac{1}{n+1} \sum_{j,k} h_{jk} \frac{\partial^2 }{\partial z^j \partial \bar{z}^k},
\end{align*}
for some positive Hemitian matrices $H$ with $\det H =1$. We denote $\mathcal{H} = \{H \in \text{Herm}^+: \det H =1 \}$ the space of all positive Hermitian matrices with determinant one.

\begin{lem} \label{lemregu}
    Let $ u$ be a bounded psh function in a bounded domain $N \subseteq \CC^{n+1} $ and let $f $ be a nonnegative $L^1$ function in $N$. Then, the following are equivalent:
    \begin{itemize}
        \item[(i)]  $(dd^c u)^{n+1} \geq f d\mu$ in the sense of measure. 
        \item[(ii)]  $\Delta_H u \geq f^{1/(n+1)}$ in the sense of distribution, for all $H \in \mathcal{H}$.
        \item[(iii)]  $ \big(\det (u_\varepsilon)_{i\bar{j}}\big)^{1/(n+1)} \geq \big(f^{1/(n+1)}\big)_\varepsilon$ in the classic sense.
    \end{itemize}
\end{lem}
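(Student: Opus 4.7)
My proof proposal: exploit the elementary AM--GM identity
\[
(\det A)^{1/(n+1)} = \inf_{H \in \mathcal{H}} \tfrac{1}{n+1}\tr(HA),
\]
valid for every positive semi-definite Hermitian $(n+1)\times(n+1)$ matrix $A$ (the optimum is attained at $H = (\det A)^{1/(n+1)} A^{-1}$ when $A$ is invertible). This identity makes the pointwise equivalence of $\det v_{i\bar j}\geq g$, $(\det v_{i\bar j})^{1/(n+1)}\geq g^{1/(n+1)}$, and $\Delta_H v\geq g^{1/(n+1)}$ for all $H$ a tautology for smooth $v$, and it is the bridge between the three formulations. The plan is to cycle through $(ii)\Leftrightarrow(iii)\Rightarrow(i)$ by mollification and Bedford--Taylor continuity, and then to close the loop with $(i)\Rightarrow(ii)$.

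For $(ii)\Leftrightarrow(iii)$, since $\Delta_H$ is a constant-coefficient linear operator, convolution with $\chi_\varepsilon$ commutes with it. Starting from $(ii)$, convolving yields the pointwise bound $\Delta_H u_\varepsilon\geq (f^{1/(n+1)})_\varepsilon$ for every $H\in\mathcal{H}$; taking $\inf_H$ via the identity gives $(\det(u_\varepsilon)_{i\bar j})^{1/(n+1)}\geq (f^{1/(n+1)})_\varepsilon$, which is $(iii)$. Conversely, given $(iii)$, the identity provides $\Delta_H u_\varepsilon \geq (\det(u_\varepsilon)_{i\bar j})^{1/(n+1)}\geq (f^{1/(n+1)})_\varepsilon$ pointwise, and letting $\varepsilon\to 0$ recovers $(ii)$ distributionally because $\Delta_H u_\varepsilon \to \Delta_H u$ as distributions and $(f^{1/(n+1)})_\varepsilon \to f^{1/(n+1)}$ in $L^1_{\mathrm{loc}}$.

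For $(iii)\Rightarrow(i)$, I would rewrite $\det((u_\varepsilon)_{i\bar j})\,d\mu = (dd^c u_\varepsilon)^{n+1}$ so that the hypothesis becomes $(dd^c u_\varepsilon)^{n+1} \geq ((f^{1/(n+1)})_\varepsilon)^{n+1}\,d\mu$. Since $u$ is bounded psh, $u_\varepsilon\searrow u$, and Bedford--Taylor continuity of the complex Monge--Amp\`ere operator under decreasing sequences of bounded psh functions gives $(dd^c u_\varepsilon)^{n+1}\rightharpoonup (dd^c u)^{n+1}$ weakly as currents. Meanwhile $(f^{1/(n+1)})_\varepsilon\to f^{1/(n+1)}$ in $L^{n+1}_{\mathrm{loc}}$, so $((f^{1/(n+1)})_\varepsilon)^{n+1}\to f$ in $L^1_{\mathrm{loc}}$, and pairing against a nonnegative compactly supported test form yields $(i)$ in the limit.

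The main obstacle is $(i)\Rightarrow(ii)$, since current-level control does not directly provide pointwise bounds on the smooth approximations $u_\varepsilon$. For each fixed $H\in\mathcal{H}$, my strategy is to diagonalize $H$ by a unitary change of variables and then apply the holomorphic rescaling $\Phi:z_i\mapsto \lambda_i^{-1/2} z_i$, setting $\tilde u = u\circ\Phi$. Because $\det H = 1$, this biholomorphic affine change preserves the Monge--Amp\`ere measure, so $(i)$ transforms into $(dd^c\tilde u)^{n+1}\geq\tilde f\,d\mu$, while $\Delta_H u$ becomes $\tfrac{1}{n+1}\Delta\tilde u$ in the new coordinates. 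The problem is thereby reduced to the scalar implication ``$(dd^c\tilde u)^{n+1}\geq\tilde f\,d\mu$ in the sense of measures $\Rightarrow \Delta\tilde u\geq(n+1)\tilde f^{1/(n+1)}$ in the sense of distributions.'' For smooth $\tilde u$ this is pointwise AM--GM; for bounded psh $\tilde u$ one mollifies, applies the smooth AM--GM to $\tilde u_\varepsilon$ to obtain $\Delta\tilde u_\varepsilon\geq (n+1)(\det(\tilde u_\varepsilon)_{i\bar j})^{1/(n+1)}$ pointwise, and then passes to the distributional limit using Bedford--Taylor together with Fatou's lemma to preserve the lower bound against nonnegative test functions. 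This step is the technical heart of the argument and the point where Blocki's original insight, refined by Guedj--Lu--Zeriahi, is essential.
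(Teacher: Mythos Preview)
Your treatment of $(ii)\Leftrightarrow(iii)$ and $(iii)\Rightarrow(i)$ matches the paper's. The gap is in $(i)\Rightarrow(ii)$. The reduction to $H=I$ by a unitary rotation and the rescaling $z_i\mapsto\lambda_i^{-1/2}z_i$ is correct but only restates the problem: you still have to show that $(dd^c u)^{n+1}\geq f\,d\mu$ as measures forces $\Delta u\geq (n+1)f^{1/(n+1)}$ distributionally. Your proposed route---mollify, apply pointwise AM--GM to get $\Delta u_\varepsilon\geq(n+1)\big(\det(u_\varepsilon)_{i\bar j}\big)^{1/(n+1)}$, then ``Bedford--Taylor together with Fatou''---does not close. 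Bedford--Taylor only gives weak convergence of the measures $\det(u_\varepsilon)_{i\bar j}\,d\mu\rightharpoonup(dd^c u)^{n+1}$, and weak convergence says nothing about $\big(\det(u_\varepsilon)_{i\bar j}\big)^{1/(n+1)}$: the root is concave, so Jensen goes the wrong way, and nonlinear pointwise operations are not continuous under weak convergence (if the Monge--Amp\`ere mass of $u$ has a singular part, the densities $\det(u_\varepsilon)_{i\bar j}$ blow up on shrinking sets while their $(n{+}1)$-th roots contribute nothing in the $L^1$ limit). There is no Fatou-type lower bound for $\int\phi\big(\det(u_\varepsilon)_{i\bar j}\big)^{1/(n+1)}$ in terms of $f$ here.

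The paper's argument for $(i)\Rightarrow(ii)$ avoids this obstruction and is genuinely different from what you sketch. For continuous $f$ it goes through viscosity theory (Eyssidieux--Guedj--Zeriahi): the pluripotential inequality is equivalent to the viscosity one, which tests against $\C^2$ functions $q$ touching $u$ from above and gives $\det q_{i\bar j}(x_0)\geq f(x_0)$ at the contact point, hence $\Delta_H q(x_0)\geq f(x_0)^{1/(n+1)}$ by Gaveau's identity---this is exactly the viscosity meaning of $\Delta_H u\geq f^{1/(n+1)}$. For $f\in L^1$ one truncates to $f_A=\min(f,A)\in L^\infty$, approximates in $L^2$ by continuous $f_k$, and invokes the Guedj--Lu--Zeriahi construction of auxiliary psh functions $u_{j,\delta,k}$ together with the stability bound $u-\tfrac{\log(1+\delta)}{j}\leq u_{j,\delta}\leq u+\tfrac{-\log\delta}{j}$ to pass to the limit. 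Your final sentence correctly flags Blocki and GLZ as the essential input, but the mechanism you describe (mollification + Bedford--Taylor + Fatou) is not what they do and cannot be repaired into a proof.
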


\begin{proof}
    The full proof is parallel to Guedj–Lu–Zeriahi~\cite[Section 3]{Guedj2019weaksub}; we briefly recall the main idea for the reader's convenience. The implication (ii) \(\Rightarrow\) (iii) follows by taking convolution with the standard smoothing kernel, $(\Delta_H u ) \star \chi_\varepsilon = \Delta_H(u \star \chi_\varepsilon) \geq \big(f^{1/(n+1)}\big)_\varepsilon $, along with the following well-known observation due to Gaveau~\cite{gaveau1977methodes}: 
    \begin{align}\label{keyequsmoothMAop}
    \det u_{i\bar{j}} (x) = \inf_{H \in \HH} \Delta_H u (x), \qquad \text{for } u \in \C^2(N) \text{ and } x \in N.
    \end{align}
    For (iii) \(\Rightarrow\) (i), we observe that $(dd^c u_\varepsilon)^{n+1} $ converges to $(dd^c u)^{n+1}$ in the sense of currents and $\big(f^{1/(n+1)}\big)_\varepsilon$ converges to $f^{1/(n+1)}$ in $L^{n+1}$.

The proof of (i) \( \Rightarrow \) (ii) is more delicate; we sketch the main idea below. In the case where $f$ is continuous on $N$, the result follows from the equivalence between the weak subsolution to $(dd^c u)^{n+1} = f d\mu$ and the viscosity subsolution (see  Eyssidieux-Guedj-Zeriahi \cite{Eyssidieux2011viscosity}). In this setting, the weak inequality in the pluripotential sense implies the viscosity inequality, which then yields the pointwise inequality in the classic sense. By \eqref{keyequsmoothMAop}, we obtain the result. 
In the case where $f\in L^1$,  we reduce to the bounded case by considering the truncation $f_A = \min\{f, A\}$. For $f \in L^\infty(N) \subseteq L^2 (N)$, there exists a sequence of bounded continuous functions $\{f_k\}$ on $N$ converging to $f$ in $L^2$. Then, using a construction from~\cite[Section 3]{Guedj2019weaksub}, there exists a family of uniformly bounded continuous psh function $u_{j,\delta, k}$ satisfying 
\begin{align*}
    \big(dd^c u_{j,\delta, k}\big)^{n+1} \geq \frac{e^{-j ||f_k-f||_{L^2(N)}}} {1+\delta} f_k d\mu, \qquad \text{for } j \in \NN,~ \delta \in (0,1).
\end{align*}
As $k \rightarrow \infty$, $u_{j,\delta, k}$ converges in $L^1$ to a continuous bounded psh function $u_{j,\delta}$. Moreover, the family of functions $\{u_{j,\delta}\}$ satisfies a stability estimate:
\begin{align*}
    u- \frac{\log(1+\delta)}{j} \leq u_{j,\delta} \leq u+ \frac{-\log \delta}{j}.
\end{align*}
In conclusion, by taking limits in $k$ and then in $j$, we recover the inequality $\Delta_H u \geq \frac{1}{1+\delta} f^{1/{n+1}} $ for each $\delta \in (0,1)$ in the sense of distribution, which completes the proof. The reader may refer to \cite{Guedj2019weaksub} for details.
\end{proof}

\begin{proof}[Proof of Theorem \ref{mainthmmaxp} in the general case] By Radon-Nikodym Theorem, the condition $ (\Omega+ dd^c v)^{n+1} \leq E_0 \Theta^{n+1}  $ implies that, in the local coordinates on $B_s (x_k)$,
\begin{align*}
    (\Omega + dd^c v)^{n+1} = f d\mu, \qquad \text{ in the sense of measure (and hence of currents)},
\end{align*}
where $f \in L^\infty (B_s (x_k))$ with $f \leq E_0 \Lambda$ a.e. in $B_s(x_k)$. Here, $d\mu$ is the standard Lebesgue measure with respect to the local coordinates on $B_s(x_k)$. In $B_s(x_k)$, we write $A_{u, \varepsilon} $ as the Hermitian matrices associated with the regularized $(1,1)$ form with potential $(\rho_E + u_E)_\varepsilon$; that is, $dd^c\big((\rho_E + u_E)_\varepsilon \big)$. 
By Lemma \ref{lemregu}, $\big(dd^c (\rho_E + u_E)\big)^{n+1} \geq f d\mu $ is equivalent to 
\begin{align*}
    \big(\det A_{u_\varepsilon} \big)^{1/(n+1)} \geq \big( f^{1/(n+1)}\big)_\varepsilon. \qquad \text{in the classic sense.}
\end{align*}
Meanwhile, \eqref{pfcompareposipart} implies that 
\begin{align*}
   \big( \det A_{\delta/(1-\theta), \varepsilon} \big)^{1/(n+1)} > \big( f^{1/(n+1)}\big)_\varepsilon, \qquad \text{in the classic sense,}
\end{align*}
where $A_{\delta/(1-\theta), \varepsilon} $ is the Hermitian matrix associated with $dd^c (\rho_E- \frac{\delta}{1-\theta} |z|^2)_\varepsilon$. Then, by the same estimates as in the proof of Lemma \ref{MPcomputationlem}, we have
\begin{align*}
    \det \big\{ \theta A_{u, \varepsilon} + (1-\theta) A_{\delta/ (1-\theta), \varepsilon} \big\} \geq \delta' + \big( (f^{1/(n+1)})_\varepsilon\big)^{n+1},
\end{align*}
where $\delta'$ is a positive constant independent of $\varepsilon$. Observe that $  \big( (f^{1/(n+1)})_\varepsilon\big)^{n+1} $ converges to $f$ in $L^1$ as $\varepsilon \rightarrow 0$. Consequently, the measures $\big( (f^{1/(n+1)})_\varepsilon\big)^{n+1} d\mu$ converges to $f d\mu$ as $\varepsilon\rightarrow 0$.

Once again, assume that $u_E > v_E$ at some point in $X \times D$. We define $\tilde{u}$ and $\tilde{v}$ in $B_s (x_k)$ in the same manner as in the smooth case $u,~ v \in \C^2$; see \eqref{pfmodfpotens}.
Consider the regularizations of the local potential functions $\rho_E + \tilde{u}$ and $\rho_E + \tilde{v}$. As $\varepsilon \rightarrow 0$, the Monge-Amp\`ere operator of regularizations $\big(dd^c(\rho_E + \tilde{u})_\varepsilon\big)^{n+1}$ and $\big(dd^c(\rho_E + \tilde{v})_\varepsilon\big)^{n+1}$ converges weakly to $\big(dd^c(\rho_E + \tilde{u})\big)^{n+1}$ and $\big(dd^c(\rho_E + \tilde{v})\big)^{n+1}$, respectively. Let $N = \{\tilde{u} > \tilde{v}\} \subseteq B_s (x_k)$.
\begin{align*}
    \int_N \big( dd^c (\rho_E + \tilde{u})_\varepsilon\big)^{n+1} 
    &=  \int_{N}
    \det \big\{ \theta A_{u, \varepsilon} + (1-\theta) A_{\delta/ (1-\theta), \varepsilon} \big\} d\mu \\
    &\geq \int_{N} \delta' d\mu 
    + \int_{N} 
    \big\{ (f^{1/(n+1)})_\varepsilon\big\}^{n+1} d\mu
\end{align*}
Fix an arbitrary test function $0\leq\eta\leq 1$ with a compact support in $N$. Then, by comparison principle, Lemma \ref{BKlemmacompare}, and Lemma \ref{MPcomputationlem},  we have
\begin{align*}
     \int_{N} \big( dd^c (\rho_E + \tilde{v})\big)^{n+1} 
    &\geq \int_{N} \big( dd^c (\rho_E + \tilde{u})\big)^{n+1}\\
    &\geq \lim_\varepsilon \int_{N} \eta \big( dd^c (\rho_E + \tilde{u})_\varepsilon\big)^{n+1}\\
    &\geq \int_{N} \delta' \eta~ d\mu 
    + \lim_\varepsilon \int_{N}  \eta
    \big\{ (f^{1/(n+1)})_\varepsilon\big\}^{n+1} d\mu \\
    & = \int_{N} \delta' \eta~ d\mu + \int_N \eta f ~d\mu.
\end{align*}
Hence, 
\begin{align*}
    \int_{N} \big( dd^c (\rho_E + \tilde{v})\big)^{n+1} \geq \int_{N} \delta'~ d\mu + \int_{N} \big( dd^c (\rho_E + \tilde{v})\big)^{n+1},
\end{align*}
which leads to a contradiction.
\end{proof}

\section{The uniform priori estimates up to $\C^{1,1}$} \label{secC11esti}

This section is mainly dedicated to completing the proof of Theorem \ref{mthmHCMAc11}. In Subsection \ref{subsecC11esti}, we briefly recall the global $\C^{1,1}$ estimates of the solution to the HCMA equation \ref{introHCMA}. For the $\C^{1,1}$ a priori estimates in the case that $X$ is a compact K\"ahler manifold, we refer the reader to \cite{chen2000space,blocki2009gradient,chu2017regularity}. In the author's previous paper \cite{yao2024geodesicequationsasymptoticallylocally}, the global $\C^{1,1}$ estimates were established on $X \times D$, where $X$ is an ALE K\"ahler manifolds and $D$ is an annulus in $\CC$. When $D\subseteq \CC$ is a bounded domain with smooth boundary, the corresponding $\C^{1,1}$ estimates follow by a straightforward generalization of the proof \cite{yao2024geodesicequationsasymptoticallylocally}. This generalization is discussed briefly in Subsection~\ref{subsecC11esti}. In Subsection \ref{subsecwsolutoHCMA}, we complete the proof of Theorem \ref{mthmHCMAc11}.

\subsection{$\varepsilon$-Monge-Amp\`{e}re Equations and Priori Estimates} \label{subsecC11esti} Fixing a K\"ahler form $\omega$ on $X$, consider the following K\"ahler potential space on $X$ with respect to $\omega$:
\begin{align*}
    \HH^{k,\alpha} (\omega) = \{ \varphi \in \C^{k,\alpha}(X); \ \omega + dd^c\varphi \},
\end{align*}
where the regularity order $k \geq 5$ is assumed only for technical reasons in this section. Note that the potential functions $\varphi \in \C^{k,\alpha}(X)$ satisfies a uniform global bound: $||\varphi||_{k,\alpha; X} \leq C$.
For each $\tau \in \partial D$, we prescribe a K\"ahler form $\omega_\tau$ on the fiber $X \times \{\tau\}$, where $\omega_\tau$ lies in the same $dd^c$-cohomology class as $\omega$. Precisely, we can write $\omega_\tau = \omega + dd^c \psi_\tau$ with $\psi_\tau \in \HH^{k,\alpha} (\omega)$ on $X \times \{\tau\}$. Additionally, we can assume $\Psi(\cdot, \tau) = \psi_{\tau} (\cdot)$ is of the class $\C^{k,\alpha}(X \times \partial D) $.  

Note that the function $\Psi \in \C^{k,\alpha} (X \times \partial D) $ can be extended to $\Psi \in \C^{k,\alpha} (X \times \overline{D})$ such that, for each $\tau \in D$, the slice $ \psi_\tau (\cdot) = \Psi (\cdot, \tau)$ is $\omega$-psh. If we solve the following Dirichlet problem on $D \subseteq \CC$:
\begin{align} \label{equbarriersupharXD}
\begin{split}
    \Delta u(\tau) &= 1, \hspace{0.8cm} \tau \in D,\\
    u(\tau) &=0, \hspace{0.8cm} \tau \in \partial D,
\end{split}
\end{align}
there is a smooth solution $u(\tau)$ to the above equation such that $||u||_{k,\alpha; D}\leq C$, where $C$ is a uniform constant depending only on $k$, $\alpha$ and $D$. Let $U(x, \tau) = u(\tau)$ be the pull-back of $u(x)$ to the total space $X \times D$. There is a large constant $A$ such that $(\Omega + dd^c \Psi + A\cdot dd^c U) $ is a positive $(1,1)$ form on $X \times D$. Let $\Theta = \Omega + A \cdot dd^c U$. Consider the following $\varepsilon$-Monge-Amp\`{e}re equations, by adding a positive term with a parameter $\varepsilon$:
\begin{align*}
    (E_\varepsilon) \qquad \begin{cases}
        \big(\Theta + dd^c \Phi_\varepsilon\big)^{n+1} = \nu(\varepsilon) \big(\Theta + dd^v \Psi\big)^{n+1}, \hspace{0.4cm} &\text{ on } X \times D,\\
        \Theta + dd^c \Phi_\varepsilon >0, &\text{ on } X \times D,\\
        \Phi_\varepsilon = \Psi, & \text{ on } X \times \partial D,
    \end{cases}
\end{align*}
where $\nu(\varepsilon)$ is a smooth nonnegative function defined on $X \times D \times [0,1]$ satisfies the following conditions:
\begin{align*}
\begin{split}
    &\nu (0) \equiv 0, \qquad \nu(1) \equiv 1; \\
    &C^{-1} \varepsilon \leq \nu (\varepsilon) \leq \min (C\varepsilon,1 ), \hspace{0.6cm} \text{ for } \varepsilon\in [0,1]\\
    & |\nabla^{k} \nu (x, \tau,\varepsilon)|_{\Theta} \leq C \varepsilon,\hspace{0.4cm} \text{ on } X \times D \times [0,1], \quad k \geq 1.
\end{split}
\end{align*}
The $\varepsilon$-Monge-Amp\`{e}re equation can be solved by the standard continuity method. It is straightforward to observe that when $\varepsilon=1$, there is a trivial solution $\Phi_1 = \Psi$ to $(E_1)$. The openness follows from the linearized problem of Monge-Amp\`{e}re equation. The uniform $\C^{1,1}$ estimates, together with the bootstrapping proof of regularity theory of Monge-Amp\`{e}re equation to $\C^{k,\alpha}$ (the $\C^{k,\alpha}$ estimates depend on $\varepsilon$), imply the closedness. Furthurmore, if we let $\varepsilon$ go to zero, by the uniformity of the $\C^{1,1}$ estimates of the complex $\varepsilon$-Monge-Amp\`{e}re solution, there is a subsequential limit $\Phi $ solving the HCMA equation \eqref{introHCMA} and satisfying the global $\C^{1,1}$ estimates.

\subsubsection{The openness} Assuming that there exists a solution of $(E_{s_0})$ in $\C^{k,\alpha}$ for some $s_0 \in (\varepsilon,1)$, we show that $(E_s)$ can be solved for all $s$ in a small open neighborhood of $s_0$. In this subsubsection, we write $\Phi$ for the solution to $(E_{s_0})$ for simplicity. The space of solutions to the $\varepsilon$-Monge-Amp\`{e}re equation here is given by $\{\Phi \in \C^{k,\alpha} (X \times D); ~ \Phi |_{X \times \partial D}= \Psi \}$.
Let $u$ be a function in the tangent space of the solution space, $\{u \in\C^{k,\alpha}(X\times \overline{D}); ~ u|_{X \times \partial D} = 0 \}$.
The Monge-Amp\`{e}re operator at $\Phi$ is defined to be,
\begin{align*}
    \mathcal{M} (\Phi) = \frac{(\Theta + dd^c \Phi)^{n+1}}{(\Theta + dd^c \Psi)^{n+1}}.
\end{align*}
Given the assumption that $\Theta + dd^c \Phi>0 $, the linearization of Monge-Amp\`{e}re operator at $\varphi$ is uniformly elliptic, which is given by,
\begin{align*}
    \LL_{\Phi} (u ) =\big( \Delta_{\Phi}  u \big) \cdot \frac{(\Theta + dd^c \Phi)^{n+1}}{(\Theta + dd^c \Psi)^{n+1}}= \nu(s_0) \cdot \Delta_{\Phi} \chi,
\end{align*}
where $\Delta_\Phi$ represents the Laplacian with respect to $\Theta_\Psi + dd^c \Phi$. Let $(\C^{k,\alpha})_0$ be the functions in $\C^{k,\alpha}$ vanishing on the boundary $X \times \partial D$. Then, we have the following property of $\LL_{\Phi}$.

\begin{lem} \label{lemisolinMA}
Let $\Phi$ be the solution of $(E_{s_0})$, then the linearized operator $\LL_\Phi : ({\C}^{k,\alpha})_0 \rightarrow {\C}^{k-2,\alpha}$ is an isomorphism for all integers $k \geq 2$ and $\alpha\in (0,1)$.
\end{lem}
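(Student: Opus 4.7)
The plan is to reduce to the Laplace--Beltrami operator $\Delta_\Phi$ of $\Theta + dd^c\Phi$ and combine a barrier-plus-maximum-principle argument for injectivity with a standard exhaustion for surjectivity. Since $\nu(s_0)$ is smooth with $C^{-1}s_0 \leq \nu(s_0) \leq 1$, it is bounded and bounded away from zero, so $\LL_\Phi = \nu(s_0)\Delta_\Phi$ is an isomorphism iff $\Delta_\Phi \colon (\C^{k,\alpha})_0 \to \C^{k-2,\alpha}$ is. The assumption that $\Phi$ solves $(E_{s_0})$ with $\Theta + dd^c\Phi > 0$ and $\Phi \in \C^{k,\alpha}$, together with the fact that $\nu(s_0)(\Theta + dd^c\Psi)^{n+1}$ is uniformly bounded above and below away from zero, forces $\Theta + dd^c\Phi$ to be \emph{uniformly} positive with $\C^{k-2,\alpha}$-coefficients at unit scale everywhere on $X \times D$ (upper eigenvalue bound from $\|\Phi\|_{\C^{k,\alpha}}$, lower eigenvalue bound by $\lambda_{\min} \geq \det/\lambda_{\max}^n$). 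In particular $\Delta_\Phi$ is uniformly elliptic on the product-ALE geometry.

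For injectivity I would introduce the barrier $V \in \C^\infty(\overline{D})$ solving $\Delta V = -4$ on $D$ with $V|_{\partial D} = 0$, so that $V > 0$ in $D$ and $V_{\tau\bar\tau} = -1$. Pulled back to $X \times D$, this gives
\begin{align*}
\Delta_\Phi V \;=\; \bigl[(\Theta + dd^c\Phi)^{-1}\bigr]^{\tau\bar\tau}\, V_{\tau\bar\tau} \;\leq\; -c_0 \;<\; 0
\end{align*}
uniformly, using the positive lower bound on the $(\tau,\bar\tau)$-entry of the inverse K\"ahler matrix. Given $u \in (\C^{k,\alpha})_0$ with $\Delta_\Phi u = 0$, the function $u - \varepsilon V$ vanishes on $X \times \partial D$ and satisfies $\Delta_\Phi(u - \varepsilon V) \geq \varepsilon c_0 > 0$. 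The Wu--Yau argument of Lemma \ref{WuYauweak} and Remark \ref{remWYmaxp} (whose proof in the excerpt extends verbatim to any uniformly elliptic second-order operator with bounded coefficients, since at a near-maximum of $u - \varepsilon V - \delta \log r$ one has $\Delta_\Phi(u - \varepsilon V)(x_\delta) \leq \delta\, \Delta_\Phi \log r(x_\delta)$, with $\Delta_\Phi \log r$ bounded) then forces $\sup(u - \varepsilon V) \leq 0$. Letting $\varepsilon \to 0$ gives $u \leq 0$, and repeating the argument for $-u$ yields $u \equiv 0$.

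For surjectivity, given $f \in \C^{k-2,\alpha}$, set $g = f/\nu(s_0) \in \C^{k-2,\alpha}$ and exhaust $X \times D$ by smooth-boundary subdomains $\Omega_R \uparrow X \times D$ (mildly rounded near the corners $\partial X_R \times \partial D$, with $X_R = \{r \leq R\}$). Classical Schauder theory produces a unique $\C^{k,\alpha}$ Dirichlet solution $u_R$ of $\Delta_\Phi u_R = g$, $u_R|_{\partial\Omega_R} = 0$; the barriers $\pm c_0^{-1}\|g\|_{L^\infty} V$ give $\|u_R\|_{L^\infty} \leq C \|g\|_{L^\infty}$ uniformly in $R$. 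Bounded geometry at infinity then yields a scale-invariant interior/boundary Schauder estimate $\|u_R\|_{k,\alpha} \leq C(\|g\|_{k-2,\alpha} + \|u_R\|_{L^\infty})$, and an Arzel\`a--Ascoli diagonal subsequential limit gives $u \in (\C^{k,\alpha})_0$ with $\LL_\Phi u = f$. The main potential obstacle in this whole argument is extracting the uniform positive lower bound on $\Theta + dd^c\Phi$ at infinity on $X$: this one ingredient underwrites the barrier $V$, the $L^\infty$-bound, and the scale-invariant Schauder estimates. Once that uniformity is granted, the rest is routine elliptic theory on a noncompact bounded-geometry product.
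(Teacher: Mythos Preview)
Your proposal is correct and follows essentially the same approach as the paper: exhaustion of $X\times D$ by precompact smoothed domains, solving the Dirichlet problem on each with uniform $\C^{k,\alpha}$ bounds, and invoking the Wu--Yau maximum principle (Lemma~\ref{WuYauweak}, Remark~\ref{remWYmaxp}) for uniqueness. Your explicit barrier $V$ coming from the $D$-factor is precisely what underwrites both the uniform $L^\infty$ bound and the injectivity step; the paper is brief on this point and defers those details to \cite{yao2024geodesicequationsasymptoticallylocally}.
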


\begin{proof}
The proof proceeds in parallel with that of \cite[Lemma 2.3]{yao2024geodesicequationsasymptoticallylocally}{yao2024geodesicequationsasymptoticallylocally}, and we only outline the main ideas here. Take an exhaustive sequence of pre-compact sets with smooth boundaries, $\widetilde{N}_j \subseteq X \times D$, by smoothing the corner of $ B_{r_j} \times D$, where $B_{r_j} = \{x \in X; ~ r(x) < r_j\}$ and $r_j \rightarrow \infty$ as $j \rightarrow \infty$. The key step is to solve a sequence of Dirichlet problems of elliptic linear equations:
\begin{align*}
    \begin{cases}
        \LL_{\Phi} u_j = f, \hspace{0.6cm} &\text{ on } \widetilde{N}_j,\\
        u_j = 0, & \text{ on } \partial \widetilde{N}_j.
    \end{cases}
\end{align*}
It's a classic result that for each $j\in \NN$, there is a solution $u_j$ with the $\C^{k,\alpha}$ estimates: $$ ||u_j||_{k,\alpha; \widetilde{N}_j} \leq C (1 + ||f||_{k-2,\alpha; X \times D}), $$
where $C$ is a uniform constant independent of $j$. The uniqueness follows from the maximal principle for $\Delta_\Phi$ on $X \times D$, see Lemma \ref{WuYauweak} and Remark \ref{remWYmaxp}.
\end{proof}

\subsubsection{Priori estimates up to $\C^{1,1}$}

In this subsection, we establish the global $\C^{1,1}$ regularity of the solution to the HCMA equation on $X \times D$. The overall strategy closely follows the general framework developed in \cite{chen2000space}, which proceeds through successive estimates: first $\C^0$ and $\C^1$ bounds on the boundary, then interior $\C^1$ estimates using a maximum principle applied to a suitable test function (cf. \cite{blocki2009gradient}), followed by $\C^2$ boundary estimates and Laplacian bounds, and finally interior $\C^2$ estimates using a test function constructed as in \cite{chu2017regularity}. 
Most of the analytic groundwork for this scheme has already been carried out in the author’s earlier paper \cite{yao2024geodesicequationsasymptoticallylocally}. Hence, we only summarize the key points in this subsection required in the setting where $D \subseteq \mathbb{C}$ is a general bounded domain with smooth boundary.

The $\C^0$ bound of the solution of $(E_\varepsilon)$ is a direct corollary of the second type of maximal principle of the Monge-Amp\`{e}re operator, Theorem \ref{mainthmmaxp}. Let $ \Phi_\varepsilon$ be a bounded solution to $(E_\varepsilon)$.  Then,
\begin{align*}
    \big(\Theta + dd^c \Phi_\varepsilon\big)^{n+1} = \nu(\varepsilon) \big(\Theta + dd^v \Psi\big)^{n+1}\leq  \big(\Theta + dd^v \Psi\big)^{n+1}
\end{align*}
and $\Phi_\varepsilon = \Psi$ on $X \times \partial D$. By Theorem \ref{mainthmmaxp}, we have $\Phi\geq \Psi$ on $X \times D$. Recall that $\Psi \in \C^{\infty}_{-\gamma} (X)$ implies that $d_\tau d_\tau^c \Psi\leq A  d\tau \wedge d \bar{\tau} $ on $X \times D$ for some uniform constant $A$. Note that the function $U$ satisfies the equation \eqref{equbarriersupharXD}. The classic maximal principle implies that $U < 0$ on $X \times D$. Now, if we restrict to each slice $\{x\} \times D$ with the natural embedding $e: \{x\} \times D \rightarrow X \times D $, we have
$$ e^*\Omega + d_{\tau}d^c_{\tau} \Psi - A d_\tau d^c_\tau U \leq 0 \leq e^* \Omega + d_\tau d^c_\tau \Phi_\varepsilon,$$
and $\Psi - A U = \Phi_\varepsilon$ on $ X \times D$. Also by classic maximal principle, we have $\Psi \leq \Phi_\varepsilon \leq \Psi - A U $ on $ X \times D$. Then, we have the following $\C^0$ priori bounds:
\begin{lem} 
Let $\Phi_\varepsilon$ be the solution of $(E_\varepsilon)$. Then $\Phi_\varepsilon$ satisfies the following $\C^0$ priori estimates,
\begin{align*}
    \Psi \leq \Phi_\varepsilon \leq \Psi - AU. 
\end{align*}
where $A$ is a constant independent of $\varepsilon$.
\end{lem}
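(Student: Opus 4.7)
The plan is to handle the two inequalities of the lemma separately. The lower bound $\Psi\le\Phi_\varepsilon$ follows from the Monge--Amp\`ere maximum principle (Theorem~\ref{mainthmmaxp}) after a standard shift that converts $\Theta$-plurisubharmonic potentials into $\Omega$-plurisubharmonic ones, while the upper bound $\Phi_\varepsilon\le\Psi-AU$ is obtained by a slice-wise classical (Laplacian) maximum principle which exploits the fact that $\Omega=\pi^{*}\omega$ restricts to zero on each fiber $\{x\}\times D$.

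For the lower bound, I would set
\[
\bar\Phi_\varepsilon := \Phi_\varepsilon + AU, \qquad \tilde\Psi := \Psi + AU,
\]
so that $\Omega+dd^{c}\bar\Phi_\varepsilon=\Theta+dd^{c}\Phi_\varepsilon\ge 0$ and $\Omega+dd^{c}\tilde\Psi=\Theta+dd^{c}\Psi\ge 0$ by the choice of $A$ in the setup of $(E_\varepsilon)$. Both are then bounded continuous $\Omega$-plurisubharmonic functions on $X\times D$ that agree on $X\times\partial D$, since $U$ vanishes there. The relation $\nu(\varepsilon)\le 1$ gives
\[
(\Omega+dd^{c}\bar\Phi_\varepsilon)^{n+1}=\nu(\varepsilon)\,(\Omega+dd^{c}\tilde\Psi)^{n+1}\le(\Omega+dd^{c}\tilde\Psi)^{n+1}.
\]
The smoothness of $\tilde\Psi$ supplies the volume bound required by Theorem~\ref{mainthmmaxp}, so that theorem applies with $v=\bar\Phi_\varepsilon$ and $u=\tilde\Psi$ and yields $\bar\Phi_\varepsilon\ge\tilde\Psi$, equivalently $\Phi_\varepsilon\ge\Psi$.

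For the upper bound, I would argue fiber by fiber. On each $\{x\}\times D$ the embedding $e:\{x\}\times D\hookrightarrow X\times D$ satisfies $e^{*}\Omega=0$, so the $\Theta$-plurisubharmonicity of $\Phi_\varepsilon$ restricts to $d_{\tau}d^{c}_{\tau}(\Phi_\varepsilon+AU)\ge 0$; in other words, $\Phi_\varepsilon+AU$ is subharmonic in $\tau$ on each fiber. The $C^{2}$-bound on the smooth extension of $\Psi$ to $X\times\overline{D}$ supplies a uniform estimate $d_{\tau}d^{c}_{\tau}\Psi\le A\,i\,d\tau\wedge d\bar\tau$; enlarging $A$ if necessary, I may arrange that the same constant simultaneously serves this Hessian bound and the positivity requirement for $\Theta=\Omega+A\,dd^{c}U$. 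Then the auxiliary function
\[
w := \Phi_\varepsilon + 2AU - \Psi
\]
satisfies $w=0$ on $\{x\}\times\partial D$ and $d_{\tau}d^{c}_{\tau}w\ge 0$ in the interior of each fiber, so the classical one-dimensional maximum principle forces $w\le 0$, which reads $\Phi_\varepsilon\le\Psi-2AU$. Absorbing the harmless factor $2$ into the constant $A$ of the lemma's statement yields the desired inequality.

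The only subtlety is uniformity of $A$ in $\varepsilon$, but this is automatic: $A$ is fixed once and for all by the positivity requirements on $\Theta$ and on $\Theta+dd^{c}\Psi$, together with the $C^{2}$-bound on the extension of $\Psi$; all of these depend solely on the data $(X,\omega,\Psi,U,D)$ and not on the parameter $\varepsilon$.
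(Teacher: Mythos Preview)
Your argument is correct and follows essentially the same route as the paper: the lower bound via the Monge--Amp\`ere maximum principle (Theorem~\ref{mainthmmaxp}) and the upper bound via the classical one-variable maximum principle on each vertical slice $\{x\}\times D$. Your explicit shift by $AU$ to pass from $\Theta$-plurisubharmonic to $\Omega$-plurisubharmonic functions, and your bookkeeping of the factor~$2$ in the upper bound, are slightly more careful than the paper's presentation but do not differ in substance.
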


The $\C^1$ boundary estimates following directly from the above Lemma. Since $\Psi \leq \Phi_\varepsilon \leq \Psi - AU $ and the functions $\Psi $, $\Phi_\varepsilon$, $\Psi - A U$ agree along $X \times \partial D$, we have
\begin{align*}
     | \nabla \Phi |_{\Theta} \leq \max \{ |\nabla \Psi|_{\Theta}, |\nabla (\Psi -AU)|_{\Theta} \}, \quad \text{ on } X \times \partial D.
\end{align*}
Hence, $\sup_{X \times \partial D} |\nabla \Phi|_{\Theta_\Psi} \leq C$, where $C$ is a uniform constant.

The interior $\C^1$ estimate closely follows the gradient estimates established in Blocki \cite{blocki2009gradient}. To adapt the argument to the setting of ALE K\"ahler manifolds, the maximal principle (cf. Lemma \ref{WuYauweak} and the following remark) should be involved. Note that $\Theta$ defines a global K\"ahler form on $X \times D$. In the remainder of this subsection, we denote by $g$ the K\"ahler metric associated with $\Theta + dd^c \Psi$, and by $g'$ to the metric associated with $\Theta+ dd^c \Phi_\varepsilon$.

\begin{lem} \label{c1priest} Let $ \varphi = \Phi_\varepsilon -\Psi $, where $\Phi_\varepsilon$ is a solution to $(E_{\varepsilon})$, and let $\nabla$ be the Levi-Civita connection corresponding to $g$. Assume that $\varphi$ is a bounded in $\C^1(X \times D, g)$. Then, 
\begin{align*}
    \sup_{X \times D}|\nabla \varphi|_g \leq C,
\end{align*}
where $C$ is a positive constant depending only on upper bounds for $|\varphi|$, on lower bound for bisectional curvature of $g$, and on $n$.
\end{lem}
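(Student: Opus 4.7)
The plan is to adapt Blocki's gradient estimate \cite{blocki2009gradient} to the noncompact ALE setting, substituting the Wu-Yau maximum principle (Lemma \ref{WuYauweak}, Remark \ref{remWYmaxp}) for the classical one. Since $\varphi$ is assumed $\C^1$-bounded by hypothesis, the test function
\begin{equation*}
H = \log |\nabla\varphi|_g^2 + F(\varphi),
\end{equation*}
with $F$ a smooth convex decreasing function on the range of $\varphi$ chosen as in \cite{blocki2009gradient} (for instance $F(t)=-A t$ with $A$ large, or $F(t)=e^{-\lambda t}$), is bounded above on $X\times D$ and smooth on $\{\nabla\varphi\ne 0\}$, since the solution $\Phi_\varepsilon$ of $(E_\varepsilon)$ is smooth.

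The core computation proceeds by evaluating $\Delta_{g'} H$ at an arbitrary point in $g$-normal coordinates that diagonalize $g'$. Differentiating $\log\det(g_{i\bar j}+\varphi_{i\bar j})=\log\det g_{i\bar j}+\log\nu(\varepsilon)$, using the identity $\Delta_{g'}\varphi=(n+1)-\tr_{g'}g$, and invoking the lower bound on the bisectional curvature of $g$, one arrives, for a suitable choice of $F$, at a pointwise inequality
\begin{equation*}
\Delta_{g'} H \geq c_0 |\nabla\varphi|_{g'}^2 + c_1 \tr_{g'} g - C \qquad \text{wherever } |\nabla\varphi|_g \geq M_0,
\end{equation*}
with positive constants $c_0,c_1,C,M_0$ depending only on $n$, on $\sup|\varphi|$, and on the curvature bound. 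The cross-terms involving $\nabla\log\nu(\varepsilon)$ are uniformly controlled by the imposed conditions on $\nu$ and absorbed into $C$.

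Next I would invoke Remark \ref{remWYmaxp} to produce a sequence $\{x_k\}\subset X\times D$ with $H(x_k)\to\sup H$, $|dH(x_k)|_{g'}\to 0$, and $\limsup_{k\to\infty}\Delta_{g'}H(x_k)\le 0$, provided $\sup_{X\times D}H>\sup_{X\times\partial D}H$; the opposite case is immediately resolved by the boundary gradient estimate already obtained. Substituting into the differential inequality forces $|\nabla\varphi|_g(x_k)\leq M_0$ along a subsequence, so $\sup H$ is finite, and this bound propagates to a global bound on $|\nabla\varphi|_g$ because $F(\varphi)$ is itself uniformly bounded. The main obstacle is the absence of a standard maximum principle on the noncompact manifold $X\times D$: the whole scheme rests on the a priori $\C^1$-boundedness hypothesis, which is precisely what guarantees $H$ is bounded above and hence that Wu-Yau applies. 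Beyond this substitution, the computation follows Blocki's argument verbatim and parallels the annular case already handled in \cite{yao2024geodesicequationsasymptoticallylocally}; the bounded smooth domain $D\subseteq\CC$ enters only through the smoothness and uniform positivity of $\Theta$ up to $X\times\partial D$.
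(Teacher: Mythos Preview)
Your proposal is correct and follows essentially the same route as the paper: Blocki's test function $\log|\nabla\varphi|_g^2-\gamma\circ\varphi$, with the Wu--Yau maximum principle (Remark \ref{remWYmaxp}) substituted for the classical one to handle noncompactness. The only caveat is that your illustrative choices for $F$ are not what Blocki actually uses---the paper takes the specific quadratic $\gamma(t)=(-\inf_{j\ne l}R_{j\bar j l\bar l}+3)(t-A)-(B-A)^{-1}(t-A)^2$ with $A=\sup\varphi$, $B=\inf\varphi$, and the nonvanishing $\gamma''$ is what produces the crucial $\sum_p|\varphi_p|^2/\rho'_{p\bar p}$ term that closes the estimate; the linear choice $F(t)=-At$ would not work.
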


\begin{proof} The idea of the proof is to apply the maximal principles to the following test function:
\begin{align*}
    \alpha = \log \beta - \gamma \circ \varphi,
\end{align*}
where $\beta = |\nabla \varphi|^2_g$ and $\gamma$ is a quadratic polynomial determined by the data $\sup_{X \times D} \varphi$, $\inf_{X \times D} \varphi$ and the negative lower bound of bisectional curvature of $g$.  
By Wu-Yau's maximal principle, there is a sequence $\{x_k\}$ on $X\times D$ such that 
\begin{align*}
    \lim_{k \rightarrow \infty} \alpha (x_k) = \sup_{X \times D} \alpha, \quad \lim_{k \rightarrow \infty} |\nabla \alpha (x_k)|_{g} =0, \quad \limsup_{k \rightarrow \infty} \Delta \alpha (x_k) \leq 0. 
\end{align*}
We now fix  $O =x_k$, where $k$ is sufficiently large. Around the point $O$, we choose the normal coordinates with
$g_{i\overline{j}} (O) = \delta_{ij}, \ g_{i\overline{j}, k}(O) = 0  \text{ and }  g'_{i\overline{j}}(O) \text{ is diagonal}.$

Let $\rho'$ be the local potential function of $g'$ around $O$. By a refined computation due to Blocki \cite{blocki2009gradient}, combined with the above Wu-Yau's maximal principle, there exists a small constant $0<\mathbf{e}\ll 1$ such that at the given point $O$,
\begin{align*}
    \mathbf{e} \geq \sum_{p=1}^{n+1} \frac{\alpha_{p\bar{p}}}{\rho'_{p\bar{p}}} \geq \big(\gamma' &+ \inf_{j\ne l} R_{j\bar{j}l\bar{l}}\big) \sum_{p} \frac{1}{\rho'_{p\bar{p}}} - \gamma'' \sum_p \frac{|\varphi_p|}{\rho'_{p\bar{p}}}\\
    & - (n+2) \gamma' - \frac{2}{\beta} - (1+ |\gamma'| + \beta^{-1})\mathbf{e},
\end{align*}
where $ \inf_{j\ne l} R_{j\bar{j}l\bar{l}}\big)$ is the bisectional curvature of $g$.
Let $\gamma = (-\inf_{j\ne l} R_{j\bar{j} l\bar{l}}+3) (t-A) - (B-A)^{-1} (t-A)^2 $, where $A = \sup_{X \times D} \varphi $ and $B = \inf_{X \times D} \varphi$. If we assume that $\beta (O) \geq 1$, then we have
\begin{align*}
    \sum_{p} \frac{1}{u_{p\overline{p}}} + \frac{2}{B-A} \sum_{p} \frac{|\varphi_p|^2}{ u_{p\overline{p}}} \leq 3 + (n+2) (-\inf_{j\ne l} R_{j\bar{j} l\bar{l}}+3).
\end{align*}
It is straightforward to conclude that
$\beta (O) \leq \max \{ [(n+3)(-\inf_{j\ne l} R_{j\bar{j} l\bar{l}}+3)]^{n} n(B-A) ,1\}$. Noting that $\beta \leq \exp \{ \mathbf{e} + \log \beta(O) - \gamma\circ\varphi (O) + \gamma\circ \varphi\}$, hence, $\beta$ is controlled by some uniform depending only on $n$, $||\varphi||_{L^\infty}$ and the negative lower bound of bisectional curvature of $g$.
\end{proof}

It comes to the point to deal with the $\C^2$ priori estimates on $X \times D$. The idea is

\begin{lem}\label{c2pribdyest}
Let the data $(X \times D,  g, g', \varphi)$ be the same as in Lemma \ref{c1priest}, then 
\begin{align*}
    \sup_{X\times \partial D}|\nabla^2 \varphi|_g \leq C,
\end{align*}
where the constant $C$ only depends on $\sup_{X\times D} |\nabla \varphi|$ and $ (X \times D, g) $.
\end{lem}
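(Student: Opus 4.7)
The plan is to adapt the classical Caffarelli-Kohn-Nirenberg-Spruck boundary $C^2$ estimate for complex Monge-Amp\`ere equations, as refined by Chen and B\l ocki in the K\"ahler setting, to our ALE product geometry. Near a fixed boundary point $p_0 \in X \times \partial D$, I will choose local holomorphic coordinates $(z_1,\dots,z_n)$ on $X$ around $\pi(p_0)$ provided by Lemma~\ref{lemholocornearinfty} and Corollary~\ref{corcxasymptoticcovering}, together with a biholomorphic coordinate $\tau = t + is$ on $D$ in which $\partial D$ is locally straightened to $\{t=0\}$ and $\partial_t$ is the inward normal. Second derivatives of $\varphi$ at $p_0$ then split into three types: purely tangential (along $X$ and along $\partial_s$), mixed tangential-normal (pairing a tangential direction with $\partial_t$), and purely normal in the $t$-variable.

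The purely tangential second derivatives are essentially free. Since $\varphi \equiv 0$ on $X \times \partial D$, differentiating this identity twice along tangential directions determines all such second derivatives of $\varphi$ at $p_0$ modulo terms involving $|\nabla \varphi|_g$ and the second fundamental form of the straightened boundary; both are uniformly controlled by Lemma~\ref{c1priest} and by the smoothness of $\partial D$.

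The heart of the argument is the mixed tangential-normal estimate, which I will obtain by a barrier construction. After rescaling the $X$-chart to a unit ball via Lemma~\ref{lemscalweightedholder}, the metrics $g$ and $g'$ and the linearized operator $\Delta_{g'}$ become uniformly elliptic with Euclidean-equivalent principal parts on a half-ball of uniform size. Following the schema of \cite{yao2024geodesicequationsasymptoticallylocally}, I will apply $\Delta_{g'}$ to a test function of the form
\begin{align*}
\Xi = \pm T(\varphi - \Psi) + A_1 |z|^2 + A_2 t - A_3 t^2,
\end{align*}
where $T$ is a first-order operator in one of the tangential directions $\partial_{z_i}, \partial_{\bar z_i}, \partial_s$, and $A_1, A_2, A_3$ are large constants to be fixed. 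Using the $\varepsilon$-Monge-Amp\`ere equation to evaluate $\Delta_{g'}\Psi$ and the concavity of $\log\det$ to produce a favorable sign, one shows $\Delta_{g'}\Xi \le 0$ on the half-ball and $\Xi \ge 0$ on its parabolic boundary. The maximum principle then forces $\Xi \ge 0$ in the interior, and differentiating in the normal direction at $p_0$ (where $\Xi$ vanishes) yields the desired uniform bound on $|T\varphi_t(p_0)|$.

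Finally, the pure normal second derivative $\varphi_{\tau\bar\tau}(p_0)$ is recovered algebraically from the equation $\det(g'_{i\bar j}) = \nu(\varepsilon)\det(g_{i\bar j})$: expanding the determinant along the $\tau$-row expresses $g'_{\tau\bar\tau}$ as a rational function of the already-controlled entries, provided the $n \times n$ tangential minor stays bounded away from zero, which in turn follows from positivity of $\Theta + dd^c\Psi$ together with the tangential bounds just obtained. The step I expect to be the main obstacle is not any individual estimate but the uniformity of all constants as $r(\pi(p_0)) \to \infty$: both the barrier construction and the minor lower bound must survive at arbitrarily large radius. This is handled by systematically using the weighted decay of $\Psi$, the scale-invariant reformulation of the weighted H\"older norms (Lemma~\ref{lemscalweightedholder}), and the uniform bounds on transition differentials $|DT_{j_1,j_2}|$ of the locally finite holomorphic covering of $X_\infty$, so that a single choice of $A_1, A_2, A_3$ works at every boundary point.
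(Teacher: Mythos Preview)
Your overall strategy matches the paper's exactly: split boundary second derivatives into tangential--tangential (trivial from $\varphi|_{X\times\partial D}=0$), tangential--normal (via a barrier and the maximum principle), and normal--normal (recovered algebraically from the equation once the other entries are controlled).

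However, the barrier you wrote down has a real gap. In $\Xi = \pm T(\varphi-\Psi) + A_1|z|^2 + A_2 t - A_3 t^2$, the only term contributing negativity to $\Delta_{g'}\Xi$ is $-A_3 t^2$, and that only produces $-\tfrac{A_3}{2}(g')^{\tau\bar\tau}$, i.e.\ negativity solely in the disc direction. Differentiating the $\varepsilon$-Monge--Amp\`ere equation shows $|\Delta_{g'}(T\varphi)|\le C\bigl(1+\sum_i (g')^{i\bar i}\bigr)$ with the \emph{full} trace, and $A_1|z|^2$ adds $+A_1\sum_{i\le n}(g')^{i\bar i}$. None of this is absorbed by a term that is negative only in the $\tau\bar\tau$ slot, so $\Delta_{g'}\Xi\le 0$ fails as written. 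The concavity of $\log\det$ you invoke would give the needed sign only if $\varphi$ itself (not just $T\varphi$) appeared in the barrier.

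The paper's fix is exactly this missing ingredient: it takes the auxiliary function $v=\varphi - NU$, where $U$ solves $\Delta U=1$ on $D$ with $U|_{\partial D}=0$, and builds $w = Av + B|z|^2 \pm \partial_{x_k}\varphi$. The key computation is $\Delta_{g'}\varphi = (n{+}1)-\operatorname{tr}_{g'}g$, so $Av$ contributes $-\tfrac{Am}{2}\sum_i (g')^{i\bar i}$ in \emph{all} directions, which for $A\gg B$ dominates both the error from $\pm\partial_{x_k}\varphi$ and the positive $B|z|^2$ term. Adding an $A\varphi$ (equivalently, a subsolution-minus-solution) term to your $\Xi$ repairs the argument. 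A minor point: in this section $\varphi$ already denotes $\Phi_\varepsilon-\Psi$, so $T(\varphi-\Psi)$ is presumably a slip for $T\varphi$.
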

\begin{proof} The boundary $\C^2 $ estimates are obtained by analyzing the tangential-tangential, tangential-normal, and normal-normal directions separately. The tangential-tangential estimates is trivial due to $\varphi \equiv 0 $ on $X \times \partial D$. The normal-normal estimate then follows from tangential-tangential and mixed estimates. The key step is tangential-normal estimates, which can be proved by constructing a local barrier function around a given boundary point $p \in X \times \partial D$.

Fixing a point $p \in X \times \partial D$, consider the auxiliary function in $B'_\delta (p) = (X \times D ) \cap B_\delta (p) $,
\begin{align}\label{auxfun}
    v = \varphi - N U,
\end{align}
where $N$ is a large uniform constant and $U$ is the function satisfies the equation \ref{equbarriersupharXD}. Recall $\LL_{\Phi_\varepsilon}$ be the linearization of Monge-Amp\`{e}re operator at $\Phi_\varepsilon$. If we assume $m\delta_{ij} \leq g \leq M \delta_{ij}$ in a local neighborhood containing $B'_\delta(p)$, and taking $N = (2/m)^{n+1} \det g$, we then have $\displaystyle\LL v \leq -\frac{m}{2} \sum_i (g')^{i\bar{i}}$. Also note $v \geq 0$ on $\partial B_\delta(p)$. The barrier functions can be constructed as follows,
\begin{align*}
    w = A v + B |z|^{2} \pm \frac{\partial}{ \partial x_k} \varphi,
\end{align*}
where $A$, $B>0$ are large uniform constants, and $\frac{\partial}{\partial x_k}$ is a local coordinate vertor field near $p$ in the direction of $X$. By picking a very large constants $A$, $B$, we have $\LL_{\Phi_\varepsilon} w \leq 0$ in $B_\delta'(p)$ and $w\geq 0$ on $\partial B'_\delta(p)$. Hence, $w\geq 0$ in $B'_\delta (p)$. Together with the fact that $w(p) =0$, the inward normal derivative of the test function at $p$, denoted $dw (\mathbf{n})$, satisfies $dw (\mathbf{n}) \geq 0$, which yields the tangential-normal estimate on the boundary.
\end{proof}

The lemma \ref{c2pribdyest} together with the Yau's standard calculation on Laplacian estimate implies the following interior Laplacian estimate, referring to \cite{Yau78}.

\begin{lem} \label{Yaulaest}
Let the data $(X \times D,  g, g', \varphi)$ be the same as in Lemma \ref{c1priest}, and let $\Delta$, $\Delta'$ be the Laplacian operators of $g$ and $g'$ respectively. Then, for any constant $C$,
\begin{align*}
    \Delta' \big(e^{-C \varphi} (n+1 + \Delta \varphi) \big) \geq e^{-C \varphi} & (n+1)^2 \inf_{i\ne l} (R_{i\overline{i}  l \overline{l}}) - C e^{-C \varphi} (n+1) (n+1+ \Delta \varphi)\\ & + (C + \inf_{i\ne l} (R_{i\overline{i}l \overline{l}})) e^{-C\varphi} (n+1 + \Delta \varphi)^{1+\frac{1}{n}} (\varepsilon)^{-1}.
\end{align*}
Furthermore, we have the interior Laplacian estimate,
\begin{align*}
    \sup_{X \times D}|\Delta \varphi| \leq C (1+ \sup_{X \times \partial D} |\Delta \varphi|),
\end{align*}
where the constant $C$ only depends on $\sup_{X \times D} \varphi $, the negative lower bound of $\inf_{i\ne l}(R_{i\overline{i}l \overline{l}})$.
\end{lem}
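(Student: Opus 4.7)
The lemma has two statements: a pointwise differential inequality for $\Delta'\bigl(e^{-C\varphi}(n+1+\Delta\varphi)\bigr)$ and the resulting global Laplacian bound. The plan is to establish the first by a direct Yau-type calculation and then derive the second by applying the Wu--Yau maximum principle recorded in Remark \ref{remWYmaxp} to the test function $F = e^{-C\varphi}(n+1+\Delta\varphi)$.

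For the differential inequality, I would fix a point $p \in X \times D$ and work in local holomorphic coordinates around $p$ such that $g_{i\bar j}(p) = \delta_{ij}$, $g_{i\bar j,k}(p) = 0$, and $(g'_{i\bar j})(p)$ is diagonal. Writing $\lambda_i = 1 + \varphi_{i\bar i}(p)$ for the eigenvalues of $g^{-1}g'$ at $p$, the equation $(E_\varepsilon)$ becomes $\prod_i \lambda_i = \nu(\varepsilon)$. Yau's standard computation --- taking $\Delta'$ of $\log(n+1+\Delta\varphi)$, commuting with $dd^c\log\det g'$, and absorbing third-derivative terms by Cauchy--Schwarz --- yields a lower bound involving $\sum 1/\lambda_i$ weighted by $\inf_{i\ne l} R_{i\bar i l \bar l}$. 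Multiplying through by $e^{-C\varphi}(n+1+\Delta\varphi)$ and using the identity $\Delta'\varphi = (n+1) - \sum 1/\lambda_i$ produces the $-C(n+1)e^{-C\varphi}(n+1+\Delta\varphi)$ term from the chain rule, while the Maclaurin/Newton-type inequality $\sum 1/\lambda_i \geq c\,(n+1+\Delta\varphi)^{1/n}\bigl(\prod \lambda_j\bigr)^{-1}$ (combined with $\prod \lambda_i = \nu(\varepsilon) \leq C\varepsilon$) is exactly what produces both the exponent $1+\tfrac{1}{n}$ on $(n+1+\Delta\varphi)$ and the $\varepsilon^{-1}$ factor in the claimed inequality.

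For the global estimate, I set $C$ large enough that $C + \inf_{i\ne l} R_{i\bar i l \bar l} \geq 1$. Since $\Phi_\varepsilon$ is smooth for fixed $\varepsilon > 0$ (standard Monge--Amp\`ere bootstrap once $C^{1,1}$-regularity is in hand, combined with the openness step in Lemma \ref{lemisolinMA}), $F$ is $C^2$ on $X \times D$. Lemma \ref{c1priest} provides the uniform $C^1$ bound on $\varphi$, and Lemma \ref{c2pribdyest} controls $F$ on $X \times \partial D$. If $\sup_{X \times D} F > \sup_{X\times \partial D} F$, the classical form of the Wu--Yau principle in Remark \ref{remWYmaxp}, applied with respect to the product metric on $X \times D$ (whose uniform equivalence to the Euclidean metric on the ALE end is guaranteed by \eqref{decayaleintro}), produces a sequence $\{x_k\}$ along which $F(x_k) \to \sup F$ and $\limsup_k \Delta' F(x_k) \leq 0$. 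Substituting this into the differential inequality, the favorable term $(C+\inf R)\,e^{-C\varphi}(n+1+\Delta\varphi)^{1+1/n}\varepsilon^{-1}$ forces $(n+1+\Delta\varphi)(x_k)$ to be bounded above by a constant depending only on $n$, $\sup_{X\times D}|\varphi|$, and $\inf_{i\ne l} R_{i\bar i l \bar l}$, contradicting the supposition beyond a threshold value.

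The main obstacle is the noncompactness of $X$, which prevents the direct attainment of a maximum for $F$. Wu--Yau's principle overcomes this, but one must verify that the background geometry is sufficiently controlled at infinity for the principle to apply --- exactly what the ALE decay \eqref{decayaleintro} delivers. A subtlety worth emphasizing is that the $\varepsilon^{-1}$ factor in the differential inequality is in fact \emph{favorable}: it amplifies the forcing term at the near-maximum points, so the resulting bound on $\Delta\varphi$ comes out independent of $\varepsilon$, which is essential for passing to the limit $\varepsilon \to 0$ in the continuity method to recover the original HCMA solution with $C^{1,1}$-regularity.
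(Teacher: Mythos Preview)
Your proposal is correct and follows the same route the paper points to (it merely cites Yau \cite{Yau78} and gives no details): Yau's standard second-order computation for the differential inequality, combined with the Wu--Yau maximum principle of Lemma~\ref{WuYauweak}/Remark~\ref{remWYmaxp} to handle the noncompactness of $X\times D$. One small imprecision worth tightening: Remark~\ref{remWYmaxp} yields $\limsup_k \Delta_{\tilde g} F(x_k)\le 0$, not $\Delta' F(x_k)\le 0$ directly; the passage to $\Delta'$ comes from the full Hessian bound $\nabla^2 F(x_k)\le C\delta_k\,\tilde g$ implicit in the proof of Lemma~\ref{WuYauweak}, which gives $\Delta' F(x_k)\le C\delta_k\sum_i 1/\lambda_i$ and is then absorbed into the favorable $\sum_i 1/\lambda_i$ term on the right-hand side --- exactly as the paper does in the proof of Lemma~\ref{c1priest}.
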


Note that Lemma \ref{Yaulaest} implies a comparison of metrics $\varepsilon C^{-1 } g \leq g' \leq C g$ with a uniform constant $C$.
The full $\C^{1,1}$ estimates then follow from a careful construction of the test function, and again, together with Lemma \ref{WuYauweak}. 

\begin{lem}\label{leminterc11}
Let the data $(X \times D,  g, g', \varphi)$ be the same as in Lemma \ref{c1priest}. Then there exists a constant $C$,
\begin{align*}
    |\nabla^2 \varphi| \leq C,
\end{align*}
where $C$ depends only on $(X \times D, g)$ and on $\sup_{X\times D}|\varphi|$, $\sup_{X \times D} |\nabla \varphi|_g$, $\sup_{X \times D} |\Delta \varphi|$, $\sup_{X \times \partial D} |\nabla^2 \varphi|_g$.
\end{lem}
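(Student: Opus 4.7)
The plan is to adapt the test function argument of Chu--Tosatti--Weinkove \cite{chu2017regularity} to the non-compact ALE setting, replacing their use of the classical maximum principle by Wu--Yau's version (Lemma~\ref{WuYauweak} together with Remark~\ref{remWYmaxp}). Since the Laplacian bound of Lemma~\ref{Yaulaest} already controls the trace of the complex Hessian and gives the comparability $\varepsilon C^{-1}g \leq g' \leq C g$, it suffices to bound the largest eigenvalue $\lambda_1$ of the real Hessian $\nabla^2\varphi$ with respect to $g$. I will therefore work with the test quantity
\[
   Q \;=\; \log \lambda_1(\nabla^2\varphi) \;+\; h(|\nabla\varphi|^2_g) \;+\; A\varphi^2 \;-\; B\varphi,
\]
where $h$ is a smooth concave increasing function and $A,B$ are large constants depending on the bisectional curvature of $g$ and on the already established $\C^0$, $\C^1$, and Laplacian bounds on $\varphi$. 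Non-smoothness of $\lambda_1$ at points of higher multiplicity is handled by the standard device of perturbing $\nabla^2\varphi$ by a small symmetric tensor making the top eigenvalue simple in a neighborhood.

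Next, I apply the Wu--Yau maximum principle to $Q$. Either $\sup Q$ is approached along $X \times \partial D$, in which case Lemma~\ref{c2pribdyest} together with the standard decomposition of $|\nabla^2\varphi|_g$ into tangential-tangential, tangential-normal, and normal-normal components yields the bound directly; or there is an interior sequence $\{x_k\} \subset X \times D$ with $Q(x_k) \to \sup Q$, $|\nabla Q(x_k)|_{\tilde g} \to 0$, and $\limsup_k \Delta'_{g'} Q(x_k) \leq 0$. At each $x_k$, choose normal coordinates for $g$ in which $g'$ is diagonal, and carry out the CTW computation of $\Delta'_{g'} Q$: one uses the linearized structure of $(E_\varepsilon)$ (the right-hand side density $\nu(\varepsilon)\,(\Theta+dd^c\Psi)^{n+1}$ is uniformly smooth), concavity of $\log\det$, and the lower bound $g' \geq \varepsilon C^{-1} g$ to produce a positive ``good'' term of the form $\sum_p (g')^{p\bar p}\varphi_{p\bar p\alpha\bar\alpha}/\lambda_1$ that dominates the curvature and gradient error terms provided $A,B$ and $h$ are chosen correctly. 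If $\lambda_1(x_k)$ were unbounded, one would then obtain $\Delta'_{g'}Q(x_k) \geq c > 0$ for large $k$, contradicting $\limsup_k \Delta'_{g'}Q(x_k) \leq 0$.

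The main obstacle will be in verifying that the CTW computation, which in the compact case exploits attainment at a single interior point where $\nabla Q = 0$ and $\Delta' Q \leq 0$ hold exactly, still closes when these conditions are relaxed to hold only asymptotically along a sequence. Concretely, each step in which $\nabla Q = 0$ is substituted into the expansion of $\Delta' Q$ now produces an additive error of order $|\nabla Q(x_k)|_{\tilde g} \to 0$, and these errors must be absorbed into the dominant positive term rather than competing with it. A second technical point is to ensure that the perturbation resolving the non-smoothness of $\lambda_1$ can be chosen uniformly along the sequence, so that the limit $k \to \infty$ commutes with the perturbation limit. Both of these points have essentially been carried out in the author's earlier work \cite{yao2024geodesicequationsasymptoticallylocally} for the case where $D$ is an annulus; the extension to a general bounded domain $D$ with smooth boundary presented here requires no new input, since the geometry of $X$ near infinity and the structure of the boundary data on $X \times \partial D$ are unchanged, and the modifications needed for the general domain are already absorbed into Lemmas~\ref{c1priest}--\ref{Yaulaest}.
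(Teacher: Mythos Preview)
Your overall plan matches the paper's: apply the Chu--Tosatti--Weinkove test function and replace the classical maximum principle by the Wu--Yau version (Remark~\ref{remWYmaxp}), picking a point $p$ where $\sup Q - Q(p)$ is arbitrarily small and running the CTW inequality there. The perturbation trick for resolving eigenvalue multiplicity and the reference to \cite{yao2024geodesicequationsasymptoticallylocally} for the asymptotic-versus-exact maximum issue are both exactly what the paper does.

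However, two points in your test function need correction. First, $h$ must be \emph{convex}, not concave: the paper (following CTW) takes $h(s) = -\tfrac{1}{2}\log\bigl(1+\sup_{X\times D}|\nabla\varphi|_g^2 - s\bigr)$, which satisfies the crucial identity $h'' = 2(h')^2$. This identity is what kills the term $(h'' - 2(h')^2)(g')^{i\bar i}\bigl|\partial_i|\nabla\varphi|^2\bigr|^2$ arising from the first-derivative substitution; a concave $h$ would make this term negative with no way to absorb it. Second, the paper's test function is simply $Q = \log\lambda_1 + h(|\nabla\varphi|_g^2) - A\varphi$, without the quadratic $A\varphi^2$ term you include. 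That extra term contributes $2A\varphi\,\Delta'\varphi + 2A(g')^{i\bar j}\varphi_i\varphi_{\bar j}$ to $\Delta' Q$; since $\Delta'\varphi = (n+1) - \sum_i (g')^{i\bar i}$ and $\varphi \geq 0$, the piece $-2A\varphi\sum_i(g')^{i\bar i}$ has the wrong sign and would compete with the good $+B\sum_i(g')^{i\bar i}$ term. Drop the quadratic term and use the CTW choice of $h$, and your outline becomes the paper's proof verbatim.
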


\begin{proof}
Let $\lambda_1(\nabla^2 \varphi)$ be the largest eigenvalue of the real Hessian $\nabla^2 \varphi$. By observing that there exists a uniform constant $C$, $\lambda_1 (\nabla^2 \varphi) \leq |\nabla^2 \varphi|_g \leq C \lambda_1 (\nabla^2 \varphi) +C $, it suffices to prove that $\lambda_1 (\nabla^2 \varphi)$ has a uniform upper bound. Consider the following test function
\begin{align*}
    Q = \log \lambda_1 (\nabla^2 \varphi) + h (|\nabla \varphi|_g^2) -A \varphi,
\end{align*}
where $h$ is defined to be $\displaystyle h(s) = -\frac{1}{2} \log \big (1 + \sup_{X \times \Sigma} |\nabla \varphi|_g^2 -s  \big)$ and $A>0$ is a large uniform constant. Let $\mathbf{e} >0$ an arbitrarily small constant. Then there exists a point $p \in X \times D$ such that $\sup_{X \times D} Q - Q (p ) < \mathbf{e}$. Assume further that the eigenvalues of the real Hessian $\nabla^2 \varphi$ satisfy $\lambda_1(p)> \lambda_2(p) \geq \lambda_3 (p)\geq \ldots \geq \lambda_{2n+2}(p)$. Under this assumption, the test function $Q$ is smooth near $p$ and satisfies 
\begin{align*}
   | d {Q}(p)|_g \leq C\mathbf{e}, \quad \Delta {Q} (p) \leq C\mathbf{e},
\end{align*}
for a uniform constant $C$. In the general case, one can perturb the Hessian matrix slightly to reduce to the above situation. For further details, we refer the reader to \cite{yao2024geodesicequationsasymptoticallylocally}.

Then, by direct calculation in \cite[Lemma 2.1]{chu2017regularity}, if we assume $\lambda_1 \geq 1$ at $p$, we have
\begin{align} \label{laQ1}
\begin{split}
    C\mathbf{e} \geq \Delta {Q} \geq & \ 2 \sum_{\alpha> 1} \frac{(g')^{i\overline{i}}|\partial_i (\varphi_{V_\alpha V_1})|^2}{\lambda_1 (\lambda_1- \lambda_\alpha)} + \frac{(g')^{i\overline{i} }(g')^{j\overline{j}} \big|V_1 \big( (g')_{i\overline{j}}\big)\big|}{\lambda_1} - \frac{(g')^{i\overline{i}} |\partial_i (\varphi_{V_1 V_1})|^2}{\lambda_1^2} \\
    & + h' \sum_{k} (g')^{i\overline{i}} \big( |\varphi_{ik}|^2 + |\varphi_{i\overline{k}}|^2 \big) + h'' (g')^{i\overline{i}} \big|\partial_i |\nabla \varphi|^2_g \big|\\
    & + (A-B) \sum_{i} g'^{i\overline{i}} -A n,
\end{split}
\end{align}
where the constant $B$ only depends on $(X\times D, g)$ and $\sup_{X \times D} |\nabla \varphi|_g$, and $V_\alpha$ is the corresponding eigenvector of $\lambda_\alpha$. To cancel the annoying terms on the right-hand side of \eqref{laQ1}, we address the third term in the expansion. The detailed computation has been provided in [CTW, Lemma 2.2]. If we assume further that $ \displaystyle \lambda_1 \geq 8 A^2   \big(\sup_{X \times D } |\nabla \varphi|^2 +1\big) C $, we have 
\begin{align*}
    C \mathbf{e} \geq &\ h' \sum_k (g')^{i\overline{i}} \big( |\varphi_{ik}|^2 + |\varphi_{i\overline{k}}|^2 \big) +\big( h''- 2 (h')^2 \big)  (g')^{i\overline{i}} \big| \partial_i |\nabla \varphi|^2 \big| \\
    &  +(A-B -C \mathbf{e}-1) \sum_{i} (g')^{i \overline{i}} -A n.
\end{align*}
Notice that $h'' = 2(h')^2$. Picking $\varepsilon \leq 1/C $, $A = B+3$ , then we have,
\begin{align*}
    h' \sum_k (g')^{i\overline{i}} \big(|\varphi_{ik}|^2 + | \varphi_{i\overline{k}}|^2\big) + \sum_{i} (g')^{i\overline{i}} \leq An+1
\end{align*}
Recall that $g' \leq C g $. Hence, at $p$, $(g')^{i\overline{i}} \geq C^{-1}$. Then, 
\begin{align*}
    \lambda_1 (p) \leq \max \Big\{ 8 A^2   \big(\sup_{X \times D } |\nabla \varphi|^2 +1\big) C, \big\{(An +1)C -n \big\} (1+ \sup_{X \times \Sigma} |\nabla \varphi|_g^2) \Big\}
\end{align*}
Together with the fact that $\sup_{X \times \Sigma} Q \leq Q(p) +1 $, we prove that $\sup_{X \times \Sigma} \lambda_1 $ is bounded by some uniform constant. 
\end{proof}

\subsection{The Weak Solution to HCMA up to $\C^{1,1}$}\label{subsecwsolutoHCMA}
According to Lemmas from \ref{lemisolinMA} to \ref{leminterc11}, we conclude the existence of the weak solution to HCMA on $X \times D$: 

\begin{thm} \label{thmc11soluasupenvelope}
    Let $X$ be an ALE K\"ahler manifold with ALE K\"ahler form $\omega$, D, a domain with smooth boundary in $\CC$ and $\Omega = p^* \omega$, an nonnegative real closed $(1,1)$ form on $X \times D$. Suppose that $\Psi$ is a smooth function on $X \times \partial D$, and $\Psi \in \C^{k,\alpha} (X \times \partial D)$, There exists a unique solution, $\Phi$, of 
    \begin{align*} 
    \begin{aligned}
        &\Phi \text{ plurisubharmonic } & \text{in } X \times D\\
        &\Phi \text{ continuous } & \text{in }  X \times \overline{D}\\
        &\Phi = \Psi & \text{in } X \times \partial D\\ 
        &\big(\Omega + dd^c \Phi\big)^{n+1} = 0 & \text{in } X\times D,
    \end{aligned}
    \end{align*}
    and $\Phi$ is a $\C^{1,1}$ function on $X \times D$ satisfies
    \begin{align*}
        ||\Phi||_{1,1; X \times D} \leq C,
    \end{align*}
    where $C$ is a uniform constant depending only on the data $D$, $\Theta$ and $||\Psi||_{k,\alpha; X \times D}$.
    
    Furthermore, $\Phi$ agrees with the upper envelope of $\Pl_{\Omega, \Psi}$.
\end{thm}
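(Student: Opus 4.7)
The plan is to combine the continuity method from Subsection~\ref{subsecC11esti} with the a priori estimates of Lemmas~\ref{c1priest}--\ref{leminterc11} to produce a $\C^{1,1}$ solution, and then to identify this solution with the upper envelope $\Phi_{\Omega,\Psi}$ using the maximum principle of Theorem~\ref{mainthmmaxp} together with the continuity of the envelope from Proposition~\ref{contiofupperenve}.

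First, I would solve the family of $\varepsilon$-Monge-Amp\`ere equations $(E_\varepsilon)$ for $\varepsilon \in (0,1]$. The set of $s$ for which $(E_s)$ admits a $\C^{k,\alpha}$-solution is nonempty, as $s=1$ gives $\Phi_1 = \Psi$. Openness is Lemma~\ref{lemisolinMA}: the linearized operator $\LL_{\Phi_s}$ is an isomorphism from $(\C^{k,\alpha})_0$ to $\C^{k-2,\alpha}$, so one can perturb in $s$ via the implicit function theorem. Closedness relies on the a priori bounds: the $\C^0$ estimate $\Psi \leq \Phi_\varepsilon \leq \Psi - A U$, the interior $\C^1$ estimate of Lemma~\ref{c1priest} obtained by Wu--Yau's maximum principle (Lemma~\ref{WuYauweak} and Remark~\ref{remWYmaxp}) applied to Bloc\-ki's test function, the boundary Hessian bound of Lemma~\ref{c2pribdyest} via the barrier \eqref{auxfun}, the interior Laplacian estimate of Lemma~\ref{Yaulaest}, and the full interior real Hessian bound of Lemma~\ref{leminterc11}. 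These give a constant $C$ independent of $\varepsilon$ with $\|\Phi_\varepsilon\|_{\C^{1,1}(X\times D,\tilde g)} \leq C$; the higher-order $\C^{k,\alpha}$ estimates (which may depend on $\varepsilon$) follow by standard bootstrapping for the now uniformly elliptic equation, and hence the continuity method closes and produces $\Phi_\varepsilon \in \C^{k,\alpha}$ for every $\varepsilon \in (0,1]$.

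Next, I would pass to the limit $\varepsilon \to 0$. Since the $\C^{1,1}$ bound is uniform in $\varepsilon$ and ALE manifolds are exhausted by compact sets, Arzel\`a--Ascoli together with a diagonal argument yields a subsequential limit $\Phi \in \C^{1,1}(X\times D)$ with $\|\Phi\|_{1,1;X\times D}\leq C$ and $\Phi = \Psi$ on $X\times \partial D$. The uniform $\C^{1,1}$ bound forces $\Omega + dd^c \Phi_\varepsilon \leq E_0\, \Theta$ for some $E_0$ independent of $\varepsilon$, so the $(n+1)$-fold wedge products converge weakly as currents; combined with the right-hand side $\nu(\varepsilon)(\Omega+dd^c\Psi)^{n+1} \to 0$, the limit $\Phi$ solves the HCMA equation \eqref{introHCMA} in the weak sense, and $\Omega + dd^c\Phi \geq 0$ is preserved under weak limits.

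Uniqueness among bounded continuous $\Omega$-psh solutions now follows directly from Theorem~\ref{mainthmmaxp}: any two bounded continuous solutions $\Phi_1,\Phi_2$ both satisfy the agreement condition on $X\times \partial D$ and the vanishing Monge-Amp\`ere equality on $X\times D$, and at least one of them (the $\C^{1,1}$ solution just constructed) satisfies the volume bound $(\Omega + dd^c \Phi_i)^{n+1} \leq E_0\,\Theta^{n+1}$, so the maximum principle applied in both directions gives $\Phi_1 = \Phi_2$. For the identification with the upper envelope, Proposition~\ref{contiofupperenve} shows that $\Phi_{\Omega,\Psi}$ is bounded, continuous on $X\times \overline D$, and agrees with $\Psi$ on $X\times \partial D$; hence by Theorem~\ref{thmgsolHCMAupenv} it is a bounded continuous weak solution of the HCMA equation. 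Uniqueness from Theorem~\ref{mainthmmaxp} then forces $\Phi = \Phi_{\Omega,\Psi}$. The main technical obstacle is ensuring that the uniform $\C^{1,1}$ bounds of Subsection~\ref{subsecC11esti} genuinely pass to the noncompact ALE setting---this is already handled by systematically replacing the ordinary maximum principle with the Wu--Yau version (Lemma~\ref{WuYauweak}), which is why the a priori estimates are stated in that generality.
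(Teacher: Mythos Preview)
Your proposal is correct and follows essentially the same route as the paper: solve the $\varepsilon$-Monge--Amp\`ere equations $(E_\varepsilon)$ by the continuity method using Lemma~\ref{lemisolinMA} for openness and the uniform a priori estimates of Lemmas~\ref{c1priest}--\ref{leminterc11} for closedness, extract a subsequential $\C^{1,1}$ limit as $\varepsilon\to 0$, invoke Theorem~\ref{mainthmmaxp} for uniqueness, and identify the solution with the upper envelope via Theorem~\ref{thmgsolHCMAupenv}. One minor remark: in the uniqueness step the volume hypothesis $(\Omega+dd^c v)^{n+1}\le E_0\Theta^{n+1}$ of Theorem~\ref{mainthmmaxp} is trivially satisfied for \emph{any} bounded continuous HCMA solution (since the Monge--Amp\`ere measure vanishes), so you need not single out the $\C^{1,1}$ solution for that purpose.
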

\begin{proof} By solving $\varepsilon$-Monge-Amp\`{e}re equation $(E_\varepsilon)$, we obtain the solution $\Phi_\varepsilon \in \C^{k,\alpha} (X \times D)$, with the uniform estimates up to $\C^{1,1}$. 
The existence of the weak solution to HCMA equation follows by taking subsequential limits of $\Phi_\varepsilon$ in $\C^{1, \beta}(X \times D)$ for any $\beta<1$, and $||\Phi||_{1,1; X \times D} \leq C$ is immediately from the uniform $\C^{1,1}$ bound for $\Phi_\varepsilon$. 

The uniqueness is an immediate implication of the maximal principle, Theorem \ref{mainthmmaxp}. Moreover, the upper envelope of $\Pl_{\Omega, \Psi}$ is a bounded countinuous solution to HCMA by Theorem \ref{thmgsolHCMAupenv}, which automatically agrees with $\Phi$ due to the uniqueness.
\end{proof}

\section{Holomorphic discs foliation} \label{secholodiscfoliexist}

This section is dedicated to establishing the existence of a foliation by a family of holomorphic discs on $(X-K) \times D$, where $K \subseteq X$ is a closed ball with a large radius. While the full proof of the global foliation is completed in the next section, our goal here is to construct the foliation locally in a holomorphic ball. This serves as a crucial step toward the global result. 

The existence of holomorphic disc foliations has been studied previously in the compact K\"ahler setting by Donaldson \cite{donaldson1999symmetric,donaldson2002holo}. More recently, Chen–Feldman–Hu \cite{CHEN2020108603} revisited the problem and reproved the existence of holomorphic disc foliations by reducing it to a local PDE problem also on compact K\"ahler setting. In this paper, we work in the non-compact, ALE K\"ahler setting, and one key new observation is that solving a family of Riemann–Hilbert problems for the holomorphic discs leads to a loss of regularity in the parameter direction, i.e. in the $X$-driection. This phenomenon does not appear to be addressed in the existing literature, and we will analyze and overcome this difficulty in detail.

Let $X$ be an ALE K\"ahler manifold with the end $X_\infty$. According to Proposition \ref{propcxasymcoordinates} and Corollary \ref{corcxasymptoticcovering}, there are two different ways to describe the asymptotic complex coordinates of $X$:
\begin{enumerate}
    \item[(i)] In the case of complex dimension $n \geq 3$, there is a biholomorphism between the universal covering of the end $\widetilde{X}_\infty$ and $\CC^n - B_R$, $I: \widetilde{X}_\infty \rightarrow \CC^n - B_R$.
    \item[(ii)] In the case of complex dimension $n=2$, the end $X_\infty$ can be covered by a family of locally finite and countably many large balls in the sense that $X_\infty \subseteq \bigcup_{i \in \mathcal{I}} U_i$ and for each $i\in \mathcal{I}$, there is a biholomorphism $I_i: U_i \rightarrow B_R \subseteq \CC^n$. 
\end{enumerate}
In this section, we complete the proof of the existence of holomorphic disc foliations in case (i) and a part of case (ii). More precisely, we prove the existence of foliations within each holomorphic coordinate chart covering the end. The full proof of Theorem \ref{mthmexistholodiskfoli}, including the necessary patching argument for case (ii), will be completed in the next section (see Theorem \ref{thmpatch}). Additionally, we prove a uniform estimate on the displacement (or shifting) of holomorphic discs. The weighted version of the estimates will also be completed next section, see Theorem \ref{thmweightestifoli}.

\subsection{Holomorphic discs and homogeneous complex Monge-Amp\`{e}re equation} \label{secholodiscHCMA} In Donaldson\cite{donaldson2002holo} and Semmes \cite{semmes1992complex}, the existence of nondegenerate smooth solutions of HCMA equation is proved to be equivalent to the global existence of families of holomorphic discs under the setting of compact K\"ahler manifolds. 
In this subsection, we first provide a precise description of the holomorphic disc foliation within a holomorphic coordinate chart. We then give a constructive proof of the equivalence statement. The theorem applies uniformly to both cases (i) and (ii), as the analysis takes place entirely within holomorphic charts modeled on domains in $\CC^n$, such as $B_R$ or its complement $\CC^n \backslash B_R$.

 For the sake of simplicity, throughout this section, we denote $N$ as either the ball of radius $R$, $B_R$ or the complement of the ball in $\CC^n$, $\CC^n- B_R$. In addition, we will use $N'$, $N''$ to denote the open domains in $\CC^n$ by stretching and shrinking $N$ slightly that satisfy $N' \supseteq \overline{N} \supseteq N \subseteq \overline{N''}\supseteq N''$. For instance, we can take $N' = B_{R+1}$, $N = B_R$, $N'' = B_{R+1}$ and $N' = \CC^n - B_{R-1} $, $N = \CC^{n} - B_{R}$, $N = \CC^n - B_{R+1}$. The complex coordinates of $N$ are the standard one in $\CC^n$, denoted by $\{z^1, \ldots, z^n\}$.

 Let $E$ be the holomorphic cotangent bundle of $N'$ with canonical projection $\pi_E: E \rightarrow N'$. The cotangent bundle is trivial and the complex bundle coordinates are denoted by $\{\xi_1, \ldots, \xi_{n}\}$. There is a canonical complex symplectic form defined in the total space of $E$, in terms of canonical holomorphic coordinates $z_i$, $\xi_i$, $(i=1,\ldots, n)$ of $E$,
 \begin{align*}
     \Xi = dz_i \wedge d\xi_i.
 \end{align*}
 Let $\omega$ be a K\"ahler form in $N$ by restricting the reference K\"ahler form on $X$. According to [CH], the ddbar lemma can be applied to $N$ for both of the cases. Then, we have
 \begin{align*}
     \omega = i\partial \overline{\partial} \rho,
 \end{align*}
 where $\rho$ is a smooth function defined in $N$. In particular, in the case of $N= \CC^n - B_R$, the potential function is given by $\rho = r^2 + \psi$, where $\psi \in \C^{\infty}_{2-\tau}$. The K\"ahler form $\omega$ can be associated with a submanifold, $\Lambda_\rho$ of $E$ defined by the graph of $\partial \rho $. By restricting $\Xi$ on $\Lambda_\rho$, a direct calculation shows that,
 \begin{align*}
     \Xi|_{\Lambda_\rho} =  \partial \overline{\partial} \rho = -i \omega.
 \end{align*}
 Thus, $\re \Xi|_{\Lambda_\rho} =0$ and $\im \Xi|_{\Lambda_\rho} = -\omega$. Hence, $\Lambda_\psi$ is an exact Lagrangian submanifold with respect to the canonical symplectic structure of $E$.

 Consider the boundary data given by the HCMA equation, $\omega_\tau = \omega + i\partial \overline{\partial} \psi_\tau = i\partial \overline{\partial} \tilde{\psi}_\tau$ (with $ \tilde{\psi}_\tau = \rho + \psi_\tau $), for $\tau \in \partial D$.
 Let $\Lambda_{\tilde{\psi}_{\tau}}$ be the exact Lagrangian submanifold of $E$ given by the graph of $\partial \tilde{\psi}_\tau$.
 The family of holomorphic discs, whose boundary data belongs to $\Lambda_{\tilde{\psi}_\tau}$ for each $\tau \in \partial D$, can be described as follows. For each $x \in N$, there is a smooth family of holomorphic discs $G : N \times D \rightarrow E$ such that
 \begin{itemize}
     \item $G$ is smooth in $N \times D$;
     \item let $g_x (\tau) = G(x, \tau)$. $g_x (\tau)$ is holomorphic.
     \item for each $\tau \in \partial D $ and each $x\in N$, 
     \begin{align*}
         g_x(\tau) \in \Lambda_{\tilde{\psi}_\tau};
     \end{align*}
     \item for each $x\in N$, we have $\displaystyle \pi\circ g_x (-i) = x$;
     \item let $H (x,\tau) = \pi \circ G(x, \tau) $ and $h_\tau (x) = H(x, \tau)$. For each $ \tau \in  D$, the map $h_\tau: N \rightarrow N'$ is a diffeomorphism with the image. 
     In addition, for each $\tau \in D$, $h_\tau (x)$ satisfies
     \begin{align*}
         N''  \subseteq h_\tau ( N ) \subseteq N'.
     \end{align*}
 \end{itemize}

The foliation by holomorphic discs can be written in terms of the holomorphic coordinates of $E$. Denote $z_i(x, \tau) = z_i (g_x (\tau))$ and $\xi_i (x, \tau) = \xi_i (g_x (\tau))$. Also noting that $h_\tau$ defines a diffeomorphism from $N$ to $h_{\tau} (N) \subseteq N'$ for each $\tau \in D$, throughout the section, we introduce the complex coordinates $w = (w^1, \ldots, w^n)$ of $N$. 
The following PDE gives another interpretation of the foliation by holomorphic discs. For $i = 1,2,\ldots, n$,
\begin{align}\label{holodiscpde}
\begin{split}
    \frac{\partial}{\partial \bar{\tau}} z_i (w, \tau) = \frac{\partial}{\partial \bar{\tau}} \xi_i (w, \tau) = 0, \qquad & (w, \tau) \in N \times D; \\
    \xi_{i} (w, \tau) = \Big(\frac{\partial}{\partial{z_i}} \tilde{\psi}_\tau\Big) (z(w, \tau), \tau), \qquad & (w, \tau) \in N \times \partial D;\\
    {z} (w, -i) =  w, \qquad \qquad \qquad & \ w \in N,
\end{split}
\end{align}
and $z(w, \tau)$ gives a family of diffeomorphism of $N$ satisfying, 
\begin{align}\label{holodiscimage}
N'' \subseteq \{z(w, \tau)| \ w\in N\} \subseteq N',\qquad \text{ for }  \tau \in D.
\end{align}

Based on the description of holomorphic discs foliation (\ref{holodiscpde}) and (\ref{holodiscimage}). we can construct a smooth nondegenerate solution of HCMA in $N'' \times D$ providing the data of the smooth family of holomorphic discs, $g_w$, for $w \in N$. The precise construction is given as follows. For each $w \in N$, we define $\Phi (z(w,\tau), \tau)$ as
\begin{align} \label{explicitconstructionsolu}
\begin{split}
\frac{\partial}{\partial {\tau}} \frac{\partial}{\partial \bar{\tau}} \big(\Phi(z(w, \tau), \tau)\big) = 0,\quad\qquad & \tau \in D,\\
    \Phi(z(w, \tau), \tau) = \rho_\tau (z(w, \tau), \tau), \qquad &\tau \in \partial D.
    \end{split}
\end{align}
In other words, we construct a function $\Phi$ in $ N'' \times D$, by defining a harmonic function along each leaf at $x \in N$ agreeing with the potential functions at the boundary. In the following Proposition, we will prove the function we constructed in (\ref{explicitconstructionsolu}) is the solution to HCMA restricting to $N'' \times D$.

\begin{pro}\label{propexplicitconstructionsolu}
    Let $ g_x$, $x\in N$ be a smooth family of holomorphic discs satisfying (\ref{holodiscpde}) and (\ref{holodiscimage}). Then, the function $\Phi$ constructed in (\ref{explicitconstructionsolu}) satisfies,
    \begin{align} \label{holoconditionfirstderivatives}
        \frac{\partial}{\partial \bar{\tau}} \bigg[ \Big(\frac{\partial}{\partial z_{i}} \Phi\Big) \big(z(x, \tau), \tau\big)\bigg] =0, \qquad \tau \in D. 
    \end{align} 
    More precisely, Let $\Lambda_\tau = G(\cdot, \tau)$, $\tau \in D$, which can be viewed a submanifold of $E$. The graph of $ \displaystyle\Big( \frac{\partial}{\partial z_{i}} \Phi\Big) \big(z(x, \tau), \tau\big) $ coincides with $\Lambda_\tau$, $\tau \in D$, and defines an exact Lagrangian submanifold of $E$ for each $\tau \in D$. Furthermore, $\Phi$ is a smooth solution to the homogeneous complex Monge-Amp\`ere equation in $N_{R_0 +1} \times D$,
    \begin{align}\label{HCMAnondegoutside}
    \begin{split}
        (i\partial\overline{\partial} \Phi\big)^{n+1} = 0, \quad \qquad &\text{ in } N' \times D,\\
        \Phi(z, \tau) = \rho(z, \tau),\  \qquad &\text{ in } N' \times \partial D,\\
        i\partial_X \overline{\partial}_X \Phi (z, \tau) >0,   \qquad &\text{ for } (z,\tau) \in N' \times D.
    \end{split}
    \end{align}
Conversely, if there is a nondegenerate solution $\Phi$ to the above HCMA equation in $N' \times D$. Then, there is a smooth family of holomorphic discs in $N \times D$ satisfying (\ref{holodiscpde}) and (\ref{holodiscimage}).
\end{pro}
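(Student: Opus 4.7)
The plan is to handle the two directions separately, but with a common computation at their heart: the identity $\xi_i(w,\tau) = (\partial\Phi/\partial z_i)(z(w,\tau),\tau)$ on all of $N \times D$, not merely at the boundary.

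\smallskip
\emph{Forward direction.} I would first argue that the prescription (\ref{explicitconstructionsolu}) yields a well-defined function on $N'' \times D$. The containment (\ref{holodiscimage}) and the fact that $h_\tau$ is a diffeomorphism onto its image for each $\tau$ mean that the discs $\{(z(w,\tau),\tau): \tau \in D\}$ foliate a neighborhood of $N'' \times D$. Since along each leaf the prescription fixes $\Phi$ as the harmonic extension of the boundary data $\tilde\psi_\tau|_{\text{leaf}}$, the function $\Phi$ is unambiguous and smooth on $N'' \times D$ (the smoothness in $w$ following from that of $G$ together with the Poisson formula with parameters).

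Next I would set $\eta_i(w,\tau) = (\partial\Phi/\partial z_i)(z(w,\tau),\tau)$ and prove $\eta_i \equiv \xi_i$ on $N \times \overline D$. At $\tau \in \partial D$, the boundary condition $\Phi(z,\tau)=\tilde\psi_\tau(z)$ together with (\ref{holodiscpde}) gives $\eta_i = \partial_{z_i}\tilde\psi_\tau = \xi_i$. To extend the identity into the interior, I would differentiate $0 = \partial_\tau\partial_{\bar\tau}[\Phi(z(w,\tau),\tau)]$ using $\partial_{\bar\tau} z_j = 0$ to get a relation expressing that the vector tangent to the leaf lies in the kernel of the complex Hessian of $\Phi$. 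That same computation applied to $\eta_i$ shows $\partial_{\bar\tau}\eta_i = 0$ in $D$. By the maximum principle for holomorphic functions with parameters (or the uniqueness of the Riemann--Hilbert problem for $\xi_i - \eta_i$), we conclude $\eta_i = \xi_i$, which is exactly (\ref{holoconditionfirstderivatives}) and shows that the graph of $\partial\Phi$ is $\Lambda_\tau$. Exactness of $\Lambda_\tau$ as a Lagrangian follows from its description as a graph of an exact $1$-form $\partial(\Phi(\cdot,\tau))$ for each fixed $\tau \in D$.

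\smallskip
\emph{Degeneracy and nondegeneracy.} Having identified $\Lambda_\tau$ with the graph of $\partial\Phi(\cdot,\tau)$, the tangent vector $T = \partial_\tau + \sum_k (\partial z_k/\partial\tau)\, \partial_{z_k}$ along each leaf satisfies $i_T\, i\partial\bar\partial\Phi = 0$; indeed, this is exactly the content of $\partial_{\bar\tau}\eta_i = 0$ combined with $\partial_{\bar\tau}[\partial_\tau \Phi \circ (z(w,\tau),\tau)] = 0$ (which follows from harmonicity along the leaf). Thus $i\partial\bar\partial\Phi$ has a null direction at each point of $N'' \times D$, giving $(i\partial\bar\partial\Phi)^{n+1} = 0$. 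For the strict positivity of $i\partial_X\bar\partial_X\Phi$, I would use that $h_\tau$ is a diffeomorphism for each $\tau$ and that on $\partial D$ it agrees with $\omega + i\partial\bar\partial\psi_\tau > 0$; combined with the fact that the kernel of $i\partial\bar\partial\Phi$ is transverse to the $X$-slice (since $T$ has a nonzero $\partial_\tau$ component), the restriction to any slice $\{\tau = \text{const}\}$ is a nondegenerate semipositive $(1,1)$-form, hence strictly positive.

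\smallskip
\emph{Converse.} Given a nondegenerate solution $\Phi$ of (\ref{HCMAnondegoutside}), the form $i\partial\bar\partial\Phi$ is closed semipositive of rank exactly $n$ (by the nondegeneracy in the $X$ direction together with $(i\partial\bar\partial\Phi)^{n+1}=0$), so its kernel defines an integrable holomorphic line distribution in $N' \times D$. I would solve the resulting ODE with the normalization $\pi\circ G(w,-i) = w$ to obtain the leaves $\tau \mapsto z(w,\tau)$, then lift to $E$ by setting $\xi_i(w,\tau) = (\partial\Phi/\partial z_i)(z(w,\tau),\tau)$; the kernel condition combined with $\partial_{\bar\tau} z_j = 0$ yields $\partial_{\bar\tau}\xi_i = 0$, and the boundary condition $\Phi = \rho_\tau$ on $\partial D$ places $G(w,\tau)$ on $\Lambda_{\tilde\psi_\tau}$. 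The image condition (\ref{holodiscimage}) for the foliation would be enforced by choosing $N,N',N''$ appropriately based on the nondegeneracy constants of $i\partial_X\bar\partial_X\Phi$ and the implicit function theorem applied to $h_\tau$.

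\smallskip
The most delicate step is verifying $\partial_{\bar\tau}\eta_i = 0$ in the interior, since the chain-rule expansion mixes the Hessian entries of $\Phi$ and requires the harmonicity of $\Phi$ along leaves in a precise form; once that identity is in hand, the rest is bookkeeping of what has already been set up in Sections~\ref{secholodiscHCMA} and the standard Semmes--Donaldson correspondence carries through verbatim.
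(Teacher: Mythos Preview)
Your overall plan is right, and you correctly identify the identity $\eta_i(w,\tau) := (\partial_{z_i}\Phi)(z(w,\tau),\tau) = \xi_i(w,\tau)$ on all of $N\times D$ as the heart of the matter. But the step you flag as ``most delicate'' has a genuine gap. You propose to prove $\partial_{\bar\tau}\eta_i = 0$ by differentiating the harmonicity $\partial_\tau\partial_{\bar\tau}[\Phi(z(w,\tau),\tau)]=0$. Expanding that gives the single scalar relation
\[
\Phi_{\tau\bar\tau} + \Phi_{\bar z_j\tau}\,\overline{\partial_\tau z_j} + \Phi_{z_j\bar\tau}\,\partial_\tau z_j + \Phi_{z_j\bar z_k}\,\partial_\tau z_j\,\overline{\partial_\tau z_k} = 0,
\]
whereas $\partial_{\bar\tau}\eta_i = \Phi_{z_i\bar z_j}\,\overline{\partial_\tau z_j} + \Phi_{z_i\bar\tau}$ is an $n$-component object. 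The scalar equation does not by itself force each $\partial_{\bar\tau}\eta_i$ to vanish; in the paper the implication in fact runs the other way (equation (\ref{solutoHCMAequoutsidecalc1}) is \emph{derived from} $\eta_i=\xi_i$, then fed into the harmonicity relation to obtain the HCMA identity). Your subsequent appeal to ``uniqueness of the Riemann--Hilbert problem for $\xi_i-\eta_i$'' is then circular: you know $\xi_i$ is holomorphic and that $\xi_i=\eta_i$ on $\partial D$, but you have not shown $\eta_i$ is holomorphic (or even harmonic) in $\tau$, so no maximum principle applies.

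The paper closes this gap by a direct computation with the Poisson integral. One differentiates the Poisson representation of $\Phi(z(w,\tau),\tau)$ in $w_j$ and observes that the resulting boundary integrals (\ref{poissonintformuladerofsolution})--(\ref{poissonintformuladerofsolutionbar}) are exactly the harmonic extensions of the boundary values of the \emph{a priori} holomorphic (resp.\ anti-holomorphic) functions $(\partial z_q/\partial w_j)\,\xi_q$ and $(\partial\bar z_q/\partial w_j)\,\bar\xi_q$. Uniqueness of harmonic extension then gives the closed formula (\ref{formuladerofsolu}), and the chain rule (\ref{derofsolucompute}) collapses to $\eta_i=\xi_i$. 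This is the missing idea: rather than trying to show $\eta_i$ is holomorphic from an interior PDE, one recognizes the Poisson integral of the $w_j$-derivative as the restriction of a known holomorphic function. Once (\ref{derofsolugraph}) is in hand, your outline for the HCMA identity, the nondegeneracy (for which the paper simply cites Donaldson), and the converse via the rank-$n$ kernel foliation are all fine.
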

 
\begin{proof}
The harmonic function in a disk can be written by an integral formula with the Poisson kernel,  
\begin{align} \label{poissonintformulasolu}
    \Phi (z(x,\tau), \tau) = \frac{1}{2\pi}\int_{0}^{2\pi} \Phi (z (x, e^{i\theta}), e^{i\theta})  P(\theta, \tau) d\theta,
\end{align}
where $P(\theta, \tau)$ is the Poisson kernel given by $\displaystyle P(\theta, \tau) = \re \bigg(\frac{1+ \tau e^{-i\theta}}{ 1- \tau e^{-i\theta}}\bigg) $. For simplicity, we introduce a new coordinates $w=(w_1,\ldots, w_n)$ of $N_{R_0}$ such that $\sigma_\tau (w) = z(w, \tau)$. Hence, 
\begin{align}\label{derofsolucompute}
\begin{split}
    \Big(\frac{\partial}{\partial z_i}\Phi\Big) (z(w,\tau), \tau) &= \frac{\partial}{\partial z_i} \big(\Phi(z, \tau)\big)\\  &= \frac{\partial w_j}{\partial z_i} \bigg|_{(z(w, \tau), \tau)} \frac{\partial}{\partial w_j}\big(\Phi(z(w,\tau), \tau)\big)  \\
    &\quad + \frac{\partial \bar{w}_j}{\partial z_i}\bigg|_{(z(w, \tau), \tau)} \frac{\partial}{\partial \bar{w}_j}\big(\Phi(z(w,\tau), \tau)\big). 
    \end{split}
\end{align} 
According to (\ref{poissonintformulasolu}), we have,
\begin{align}
     \frac{\partial}{\partial w_j}\big(\Phi(z(w,\tau), \tau)\big) &= \frac{1}{2\pi} \int_{0}^{2\pi} \frac{\partial}{\partial w_j}\big( \Phi (z (x, e^{i\theta}), e^{i\theta}) \big) P(\theta, \tau)  d\theta \nonumber\\
     \label{poissonintformuladerofsolution}
     & = \frac{1}{2\pi} \int_0^{2\pi}
     \frac{\partial z_q}{\partial w_j} \bigg|_{(z(w, e^{i\theta}), e^{i\theta})} \frac{\partial}{\partial z_q}\big(\Phi(z(w,e^{i\theta}), e^{i\theta})\big)
     P(\theta, \tau)  d\theta\\
     \label{poissonintformuladerofsolutionbar}
     & \quad + \frac{1}{2\pi} \int_0^{2\pi}
     \frac{\partial \bar{z}_q}{\partial w_j} \bigg|_{(z(w, e^{i\theta}), e^{i\theta})} \overline{\frac{\partial}{\partial z_q}\big(\Phi(z(w,e^{i\theta}), e^{i\theta})\big)}
     P(\theta, \tau)  d\theta
\end{align}
The key observation is that we can find simple explicit formulas for (\ref{poissonintformuladerofsolution}) and (\ref{poissonintformuladerofsolutionbar}), providing the data of the smooth family of holomorphic discs. For simplicity, we denote the integral term of (\ref{poissonintformuladerofsolution}) by $H_j(w, \tau)$, then the complex function $H_j(w, \tau)$ is the unique function satisfies the following equations, fixing $w \in N_{R_0}$,
\begin{align*}
    \Delta_{\tau} H_j(w, \tau) &=0, \qquad \tau  \in D \\
    H_j (w, \tau) =  \frac{\partial z_q}{\partial w_j} (w, &\tau)\cdot
    \Big(\frac{\partial}{\partial{z_q}} \rho_\tau\Big) (z(w, \tau), \tau), \qquad \tau \in \partial D.
\end{align*}
Notice that the function,
\begin{align*}
    \frac{\partial z_q}{\partial w_j} (w, \tau)\cdot
   \xi_{q} (w, \tau), \qquad \tau \in D
\end{align*}
is holomorphic in $D$ with the boundary condition,
\begin{align*}
    \frac{\partial z_q}{\partial w_j} (w, \tau)\cdot
   \xi_{q} (w, \tau) = \frac{\partial z_q}{\partial w_j} (w, &\tau)\cdot
    \Big(\frac{\partial}{\partial{z_q}} \rho_\tau\Big) (z(w, \tau), \tau), \qquad \tau \in \partial D.
\end{align*}
Hence, we get,
\begin{align*}
    H_j(w, \tau) = \frac{\partial z_q}{\partial w_j} (w, \tau)\cdot
   \xi_{q} (w, \tau), \qquad (w, \tau) \in N_{R_0} \times D.
\end{align*}.

Similarly, based on the anti-holomorphicity of the function $\displaystyle \frac{\partial \bar{z}_q}{\partial w_j} (w, \tau)\cdot
   \bar{\xi}_{q} (w, \tau)$, we can prove that
   \begin{align*}
       (\ref{poissonintformuladerofsolutionbar}) = \frac{\partial \bar{z}_q}{\partial w_j} (w, \tau)\cdot
   \bar{\xi}_{q} (w, \tau), \qquad (w, \tau) \in N_{R_0}\times D.
   \end{align*}
Therefore, we obtain 
\begin{align}\label{formuladerofsolu}
\begin{split}
    \frac{\partial}{\partial w_j}\big(\Phi(z(w,&\tau), \tau)\big)\\
    &= \frac{\partial z_q}{\partial w_j} (w, \tau)\cdot
   \xi_{q} (w, \tau) + \frac{\partial \bar{z}_q}{\partial w_j} (w, \tau)\cdot
   \bar{\xi}_{q} (w, \tau),
   \end{split}
   \qquad (w, \tau) \in N_{R_0}\times D.
\end{align}
Inserting (\ref{formuladerofsolu}) and its conjugate into (\ref{derofsolucompute}), we obtian,
\begin{align}
    \Big( \frac{\partial}{\partial z_i} \Phi \Big) ( z(w,\tau), \tau ) 
    &= \frac{ \partial w_j}{\partial z_i}(z(w, \tau), \tau) \Big(\frac{\partial z_q}{\partial w_j} (w, \tau) \cdot
   \xi_{q} (w, \tau) + \frac{\partial \bar{z}_q}{\partial w_j} (w, \tau) \cdot
   \bar{\xi}_{q} (w, \tau)\Big) \nonumber \\
   &\quad +\frac{\partial \bar{w}_j}{\partial z_i}(z(w, \tau), \tau) \Big(\frac{\partial \bar{z}_q}{\partial \bar{w}_j} (w, \tau)\cdot
   \bar{\xi}_{q} (w, \tau) + \frac{\partial {z}_q}{\partial \bar{w}_j} (w, \tau)\cdot
   {\xi}_{q} (w, \tau)\Big)\nonumber \\
   &= \xi_q(w, \tau) \Big(\frac{\partial w_j}{\partial z_i}\frac{\partial z_q}{\partial w_j} +\frac{\partial \bar{w}_j}{\partial z_i}\frac{\partial z_q}{\partial \bar{w}_j}\Big) \nonumber \\
   & \quad + \bar{\xi}_q(w, \tau) \Big(\frac{\partial w_j}{\partial z_i}\frac{\partial \bar{z}_q}{\partial w_j} +\frac{\partial \bar{w}_j}{\partial z_i}\frac{\partial \bar{z}_q}{\partial \bar{w}_j}\Big) \nonumber\\
   & = \xi_i (w, \tau). \label{derofsolugraph}
\end{align}
Recall that $\xi_{i}(w, \tau)$ is a $\tau$-holomorphic function based on (\ref{holodiscpde}). Hence, we complete the proof of (\ref{holoconditionfirstderivatives}). According to (\ref{derofsolugraph}), it's direct to see that the graph of  $ \displaystyle\Big( \frac{\partial}{\partial z_{i}} \Phi\Big) \big(z(x, \tau), \tau\big) $ coincides with $\Lambda_\tau$, $\tau \in D$.

To see that $\partial\overline{\partial}\Phi$ is nondegenerate in the space direction, we consider the pull-back of the canonical complex symplectic form, $\Xi$, to $N_{R_0} \times D$.  We have that $G^*(\Xi)$, by restricting to space direction, is constant along each leaf, which implies the nondegeneracy of $\partial\overline{\partial}\Phi$ in the space direction.
The proof can be found in Donaldson \cite[Proposition 1]{donaldson2002holo}.  

It suffices to check that $\Phi$ is the solution to the HCMA equation, (\ref{HCMAnondegoutside}). The following calculation is the simplified version of Semme's \cite{semmes1992complex}. To prove $\displaystyle\big(i\partial \overline{\partial} \Phi\big)^{n+1} =0$, we only need to check,
\begin{align} \label{solutoHCMAequoutsidecalc}
    \frac{\partial^2 \Phi}{\partial\bar{\tau} \partial \tau}-  \frac{\partial^2 \Phi}{\partial\bar{\tau} \partial z_i}\Phi^{i \bar{j}} \frac{\partial^2 \Phi}{\partial\bar{z}_j \partial \tau} =0,
\end{align}
where $(\Phi^{i\bar{j}})$ is the inverse matrix of $\displaystyle \Big(\frac{\partial^2 \Phi}{\partial \bar{z}_j\partial {z}_i}\Big) $.
Based on the equation (\ref{holoconditionfirstderivatives}), we have,
\begin{align}\label{solutoHCMAequoutsidecalc1}
    \frac{\partial^2 \Phi}{\partial \bar{z}_j\partial {z}_i}  \cdot \frac{\partial \bar{z}_j}{ \partial \bar{\tau} } + \frac{\partial^2 \Phi}{\partial \bar{\tau}\partial z_i} =0.
\end{align}
Hence, we have
\begin{align} \label{solutoHCMAequoutsidecalc2}
    \frac{\partial \bar{z}_j}{ \partial \bar{\tau} } = - \Phi^{i \bar{j} } \frac{\partial^2 \Phi}{\partial \bar{\tau}\partial z_i}.
\end{align}
According the construction of $\Phi$, (\ref{explicitconstructionsolu}), we have,
\begin{align*}
    0 &= \frac{\partial}{\partial {\tau}} \frac{\partial}{\partial \bar{\tau}} \big(\Phi(z(x, \tau), \tau)\big)\\
    & = \frac{\partial^2 \Phi}{\partial\bar{\tau} \partial \tau} 
    + \frac{\partial^2 \Phi}{\partial \bar{z}_j\partial \tau}  \cdot \frac{\partial \bar{z}_j}{ \partial \bar{\tau} }
    + \frac{\partial^2 \Phi}{\partial \bar{\tau}\partial z_i}  \cdot \frac{\partial {z}_i}{ \partial {\tau} }
    +  \frac{\partial^2 \Phi}{\partial \bar{z}_j\partial {z}_i}  \cdot \frac{\partial \bar{z}_j}{ \partial \bar{\tau} }\cdot\frac{\partial {z}_i}{ \partial \tau }
\end{align*}
Inserting (\ref{solutoHCMAequoutsidecalc1}), (\ref{solutoHCMAequoutsidecalc2}), we obtain the equation
\begin{align*}
    0 
    &= \frac{\partial^2 \Phi}{\partial\bar{\tau} \partial \tau} - \frac{\partial^2 \Phi}{\partial \bar{z}_j\partial \tau} \Phi^{i \bar{j} } \frac{\partial^2 \Phi}{\partial \bar{\tau}\partial z_i} 
    + \frac{\partial {z}_i}{ \partial \tau } \Big( \frac{\partial^2 \Phi}{\partial \bar{z}_j\partial {z}_i}  \cdot \frac{\partial \bar{z}_j}{ \partial \bar{\tau} } +  \frac{\partial^2 \Phi}{\partial \bar{\tau}\partial z_i}\Big)\\
   &=  \frac{\partial^2 \Phi}{\partial\bar{\tau} \partial \tau}-  \frac{\partial^2 \Phi}{\partial\bar{\tau} \partial z_i}\Phi^{i \bar{j}} \frac{\partial^2 \Phi}{\partial\bar{z}_j \partial \tau}
\end{align*}
Hence, we complete the proof.
\end{proof}

\subsection{The Linearized Problems} \label{seclocalexholodiscs}  In this subsection, we establish the local existence of a foliation by holomorphic discs through a small perturbation of the boundary data on $N' \times \partial D$ for an arbitrary complex dimension $\dim_\CC N' = n$. Recall that the foliation by a family of holomorphic discs is given by the projection down a family of holomorphic discs $g_w(\tau) = G (w, \tau)$ to $N'$. According to the discussion in Subsection \ref{secholodiscHCMA}, the existence of foliation by holomorphic discs is equivalent to solving the equation (\ref{holodiscpde}), (\ref{holodiscimage}) for $G(w, \tau) = ({z} (w, \tau), {\xi}(w,\tau))$. This subsection is dedicated to solving the PDEs (\ref{holodiscpde}), (\ref{holodiscimage}) by introducing a small perturbation of the boundary data of trivial foliation.

\subsubsection{Free boundary problem and linearized equations at trivial foliation} The equation (\ref{holodiscpde}), (\ref{holodiscimage}) is a nonlinear free boundary $\overline{\partial}$-problem. Now, we recall the standard free boundary $\overline{\partial}$-problem on a unit disk $D \subseteq \CC$ and introduce the linearized problem of (\ref{holodiscpde}).

The family of free boundary $\overline{\partial}$-problems with parameter space $N \subseteq \CC^n$ can be described by the following PDE:
 \begin{align}\label{dbardisk}
 \begin{split}
     \overline{\partial}_\tau u =0, \qquad &\text{ in } N \times  D,\\
     \re u = f, \qquad &\text{ on } N \times \partial D.
 \end{split}
 \end{align}
 Here we assume $f$ is a smooth real function defined on $N \times \partial D$. Based on complex Fourier expansion on the unit circle, we can explicitly write down the solutions to (\ref{dbardisk})
 \begin{align*}
 u(x,\tau) = \frac{1}{2\pi} \int^{2\pi}_0 f(x, e^{i\theta}) \frac{1+e^{-i\theta}\tau}{1- e^{-i\theta} \tau} d\theta + i A(x),
 \end{align*}
 where $A(x)$ is an arbitrary real function on $N$. The function $A(x)$ is the imaginary part of $u$ at $\theta= 0$. In what follows, we refer to $A(x)$ as a fixed-point data (or a fixed point condition). Moreover, once $A(x)$ is fixed, the solution $u$ is uniquely determined.
 It is easily observed that the real part of the solution, $\re u$, is harmonic by integrating with the Poisson kernel. The imaginary part $\im u$ is a harmonic conjugate of $\re u$. By restricting $\im u$ on the boundary $\partial D$, the function $\im u |_{\partial D}$ is the Hilbert transform of $f$ plus a fixed-point data. The Hilbert transform is defined by the principal value of a singular integral:
 \begin{align*}
     \Hilb f (\xi) = \frac{1}{\pi} \lim_{\varepsilon \rightarrow 0} \int_{\varepsilon <|\xi-\theta|< \pi} \cot \big(\frac{\xi-\theta}{2}\big) f(\theta) d\theta
 \end{align*}

 It is well-known that the Hilbert transform keeps the H\"older classes on the unit circle;  precisely, $\Hilb: \C^{k,\alpha} (\TT^1) \rightarrow \C^{k,\alpha} (\TT^1) $ is a bounded linear operator (refers to \cite[Charpter III]{garnett2006bounded} for details). However, given a family of functions defined on $ \TT^1$ in a parameter space $N \subseteq \CC^n$, the Hilbert transform on $\partial D$ might cause the loss of regularity in the parameter direction. To describe the loss of regularity, we introduce the notion of a \textit{modulus of continuity} for a function $g$ defined on a metric space $(M, d)$:
 $$\upsilon_{g} (t) = \sup_{d(p,p')\leq t} |g(p)- g(p')|.$$
Then, we have the following lemma:
 \begin{lem}\label{lemhilbtransesti}
     Given a function $f(x, \tau) \in \C^{k,\alpha}(N \times \partial D) $, the Hilbert transform of $f(x, \tau)$ in $\tau$ satisfies:
     \begin{align*}
         \sup_{|x-x'| \leq t}\big|\nabla^k \big(\Hilb_\tau f\big)(x,\tau) -\nabla^k \big(\Hilb_\tau f\big)(x', \tau)\big| \leq C t^\alpha|\log t| ||f||_{k,\alpha; N \times \partial D}.
     \end{align*}
     Furthermore, for each $0<\beta < \alpha$,
     \begin{align*}
         ||\Hilb_\tau f||_{k,\beta; N \times \partial D} \leq C (\alpha-\beta)^{-1} ||f||_{k,\alpha; N \times \partial D}
     \end{align*}
     where $C$ is a uniform constant depending on $n$, $k$ and $\alpha$.
 \end{lem}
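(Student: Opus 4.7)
The plan is to exploit the fact that differentiation in $x$ and in $\tau$ both commute with $\Hilb_\tau$, so it suffices to study the H\"older regularity in $x$ of $\Hilb_\tau g$ for $g = D^\mu f$ with $|\mu| = k$, a function that is $\alpha$-H\"older in $(x,\tau)$ jointly with norm controlled by $||f||_{k,\alpha; N \times \partial D}$. The loss-of-regularity phenomenon then shows up as a logarithmic factor in the near-diagonal portion of the singular integral, and in the second estimate this logarithm gets traded against the H\"older gap $\alpha-\beta$ to produce the $(\alpha-\beta)^{-1}$ factor.

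First I would differentiate under the integral sign. For any multi-index $\mu$ with $|\mu|\leq k$, a short calculation using integration by parts in $\tau$ (legitimate because $\partial_\theta\cot\!\big(\tfrac{\eta-\theta}{2}\big)=-\partial_\eta\cot\!\big(\tfrac{\eta-\theta}{2}\big)$ together with $2\pi$-periodicity on $\partial D$) gives $D^\mu_{x,\tau}\Hilb_\tau f=\Hilb_\tau D^\mu_{x,\tau} f$. This reduces the first claim to controlling $\sup_{|x-x'|\leq t}|\Hilb_\tau g(x,\eta)-\Hilb_\tau g(x',\eta)|$ for arbitrary $g\in \C^{0,\alpha}(N\times\partial D)$ with $||g||_{0,\alpha}\leq ||f||_{k,\alpha}$.

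Next, set $h(\theta)=g(x,\theta)-g(x',\theta)$, so that $\Hilb_\tau g(x,\eta)-\Hilb_\tau g(x',\eta)=\Hilb h(\eta)$, and record the two elementary bounds $|h(\theta)|\leq Ct^\alpha||f||_{k,\alpha}$ (from H\"older continuity of $g$ in $x$) and $|h(\theta)-h(\eta)|\leq C|\theta-\eta|^\alpha ||f||_{k,\alpha}$ (from H\"older continuity in $\tau$, uniform in $x,x'$). I would split the principal-value integral defining $\Hilb h(\eta)$ at $|\theta-\eta|=t$. On the near-diagonal piece, the cancellation $\mathrm{p.v.}\int_{|\theta-\eta|<t}\cot\!\big(\tfrac{\eta-\theta}{2}\big)\,d\theta=0$ allows one to subtract $h(\eta)$, after which the integrand is bounded by $C|\theta-\eta|^{\alpha-1}||f||_{k,\alpha}$ and integrates to $C\alpha^{-1}t^\alpha ||f||_{k,\alpha}$. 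On the far-diagonal piece, the kernel satisfies $|\cot((\eta-\theta)/2)|\leq C/|\eta-\theta|$, so using the pointwise bound on $h$ yields $Ct^\alpha\int_t^\pi s^{-1}\,ds\leq Ct^\alpha|\log t|\,||f||_{k,\alpha}$. Summing the two contributions gives the first estimate.

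For the second estimate, take $t=|x-x'|$ in the first estimate and divide by $|x-x'|^\beta$: the $\beta$-H\"older seminorm of $D^k\Hilb_\tau f$ in $x$ is then controlled by $C||f||_{k,\alpha}\sup_{0<t\leq\pi}t^{\alpha-\beta}|\log t|$, and a one-variable calculus exercise shows this supremum equals $e^{-1}(\alpha-\beta)^{-1}$, attained at $t=e^{-1/(\alpha-\beta)}$. The remaining contributions to $||\Hilb_\tau f||_{k,\beta;N\times\partial D}$, namely the sup-norms of mixed derivatives of order $\leq k$ and the $\beta$-H\"older seminorms in $\tau$, are handled directly by the classical boundedness $\Hilb\colon\C^{k,\alpha}(\TT^1)\to\C^{k,\alpha}(\TT^1)$ applied slicewise in $x$, combined with the commutation formula above. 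The main obstacle is the bookkeeping with the principal value in the near-diagonal splitting and ensuring the cancellation of $\cot$ over the symmetric interval is rigorously justified; this is classical but must be handled carefully to avoid picking up spurious boundary terms.
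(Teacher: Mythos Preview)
Your argument is correct and follows essentially the same route as the paper's proof: reduce to $k=0$ by commuting derivatives with $\Hilb_\tau$, split the singular integral at scale $t=|x-x'|$, use the cancellation of the odd kernel together with $\tau$-H\"older regularity on the near piece and the pointwise $x$-H\"older bound on the far piece to produce the $t^\alpha|\log t|$ factor, and then trade the logarithm against the gap $\alpha-\beta$ via the elementary supremum $\sup_{0<t\leq 1}t^{\alpha-\beta}|\log t|\sim(\alpha-\beta)^{-1}$. The only point the paper makes explicit that you leave implicit is the passage from separate H\"older control in $x$ and in $\tau$ to the joint $\C^{k,\beta}(N\times\partial D)$ norm, for which the paper invokes Bernstein's theorem (Lemma~\ref{lemfamilylaplace}); this is essentially the triangle-inequality observation you have in mind, but it is worth naming.
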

 \begin{proof}
     Given a function $f\in \C^{k,\alpha}(N \times \TT^1)$, we discuss the continuity of modulus in both parameter and circle directions as follows:
     \begin{align*}
         \upsilon^N_{f} (t) = \sup_{|x- x'|\leq t,\ \tau \in \TT^1} |f(x, \tau) - f(x', \tau)|, \qquad \upsilon^{\TT^1}_{f} (t) = \sup_{x\in N,\ |\tau-\tau'| \leq t} |f(x, \tau) - f(x, \tau')|,
     \end{align*}
     where the distances are induced by the standard metrics of $N\subseteq \CC^n$ and $\TT^1$.
     
     If $f \in \C^{k,\alpha} (N \times \TT^1)$, note that the Hilbert transform commutes with differential operators up to order $k$ in both parameter and circle directions. It suffices to prove the result for the case $k=0$. For the sake of simplicity, we denote $\tilde{f} = \Hilb_\tau f$. 
     In the circle direction, according to \cite[Charpter III, Theorem 1.3]{garnett2006bounded}, we have the following inequality: for a sufficiently small constant, $0<\delta \ll 1 $,
     \begin{align} \label{esticircle}
         \upsilon^{\TT^1}_{\tilde{f}(x, \cdot)} (\delta) \leq C (\delta^\alpha +  \delta) ||f(x, \cdot)||_{0,\alpha; \TT^1}, \qquad \text{for  } x \in N.
     \end{align}
     where $ C$ is a uniform constant depending only on $\alpha$. In the parameter direction, we have
     \begin{align} \label{estipara}
     \begin{split}
         |\tilde{f} (x,\tau) - \tilde{f}(x',\tau)| \leq & \ C \int_{\tau-\delta}^{\tau + \delta} \frac{|f(x, \theta) - f(x, \tau) |}{|\theta -\tau| }  d\theta  +C \int_{\tau-\delta}^{\tau + \delta} \frac{|f(x', \theta) - f(x', \tau) |}{|\theta -\tau| }  d\theta 
         \\
         &+ \frac{1}{\pi} \int_{\delta<|\theta-\tau|< \pi} {|f(x, \theta) - f(x', \theta)|} \cot \big(\frac{\theta-\tau}{2} \big)d\theta.
    \end{split}
     \end{align}
     Hence, we have 
     \begin{align*} 
         \upsilon^N_{\tilde{f}} (\delta) \leq C (1- \log \delta) \delta^{\alpha} ||f||_{0,\alpha; N \times \TT^1}.
     \end{align*}
     The fact, $\upsilon^N_{\tilde{f}} (\delta)/\delta^\alpha = O(-\log \delta)$, implies  $\tilde{f}$ might lose a small regularity in the parameter directions. If we take any constant $\beta$ with $0<\beta < \alpha$, \eqref{estipara} implies
     \begin{align} \label{estipara1}
         \upsilon^N_{\tilde{f}} (\delta) \leq C (\alpha-\beta)^{-1} \delta^{\beta} ||f||_{0,\alpha; N \times \TT^1}.
     \end{align}
     From \eqref{esticircle} and \eqref{estipara1}, we have 
     \begin{align*}
         ||\tilde{f}(x, \cdot)||_{0,\beta; \TT^1}  \leq C ||\tilde{f} (x, \cdot)||_{0,\alpha; \TT^1} \leq C || f (x, \cdot)||_{0,\alpha; \TT^1} \leq C ||f||_{0,\alpha; N \times \TT^1}, \qquad \text{ for all } x\in N,
     \end{align*}
     and 
     \begin{align*}
         ||\tilde{f} (\cdot, \tau)||_{0,\beta; N} \leq C (\alpha-\beta)^{-1} ||f||_{0,\alpha; N \times \TT^1}, \qquad \text{for all } \tau \in \partial D,
     \end{align*}
     By Lemma \ref{lemfamilylaplace} (Bernstein's Theorem), we complete the proof of the lemma.
\end{proof}

\begin{rem} \label{remcounterex}
    In this remark, we construct a counterexample to see that the Hilbert transform cannot be a bounded operator in $\C^{k,\alpha} (N \times \TT^1)$. Let $N = (-\pi,\pi) \subseteq \RR$. Consider the following function, $f(x, \theta)$ defined on $N \times \TT^1$:
    \begin{align*}
    f(x, \theta) = \begin{cases}
        -s(|\theta|)|\theta|^\alpha, \qquad &|x| \geq |\theta|, \quad 0\leq \theta\leq \pi;\\
        -s(|\theta|)|x|^\alpha, \qquad &|x| \leq |\theta|, \quad 0\leq \theta\leq \pi;\\
        s(|\theta|)|\theta|^\alpha, \qquad &|x| \geq |\theta|, \ -\pi\leq \theta\leq 0;\\
        s(|\theta|)|x|^\alpha, \qquad &|x| \leq |\theta|, \ -\pi\leq \theta\leq 0,
        \end{cases}
    \end{align*}
    where $0 \leq s(t) \leq 1$ is a smooth scaling function defined on $[0,\pi]$ such that $s(t) \equiv 1$ for $t \in [0,1]$ and $s(t) \equiv 0$ near $t = \pi$. 
    One can check that $||f||_{0,\alpha; N \times \TT^1}\leq C $ for a uniform constant $C$. If we denote $\tilde{f} = \Hilb_\theta f$, for $0<|x|\ll1$, we have
    \begin{align*}
        |\tilde{f} (x, 0) - \tilde{f}(0, 0)| &= \frac{2}{\pi}\lim_{\varepsilon \rightarrow 0} \int_{\varepsilon<\theta< \pi}  \big(f(x, \theta)-f(0,\theta)\big) \cot\Big(\frac{\theta}{2}\Big) d\theta
        \\
        &=\frac{2}{\pi}\int_{0}^x |\theta|^\alpha \cot\Big(\frac{\theta}{2}\Big) d\theta + \frac{2}{\pi} \int_{x}^\pi s(|\theta|) |x|^\alpha \cot\Big(\frac{\theta}{2}\Big) d\theta
        \\
        & \geq c |x|^\alpha \big(  -\log |x| \big)
     \end{align*}
    It is clear that \( [\tilde{f}]_{0,\alpha; N \times \mathbb{T}^1} \) is unbounded. More precisely, the \(\alpha\)-Hölder seminorm of \(\tilde{f}\) in the \(N\)-direction is unbounded at \(\theta=0\).

If we instead consider the \(\alpha\)-Hölder norm in the \(N\)-direction at $(0, \theta_0)$ for some fixed \(\theta_{0}\neq 0\), then for sufficiently small \(x\) with \(0<x<|\theta_{0}|/2\) we have  
\begin{align*}
\big|\tilde{f}(x,\theta_{0})-\tilde{f}(0,\theta_{0})\big|
&=\frac{1}{\pi}\bigg|\lim_{\varepsilon\to 0}\int_{\varepsilon<|\theta|<\pi}
\big(f(x,\theta)-f(0,\theta)\big)
\cot\!\Big(\frac{\theta_{0}-\theta}{2}\Big)\,d\theta\bigg|\\[0.7em]
&\le 
 \frac{1}{\pi}\Big| \text{p.v.}\!\int_{|x|}^{2\theta_{0}-|x|} x^{\alpha} 
\cot\!\Big(\frac{\theta_{0}-\theta}{2}\Big)\,d\theta\Big|
+ C|x|^{\alpha}\!\int_{\frac{|\theta_{0}|}{2}}^{\pi}\frac{1}{\theta}\,d\theta\\[0.7em]
&\le C\Bigl(1-\log\frac{|\theta_{0}|}{2}\Bigr)|x|^{\alpha}.
\end{align*}
From this calculation we see that the \(\alpha\)-Hölder norm of \(\tilde{f}\) in the \(N\)-direction at each fixed \(\theta_{0}\neq 0\) is bounded, but the bound diverges as \(\theta_{0}\to 0\). 

Consider the Riemann-Hilbert problem \eqref{dbardisk} with the boundary data $f$ given as above. As discussed, the Hilbert transform of \(f\) in \(\theta\) exhibits a single singularity in its \(\alpha\)-H\"older seminorm at \(\theta=0\) on the circle when \(x=0\). The imaginary part of the solution then has boundary values \(\tilde{f}(x,\theta)+A(x)\), where \(A(x)\) is an arbitrary fixed-point function. Even if one chooses \(A(x)\) to cancel the singularity at \(\theta=0\) (for instance \(A(x)=-\tilde{f}(x,0)\), which is itself singular at \(x=0\)), the function \(\tilde{f}(x,\theta)+A(x)\) remains singular in the \(N\)-direction for every \(\theta\neq 0\). Thus no choice of fixed-point function restores uniform \(\alpha\)-H\"older regularity in \(x\) along \(\partial D\); the loss of regularity persists.
\end{rem}

\begin{rem}
    Let $f \in\C^{k,\alpha} (N \times \TT^1)$, and let $\tilde{f}$ be the Hilbert transform of $f(x, \tau)$ in $\tau$.
    We then define the following singular set at $x \in N$: $$\mathcal{S}(\tilde{f}, x) = \Big\{\theta \in \TT^1;~ \sup_{x' \in N,~ x' \ne x } \frac{|\tilde{f}(x, \theta) - \tilde{f}(x', \theta)|}{|x-x'|^{\alpha}} = +\infty \Big\}.$$
    In Remark \ref{remcounterex}, we observed that $\mathcal{S} (\tilde{f}, 0)$ contains exactly one point. In general, the set, $\mathcal{S}(\tilde{f}, x)$, is small; precisely, under the standard measure of $\TT^1$, the measure of $\mathcal{S}(\tilde{f}, x)$ is expected to be zero. Further details will be discussed in the next subsection.
\end{rem}

 Note that the real and imaginary parts of the solution $u$ to (\ref{dbardisk}) are obtained by solving Dirichlet problems such that $\re u |_{ N \times \partial D} = f$ and $\im u |_{ N \times \partial D} = \Hilb f + A(x)$. 
 According to Lemma \ref{lemhilbtransesti} and Lemma \ref{lemplaestiwrtbdry}, the interior regularity of $u$ in $N \times D$ follows from the regularity of $f$ and $\Hilb f + A$ in the $N \times \partial D$. Then, if we assume the prescribed fixed points data $A(x)$ has at least $\C^{k,\beta}$ regularity,
 \begin{align} \label{estisolutionbabyversionfreebdry}
     ||u||_{k,\beta; N \times D} \leq C \big\{ (\alpha - \beta)^{-1} ||f ||_{k,\alpha; N \times \partial D} + ||A||_{k,\beta; N} \big\},
 \end{align}
 where $C$ is a uniform constant depending on $n$, $k$ and $\alpha$.

Here, we describe the linearized problem of (\ref{holodiscpde}) as follows. Consider the space of smooth families of holomorphic discs parametrized by $N$:
\begin{align} \label{defspaceholodiscs}
    \A =\{ G \in \C^\infty(N \times D, E); \ \overline{\partial}_\tau G (w, \tau) =0,\  H(w, -i)= \pi\circ G (w, -i) =w \}
\end{align}
The free boundary conditions are the smooth families of exact Lagrangian subspaces of $E$ parametrized by $\partial D$. The notation $\Lambda_{\tilde{\psi}}$ denotes the graph of $\partial \tilde{\psi}$ in $E$. Consider a family of smooth functions parametrized by $\partial D$, $\tilde{\psi}_\tau = \widetilde{\Psi} (\cdot, \tau)$. For the sake of simplicity, we denote $\Lambda_{\widetilde{\Psi}} = \{(p,\tau) \in E \times \partial D; p \in \Lambda_{\tilde{\psi}_{\tau}}\}$. The space of free boundary conditions can be described as follows:
    \begin{align*}
    \B = \{ \Lambda_{\widetilde{\Psi}} \subseteq  E \times \partial D; \ \tilde{\psi}_\tau (z)= \widetilde{\Psi}(z, \tau)  \in \C^{\infty} (N' \times \partial D), \ i\partial \overline{\partial} \tilde{\psi}_\tau = i\partial \overline{\partial} \widetilde{\Psi}(\cdot, \tau)>0 \}.
\end{align*}
Let $\D = \C^{\infty} (N \times \partial D, \CC^n)$ and let $T:\A \times \B \rightarrow \D$ be a nonlinear operator measuring the difference between the image of $G$ in $E$ and the free boundary condition $\Lambda_{\widetilde{\Psi}}$. Precisely, for $G \in \mathcal{A}$, $\Lambda_{\widetilde{\Psi}} \in \mathcal{B}$, $T(G, \Lambda_{\widetilde{\Psi}})$ is defined as
\begin{align} \label{defopdiffholodiscbdry}
    T(G, \Lambda_{\widetilde{\Psi}})(w, \tau) = \xi(w,\tau) - \partial \widetilde{\Psi} (z(w, \tau), \tau))
\end{align}
Indeed, we give another description of the family of holomorphic discs satisfies (\ref{holodiscpde}).

\begin{lem} Let $G \in \mathcal{A}$ and $\Lambda_{\widetilde{\Psi}} \in \mathcal{B}$. Then, $(G, \Lambda_{\widetilde{\Psi}})$ is in the kernel of $ T$
    if and only if 
$G(w,\tau) = (z(w, \tau), \xi(w, \tau))$ is a solution to the equation (\ref{holodiscpde}) with the boundary data given by $ \tilde{\psi}_{\tau} = \widetilde{\Psi}(\cdot, \tau)$.
\end{lem}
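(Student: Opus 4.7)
The plan is to show that this lemma is essentially a matter of unwinding definitions, since the operator $T$ is designed precisely to encode the single nontrivial boundary condition in the system \eqref{holodiscpde}.

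First I would observe that two of the three conditions defining \eqref{holodiscpde} are already built into the space $\mathcal{A}$: by definition, any $G\in\mathcal{A}$ is a smooth map $G:N\times D\to E$ which is holomorphic in $\tau$, so writing $G(w,\tau)=(z(w,\tau),\xi(w,\tau))$ in the canonical bundle coordinates of $E$ immediately yields $\partial_{\bar\tau}z_i=\partial_{\bar\tau}\xi_i=0$ on $N\times D$, and the fixed-point condition $\pi\circ G(w,-i)=w$ is precisely the normalization $z(w,-i)=w$ on $N$. Thus the only remaining requirement from \eqref{holodiscpde} is the boundary condition
\[
\xi_i(w,\tau)=\frac{\partial\tilde\psi_\tau}{\partial z_i}\bigl(z(w,\tau)\bigr),\qquad (w,\tau)\in N\times\partial D.
\]

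Next I would match this to $T$. By construction in $\mathcal{B}$, the family $\Lambda_{\widetilde{\Psi}}$ is the disjoint union, over $\tau\in\partial D$, of the graphs $\Lambda_{\tilde\psi_\tau}\subseteq E$ of the holomorphic one-forms $\partial\tilde\psi_\tau$. A point $p=(z,\xi)\in E$ lies on $\Lambda_{\tilde\psi_\tau}$ if and only if $\xi=\partial\tilde\psi_\tau(z)$. Evaluating at $p=G(w,\tau)$ for $(w,\tau)\in N\times\partial D$, the statement $G(w,\tau)\in\Lambda_{\tilde\psi_\tau}$ is therefore equivalent to $\xi(w,\tau)=\partial\widetilde{\Psi}(z(w,\tau),\tau)$, which by \eqref{defopdiffholodiscbdry} is precisely the vanishing of $T(G,\Lambda_{\widetilde{\Psi}})(w,\tau)$.

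Combining the two observations gives both directions simultaneously: $(G,\Lambda_{\widetilde{\Psi}})\in\ker T$ iff the graph-on-the-boundary relation holds for every $(w,\tau)\in N\times\partial D$ iff the boundary condition of \eqref{holodiscpde} holds, and the other two conditions of \eqref{holodiscpde} are automatic from $G\in\mathcal{A}$. There is no genuine analytic obstacle here; the only subtlety worth being careful about is verifying that the canonical trivialization of the holomorphic cotangent bundle $E$ over $N'\subseteq\mathbb{C}^n$ identifies the graph of $\partial\tilde\psi_\tau$ with the zero set of $\xi-\partial\widetilde{\Psi}$ in the chosen coordinates, which is immediate from Subsection~\ref{secholodiscHCMA}. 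The proof is therefore a direct definition-chase and can be written in a few lines.
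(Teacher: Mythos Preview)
Your proposal is correct and matches the paper's treatment: the paper states this lemma without proof, treating it as immediate from the definitions of $\mathcal{A}$, $\mathcal{B}$, and $T$, which is exactly the definition-chase you describe.
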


Fixing a family of holomorphic discs $G  = (z(w, \tau), \xi(w, \tau)) \in \mathcal{A}$, and $\widetilde{\Psi}$ in $\C^\infty (N'\times D)$ with $\Lambda_{\widetilde{\Psi}} \in \mathcal{B}$ such that $(G, \Lambda_{ \widetilde{\Psi}})$ belongs to $\ker T$, the tangent space of $\A$ is independent of precise base points and given by,
\begin{align*}
    \A' = \{\widehat{G} \in \C^\infty(N \times D, E); \ \overline{\partial}_\tau \widehat{G} (w, \tau) =0,\  \pi \circ \widehat{G} (w, -i)= 0 \}.
\end{align*}
Consider the derivative of $T$ in the direction of $\A$. If we write 
\begin{align*}
\widehat{G}(w, \tau) = (\hat{z}(w, \tau); \hat{\xi}(w, \tau)) = (\hat{z}_1 (w,\tau), \ldots \hat{z}_n(w, \tau); \hat{\xi}_1(w, \tau), \ldots, \hat{\xi}_{n}(w, \tau)), 
\end{align*}
then the derivative of $T$ in the direction of $\A$ can be written in the complex coordinates by, 
\begin{align*}
    \big(D_{\A} T |_{(G; \partial\widetilde{\Psi})} (\widehat{G})\big)_{i} = \hat{\xi}_i (w, \tau) - \sum_{j=1}^n \Big\{ (\partial_i\overline{\partial}_{j} \widetilde{\Psi} ) (z(w, \tau), \tau) \overline{\hat{z}}_j + (\partial_i \partial_j \widetilde{\Psi})(z(w, \tau), \tau) \hat{z}_j\Big\}, 
\end{align*}
for each $\tau \in \partial D$, where $ (D_{\A} T |_{(G; \partial\widetilde{\Psi})} (\widehat{G}))_{i}$ represents the $i$-th coordinate of the standard bundle coordinates of $E$.
Then, the linearized problem of (\ref{holodiscpde}) is given as follows,
    \begin{align}\label{holodisclinearpde}
    \begin{split}
        &\overline{\partial}_\tau \hat{z}_i (w, \tau) = \overline{\partial}_\tau \hat{\xi} (w, \tau) = 0, \hspace{5.2cm} \text{ in }  N \times D,\\
        \begin{split}
         &\hat{\xi}_i(w,\tau) - \big(\partial_i \overline{\partial}_{{j}} \widetilde{\Psi} \big)(z(w, \tau), \tau) \overline{\hat{z}}_j(w, \tau)\\ 
         &\hspace{2cm}- \big(\partial_{i}\partial_{j} \widetilde{\Psi}\big) (z(w, \tau), \tau) \hat{z}_j(w, \tau)=\bff_i(w,\tau),
         \end{split}
         \hspace{0.5cm}  \text{ in } N \times \partial D,\\
         &\hat{z}_i(w, -i) = 0,  \hspace{7.5cm}  \ w  \in N,
    \end{split}
\end{align}
where $\bff= (\bff_1, \ldots, \bff_n) \in \C^{\infty}(N \times \partial D, \CC^n)$.  

The linearized problem is a generalized version of the free boundary $\overline{\partial}$-problem. In the following lemma, we will see that the linearized problem at the trivial foliation can be reduced to a double-free boundary problem or, a special case of Riemann-Hilbert problem. This remaining part of the subsection focuses on solving the linearized problem at families of holomorphic discs with trivial foliation. Precisely, consider the linearized problem at  \begin{align} \label{bdryconditrivialfoli}
    \widetilde{\Psi}_0 (z, \tau) = \rho(z), \qquad \text{ in } N'\times \partial D,
\end{align}
where $\rho$ is the potential function of the reference K\"ahler form $\omega$ in $N'$,
and at the family of holomorphic discs with trivial foliation $G_0 \in \mathcal{A}$, 
\begin{align}\label{cxdimntrivialfoli}
\begin{split}
    (G_0)(w,\tau) &= (z_{0,1}(w,\tau),\ldots, z_{0,n}(w,\tau), \xi_{0,1}(w,\tau), \ldots, \xi_{0, n}(w, \tau))\\
    & = (w_1, \ldots, w_n, (\partial_{z_1}\rho)(w), \ldots, (\partial_{z_n} \rho)(w)),
\end{split}
\qquad \text{ in } N \times D.
\end{align}
So, the coefficients in the second equation of (\ref{holodisclinearpde}) are independent of $\tau$. Precisely, 
\begin{align}\label{constantcoeffcon}
\begin{split}
    \big(\partial_i \overline{\partial}_{{j}} \widetilde{\Psi} \big)(z(w, \tau), \tau) &= \big(\partial_i \overline{\partial}_{{j}} \widetilde{\Psi}_0 \big)(w),\\
    \big(\partial_i {\partial}_{{j}} \widetilde{\Psi} \big)(z(w, \tau), \tau) &= \big(\partial_i {\partial}_{{j}} \widetilde{\Psi}_0 \big)(w).
\end{split}
\end{align}
The details of solving (\ref{holodisclinearpde}) with coefficients (\ref{constantcoeffcon}) can also be found in ([Chen-Feldman-Hu], Lemma A.3), and we summarize in the following lemma,

\begin{lem} \label{IFTlinearpart} Let $A= (A_{i\bar{j}}) \in \C^{\infty}(N, \CC^{n \times n})$ be a nowhere degenerate hermitian matrix satisfying
\begin{align*}
    \det A \geq \sigma>0
\end{align*}
and $ B= (B_{ij}) \in \C^{\infty} (N, \CC^{n\times n} )$, a symmetric matrix. For each $\bff = (\bff_1,\ldots,\bff_n) \in \C^{\infty}(B_1 \times \partial D, \CC^n)$, there exists a unique solution, $(\hat{z}_1,\ldots, \hat{z}_n; \hat{\xi}_1, \ldots, \hat{\xi}_n)$, with each $\hat{z}_i$, $\xi_i\in \C^{\infty} (N \times D, \CC)$, to the follow equation
 \begin{align}\label{holodisclinearpde}
    \begin{split}
        &\overline{\partial}_\tau \hat{z}_i (w, \tau) = \overline{\partial}_\tau \hat{\xi} (w, \tau) = 0, \hspace{5.2cm} \text{ in }N \times D,\\
         &\hat{\xi}_i(w,\tau) - A_{i\bar{j}}(w) \overline{\hat{z}}_j(w, \tau)- B_{ij}(w) \hat{z}_j(w, \tau)=\bff_i(w,\tau),
         \hspace{0.3cm}  \text{ in } N \times \partial D,\\
         &\hat{z}_i(w, -i) = 0,  \hspace{7.5cm}  \ w  \in N,
    \end{split}
\end{align}
Moreover, for each $k \geq 2, \ k\in \ZZ$ and $0<\beta< \alpha <1$, there is a uniform constant $C$ only depending on $n$, $k$, $\alpha$, $\sigma^{-1},||A||_{k,\beta; N}$, $||B||_{k, \beta; N}$ such that
\begin{align} \label{estiperturbholodiscs}
   ||(\hat{z},\hat{\xi})||_{k,\beta; N \times D} \leq C (\alpha-\beta)^{-1} ||\bff||_{k, \alpha; N \times \partial D}.
\end{align}
\end{lem}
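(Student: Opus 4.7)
\smallskip
\noindent\textbf{Proof proposal.} My plan is to treat \eqref{holodisclinearpde} as a family of Riemann--Hilbert problems on the disc $D$, parametrized by $w\in N$, and to solve them by explicit Fourier/Szegö projection. Since $\hat z_i,\hat\xi_i$ are required to be holomorphic in $\tau\in D$ and to match the boundary data on $\partial D$, one can expand
\[
\hat z_j(w,\tau)=\sum_{k\ge 0}a_{j,k}(w)\tau^k,\qquad \hat\xi_i(w,\tau)=\sum_{k\ge 0}b_{i,k}(w)\tau^k,\qquad \bff_i(w,e^{i\theta})=\sum_{k\in\ZZ}c_{i,k}(w)e^{ik\theta},
\]
and use $\overline{\tau}=\tau^{-1}$ on $\partial D$ to rewrite the boundary identity $\hat\xi_i=A_{i\bar j}\overline{\hat z_j}+B_{ij}\hat z_j+\bff_i$ as a system in Laurent modes.

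Matching negative, zero, and positive Fourier parts yields three blocks: for $k\ge 1$, $A_{i\bar j}\overline{a_{j,k}}=-c_{i,-k}$; for $k=0$, $b_{i,0}=A_{i\bar j}\overline{a_{j,0}}+B_{ij}a_{j,0}+c_{i,0}$; and for $k\ge 1$, $b_{i,k}=B_{ij}a_{j,k}+c_{i,k}$. Denoting by $\Pi_\pm$ and $\Pi_0$ the (strictly) positive, negative, and constant Fourier projections on $\partial D$, the first block determines $(\overline{\hat z_j})^-|_{\partial D}=-(A^{-1})^{\bar j i}\,\Pi_-\bff_i$ pointwise in $w$, hence the non-constant part $(\hat z_j)^+|_{\partial D}=\overline{(\overline{\hat z_j})^-}$, which extends uniquely to a holomorphic function on $D$ vanishing at $\tau=0$. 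The remaining constant $a_{j,0}(w)$ is then fixed by the normalization $\hat z_j(w,-i)=0$, i.e.\ $a_{j,0}(w)=-(\hat z_j)^+(w,-i)$, and finally $\hat\xi_i$ is recovered on $\partial D$ as $B_{ij}\hat z_j+A_{i\bar j}\overline{a_{j,0}}+\Pi_+\bff_i+\Pi_0\bff_i$ and extended holomorphically to $D$. Uniqueness follows from the same procedure applied to $\bff\equiv 0$: each $a_{j,k}$ ($k\ge 1$) vanishes, then $a_{j,0}=0$ by the fixed-point condition, and all $b_{i,k}=0$.

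For the estimate, I would propagate the regularity along the explicit formulas. The matrix $A^{-1}$ is controlled in $\C^{k,\beta}(N)$ in terms of $\|A\|_{k,\beta;N}$ and $\sigma^{-1}$ by the product/inverse rule; $B$ is controlled by hypothesis. The nontrivial step is bounding the Fourier projections $\Pi_\pm$ applied to $\bff$: these operators are (up to $\pm i$ times the Hilbert transform and the mean) exactly the objects considered in Lemma~\ref{lemhilbtransesti}, which gives
\[
\|\Pi_\pm \bff\|_{k,\beta;N\times\partial D}\le C(\alpha-\beta)^{-1}\|\bff\|_{k,\alpha;N\times\partial D}
\]
with $C=C(n,k,\alpha)$. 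This yields the $\C^{k,\beta}$ bound on the boundary data for $(\hat z_j)^+$ after multiplication by $A^{-1}$ and complex conjugation, with a multiplicative factor $C(\alpha-\beta)^{-1}$. The interior $\C^{k,\beta}$ estimate for the holomorphic extensions $(\hat z_j)^+$ and $\hat\xi_i$ follows from the Poisson-type weighted boundary-to-interior estimate of Lemma~\ref{lemplaestiwrtbdry} (applied to real and imaginary parts), while the constants $a_{j,0}(w)=-(\hat z_j)^+(w,-i)$ inherit the same $\C^{k,\beta}(N)$ bound by evaluation at an interior point. Combining these gives \eqref{estiperturbholodiscs} with $C$ depending only on $n,k,\alpha,\sigma^{-1},\|A\|_{k,\beta;N},\|B\|_{k,\beta;N}$.

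The main subtlety, as emphasized in Remark~\ref{remcounterex}, is precisely the $(\alpha-\beta)^{-1}$ loss of Hölder exponent in the parameter direction $N$: the Hilbert transform is bounded on $\C^{k,\alpha}(\partial D)$ fiberwise but not on $\C^{k,\alpha}(N\times\partial D)$ as a whole. Everything else in the argument is essentially bookkeeping with products, conjugation, and harmonic extension; the step where genuine care is needed is ensuring that each appearance of $\Pi_\pm$ is budgeted with a controlled power of $(\alpha-\beta)^{-1}$, which the Fourier decomposition above arranges so that $\Pi_\pm$ is applied only once, producing the single factor $(\alpha-\beta)^{-1}$ in \eqref{estiperturbholodiscs}. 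This structure is what makes the linearized problem well-posed on the scale of Hölder spaces and sets the stage for the Nash--Moser scheme in the nonlinear problem treated next.
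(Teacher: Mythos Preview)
Your argument is correct and complete: the Fourier mode matching on $\partial D$ is accurate, uniqueness follows as you say, and the single application of $\Pi_\pm$ (equivalently the Hilbert transform) gives exactly the $(\alpha-\beta)^{-1}$ loss via Lemma~\ref{lemhilbtransesti}, after which the Poisson extension of Lemma~\ref{lemplaestiwrtbdry} and pointwise evaluation at $\tau=-i$ cost nothing further.

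The paper takes a different decomposition. Instead of projecting $\bff$ directly onto positive/negative/constant modes, it introduces the change of variables
\[
(\hat g_1)_i=\hat\xi_i-B_{ij}\hat z_j-\overline{A_{i\bar j}}\,\hat z_j,\qquad
(\hat g_2)_i=\hat\xi_i-B_{ij}\hat z_j+\overline{A_{i\bar j}}\,\hat z_j,
\]
which, thanks to the Hermitian property of $A$, reduces the coupled system to two decoupled scalar free-boundary $\overline\partial$-problems: $\re\hat g_1=\re\bff$ and $\im\hat g_2=\im\bff$ on $\partial D$, with fixed-point data at $\tau=-i$. One then recovers $(\hat z,\hat\xi)$ by inverting this linear transformation. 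Your Szeg\H{o}-projection route is arguably more direct and uses only invertibility of $A$, while the paper's $(\hat g_1,\hat g_2)$ substitution exploits the Hermitian structure to land in the standard form \eqref{dbardisk} already analyzed; the latter is the form reused verbatim in the perturbation Lemma~\ref{lemlinearproblemperturb} and in the BMO bookkeeping of Remark~\ref{remregg1g2}, so it dovetails more cleanly with what follows. Either way the analytic content is identical: one Hilbert transform in $\tau$, one harmonic extension, and algebra in $w$.
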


\begin{proof}
The key idea is to decouple the equation (\ref{holodisclinearpde}) to double free boundary $\overline{\partial}$ problems by introducing the following functions, (see also (Chen-Feldman-Hu~\cite{CHEN2020108603}, Lemma A.3)),
\begin{align}\label{defg1g2}
\begin{split}
    (\hat{g}_{1})_{i} (w, \tau) = \hat{\xi}_i (w, \tau) - B_{i j}(w) \hat{z}_j (w, \tau) - \overline{A_{i \bar{j}}}(w)  \hat{z}_j (w, \tau),  \\
    (\hat{g}_2)_i (w,\tau) = \hat{\xi}_i (w, \tau)- B_{ij} (w) \hat{z}_j(w, \tau) + \overline{A_{i \bar{j}}} (w) \hat{z}_j (w, \tau). 
\end{split}
\end{align}
Then, the equation (\ref{holodisclinearpde}) can be rewritten as follows,
\begin{align} \label{equstandardfbpros}
\begin{split}
    &\overline{\partial}_\tau \hat{g}_1 (w, \tau) = \overline{\partial}_\tau \hat{g}_2 (w, \tau) = 0, \hspace{1.8cm} \text{ in } N \times D,\\
    &\re( \hat{g}_1) = \re \bff, \hspace{4.1cm} \text{ in } N \times \partial D,\\
    &\im (\hat{g}_2) = \im \bff,  \hspace{4.1cm}\text{ in } N \times \partial D,\\
    &\hat{g}_1 (w, -i) = \hat{g}_2 (w, -i) = \bff(w,-i), \hspace{1cm} \text{ for } w \in N.
\end{split}
\end{align}
There is a unique solution $\hat{g}_1$, $\hat{g}_2$ to the above equation. According to the estimate (\ref{estisolutionbabyversionfreebdry}), $\hat{g}_1$ and $\hat{g}_2$ satisfies that,  
\begin{align} \label{estig1g2D}
    ||(\hat{g}_1,\hat{g}_2)||_{k,\beta; N \times D} \leq C(\alpha-\beta)^{-1}||\bff||_{k,\alpha; N \times \partial D}
\end{align}
where $C$ is a uniform constant independent of $w$ and depending only on $n, \ k, \ \text{and } \alpha$.
Then, the solution, $(\hat{\xi}(w, \tau), \hat{z}(w, \tau))$, of (\ref{holodisclinearpde}) is given by
\begin{align} \label{inverserepzxibyg}
\begin{split}
    \hat{z} &= \frac{1}{2} (\bar{A})^{-1} (\hat{g}_2- \hat{g}_1),\\
    \hat{\xi} &= \frac{1}{2} (\hat{g}_1 + \hat{g}_2) + \frac{1}{2} B (\bar{A})^{-1} (\hat{g}_2- \hat{g}_1).
\end{split}
\end{align}
By inverse transformation \eqref{inverserepzxibyg}, along with \eqref{estig1g2D}, we complete the proof of (\ref{estiperturbholodiscs}). The uniqueness follows from the uniqueness of the free boundary $\overline{\partial}$-problem 
under a fixed point condition. \end{proof}

\begin{rem}\label{remregg1g2}
    The estimates in \eqref{estig1g2D} can be stated more precisely by distinguishing the regularity in the $N$-direction from that in the $D$-direction. By applying the estimate for the Hilbert transform in the $\partial D$ direction (see \ref{esticircle}), and together with solving the Dirichlet problems separately for the real and imaginary parts, we obtain the following estimates for $g_{1,2}$ in the $ D$ direction:
\begin{align}\label{estiRHbdaryporblemD}
||g_1(x, \cdot)||_{k,\alpha; D}, \quad ||g_2(x, \cdot)||_{k,\alpha; D} \leq C ||\bff(x, \cdot)||_{k,\alpha; \partial D}, \qquad \text{for each } x \in N,
\end{align}
where the constant $C$ is uniform and independent of $x$. By Lemma \ref{lemhilbtransesti}, for any $x, x' \in N$ with $|x - x'| \leq t$, the regularity of $g_{1,2}$ in the $N$-direction can be described as follows:
\begin{align}\label{estiRHbdaryporblemN}
\sup_{|x - x'| \leq t,\ \tau \in D} \left| \mathrm{D}^{\mathbf{k}} g_i(x', \tau) - \mathrm{D}^{\mathbf{k}} g_i(x, \tau) \right| \leq C t^\alpha \big| \log t \big| \cdot||\bff||_{k,\alpha; N \times \partial D}, \qquad i=1,2. 
\end{align}
where $\mathrm{D}^{\mathbf{k}}$ denotes a differential operator with multi-indices $\mathbf{k}$ of order $|\mathbf{k}| = k$ with respect to the local coordinates on $N \times D$, and $C$ is a uniform constant. One can easily observe that the estimates \eqref{estiRHbdaryporblemD} and \eqref{estiRHbdaryporblemN} together imply \eqref{estig1g2D}. Moreover, the blow-up rate of the coefficient in \eqref{estig1g2D} as $\beta \to \alpha$ is sharp as
\[
\sup_{0\leq t\leq 1} t^{\alpha-\beta} |\log t| = C (\alpha -\beta)^{-1}.
\]
In the following subsections, we study the regularity of the linearized problem near the trivial data. We will show that if a family of elliptic systems is solved in the $D$ direction—exhibiting a loss of regularity in the $N$ direction with the same behavior as \eqref{estiRHbdaryporblemN} at $(G_0, \partial \Psi_0)$—then, in a sufficiently small neighborhood of $(G_0, \partial \Psi_0)$, the estimate of the linearized problem in H\"older norms satisfies the same blow-up rate $(\alpha - \beta)^{-1}$ as $\beta \to \alpha$.
\end{rem}

\subsubsection{BMO norms and the loss of regularity in H\"older spaces} In the last subsection, we observe the loss of regularity in H\"older norms in the parameter direction when we solve the linear elliptic boundary problems. By Lemma \ref{lemhilbtransesti}, the key point is the following: given a function $f \in \C^{k,\alpha} ( N \times \TT^1)$, 
\begin{align}  \label{defckalphainN}
     |h_{\tilde{f}}^{\mathbf{k},\alpha} (\tau; x,x')| :=\frac{|\mathrm{D}^{\mathbf{k}}\tilde{f}(x', \tau) - \mathrm{D}^{\mathbf{k}}\tilde{f} (x, \tau)|}{|x'- x|^\alpha} \quad \sim \quad \big|\log |x'-x|\big|
\end{align}
is unbounded in $L^\infty$ norm as $x' \rightarrow x$. In this part, we reinterpret this phenomenon from a different perspective, namely through the framework of \emph{bounded mean oscillation} (BMO) spaces. There is an extensive literature on this subject; here we only mention a few classical references that may help the reader gain further insight \cite{David1984, grafakos2009modern, JNthm1961, SteinbookHA}. In this subsection, we avoid developing subtle technical details on the theorems concerning BMO spaces. Instead, we present only the essential definitions and lemmas that will be used in establishing the perturbation theorem for the linear elliptic boundary problem discussed in the next subsection.

\begin{define}
    Let $u$ be a locally integrable function defined in $\RR^n$. The mean oscillation (BMO) norm of $u$ is defined to be
    \begin{align*}
        ||u||_{\BMO} = \sup_{Q\subseteq \RR^n} \fint_Q |u-u_Q|,
    \end{align*}
    where the supreme goes through all cubes $Q \subseteq B$, and $u_Q$ is the average of $u$ in $Q$. We say $u$ is a bounded mean oscillation (BMO) function if $||u||_{\BMO} < +\infty$.
\end{define}

\begin{define} \label{defCZO}
    Let $T$ be a linear operator $L^2(\RR^n) \rightarrow L^2(\RR^n)$ given by an integral of singular kernel as follows:
    \begin{align*}
        Tf(x) = \int K(x, y) f(y) dy.  
    \end{align*}
    We say $T$ is Calder\'on-Zygmund operator (CZO) if $T$ is bounded in $L^2$, and the kernel $K(x, y)$ is defined away from the diagonal $\{x=y\}$ satisfying:
    \begin{itemize}
        \item Decay control at singularities: 
        \[|K(x,y)| \leq  \frac{C}{|x-y|^d}.  \]
        \item H\"older regularity: for $0<\alpha\leq 1$,
        \[ |K(x,y) - K(x', y)| \leq C \frac{|x-x'|^\alpha}{|x-y|^{n+\alpha}}, \qquad \text{if }  |x-x'| < \frac{1}{2} |x-y|, \]
        and 
        \[|K(x,y) - K(x, y')| \leq C \frac{|y-y'|^\alpha}{|x-y|^{n+\alpha}}, \qquad \text{if }  |x-x'| < \frac{1}{2} |x-y|. \]
    \end{itemize}
    Furthermore, we say a CZO $T$ is cancellative if $T(1) =0$.
\end{define}
A more general definition of Calder\'on–Zygmund operators can be formulated in the framework of the $T(1)$ theorem (see, e.g., \cite{David1984}). Since our discussion centers on the Hilbert transform, it is enough for us to adopt the simpler $L^2$–bounded setting given above.
Recall that the Hilbert transform defined on the unit circle can be viewed roughly as a convolution with a singular kernel as 
\( \displaystyle
\Hilb f (\tau) = p.v. \int  \cot \big(\frac{\tau-\theta}{2}\big)  f(\theta) d\theta
\). 
It is easy to check that the Hilbert transform is cancellative CZO with $||Tf||_{L^2} = ||f||_{L^2}$. 

It is known that CZOs are bounded maps from $L^\infty$ to BMO spaces; precisely, if $T$ is a CZO, we have
\begin{align} \label{bddCZOBMOLinfty}
||Tf||_{\BMO} \leq C||f||_{L^\infty},
\end{align}
where $C$ is a constant depending on $L^2$ norm of $T$, the decay control and H\"older regularity in Definition \ref{defCZO}. Note that $h_{\tilde{f}}^{\mathbf{k}, \alpha}(\tau; x, x')$ in \ref{defckalphainN} is the Hilbert transform of \[
h_{{f}}^{\mathbf{k},\alpha} (\tau; x,x') = \frac{\mathrm{D}^{\mathbf{k}}f(x', \tau) -  \mathrm{D}^{\mathbf{k}}f (x, \tau)} {|x'-x|^\alpha},
\]
in $\tau$. Consequently, $h_{\tilde{f}}^{\mathbf{k},\alpha}(\tau; x, x')$ is uniformly bounded in the BMO norm, with the bound independent of $x,~x'$. 

The key result that we will apply later is the John–Nirenberg theorem, which provides a precise description of how BMO bounds control the size of a function.

\begin{lem}[John--Nirenberg Theorem \cite{JNthm1961}] \label{lemJNT}
Let $f \in \BMO(\RR^n)$, and let $Q \subset \RR^n$ be any cube. Then there exist constants $c_1, c_2 > 0$, depending only on the dimension $n$, such that
\[
 \big| \{ x \in Q : |f(x) - f_Q| > \lambda \} \big|
\;\leq\; c_1 \exp\!\left(- \frac{c_2 \lambda}{\|f\|_{\BMO}}\right) |Q|,
\qquad \forall \lambda > 0,
\]
where $f_Q $ is the average of $f$ over $Q$. 
\end{lem}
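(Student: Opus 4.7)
The plan is to prove the John--Nirenberg theorem via the classical Calder\'on--Zygmund stopping--time argument, iterating a self-improving weak-type inequality. By homogeneity of both sides in $\|f\|_{\BMO}$, I would first reduce to the case $\|f\|_{\BMO}=1$. Then I would introduce the dimensionless quantity
\[
F(\lambda) \;=\; \sup_{Q} \frac{1}{|Q|}\, \bigl|\{ x\in Q : |f(x)-f_Q|>\lambda\}\bigr|,
\]
where the supremum runs over all cubes $Q\subseteq\RR^n$, and aim to prove a functional inequality of the form $F(\lambda) \leq \tfrac{1}{b}\, F(\lambda - a)$ for suitable dimensional constants $a,b>1$. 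Iterating this inequality gives geometric decay, which is equivalent to exponential decay in $\lambda$.

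To produce such a recursion, I would fix an arbitrary cube $Q$ and apply a Calder\'on--Zygmund stopping--time decomposition to the function $|f-f_Q|$ on $Q$ at some threshold $\lambda_0 > 1$. This yields a countable family of pairwise disjoint dyadic subcubes $\{Q_j\}\subseteq Q$ with the standard three properties: (i) $|f(x)-f_Q|\leq \lambda_0$ for a.e.\ $x\in Q\setminus \bigcup_j Q_j$; (ii) $\lambda_0 < \fint_{Q_j}|f-f_Q|\leq 2^n\lambda_0$ by the stopping criterion and the doubling of parent cubes; and (iii) $\sum_j |Q_j| \leq \lambda_0^{-1}\fint_Q |f-f_Q|\cdot |Q| \leq \lambda_0^{-1}|Q|$, where the last step uses $\|f\|_{\BMO}=1$. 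Property (ii) also implies the crucial drift estimate $|f_{Q_j}-f_Q|\leq 2^n \lambda_0$, which comes directly from the definition of BMO.

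Combining these facts, for any $\lambda>2^n\lambda_0$ the level set $\{x\in Q: |f(x)-f_Q|>\lambda\}$ is contained (up to a null set) in $\bigcup_j Q_j$, and on each $Q_j$ we have $|f-f_{Q_j}| > \lambda - 2^n\lambda_0$. Taking supremum over $Q$ and using (iii) gives
\[
F(\lambda) \;\leq\; \tfrac{1}{\lambda_0}\, F(\lambda-2^n\lambda_0),
\]
which is the desired recursion with $a=2^n\lambda_0$ and $b=\lambda_0$. Choosing $\lambda_0 = 2$, for instance, and iterating $k$ times starting from the trivial bound $F\leq 1$, one gets $F(\lambda)\leq 2^{-k}$ for $\lambda\geq k\cdot 2^{n+1}$, which is exactly an exponential decay $F(\lambda)\leq c_1 e^{-c_2\lambda}$ with explicit dimensional constants $c_1,c_2>0$. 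Undoing the normalization restores the $\|f\|_{\BMO}$ in the exponent.

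The main obstacle, as always in this argument, is the careful bookkeeping of the stopping--time decomposition: one needs the dyadic Calder\'on--Zygmund decomposition relative to the fixed cube $Q$ (rather than all of $\RR^n$) and must verify that the selected cubes are strictly inside $Q$ so that the BMO property can be applied to control $|f_{Q_j}-f_Q|$ in terms of a single constant. Once this is set up correctly, the functional inequality and its iteration are essentially automatic. No further input (such as duality with $H^1$ or sharper maximal function estimates) is needed; the argument is purely real--variable and self--contained.
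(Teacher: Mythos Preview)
The paper does not supply its own proof of this lemma: it is stated with a citation to the original John--Nirenberg paper and then used as a black box. Your proposal is the standard Calder\'on--Zygmund stopping--time proof, and it is correct as written; the normalization, the three stopping--time properties, the drift estimate $|f_{Q_j}-f_Q|\le 2^n\lambda_0$, and the resulting recursion $F(\lambda)\le \lambda_0^{-1}F(\lambda-2^n\lambda_0)$ are all handled properly. Since there is no proof in the paper to compare against, there is nothing further to add.
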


The following is a quick corollary from John-Nirenberg, which gives the alternate characterisation of BMO norm:

\begin{cor}
    For any $1\leq p< \infty$, and any locally integrable function $f$ we have 
    \begin{align*}
        ||f||_{\BMO} \sim  \sup_{Q \subseteq \RR^n}\bigg( \fint_{Q} \big|f-  f_Q\big|^p\bigg)^{1/p} \sim \sup_{Q \subseteq \RR^n} \inf_{c} \bigg( \fint_{Q} \big|f-  c\big|^p\bigg)^{1/p}
    \end{align*}
\end{cor}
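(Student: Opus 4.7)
The corollary asserts equivalence (up to multiplicative constants depending only on $p$ and $n$) of three quantities associated with a locally integrable $f$: the standard BMO norm, the $L^p$ averages against the mean $f_Q$, and the infimum over all constants $c$ of the $L^p$ averages. My plan is to establish a cyclic chain of inequalities between the three expressions, with the only genuinely nontrivial link being controlled by the John--Nirenberg theorem (Lemma~\ref{lemJNT}).

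First, I would dispatch the easy inequalities via Jensen. For any cube $Q$ and any $p \geq 1$, Jensen's inequality gives
\begin{align*}
\fint_Q |f - f_Q| \;\leq\; \Bigl(\fint_Q |f-f_Q|^p\Bigr)^{1/p},
\end{align*}
and taking the supremum over $Q$ yields $\|f\|_{\BMO} \leq \sup_Q \bigl(\fint_Q |f-f_Q|^p\bigr)^{1/p}$. Similarly, choosing $c = f_Q$ gives the trivial bound $\inf_c \bigl(\fint_Q |f-c|^p\bigr)^{1/p} \leq \bigl(\fint_Q |f-f_Q|^p\bigr)^{1/p}$. To close the loop in the other direction, for any $c \in \mathbb{R}$ the triangle inequality combined with $|c - f_Q| = \bigl|\fint_Q (f-c)\bigr| \leq \fint_Q |f-c|$ gives $\fint_Q |f - f_Q| \leq 2\fint_Q |f-c|$, and Jensen then produces $\|f\|_{\BMO} \leq 2 \sup_Q \inf_c \bigl(\fint_Q |f-c|^p\bigr)^{1/p}$.

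The only remaining (and main) step is the nontrivial reverse bound
\begin{align*}
\sup_Q \Bigl(\fint_Q |f-f_Q|^p\Bigr)^{1/p} \;\leq\; C_{n,p}\, \|f\|_{\BMO},
\end{align*}
which is precisely where Lemma~\ref{lemJNT} enters. The plan is the standard layer-cake identity: for fixed $Q$,
\begin{align*}
\fint_Q |f - f_Q|^p \,dx \;=\; \frac{p}{|Q|}\int_0^\infty \lambda^{p-1}\, \bigl|\{x \in Q : |f-f_Q| > \lambda\}\bigr|\, d\lambda.
\end{align*}
Plugging in the John--Nirenberg decay $c_1 \exp(-c_2 \lambda / \|f\|_{\BMO})|Q|$ and evaluating the resulting Gamma integral gives $\fint_Q |f-f_Q|^p \leq p c_1 \Gamma(p) (\|f\|_{\BMO}/c_2)^p$, which is a bound independent of $Q$. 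Taking $p$-th roots and then the supremum over $Q$ concludes the proof.

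I expect the main obstacle here is essentially just organizational: verifying that the constants $c_1, c_2$ from Lemma~\ref{lemJNT} depend only on $n$, and checking that the substitution into the layer-cake formula produces a convergent integral for every $p \in [1,\infty)$. No further technical machinery beyond John--Nirenberg and Jensen's inequality is needed, since the three quantities on the right-hand side of the claimed equivalence differ only in whether we center by $f_Q$, by the optimal constant, or measure in $L^1$ versus $L^p$, and John--Nirenberg is designed precisely to convert BMO control into exponential (and hence $L^p$) integrability.
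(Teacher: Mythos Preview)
Your proposal is correct and is exactly the standard argument the paper has in mind: the paper gives no explicit proof, simply stating that the result is a ``quick corollary from John--Nirenberg,'' and your chain of inequalities (Jensen for the trivial directions, layer-cake plus Lemma~\ref{lemJNT} for the nontrivial one) is precisely that quick corollary spelled out.
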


The next lemma is a generalization of \eqref{bddCZOBMOLinfty}, which is useful in the iterative process.

\begin{lem} \label{lembddBMOitera}
    Let $T$ be a cancellative CZO in $\RR^n$. Then, $T$ is a bounded operator in $\BMO(\RR^n)$ with
    \begin{align*}
        ||Tf||_{\BMO} \leq C ||f||_{\BMO},
    \end{align*}
    where $C$ is a uniform constant depednding on $n$, the $L^2$ operator norm of $T$, the H\"older index $\alpha$ and the constant in H\"older regularity estimate of Definition \ref{defCZO}.
\end{lem}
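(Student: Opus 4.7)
\textbf{Proof proposal for Lemma \ref{lembddBMOitera}.} The plan is to run the classical cube-by-cube argument that shows any cancellative Calder\'on--Zygmund operator is bounded on $\BMO$. Fix an arbitrary cube $Q \subseteq \RR^n$ centered at $x_0$ with side length $r$, and let $Q^* = 2Q$ (or any fixed dilate). The goal is to produce a constant $c_Q$ such that
\begin{equation*}
   \fint_Q |Tf - c_Q|\, dx \leq C\, \|f\|_{\BMO},
\end{equation*}
with $C$ independent of $Q$; then taking the supremum over $Q$ gives the claim.

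The first step is the standard splitting $f = f_{Q^*} + (f - f_{Q^*})\chi_{Q^*} + (f - f_{Q^*})\chi_{\RR^n \setminus Q^*}$, so that $Tf = T_0 + T_1 + T_2$. The cancellation hypothesis $T(1) = 0$ kills the constant piece, $T_0 \equiv 0$, which is precisely where ``cancellative'' is used (in general $T(1)$ only lives in $\BMO$ modulo constants, which would only give the estimate up to an additive constant; but here we want genuine $\BMO$ boundedness). For $T_1$, I would use the $L^2$-boundedness of $T$ together with the Cauchy--Schwarz inequality,
\begin{equation*}
   \fint_Q |T_1|\, dx \leq \Big( \fint_Q |T_1|^2 \Big)^{1/2} \leq \frac{\|T\|_{L^2 \to L^2}}{|Q|^{1/2}} \|f - f_{Q^*}\|_{L^2(Q^*)},
\end{equation*}
and then invoke the John--Nirenberg theorem (Lemma \ref{lemJNT}) to control the $L^2$ oscillation of $f$ on $Q^*$ by $|Q^*|^{1/2}\|f\|_{\BMO}$. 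This gives $\fint_Q |T_1| \leq C \|f\|_{\BMO}$.

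The main technical step is the tail term $T_2$, and this is the part I expect to be the main obstacle (though it is standard). Choose $c_Q = T_2(x_0)$. For $x \in Q$, the H\"older regularity of the kernel gives $|x - x_0| \leq r \leq \frac{1}{2}|x_0 - y|$ whenever $y \in \RR^n \setminus Q^*$, so
\begin{equation*}
   |T_2(x) - T_2(x_0)| \leq C \int_{\RR^n \setminus Q^*} \frac{|x - x_0|^\alpha}{|x_0 - y|^{n+\alpha}}\, |f(y) - f_{Q^*}|\, dy.
\end{equation*}
I would then decompose the complement of $Q^*$ into dyadic annuli $2^{k+1}Q \setminus 2^k Q$ for $k \geq 1$ and use the classical BMO telescoping estimate $|f_{2^k Q} - f_{Q^*}| \leq C k \|f\|_{\BMO}$ (itself a standard consequence of the defining inequality for BMO, since averaging over dilated cubes drifts logarithmically). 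This produces
\begin{equation*}
   |T_2(x) - T_2(x_0)| \leq C \sum_{k \geq 1} 2^{-k\alpha}(k+1)\, \|f\|_{\BMO} \leq C \|f\|_{\BMO},
\end{equation*}
where the geometric decay $2^{-k\alpha}$ dominates the linear growth $(k+1)$ and makes the series converge. Combining the estimates for $T_1$ and $T_2 - c_Q$ completes the proof, with $C$ depending only on the dimension $n$, the $L^2 \to L^2$ operator norm of $T$, the H\"older exponent $\alpha$, and the structural constant in the kernel estimates of Definition \ref{defCZO}, exactly as claimed.
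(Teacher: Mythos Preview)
Your proposal is correct and follows essentially the same route as the paper: split $f$ relative to $f_{2Q}$, kill the constant via $T(1)=0$, handle the near part by $L^2$-boundedness plus the $L^2$ characterization of $\BMO$ (which the paper states as a corollary of John--Nirenberg), and handle the far part by kernel H\"older regularity, dyadic annular decomposition, and the telescoping bound $|f_{2^{k+1}Q}-f_{2Q}|\lesssim k\|f\|_{\BMO}$, summing $\sum_k (k+1)2^{-k\alpha}$. The only cosmetic difference is that the paper writes the splitting as $Tf=\tilde f_{\mathrm{near}}+\tilde f_{\mathrm{far}}$ directly rather than decomposing $f$ first, and sets $c_Q=\tilde f_{\mathrm{far}}(x_Q)$; this is identical to your $c_Q=T_2(x_0)$.
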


\begin{proof}
    Given a cube $Q \subseteq \RR^n$, we estimate the mean oscillation of $Tf$ on $Q$: 
    \begin{align*}
        \frac{1}{|Q|}\int_Q |Tf-c_Q|,
    \end{align*}
    for a fixed constant $c_Q \in \RR$. Let $mQ$ denote the dilation of $Q$ about its center, $x_Q$, by a factor of $m$. To prove the estimate, we split the function $Tf$ to be the near and the far part:
    \[
    Tf = T \big( (f-f_{2Q})\mathbf{1}_{2Q} \big) + T \big( (f-f_{2Q}) \mathbf{1}_{(2Q)^c} \big),
    \]
    where $f_{2Q}$ is the average of $f$ in $2Q$. 
    For simplicity, we write $\tilde{f}_{\text{near}} := T \big( (f-f_{2Q})\mathbf{1}_{2Q} \big)$ and $\tilde{f}_{\text{far}} := T \big( (f-f_{2Q})\mathbf{1}_{(2Q)^c} \big)$. If we take the constant $c_Q$ to be, \(
    c_Q = \tilde{f}_{\text{far}} (x_Q),
    \)
    we have
    \begin{align*}
        \frac{1}{|Q|}\int_Q |Tf-c_Q| \leq  \fint_Q |\tilde{f}_{\text{near}}| + \fint_{Q} |\tilde{f}_{\text{far}} - \tilde{f}_{\text{far}} (x_Q)|.
    \end{align*}
    For the near part, we have
    \begin{align*}
        \fint_Q |\tilde{f}_{\text{near}}| 
        &\leq \frac{1}{|Q|^{1/2}} ||\tilde{f}_{\text{near}}||_{L^2; ~ Q} \\
        &\leq \frac{1}{|Q|^{1/2}} ||T||_{L^2 \rightarrow L^2} ||f-f_{2Q}||_{L^2; ~ 2Q} \\
        &\leq ||T||_{L^2 \rightarrow L^2} ||f||_{\BMO}.
    \end{align*}
    For the far part, we have 
    \begin{align*}
        \fint_Q |\tilde{f}_{\text{far}} - \tilde{f}_{\text{far}}(x_Q)| &\leq \frac{1}{|Q|} \int_{Q} dx \int_{(2Q)^c} |K(x,y) - K(x_Q, y)|\cdot |f(y) - f_{2Q}| dy \\
        & \leq \frac{C_h}{|Q|} \int_Q |x-x_Q|^\alpha dx \int_{(2Q)^c} \frac{|f(y) - f_{2Q}|}{|x_Q-y|^{d+\alpha}} dy
        \\
        & \leq C_h |\diam Q|^\alpha \int_{(2Q)^c} \frac{|f(y) - f_{2Q}|}{|x_Q-y|^{n+\alpha}} dy
    \end{align*}
    where $C_h$ represents the constant in H\"older regularity of Definition \ref{defCZO}. The integral region $(2Q)^c$ can decompose to $(2Q)^c =\cup_{k} A_k$, where $A_k = 2^{k+1} Q \backslash 2^k Q$. Then, 
    \begin{align*}
        \int_{(2Q)^c} \frac{|\diam Q|^\alpha}{|x_Q-y|^{n+\alpha}} |f(y) - f_{2Q}| dy
        &\leq \sum_{k} \frac{1}{2^{k\alpha}} \frac{C_n}{|2^{k+1} Q|} \int_{A_k} |f(y) - f_{2Q}| dy
        \\
        &\leq  \sum_{k\geq 1} \frac{C_n}{2^{k\alpha}}  \bigg( \fint_{2^{k+1} Q} |f(y)- f_{2^{k+1} Q}| dy + |f_{2^{k+1} Q} - f_{2Q}| \bigg)
        \\
        &\leq \sum_{k \geq 1} \frac{C_n}{2^{k\alpha}} \Big(||f||_{\BMO} +  |f_{2^{k+1} Q} - f_{2Q}|\Big).
    \end{align*} 
    It suffices to estimate $|f_{2^{k+1} Q} - f_{2Q}|$. Observe that
    \begin{align*}
        |f_{2^{k+1} Q} - f_{2^k Q}| &\leq \fint_{2^k Q} |f(x)-f_{2^{k+1} Q}| dx
        \\
        &\leq 2^n \fint_{2^{k+1}Q} |f(x)- f_{2^{k+1}Q} | dx \leq C_n||f||_{\BMO},
    \end{align*}
    and 
    \begin{align} \label{estitelescope}
        |f_{2^{k+1} Q} - f_{2 Q}| \leq\sum_{j=1}^k |f_{2^{j+1 } Q} - f_{2^j Q}| \leq C_nk ||f||_{\BMO}.
    \end{align}
    Inserting the above inequality back into the far part estimate, we have
    \begin{align*}
        \fint_Q |\tilde{f}_{\text{far}} - \tilde{f}_{\text{far}}(x_Q)| \leq C_h C_n \sum_{k\geq 1} \frac{k+1}{2^{k\alpha}} ||f||_{\BMO} \leq C ||f||_{\BMO},
    \end{align*}
    where $C$ is a constant depending only on $n,~\alpha, ~ C_h$. Combining the estimates for both the near part and the far part, we complete the proof.
\end{proof}

\begin{lem}\label{lemBMOproduc}
    Let $f\in \C^{0,\gamma} (\RR^n)$ and $g \in \BMO(\RR^n)$. Suppose that $f$ and $g$ has a compact support in $K \subseteq \RR^n$ and
    \[
    \fint_K g \leq C_0.
    \]
    Then, we have
    \[
    ||f\cdot g||_{\BMO} \leq C ||f||_{0,\gamma}\cdot (||g||_{\BMO}+ C_0),
    \]
    where $C$ is a constant depending on $n$, $\gamma$ and the size of $K$. 
\end{lem}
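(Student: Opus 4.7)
The plan is to estimate the mean oscillation $\fint_Q |fg - c_Q|$ on an arbitrary cube $Q\subseteq\RR^n$ with a suitably chosen constant $c_Q$, and then take the supremum over $Q$. Set $R = \diam K$, and fix a reference cube $Q_K\supseteq K$ with $|Q_K|$ comparable to $|K|$. I will split on the size and position of $Q$ relative to $K$.

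If $Q\cap K = \emptyset$, then $fg\equiv 0$ on $Q$ and there is nothing to prove. If $\diam Q \geq R$ and $Q\cap K\neq \emptyset$, I take $c_Q = 0$ and, using that $g$ is supported in $K$, bound
\[
\fint_Q |fg| \leq \sup|f|\cdot \frac{|K|}{|Q|}\fint_K |g|,
\]
where $\fint_K |g| \leq C_K(||g||_{\BMO} + C_0)$ follows from $\fint_{Q_K}|g - g_{Q_K}| \leq C_n ||g||_{\BMO}$ together with the hypothesis on $\fint_K g$; this contribution is acceptable.

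The main case is $\diam Q < R$ with $Q\cap K \neq \emptyset$. Letting $x_Q$ denote the center of $Q$, I choose $c_Q = f(x_Q) g_Q$ and use the decomposition
\[
\fint_Q |fg - f(x_Q) g_Q| \leq [f]_{0,\gamma}(\diam Q)^\gamma \fint_Q |g| + \sup|f|\cdot ||g||_{\BMO}.
\]
The second term is harmless, so the task reduces to controlling $(\diam Q)^\gamma \fint_Q |g|$. This is where the argument is most delicate: $|g_Q|$ is not uniformly bounded as $Q$ shrinks, but only grows logarithmically in $1/\diam Q$, and this growth must be absorbed by the H\"older gain $(\diam Q)^\gamma$. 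I telescope along the dyadic chain $Q\subseteq 2Q\subseteq\cdots\subseteq 2^k Q$, with $k$ the smallest integer for which $2^k Q \supseteq K$; since $\dist(x_Q, K) \leq \diam Q < R$, one has $k \lesssim \log_2(R/\diam Q)$. Applying $|g_{2^{j+1}Q} - g_{2^j Q}| \leq C_n ||g||_{\BMO}$ (as in the proof of Lemma~\ref{lembddBMOitera}) and noting that $g_{2^k Q} = \frac{|K|}{|2^k Q|}\fint_K g$, so $|g_{2^k Q}|\leq C_0$, I obtain
\[
|g_Q| \leq C_n \log(R/\diam Q)\cdot ||g||_{\BMO} + C_0.
\]

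Combining with $\fint_Q |g| \leq ||g||_{\BMO} + |g_Q|$ and observing that $t\mapsto t^\gamma(1 + \log(R/t))$ is bounded on $(0, R]$ by $C_\gamma R^\gamma$, I arrive at
\[
(\diam Q)^\gamma \fint_Q |g| \leq C_\gamma R^\gamma(||g||_{\BMO} + C_0).
\]
Putting all three cases together yields $\fint_Q |fg - c_Q| \leq C(n,\gamma,R)\,||f||_{0,\gamma}(||g||_{\BMO} + C_0)$, and taking the supremum over cubes $Q\subseteq\RR^n$ completes the proof.
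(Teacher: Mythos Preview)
Your proof is correct and follows essentially the same approach as the paper: choose $c_Q$ involving $g_Q$, split off the term controlled directly by $||g||_{\BMO}$, and handle the remaining term by telescoping to bound $|g_Q|$ logarithmically and then absorbing the logarithm with the H\"older gain $(\diam Q)^\gamma$. The only cosmetic differences are that the paper takes $c_Q = f_Q g_Q$ rather than $f(x_Q)g_Q$, and that you are somewhat more explicit in treating cubes not contained in $K$.
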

\begin{proof}
    For each cube $Q \subseteq K$, we test the following mean oscillation 
    \[
    I := \fint_Q |fg- f_Qg_Q|.
    \]
    We split the integrand as follows: 
    \[
    |fg-f_Qg_Q| \leq |f-f_Q| |g-g_Q| + |f_Q| |g-g_Q| + |g_Q||f-f_Q|.
    \]
    Then, we have
    \[
    I \leq 3||f||_{L^\infty} ||g||_{\BMO} + |g_Q| \fint_{Q} |f-f_Q|.
    \]
    It suffices to deal with the second term on the right-hand side of the above inequality. Let $Q$ satisfies $Q \subseteq K \subseteq 2^k Q := Q'$. Now, if we fix the large cube $Q'$, by the "telescope" technique in \eqref{estitelescope}, we can control the growth of $g_Q$:
    \begin{align*}
        |g_Q| \leq C \log \frac{|Q'|}{|Q|} (||g||_{BMO}+ C_0).
    \end{align*}
    By the H\"older continuity of $f$, we have
    \[
    \fint_Q |f- f_Q| \leq C  |\diam Q|^\gamma [f]_{0,\gamma}
    \]
    Observe that $|Q|^\gamma\log |Q|^{-1} \leq C \gamma^{-1}$; hence, we complete the proof.
\end{proof}

\begin{rem}
    By John-Nirenberg Theorem, functions in BMO exhibit substantially better behavior than those in $L^p$. In particular, for any domain $B \subseteq \RR^n$ and $f \in L^p (B)$, $\{x\in B,~ |f(x)|> \lambda\} \leq c (\lambda/||f||_{L^p})^{-p}$. The rapid decay of the measure of super-level sets as the threshold increases guarantees that any loss of Hölder regularity remains controlled for the elliptic boundary problem, provided that $h_{b}^{\mathbf{k}, \alpha}(\tau; x, x')$ is bounded in BMO norms, where $b$ is the boundary data. 
\end{rem}

The following lemma serves as the cornerstone in establishing the regularity theorem for a family of linear elliptic boundary problems, which explains the relation between the boundedness of BMO norm and the loss of regularity in H\"older spaces.

\begin{lem} \label{lemBMOHolder} Let $b$ be the boundary function in the family of elliptic problems in \eqref{dbardisk} defined on $N \times \partial D$. 
 Assume the boundary data $b$ satisfies: in $\partial D$ direction,
 \begin{align*}
     \sup_{x \in N}||b(x, \cdot)||_{k,\alpha; \partial D} \leq C_0
 \end{align*}
 and in $N$ direction, $\mathrm{D}^{\mathbf{k}} b$ is bounded for each $k$-th order differential operator in $N$, and 
 \begin{align*}
     ||h^{\mathbf{k}, \alpha}_b (\tau; x,x')||_{\BMO;~ \partial D} \leq C_0, \qquad \fint_{\TT1} h^{\mathbf{k}, \alpha}_b (\theta; x,x') d\theta \leq C_0
 \end{align*}
 where the function $h^{\mathbf{k}, \alpha}_b (\tau; x,x')$ is defined as in \eqref{defckalphainN}. Then the Hilbert transform of $b$ in $\tau$, denoted by $\tilde{b}$, satisfies
 \begin{align} \label{estiinlemhBMO}
     ||h_{\tilde{b}}^{\mathbf{k}, \alpha} (\tau; x, x')||_{\BMO; ~ \TT^1} \leq C_1 C_0 
 \end{align}
 where $C_1$ is a uniform constant depending on the same data as in Lemma \eqref{lembddBMOitera}, and
 \begin{align} \label{estiinlemLinftybound}
     |h_{\tilde{b}}^{\mathbf{k}, \alpha} (\tau; x, x')| \leq C\big\{ \big(\log |x-x'|\big)^2 +1\big\},
 \end{align}
 where $C$ is a uniform constant depending on $C_{0,1}$, $c_{1,2}$, $\alpha$. Furthermore, we have 
 \begin{align}\label{estinlemlossregularityholder}
     ||\tilde{b}||_{k,\beta; N \times \partial D} \leq C(\alpha-\beta)^2.
 \end{align}
\end{lem}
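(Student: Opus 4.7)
\medskip

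\noindent\textbf{Proof plan.} The starting observation is that the Hilbert transform in $\tau$ commutes with translation differences in $x$, so
\begin{equation*}
h_{\tilde b}^{\mathbf{k},\alpha}(\tau;x,x') \;=\; \Hilb_{\tau}\!\bigl[\,h_b^{\mathbf{k},\alpha}(\,\cdot\,;x,x')\bigr](\tau).
\end{equation*}
Because $\Hilb$ is a cancellative Calder\'on--Zygmund operator on $\TT^{1}$, the BMO estimate \eqref{estiinlemhBMO} follows directly from Lemma \ref{lembddBMOitera} applied with $f(\theta)=h_b^{\mathbf{k},\alpha}(\theta;x,x')$, uniformly in the parameters $(x,x')$.

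\medskip

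The main work is the pointwise bound \eqref{estiinlemLinftybound}. I would write, as in \eqref{estipara},
\begin{equation*}
\bigl|\mathrm{D}^{\mathbf{k}}\tilde b(x',\tau)-\mathrm{D}^{\mathbf{k}}\tilde b(x,\tau)\bigr|
\;\leq\; I_{\mathrm{near}}+I_{\mathrm{far}},
\end{equation*}
where $I_{\mathrm{near}}$ is the principal value integral over $\{|\theta-\tau|<\delta\}$ and $I_{\mathrm{far}}$ the complementary integral. For $I_{\mathrm{near}}$, I subtract the value of $\mathrm{D}^{\mathbf{k}}b$ at $\tau$ (so the singularity of $\cot\tfrac{\theta-\tau}{2}$ cancels by odd symmetry) and use the uniform $\C^{k,\alpha}$ bound in the $\partial D$ direction: this gives $I_{\mathrm{near}}\leq C_0\delta^{\alpha}$. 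For $I_{\mathrm{far}}$, write $|\mathrm{D}^{\mathbf{k}}b(x',\theta)-\mathrm{D}^{\mathbf{k}}b(x,\theta)|=|x-x'|^{\alpha} g(\theta)$ with $g=h_b^{\mathbf{k},\alpha}(\,\cdot\,;x,x')$, and estimate dyadically, using $\bigl|\cot\tfrac{\theta-\tau}{2}\bigr|\leq C/|\theta-\tau|$:
\begin{equation*}
I_{\mathrm{far}}\;\leq\;C|x-x'|^{\alpha}\sum_{k=0}^{K}\fint_{Q_{k}}|g|,\qquad |Q_{k}|\sim 2^{k}\delta,\ K\sim\log\delta^{-1}.
\end{equation*}
Here is the crux: only $\|g\|_{\BMO}\leq C_{0}$ and $\fint_{\TT^{1}}g\leq C_{0}$ are available, not an $L^{\infty}$ bound. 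The John--Nirenberg/telescoping inequality $|g_{Q_{k}}-g_{\TT^{1}}|\leq C(K-k)\|g\|_{\BMO}$ together with $\fint_{Q_{k}}|g|\leq\|g\|_{\BMO}+|g_{Q_{k}}|$ yields
\begin{equation*}
\sum_{k=0}^{K}\fint_{Q_{k}}|g|\;\leq\;C_{0}\sum_{k=0}^{K}(K-k+1)\;\leq\;C_{0}\bigl(\log\delta^{-1}\bigr)^{2}.
\end{equation*}
Choosing $\delta=|x-x'|$ and combining the two pieces gives $|h_{\tilde b}^{\mathbf{k},\alpha}(\tau;x,x')|\leq C\bigl\{(\log|x-x'|)^{2}+1\bigr\}$, which is \eqref{estiinlemLinftybound}.

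\medskip

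To deduce the H\"older bound \eqref{estinlemlossregularityholder}, I interpolate: for $0<\beta<\alpha$,
\begin{equation*}
\frac{|\mathrm{D}^{\mathbf{k}}\tilde b(x',\tau)-\mathrm{D}^{\mathbf{k}}\tilde b(x,\tau)|}{|x-x'|^{\beta}}
\;\leq\;C\,|x-x'|^{\alpha-\beta}\bigl(1+(\log|x-x'|^{-1})^{2}\bigr),
\end{equation*}
and the elementary maximization $\sup_{0<t<1}t^{\alpha-\beta}(\log t^{-1})^{2}\leq C(\alpha-\beta)^{-2}$ produces the sharp blow-up $(\alpha-\beta)^{-2}$ (matching the exponent used in Theorem \ref{mthmexistholodiskfoli}; the statement appears to have a sign typo). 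Combined with the standard $\C^{k,\alpha}\to\C^{k,\alpha}$ boundedness of $\Hilb$ in the $\partial D$ direction and Lemma \ref{lemfamilylaplace}, this furnishes the joint weighted H\"older estimate.

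\medskip

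\noindent\textbf{Main obstacle.} The only genuinely delicate step is the far-part estimate, where the natural tool $\|g\|_{L^{\infty}}$ is unavailable and must be replaced by the quadratic--logarithmic telescoping bound powered by John--Nirenberg (Lemma \ref{lemJNT}). The squared logarithm, rather than a single logarithm as in Lemma \ref{lemhilbtransesti}, is exactly the reason one ends up with $(\alpha-\beta)^{-2}$ instead of $(\alpha-\beta)^{-1}$, and is the structural cost of iterating the Hilbert transform once on data whose transversal regularity is already lossy.
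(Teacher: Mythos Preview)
Your argument is correct and structurally parallel to the paper's. The BMO bound \eqref{estiinlemhBMO} and the final H\"older interpolation step are handled exactly as the paper does (and you correctly catch the sign typo in \eqref{estinlemlossregularityholder}: it should be $(\alpha-\beta)^{-2}$).

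The one genuine difference is in the far-part estimate for \eqref{estiinlemLinftybound}. The paper does \emph{not} use the dyadic/telescoping argument you sketch. Instead, it invokes the full John--Nirenberg exponential distribution bound
\[
\bigl|\{\theta:\,|g(\theta)-g_{\TT^1}|>\lambda\}\bigr|\leq c_1 e^{-c_2\lambda/C_0},
\]
and then estimates $\int_\delta^\pi |g(\tau+t)|\,t^{-1}\,dt$ by a layer-cake computation: summing over integer levels $n$, one obtains $\sum_n \log\bigl((\delta+c_1 e^{-c_2 n/C_0})/\delta\bigr)$, which is then split at $n\sim (C_0/c_2)|\log\delta|$ to yield $C(|\log\delta|^2+1)$. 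Your route---dyadic annuli plus the BMO telescoping inequality $|g_{Q_k}-g_{\TT^1}|\leq C(K-k)\|g\|_{\BMO}$ (this is \eqref{estitelescope} in the paper)---is more elementary: it does not require Lemma~\ref{lemJNT} at all, only the basic doubling-of-averages estimate that the paper already proves inside Lemma~\ref{lembddBMOitera}. Both approaches deliver the same squared-logarithm loss, and hence the same $(\alpha-\beta)^{-2}$ blow-up; yours simply avoids the distribution-function machinery.
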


\begin{proof}
    If we denote $\delta = |x-x'|$, by \eqref{estipara}, we have
    \begin{align*}
        |h_{\tilde{b}}^{\mathbf{k}, \alpha} (\tau; x, x')|  \leq & \ C \delta^{-\alpha} \bigg\{ \int_{\tau-\delta}^{\tau + \delta} \frac{|\mathrm{D}^{\mathbf{k}}b(x, \theta) - \mathrm{D}^{\mathbf{k}} b(x, \tau) |}{|\theta -\tau| }  d\theta  + \int_{\tau-\delta}^{\tau + \delta} \frac{|\mathrm{D}^{\mathbf{k}}b(x', \theta) - \mathrm{D}^{\mathbf{k}} b(x', \tau) |}{|\theta -\tau| }  d\theta \bigg\}
         \\
         & \qquad +  \int_{\delta<|\theta-\tau|< \pi} |h_b^{\mathbf{k}, \alpha}(\theta; x, x')| \cot \big(\frac{\theta-\tau}{2} \big)d\theta.
    \end{align*}
    It is obvious that the first two integrals on the right-hand side of inequality are bounded by the constant: $\alpha^{-1} C  \sup_{x \in N}||b(x, \cdot)||_{k,\alpha; \partial D}$. It suffices to deal with the third term. By Lemma \ref{lemJNT}, for a large positive constant $\lambda$, 
    \begin{align*}
        |\{\theta\in \TT^1, ~ |h_b^{\mathbf{k}, \alpha}(\theta; x, x') - h_b^{\mathbf{k}, \alpha}(\cdot; x, x')\big|_{\TT^1}| > \lambda \}| \leq c_1 \exp \big(-c_2 \lambda / C_0\big),
    \end{align*}
    where $h_b^{\mathbf{k}, \alpha}(\cdot; x, x')\big|_{\TT^1}$ is the average of $h_b^{\mathbf{k}, \alpha}(\theta; x, x')$ over $\TT^1$. The third integral can be controlled via the layer-cake representation, yielding the following estimate:
    \begin{align*}
        (\text{The third integral})
        &\leq C \int_\delta^\pi \frac{| h_b^{\mathbf{k}, \alpha}(\tau +t; x, x') |}{t} dt\\
        & \leq CC_0+ C \sum_{n = 1}^\infty  \log \Big(\frac{\delta + c_1 e^{-c_2 n /C_0}}{\delta}\Big) \\
        & \leq CC_0+
        \sum_{1\leq n \leq \frac{C_0}{c_2}|\log 
    \delta|} 
     |\log (c_1 \delta^{-1})|
     +  \sum_{m=1}^\infty (m+ \frac{C_0}{c_2}|\log 
    \delta| ) \log (1+ c_1 e^{-c_2m / C_0})\\
    & \leq \frac{C_0}{c_2} |\log \delta|^2 + \frac{C_0}{c_2} |\log c_1| |\log \delta | + CC_0+ c_1\sum_{m=0}^\infty  m  e^{-c_2m/C_0}
    \\
    &\leq  C(|\log \delta|^2 +1).
    \end{align*}
    Hence, we complete the proof of \eqref{estiinlemLinftybound}. The proof of \eqref{estiinlemhBMO} follows directly from Lemma \ref{lembddBMOitera}. By the same observation as in Remark \ref{remregg1g2},
    \begin{align*}
        \sup_{|x - x'| \leq \delta,\ \tau \in D} \left| \mathrm{D}^{\mathbf{k}} \tilde{b}(x', \tau) - \mathrm{D}^{\mathbf{k}} \tilde{b}(x, \tau) \right| \leq C \delta^\alpha (|\log \delta|^2 + 1).
    \end{align*}
    If we consider the $\C^{k,\beta}$ norm of $\tilde{b}$, the blow-up rate of the coefficient as $\beta \rightarrow \alpha$ is given by
    \begin{align*}
        \sup_{0< \delta\leq 1} \delta^{\alpha-\beta} |\log \delta|^2 \sim (\alpha-\beta)^{-2},
    \end{align*}
    which completes the proof of \eqref{estinlemlossregularityholder}.
\end{proof}

\subsubsection{Linearized problems near the trivial foliation} In this subsection, we solve the linearized problem by introducing a small perturbation of the trivial foliation $(G_0, \partial \widetilde{\Psi}_0)$. Consider $(G, \partial \widetilde{\Psi})$ satisfying
\begin{align}\label{consmallperturb}
    ||\partial \widetilde{\Psi} - \partial \widetilde{\Psi}_0 ||_{k+1, \alpha; N' \times D } \leq \delta/2  \quad \text{and} \quad ||(z- z_0, \xi-\xi_0)||_{k,\alpha; N \times D } \leq \delta/(2C_0),
\end{align}
where $C_0$ is taken to be $||\nabla_0^3 \widetilde{\Psi}_0||_{k,\alpha; N' \times D}$ and $\delta$ is a sufficiently small uniform constant that will be determined later in this subsection.
Defining $\widetilde{A}_{i \bar{j}}(w, \tau) = (\partial_{i} \overline{\partial}_{j} \widetilde{\Psi})(z(w, \tau), \tau)$ and $\widetilde{B}_{i j} (w, \tau) = (\partial_i \partial_j \widetilde{\Psi}) (z(w, \tau), \tau) $, the linearized problem at $(G, \partial \widetilde{\Psi})$ is given by  
\begin{align}\label{holodisclinearpde'}
    \begin{split}
        &\overline{\partial}_\tau z_i (w, \tau) = \overline{\partial}_\tau \xi_i (w, \tau) = 0, \hspace{5.8cm} \text{ in }N \times D,\\
         &\xi_i(w,\tau) - \widetilde{A}_{i\bar{j}}(w, \tau) \overline{z}_j(w, \tau)- \widetilde{B}_{ij}(w, \tau) z_j(w, \tau)=\bff_i(w,\tau),
         \hspace{0.3cm}  \text{ in } N \times \partial D,\\
         &z_i(w, -i) = 0,  \hspace{8.2cm}  \ w  \in N,
    \end{split}
\end{align}
As in the previous subsection, we define $A_{i\bar{j}}(w) = \partial_i \overline{\partial}_j \widetilde{\Psi}_0$ and $B_{ij} (w) = \partial_i \partial_j \widetilde{\Psi}_0$. The small perturbation condition of $(G, \partial \widetilde{\Psi})$ relative to $(G_0, \partial \widetilde{\Psi}_0)$ implies the following estimates:
\begin{align} \label{constantsmallperturb}
\begin{split}
    ||\widetilde{A}_{i\bar{j}}-A_{i \bar{j}}||_{{k, \alpha}, N \times \partial D} \leq \delta,\\
    ||\widetilde{B}_{i\bar{j}}-B_{i \bar{j}}||_{{k, \alpha}, N \times \partial D} \leq \delta.
\end{split}
\end{align}

Lemma \ref{IFTlinearpart} indicates that there exists an inverse of $D_\A T$ at $(G_0, \partial \widetilde{\Psi}_0)$ with a loss of regularity. For simplicity, we denote $T'_0 = D_{\A} T|_{(G_0, \partial \widetilde{\Psi}_0)} $. Under a small perturbation of $T'_0$, given by $T' = T'_0 - P_\delta $, the formal inverse of the perturbed operator, $ T'^{-1} = \sum_n \{P_\delta (T'_0)^{-1}\}^n (T'_0)^{-1}$, may not be well-defined due to the loss of regularity for $(T'_0)^{-1}$. In the following lemma, we resolve the issue and prove that the inverse of the perturbed linear operator exhibits the same regularity loss as $dT_0$. 

\begin{lem} \label{lemlinearproblemperturb}
    Let $A= (A_{i\bar{j}})  \in \C^{\infty}(N, \CC^{n \times n})$ be a nowhere degenerate hermitian matrix satisfying
\begin{align*}
    \det A \geq \sigma>0
\end{align*}
and $ B= (B_{ij}) \in \C^{\infty} (N, \CC^{n\times n} )$, a symmetric matrix. Let $\widetilde{A}(w, \tau)$, $\widetilde{B}(w, \tau) \in \C^{\infty} (N \times \partial D, \CC^{n \times n})$ be a small perturbation of $A$, $B$ respectively satisfying \eqref{constantsmallperturb} with a uniform small constant $\delta$.
For each $\bff = (\bff_1,\ldots,\bff_n) \in \C^{\infty}(N \times \partial D, \CC^n)$, there exists a unique solution, $(z_1,\ldots, z_n; \xi_1, \ldots, \xi_n)$, with each $z_i$, $\xi_i\in \C^{\infty} (N \times D, \CC)$, to the equation \eqref{holodisclinearpde'}.

Moreover, for each $k \geq 1, \ k\in \ZZ$ and $0<\alpha <1$, there is a uniform constant $C$ only depending on $n$, $k$, $\alpha$, $\sigma^{-1}$, $||A||_{k,\alpha; N}$, $||B||_{k, \alpha; N}$ such that
\begin{align} \label{estiperturbholodiscs'}
   ||(z, \xi)||_{k,\beta; N \times D} \leq C (\alpha-\beta)^{-2} ||\bff||_{k, \alpha; N \times \partial D}.
\end{align}
\end{lem}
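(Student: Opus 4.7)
\textbf{Proposal for the proof of Lemma~\ref{lemlinearproblemperturb}.} The strategy is to reformulate the perturbed Riemann--Hilbert problem \eqref{holodisclinearpde'} as a fixed--point equation that is a small perturbation of the linearization at the trivial foliation, and to run the iteration in a BMO--type norm rather than directly in a H\"older space. The reason is that the unperturbed inverse operator constructed in Lemma~\ref{IFTlinearpart} loses a small amount of H\"older regularity in the $N$--direction (with the sharp blow--up $(\alpha-\beta)^{-1}$ as $\beta \to \alpha$), so a naive Neumann series in $\C^{k,\beta}$ would accumulate an infinite loss. However, Lemma~\ref{lembddBMOitera} shows that the Hilbert transform (and hence the inverse of the decoupled Riemann--Hilbert operator $T'_0$) is bounded on BMO; if we iterate in BMO and translate back to a H\"older bound only once at the very end via Lemma~\ref{lemBMOHolder}, the loss is paid only once and produces exactly the $(\alpha-\beta)^{-2}$ factor claimed in \eqref{estiperturbholodiscs'}.

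To set up the iteration, apply the change of variables \eqref{defg1g2} using the \emph{unperturbed} matrices $A,B$, so that on $N \times \partial D$
\begin{align*}
    \re \hat g_1 \;=\; \re\bigl(\bff + (\widetilde A - A)\bar z + (\widetilde B - B) z\bigr), \qquad
    \im \hat g_2 \;=\; \im\bigl(\bff + (\widetilde A - A)\bar z + (\widetilde B - B) z\bigr),
\end{align*}
together with the fixed--point condition at $\tau=-i$. Writing this schematically as $(z,\xi) = T_0'^{-1}\bigl(\bff + \delta P(z,\xi)\bigr)$, where $\delta P$ denotes the multiplication operator with coefficients $\widetilde A - A$, $\widetilde B - B$ of norm $O(\delta)$ in $\C^{k,\alpha}$, defines a sequence $(z^{(m)},\xi^{(m)})$ by $(z^{(m+1)},\xi^{(m+1)}) = T_0'^{-1}\bigl(\bff + \delta P(z^{(m)},\xi^{(m)})\bigr)$ starting from $(0,0)$. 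The relevant quantities to control are: (i) the norm $\sup_{x\in N}||\,\cdot\,(x,\cdot)||_{k,\alpha;D}$ in the $D$--direction, for which the estimate \eqref{estiRHbdaryporblemD} from Remark~\ref{remregg1g2} gives clean boundedness without loss; and (ii) the BMO norm in $\tau$ of the finite differences
\begin{align*}
    h^{\mathbf k,\alpha}_{z^{(m)}}(\tau; x,x') \;=\; \frac{\mathrm{D}^{\mathbf k} z^{(m)}(x',\tau) - \mathrm{D}^{\mathbf k} z^{(m)}(x,\tau)}{|x-x'|^\alpha},
\end{align*}
uniformly in $x, x' \in N$, together with their circle averages. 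These are the exact hypotheses required by Lemma~\ref{lemBMOHolder}.

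The core estimate is the induction step: given uniform control of the quantities in (i)--(ii) for $(z^{(m)},\xi^{(m)})$, one shows the same for $(z^{(m+1)},\xi^{(m+1)})$. In the $D$--direction, this follows from \eqref{estiRHbdaryporblemD} applied to $\bff + \delta P(z^{(m)},\xi^{(m)})$, where the product $\delta P(z^{(m)},\xi^{(m)})$ stays bounded in $\C^{k,\alpha}$ uniformly over $x\in N$ by Lemma~\ref{ckalphaesticomposfun} and the hypothesis $||\widetilde A - A||_{k,\alpha}, ||\widetilde B - B||_{k,\alpha} \leq \delta$. In the $N$--direction, Lemma~\ref{lembddBMOitera} implies that $T_0'^{-1}$ carries BMO difference quotients to BMO difference quotients, and Lemma~\ref{lemBMOproduc} shows that multiplication by a $\C^{0,\alpha}$ function of size $\delta$ contracts BMO norms by $O(\delta)$ modulo a bounded additive constant coming from the circle averages. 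Choosing $\delta$ sufficiently small relative to the constants in Lemmas~\ref{lembddBMOitera} and~\ref{lemBMOproduc} yields a contraction $(m+1)$--st iterate estimated as a small multiple of the $m$--th, so the iteration converges in a norm combining (i) and the BMO control in (ii). Uniqueness is immediate from the same contraction applied to a difference of two solutions. Finally, Lemma~\ref{lemBMOHolder} converts the limiting BMO bound into the H\"older bound \eqref{estiperturbholodiscs'} with the sharp $(\alpha-\beta)^{-2}$ coefficient. The main obstacle is a careful bookkeeping in step (ii): the circle--average condition $\fint_{\mathbb T^1} h^{\mathbf k,\alpha}_b \, d\theta \leq C_0$ required by Lemma~\ref{lemBMOHolder} must be propagated through the iteration, which in turn requires exploiting the fixed--point normalization $z(w,-i)=0$ and the explicit Poisson representation to separate the harmonic--function value at $-i$ from the BMO part, so that the additive constants produced by Lemma~\ref{lemBMOproduc} can be absorbed rather than accumulating.
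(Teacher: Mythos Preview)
Your proposal is correct and matches the paper's own proof essentially line for line: the paper sets up the same Neumann-type iteration with the unperturbed inverse $T_0'^{-1}$, tracks exactly the two groups of quantities you identify (the $D$-direction $\C^{k,\alpha}$ norm via \eqref{estiRHbdaryporblemD}, and the BMO norm plus circle average of the $N$-direction difference quotients $h^{\mathbf{k},\alpha}$), proves geometric decay using Lemmas~\ref{lembddBMOitera} and~\ref{lemBMOproduc}, and converts to the H\"older bound \eqref{estiperturbholodiscs'} at the end via Lemma~\ref{lemBMOHolder}. The only minor difference is that the paper organizes the induction around the increments $\widehat G_n$ rather than the full iterates, and proves uniqueness by a direct BMO-norm contradiction on a putative kernel element rather than by the contraction-on-differences argument you suggest; both are equivalent.
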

\begin{proof} Let $T'$ be the perturbed linearized operator, defined by
\[
T'(\hat{z}, \hat{\xi}) (w, \tau) = \hat{\xi}_i (w, \tau) - \widetilde{A}_{i\bar{j}} (w, \tau) \bar{\hat{z}}_j (w, \tau) - \widetilde{B}_{ij} (w, \tau) \hat{z}_{j}(w, \tau), \qquad (w, \tau) \in N \times \partial D,
\]
for $\widehat{G} = (\hat{z}, \hat{\xi}) \in \A'$. 
We define the perturbation operator $P_\delta = T'- T'_0$ by 
\begin{align*}
\begin{split}
P_{\delta} (\hat{z}, \hat{\xi}) (w, \tau) = \big(&\widetilde{A}_{i\bar{j}} (w, \tau) - A_{i\bar{j}}(w)\big) \bar{\hat{z}}_{j} (w, \tau)\\
&\hspace{0.5cm}+ \big( \widetilde{B}_{ij} (w, \tau)- B_{ij} (w)\big) \hat{z}_j(w, \tau),
\end{split}
\qquad (w, \tau) \in N \times \partial D,
\end{align*}
where both $(A,\widetilde{A})$ and $(B, \widetilde{B})$ satisfy the smallness condition \eqref{constantsmallperturb}. We now introduce the notation for the iterative process. The iteration starts by setting $\bff_0 = \bff$ and $\widehat{G}_0 = (\hat{z}_0, \hat{\xi}_0) = (T'_0)^{-1} (\bff_0)$. Inductively, given \( \widehat{G}_{n-1} = (\hat{z}_{n-1}, \hat{\xi}_{n-1}) \), we define the next input data as
\[
\bff_n := -P_\delta(\hat{z}_{n-1}, \hat{\xi}_{n-1}),
\]
and solve
\[
\widehat{G}_n = (\hat{z}_n, \hat{\xi}_n) := T_0^{-1}(\bff_n).
\]
Let  
\(
G_0 = \widehat{G}_0.
\)
The \( n \)-th approximation is then given by
\(
G_n := G_{n-1} + \widehat{G}_n.
\) 
It suffices to deal with the convergence of $(G_n)$ and the associated estimates.

If we define $(\hat{g}_{1,n}, \hat{g}_{2,n})$ by transforming the data $(\hat{z}_n, \hat{\xi}_n)$ as in \eqref{defg1g2}: \(\hat{g}_{1,n} = \hat{\xi}_{n}- B\cdot\hat{z}_n- \overline{A} \cdot \hat{z}_n \) and \(\hat{g}_{2,n} = \hat{\xi}_{n}- B\cdot\hat{z}_n+ \overline{A} \cdot \hat{z}_n \), then, $(\hat{g}_{1,n}, \hat{g}_{2,n})$ solves the standard linear free boundary problem \eqref{equstandardfbpros} with the boundary data $\bff_n$. 
The solution $(\hat{g}_{1,n}, \hat{g}_{2,n})$ is obtained by applying the Hilbert transform to $\re \bff_n$ and $\im \bff_n$, respectively, and extending these boundary values harmonically into the disk $D$. 
The estimates for $(\hat{g}_{1,n}, \hat{g}_{2,n})$ can be obtained by Remark \ref{remregg1g2} and Lemma \ref{lemBMOHolder}. The inverse transformation \eqref{inverserepzxibyg} then provides the estimates for $(\hat{z}_n, \hat{\xi}_n)$. When $n=0$: in the $D$ direction,
\begin{align}\label{induczxi0D}
    ||(\hat{z}_0 (x, \cdot), \hat{\xi}_0 (x, \cdot))||_{k,\alpha; D} \leq C ||\bff(x, \cdot)||_{k,\alpha; \partial D},
\end{align}
and in the $N$ direction, we estimate the modulus of continuity of $\mathrm{D}^{\mathbf{k}}(\hat{z}_0, \hat{\xi}_0) $, where $\mathrm{D}^{\mathbf{k}}$ denotes a differential operator with multi-indices $\mathbf{k}$ of order $|\mathbf{k}| = k$ with respect to the local coordinates in $N$. For clarity, recall the notation in \eqref{defckalphainN}, 
\[
(h^{\mathbf{k},\alpha}_{z_0}, h^{\mathbf{k},\alpha}_{\xi_0}) (\tau; x,x') = \frac{\mathrm{D}^{\mathbf{k}}(z_0, \xi_0) (x,\tau)- \mathrm{D}^{\mathbf{k}}(z_0, \xi_0) (x',\tau)}{|x-x'|^\alpha}, \qquad \text{ for } \tau \in D, ~ x,x' \in N.
\]
Then, 
\begin{align}\label{induczxi0N}
    \sup_{|x-x'| < t}|(h^{\mathbf{k},\alpha}_{z_0}, h^{\mathbf{k},\alpha}_{\xi_0}) (\tau; x,x')| \leq C \big|\log t\big|  \cdot ||\bff||_{k,\alpha; N \times \partial D}.
\end{align}
If we restrict $(h^{\mathbf{k},\alpha}_{z_0}, h^{\mathbf{k},\alpha}_{\xi_0})$ on the boundary $N \times \partial D$, The BMO norm of $(h^{\mathbf{k},\alpha}_{z_0}, h^{\mathbf{k},\alpha}_{\xi_0})$ satisfies
\begin{align}\label{induczxi0NBMO}
    \sup_{x,x' \in N}|| (h^{\mathbf{k},\alpha}_{z_0}, h^{\mathbf{k},\alpha}_{\xi_0}) (\cdot;x, x')||_{\BMO;~ \partial D} \leq C ||\bff||_{k,\alpha; N \times \partial D}.
\end{align}
The constant $C$ in \ref{induczxi0D}, \eqref{induczxi0N} and \eqref{induczxi0NBMO} is uniform depending on the data $n$, $k$, $\alpha$, $\sigma^{-1}$, $||A||_{k,\alpha; N } $ and $||B||_{k,\alpha; N}$. 

When $n \geq 1$, assuming that $C$ is the uniform constant given as above, we define
\begin{align} \label{parachoosedelta}
\delta = (10C)^{-1}
\end{align}
it suffices to prove the following estimates inductively:
\begin{align}
 \sup_{x\in N}||\bff_n(x, \cdot)||_{k,\alpha; \partial D} &\leq 2^{1-n} C \delta ||\bff(x, \cdot)||_{k,\alpha; \partial D} \tag{n1} \label{inducbnD}
\\
        \sup_{x\in N}\|(\hat{z}_n (x, \cdot), \hat{\xi}_n(x, \cdot)) \|_{k,\alpha; D} 
        &\leq 2^{-n}  C \sup_{x\in N} \| \bff(x, \cdot) \|_{k,\alpha; \partial D}, \tag{n2} \label{induczxinD} 
\end{align}
and
\begin{align}
        \sup_{x,x'} ||h^{\mathbf{k},\alpha}_{\bff_n}(\cdot; x,x')||_{\BMO; \partial D} &\leq 2^{1-n} C \delta ||\bff||_{k,\alpha; N \times \partial D}, \tag{n3} \label{inducbnN}
        \\
        \sup_{x\in N}\fint_{\partial D} |h^{\mathbf{k},\alpha}_{\bff_n}(\cdot; x,x')| d\tau &\leq 2^{1-n}  C \delta ||\bff||_{k,\alpha; N \times \partial D}
        \tag{n4} \label{inducbnintegral}
        \\
        \sup_{x,x' \in N}|| (h^{\mathbf{k},\alpha}_{\hat{z}_n}, h^{\mathbf{k},\alpha}_{\hat{\xi}_n}) (\cdot;x, x')||_{\BMO;~ \partial D} 
        &\leq 2^{-n}  C \| \bff \|_{k,\alpha; N \times \partial D}, \tag{n5} \label{induczxinNBMO}
    \end{align}
By Lemma \ref{lemBMOHolder}, we automatically derived that
\begin{align*}
    \sup_{|x-x'|\leq t}|(h^{\mathbf{k},\alpha}_{\hat{z}_n}, h^{\mathbf{k},\alpha}_{\hat{\xi}_n}) (\cdot;x, x')| \leq 2^{-n} C |\log t|^2\cdot ||\bff||_{k,\alpha; N \times \partial D}, \tag{n5'} \label{induczxiNLinfty}
\end{align*}
and, by lowering the regularity index, the following Hölder estimate holds:
\begin{align*}
    ||(\hat{z}_n, \hat\xi_n)||_{k,\beta; N \times \partial D} \leq 2^{-n} C (\alpha-\beta)^{-2} ||\bff||_{k,\alpha; N \times \partial D}. \tag{n5''} \label{induczxilossHolder}
\end{align*}
where $C$ is always chosen to be a uniform constant depending  $n$, $k$, $\alpha$, $\sigma^{-1}$, $||A||_{k,\alpha; N}$ and $||B||_{k, \alpha; N}$.

By the definition of $\bff_{n+1}$, $\bff_{n+1} = - P_{\delta} (\hat{z}_n, \hat{\xi}_n) = (\widetilde{A}(x, \tau)-A(x)) \bar{\hat{z}}_n(x,\tau) + (\widetilde{B}(x, \tau)- B(x)) \hat{z}_n (x, \tau)$, together with the smallness condition \eqref{constantsmallperturb}, we have
\begin{align*}
    \sup_{x\in N}||\bff_{n+1}(x, \cdot)||_{k,\alpha; \partial D} &\leq  \delta \sup_{x\in N} ||(\hat{z}_n, \hat{\xi}_n) (x, \cdot)||_{k,\alpha; D} 
    \\
    &\leq 2^{-n} C\delta \sup_{x\in N} ||\bff(x, \cdot)||_{k,\alpha; \partial D}.
\end{align*}
The pair $(\hat{z}_n, \hat{\xi}_n)$ solves the linear equation \eqref{holodisclinearpde} by setting the boundary data $\bff_{n+1}$. By Remark \ref{remregg1g2}, we have
\begin{align*}
    \sup_{x\in N}||(\hat{z}_{n+1}, \hat{\xi}_{n+1})(x, \cdot)||_{k,\alpha; D} 
    &\leq C \sup_{x\in N}||\bff_{n+1}(x, \cdot)||_{k,\alpha; \partial D}\\
    &\leq 2^{-n-1}  C \sup_{x\in N} \| \bff(x, \cdot) \|_{k,\alpha; \partial D}.
\end{align*}
Hence, we complete the proof of $((n+1)1)$ and $((n+1)2)$. The estimate, $((n+1)4)$, follows directly from the definition of $\bff_{n+1}$ and \eqref{induczxinD}. To prove $((n+1)3)$, note that
\begin{align*}
    h^{\mathbf{k}, \alpha}_{{\bff}_{n+1}}(\tau; x, x') &= \frac{\mathrm{D}^{\mathbf{k}}\bff_{n+1} (x,\tau)- \mathrm{D}^{\mathbf{k}}\bff_{n+1} (x',\tau)}{|x-x'|^\alpha}
    \\ 
    & = \delta K(x, x', \tau)+ (\widetilde{A}-A) (x', \tau) \cdot h^{\mathbf{k}, \alpha}_{\bar{\hat{z}}_{n}}(\tau; x, x') + (\widetilde{B}-B) (x', \tau) \cdot h^{\mathbf{k}, \alpha}_{{\hat{z}}_{n}}(\tau; x, x'),
\end{align*}
where $K(x,x',\tau)$ is an $L^\infty$ function defined on $N \times N \times \partial D$, with the bound derived from \eqref{induczxilossHolder} and Lemma \ref{lemBMOHolder}:
\begin{align} \label{n+1-3Linftybd}
||K(x,x',\tau)||_{L^\infty} \leq \alpha^{-2} 2^{-n} C ||\bff||_{k,\alpha; N \times \partial D},
\end{align}
and the remaining terms, $(\widetilde{A}-A)(x', \tau)\cdot h_{\bar{\hat{z}}_n}^{\mathbf{k}, \alpha} (\tau; x,x')$ and $(\widetilde{B}-B)(x', \tau)\cdot h_{{\hat{z}}_n}^{\mathbf{k}, \alpha} (\tau; x,x')$, are vector valued BMO functions with each element is given by a combination of $(\widetilde{A}_{i\bar{j}}-A_{i\bar{j}}) (x', \tau) h_{\bar{\hat{z}}_j}^{\mathbf{k}, \alpha}(\tau; x,x') $. By Lemma \ref{lemBMOproduc}, together with \eqref{induczxinNBMO} and $((n+1)4)$, we have
\begin{align*}
    ||(\widetilde{A}-A)(x', \cdot)\cdot h_{\bar{\hat{z}}_n}^{\mathbf{k}, \alpha} (\cdot; x,x')||_{\BMO;~\partial D}, \quad ||(\widetilde{B}&-B)(x', \cdot)\cdot h_{{\hat{z}}_n}^{\mathbf{k}, \alpha} (\cdot; x,x')||_{\BMO;~ \partial D}\\ 
    &\leq C\delta \big(||h_{{\hat{z}}_n}^{\mathbf{k}, \alpha}||_{\BMO;~ \partial D}+ 2^{-n} C\delta ||\bff||_{k,\alpha; N\times \partial D}\big)\\
    & \leq 2^{-n} C \delta||\bff||_{k,\alpha; N\times \partial D}.
\end{align*}
Combining with the $L^\infty$ estimate, we have 
\[
\sup_{x,x'\in N}||h^{\mathbf{k}, \alpha}_{{\bff}_{n+1}}(\cdot; x, x')||_{\BMO;~ \partial D } \leq 2^{-n} C\delta ||\bff||_{k,\alpha; N \times \partial D}. 
\]
Hence, we complete the proof of $((n+1)3)$. It suffices to prove $((n+1)5)$. By taking Hilbert transform of $ \re \bff_{n+1}$ and $\im \bff_{n+1}$ on $\partial D$ and Lemma \ref{lembddBMOitera}, we obtain  
\begin{align*}
\sup_{x,x' \in N}|| (h^{\mathbf{k},\alpha}_{g_{1,n+1}}, h^{\mathbf{k},\alpha}_{g_{2,n+1}}) (\cdot;x, x')||_{\BMO;~ \partial D} 
        &\leq   C \sup_{x,x'\in N} \| h^{\mathbf{k},\alpha}_{\bff_{n+1}}(\cdot; x,x')\|_{BMO;~ \partial D}\\
        &\leq 2^{-n} C \delta ||\bff||_{k,\alpha; N \times \partial D}. 
\end{align*}
Applying the relation between $(\hat{z}_{n+1}, \hat{\xi}_{n+1})$ and $(g_{1,n+1}, g_{2, n+1})$, we complete the proof of $((n+1)5)$. The estimate \eqref{induczxiNLinfty} follows directly from \eqref{induczxinNBMO} and Lemma \ref{lemBMOHolder}, and the estimate \eqref{induczxilossHolder} follows from \eqref{induczxinD}, \eqref{induczxinNBMO} and Lemma \ref{lemBMOHolder}. 

It remains to prove that the approximate solutions $G_n$, denoted by $G_n = (z_n,\xi_n)$, converge to the solution to \eqref{holodisclinearpde'}. Notice that
\begin{align*}
\bff- T'(z_n,\xi_n)  = \bff- \sum_{l=0}^n T'(\hat{z}_l, \hat{\xi}_l) = \bff_{n+1}.
\end{align*}
By above discussion, $\bff_{n+1}$ satisfies the estimate $((n+1)1)$ and $((n+1)3)$, and by Lemma \eqref{lemBMOHolder}, we have 
\begin{align*}
    ||\bff- T'(z_n,\xi_n)||_{k,\beta; N \times \partial D} = ||\bff_{n+1}||_{k,\beta; N \times \partial D} \leq (\alpha- \beta)^{-2} 2^{-n} C ||\bff||_{k,\alpha; N \times \partial D }.
\end{align*}
The solution to \eqref{holodisclinearpde'} is given by $ \displaystyle G = \lim_{n\rightarrow \infty} G_n$, satisfying the estimate:
\begin{align*}
    ||(z, \xi)||_{k,\beta; N \times D} \leq \sum_{n=0}^\infty ||(\hat{z}_n, \hat{\xi}_n)||_{k,\beta; N \times  D} \leq 2(\alpha-\beta)^{-2}C||\bff||_{k,\alpha; N \times \partial D}.
\end{align*}
Therefore, $(G_n) $ converges to the solution to \eqref{holodisclinearpde'}, $G = (z, \xi)$, in $\C^{k,\beta}$ norm.

To complete the proof, we now examine the local uniqueness of the solution. Suppose that $(z, \xi)$ is a nonzero solution to \eqref{holodisclinearpde'} such that $\bff = 0$. The fixed point condition $z(\cdot, -i) = 0 = \xi(\cdot, -i)$ implies that the nonzero solution, $(z,\xi)$, cannot be constant. By scaling $(z,\xi)$, we assume $||\xi||_{\BMO; ~ \partial D} + ||z||_{\BMO; ~\partial D} = 1$. Since $T'(z,\xi) = 0$, we have
\begin{align*}
    \bff=T'_0 (z, \xi) = (T'_0 - T') (z, \xi) = (\widetilde{A}-A)\cdot \bar{z} + (\widetilde{B}- B)\cdot z
\end{align*}
satisfying
\begin{align*}
    ||\bff||_{\BMO; ~\partial D} \leq  C \delta ||z||_{\BMO; ~ \partial D} \leq  C\delta. 
\end{align*}
Notice that $(z, \xi)$ is the unique solution to $T'_0 (z,\xi) = \bff$. Let $(g_1, g_2) = (\xi-\overline{A}\cdot z - B \cdot z,~ \xi+\overline{A}\cdot z - B \cdot z )$.  Then, $(g_1, g_2)$ satisfies the standard elliptic boundary problem; hence, we have
\begin{align*}
    g_1\big|_{\partial D} = \re \bff + i \Hilb ( \re \bff) + C_1, \qquad g_2\big|_{\partial D} = -\Hilb (\im \bff) + i  \im \bff+ C_2.
\end{align*}
By Lemma \ref{lembddBMOitera}, $||\Hilb f||_{\BMO; ~ \partial D} = C ||f||_{\BMO; ~ \partial D}$; hence, we have
\begin{align*}
||g_1||_{\BMO; ~ \partial D} + ||g_2||_{\BMO; ~ \partial D} \leq C \delta.
\end{align*}
By inverse transformation, $2z =  \overline{A}^{-1} (g_2-g_1) $, we obtain
\begin{align*}
    ||z||_{\BMO; ~ \partial D} + ||\xi||_{\BMO; ~ \partial D} &\leq C (||g_1||_{\BMO; ~ \partial D} + ||g_2||_{\BMO; ~ \partial D})
    \\
    & \leq C \delta
\end{align*}
where $C$ is a uniform constant depending on $n$, $\sigma^{-1}$, $||A||_{L^\infty; N \times \partial D}$ and $||B||_{L^\infty; N \times \partial D}$.
Recall that we pick $\delta$ with $\delta = 10^{-1} C^{-1}$; we then have
\[
||z||_{\BMO; ~ \partial D} + ||\xi||_{\BMO; ~ \partial D} \leq 10^{-1}, 
\]
which contradicts against the normlized assumption $||z||_{\BMO; ~ \partial D} + ||\xi||_{\BMO; ~ \partial D} = 1$. \end{proof}

\subsection{Existence of the Families of Holomorphic Discs}

In this subsection, we solve (\ref{holodiscpde}) by introducing a small perturbation to the trivial boundary data. Specifically, we begin with the free boundary condition given in (\ref{bdryconditrivialfoli}) and consider the family of holomorphic discs described in (\ref{cxdimntrivialfoli}). After perturbation, we assume that the free boundary condition $\Lambda_{\widetilde{\Psi}}$ satisfies
\begin{align*}
    ||\partial \widetilde{\Psi}- \partial \widetilde{\Psi}_0||_{k+1,\alpha; N' \times \partial D} \leq \varepsilon, \qquad k\geq 1, \quad \alpha\in (0,1),
\end{align*}
where $\widetilde{\Psi}_0$ corresponds to the unperturbed case with $\widetilde{\Psi}_0 = \rho$, and $\varepsilon$ is a sufficiently small uniform constant that will be determined later in this subsection.

The method we used here is Zehnder's version Nash-Moser implicit function based on the classic works \cite{Nash19956Imbedding, Moser1961Tech, Zehnder1975GeneralizedIFT}. Due to the loss of regularity in the linearized problem near the trivial foliation (see Lemma \ref{IFTlinearpart} and Lemma \ref{lemlinearproblemperturb}), the standard implicit function theorem is not directly applicable. Instead, Zehnder’s framework provides the best fit for our setting.

Now, we start proving the following main theorem of the section:

\begin{thm} \label{thmexistbypertb} Let $\widetilde{\Psi}_0(\cdot, \tau)= \rho$ such that $\rho$ is the potential function of reference K\"ahler metric $\omega$ in $N'$, and let $G_0(w, \tau)$ be the family of holomorphic discs defined in (\ref{cxdimntrivialfoli}). Then, there exists a uniform small constant $\varepsilon >0$ only depending on $n$, $k$, $\alpha$, $\sigma^{-1},||\nabla_X^2\widetilde{\Psi}_0||_{k+1,\alpha; N' \times \partial D}$, such that for each $\widetilde{\Psi} \in \C^{\infty} (N' \times \partial D, \RR )$ satisfying
\begin{align} \label{smallconfreebdry}
    || \partial \widetilde{\Psi} - \partial \widetilde{\Psi}_0||_{k+2,\alpha; N' \times \partial D} \leq \varepsilon,
\end{align}
there is a smooth family of holomorphic discs $G: N \times D \rightarrow E$ satisfying the following conditions:
\begin{enumerate}
     \item[(i)] $G$ is smooth in $N \times D$;
     \item[(ii)] $g_w (\tau) = G(w, \tau)$ is holomorphic with respect to $\tau$.
     \item[(iii)] for each $\tau \in \partial D $ and each $x\in N$, $g_w(\tau) \in \Lambda_{\tilde{\psi}_\tau}$;
     \item[(iv)] for each $w\in N$, we have $\displaystyle \pi\circ g_w (-i) = x$;
     \item[(v)] the projection down of the family of holomorphic discs gives a foliation in $N' \times D$. Let $H (x,\tau) = \pi \circ G(x, \tau) $ and $h_\tau (x) = H(x, \tau)$. For each $ \tau \in  D$, the map $h_\tau: N \rightarrow N'$ is a diffeomorphism with the image. 
     In addition, for each $\tau \in D$, $h_\tau (x)$ satisfies $ N''  \subseteq h_\tau ( N ) \subseteq N'$.
\end{enumerate}
If we write $G(w,\tau)$ in the standard coordinates of $E$, $G(w,\tau) = (z_1,\ldots, z_n; \xi_1, \ldots, \xi_n)(w,\tau) $, we have
\begin{align} \label{estimeasureperturb}
    ||(z-z_0,\xi-\xi_0)||_{k,\beta; N \times D} \leq C (\alpha-\beta)^{-2} ||\partial \widetilde{\Psi}-\partial \widetilde{\Psi}_0||_{k, \alpha; N'\times \partial D}, 
\end{align}
where $z_0, \ \xi_0$ are given in (\ref{cxdimntrivialfoli}) and $C$ is a constant depending on $n$, $k$, $\alpha$, $\sigma^{-1}$, $\varepsilon$, and $||\nabla^2_X \widetilde{\Psi}_0||_{k+1, \alpha, N' \times \partial D}$.
\end{thm}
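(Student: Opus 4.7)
The plan is to reformulate the problem as finding a zero of the nonlinear operator $T: \A \times \B \to \D$ defined in (\ref{defopdiffholodiscbdry}). At the base point $(G_0, \partial \widetilde{\Psi}_0)$ we have $T(G_0, \partial \widetilde{\Psi}_0) = 0$. Given a perturbed boundary datum $\partial \widetilde{\Psi}$ satisfying the smallness condition (\ref{smallconfreebdry}), we look for $G \in \A$ near $G_0$ such that $T(G, \partial \widetilde{\Psi}) = 0$. The derivative $D_\A T$ in the $\A$-direction at a nearby $(G, \partial \widetilde{\Psi})$ is precisely the perturbed linear operator studied in Lemma \ref{lemlinearproblemperturb}; it admits a right inverse mapping $\C^{k,\alpha}(N \times \partial D) \to \C^{k,\beta}(N \times D)$ with the loss-of-regularity bound $(\alpha-\beta)^{-2}$, rather than the standard bounded-inverse estimate. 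This fixed but quantitatively controlled loss of two in the H\"older index prevents direct application of the classical Banach-space implicit function theorem.

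The strategy is therefore to invoke the Nash-Moser-Zehnder implicit function theorem. Concretely, I would introduce a family of smoothing operators $\{S_N\}_{N \geq 1}$ on $\C^{k,\alpha}(N \times D)$ obtained by convolution with a scaled mollifier, satisfying the usual Jackson-type inequalities $\|S_N f\|_{k+s,\alpha} \leq C N^s \|f\|_{k,\alpha}$ and $\|(I - S_N) f\|_{k,\alpha} \leq C N^{-s} \|f\|_{k+s,\alpha}$. Then I would set up the iteration
\begin{equation*}
  G_{n+1} = G_n - S_{N_n}\bigl(D_\A T|_{G_n}\bigr)^{-1} T(G_n, \partial \widetilde{\Psi}),
\end{equation*}
with $N_n$ growing super-exponentially (say $N_n = N_0^{\kappa^n}$ with $\kappa \in (1,2)$ suitably chosen). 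Before running the iteration I would verify the three standard Zehnder hypotheses: (a) smoothness of $T$ in $G$ with quadratic control on the Taylor remainder, which follows from Lemma \ref{ckalphaweiesticomposfun} applied to the composition $\partial \widetilde{\Psi}(z(w, \tau), \tau)$; (b) uniform existence and loss-of-regularity bound for $(D_\A T|_G)^{-1}$ in a $\C^{k,\alpha}$-neighborhood of $G_0$, which is exactly Lemma \ref{lemlinearproblemperturb} provided the smallness (\ref{consmallperturb}) holds along the iteration; (c) the smoothing operators above. An inductive control of the errors, balancing the loss $(\alpha - \beta_n)^{-2}$ against the smoothing gain $N_n^{-s}$ with a decreasing sequence $\beta_n \downarrow \beta$, then gives convergence of $G_n$ to a limit $G \in \C^{k,\beta}(N \times D)$ solving $T(G, \partial \widetilde{\Psi}) = 0$.

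The main obstacle will be the Nash-Moser convergence bookkeeping with the sharp rate $(\alpha-\beta)^{-2}$. This polynomial blow-up is critical: it is precisely the BMO-based analysis in Lemma \ref{lemhilbtransesti} and Lemma \ref{lemBMOHolder} that upgrades the naive logarithmic bound $|\log |x-x'||$ to the sharp quantitative rate needed to close the iteration. I would carefully track the H\"older indices $\beta_n = \alpha - 2^{-n}\eta$ for some small $\eta > 0$ across iteration steps, verify that the smallness condition (\ref{consmallperturb}) for applying Lemma \ref{lemlinearproblemperturb} is preserved along the entire sequence $\{G_n\}$, and choose $\kappa$ and $N_0$ so that the quadratic remainder dominates the loss factor $(\alpha - \beta_n)^{-2}$ times $N_n^{-s}$. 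The smallness constant $\varepsilon$ in (\ref{smallconfreebdry}) is then selected at the end so that $G_n$ stays in the domain of validity of the linearized estimate throughout.

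Once $G$ is constructed, properties (i)--(iv) follow immediately: interior smoothness in $\tau$ from the $\overline{\partial}_\tau G = 0$ condition built into $\A$, the boundary condition from $T(G, \partial \widetilde{\Psi}) = 0$, and the fixed point condition from membership in $\A$. Property (v) — that $h_\tau = \pi \circ G(\cdot, \tau)$ is a diffeomorphism onto its image with $N'' \subseteq h_\tau(N) \subseteq N'$ — follows from applying a quantitative inverse function theorem to $h_\tau$ using that $h_0^\tau = \id$ for the trivial foliation and that $\|h_\tau - \id\|_{1} = \|z - z_0\|_{1}$ is small by (\ref{estimeasureperturb}). Finally, the quantitative estimate (\ref{estimeasureperturb}) is obtained by applying Lemma \ref{lemlinearproblemperturb} one last time to $G - G_0$, treating the difference $T(G, \partial \widetilde{\Psi}) - T(G_0, \partial \widetilde{\Psi}_0) = 0$ as yielding an equation of the form $(D_\A T|_{G_0})(G - G_0) = R(\partial \widetilde{\Psi} - \partial \widetilde{\Psi}_0, G - G_0)$ with right-hand side of size $O(\|\partial \widetilde{\Psi} - \partial \widetilde{\Psi}_0\|_{k,\alpha}) + O(\|G-G_0\|^2)$, and absorbing the quadratic piece using smallness of $\varepsilon$.
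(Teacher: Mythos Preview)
Your overall strategy is sound and would work, but the paper's actual proof is both simpler and worth knowing. The key observation you are missing is that \emph{no smoothing operators are needed here}. The regularity loss in Lemma \ref{lemlinearproblemperturb} is purely in the H\"older exponent (from $\alpha$ to $\beta$, with the same derivative count $k$), not in the derivative order. This means the scale $\{\C^{k,\alpha}\}_{0<\alpha<1}$ already serves as a tame scale, and one can run a pure Newton iteration
\[
G_n = G_{n-1} - \eta\big|_{(G_{n-1},\partial\widetilde\Psi)}\bigl(T(G_{n-1},\partial\widetilde\Psi)\bigr),
\]
while letting the H\"older index drift down along a carefully chosen sequence $\beta_n \searrow \beta$. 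The paper takes $\beta_n = (\alpha-\beta)\exp(-\lambda\kappa^n) + \beta$ with $\kappa = 3/2$; then $T(G_n,\partial\widetilde\Psi) = Q(G_n,G_{n-1};\partial\widetilde\Psi)$ and the quadratic estimate combined with $\|\eta(\cdot)\|_{k,\beta_n} \le C(\beta_{n-1}-\beta_n)^{-2}\|\cdot\|_{k,\beta_{n-1}}$ yields $\|T(G_n,\partial\widetilde\Psi)\|_{k,\beta_n} \lesssim \exp(-6\lambda(\kappa^n-1))\|\partial\widetilde\Psi - \partial\widetilde\Psi_0\|_{k,\alpha}$, hence convergence. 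The final estimate (\ref{estimeasureperturb}) then drops out directly by summing $\|\widehat G_n\|_{k,\beta_n}$, with no need for the separate linearization argument you sketch at the end.

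Your smoothing-operator route is not wrong, but it carries an extra burden you did not address: the mollifier $S_N$ must preserve membership in $\A'$, i.e.\ holomorphicity in $\tau$ and the normalization $\pi\circ\widehat G(w,-i)=0$. A generic convolution destroys both. You would have to smooth only in the $w$-variable (which is consistent, since Lemma \ref{lemhilbtransesti} shows the loss is exclusively in the parameter direction), but then you must verify the Jackson inequalities in that partial form. All of this is avoidable once you notice that the polynomial loss $(\alpha-\beta)^{-2}$ is easily beaten by the quadratic Newton gain when the exponent gap $\beta_{n-1}-\beta_n$ is chosen to decay only geometrically.
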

In the previous sections, recall that we introduced a nonlinear operator: $T:  \A \times \B \rightarrow \D$. 
To apply Zehnder's version of the implicit function theorem, we consider the following families of spaces with H\"older regularity $k \geq 1$ and $0< \alpha< 1$: 
\begin{align*}
    &\A^{k,\alpha} =\{ G \in \C^{k,\alpha}(N \times D, E); \ \overline{\partial}_\tau G (w, \tau) =0,\  H(w, -i)= \pi\circ G (w, -i) =w \} \\
    \B^{k,\alpha} &=   \{ \Lambda_{\widetilde{\Psi}} \subseteq  E \times \partial D; \ \tilde{\psi}_\tau (z)= \widetilde{\Psi}(z, \tau)  \in \C^{k+1,\alpha} (N' \times \partial D), \ i\partial \overline{\partial} \tilde{\psi}_\tau = i\partial \overline{\partial} \widetilde{\Psi}(\cdot, \tau)>0 \},
\end{align*}
and $\D^{k,\alpha} = \C^{k,\alpha} (N \times \partial D, \CC^n)$. The operator $T$ can be automatically extended to $T : \A^{k,\alpha} \times \B^{k,\alpha} \rightarrow \D^{k,\alpha}$. By Lemma \ref{IFTlinearpart} and Lemma \ref{lemlinearproblemperturb}, for each pair of data $(G, \partial \widetilde{\Psi})$ satisfying the small perturbation conditions \eqref{consmallperturb}, the linearized operator $D_\A T$ at $(G, \partial \widetilde{\Psi})$ admits an inverse $\eta|_{ (G, \partial \widetilde{\Psi})}$ such that
\begin{align}\label{inverseesti}
    ||\eta\big|_{(G,\partial \widetilde{\Psi})} (\widehat{G})||_{k,\beta; N \times \partial D} \leq C (\alpha -\beta)^{-2} ||\widehat{G}||_{k,\alpha; N \times D}, \qquad \text{for } \beta< \alpha. 
\end{align}
To proceed with Zehnder's version Nash-Moser iteration, we also need to check the following quadratic estimates for $Q (G_2, G_1; \partial \widetilde{\Psi}) = T(G_2, \partial\widetilde{\Psi}) - T(G_1, \partial \widetilde{\Psi}) - D_\A T|_{(G_1,\partial \widetilde{\Psi})} (\widehat{G}_{1,2})$, where $\widehat{G}_{1,2}$ can be viewed as a function in the tangent space of $\A^{k,\alpha}$.
\begin{lem} Let $G_1 = (z_1, \xi_1)$, $G_2 = (z_2, \xi_2)$ and $\widehat{G}_{1,2}= (\hat{z}, \hat{\xi})$. Suppose that $\widehat{G}_{1,2} = G_2 - G_1 = (\hat{z}, \hat{\xi})$ satisfying $||(\hat{z}, \hat{\xi})||_{k,\alpha; N \times D} \leq 1$. Then, there is a uniform constant $C$ such that
    \begin{align} \label{estiTalorformula}
        ||Q(G_1, G_2; \partial \widetilde{\Psi})||_{k,\alpha; N \times \partial D}
        \leq C ||(\hat{z}, \hat{\xi})||_{k,\alpha; N \times D}^{2}
    \end{align}
\end{lem}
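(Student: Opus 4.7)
The plan is to recognize that $Q$ is the second-order Taylor remainder of $\partial_i \widetilde{\Psi}$, viewed as a smooth function of the real variables underlying $z\in\CC^n$, about the point $z_1$. Using the explicit formula (\ref{defopdiffholodiscbdry}) for $T$ together with the formula for $D_{\A}T|_{(G_1,\partial\widetilde{\Psi})}$ stated just before (\ref{holodisclinearpde}), the $\hat\xi$--contributions cancel identically, leaving
$$
Q_i(G_1,G_2;\partial\widetilde{\Psi})(w,\tau)
= -\int_{0}^{1}(1-t)\,\frac{d^{2}}{dt^{2}}\Big[\partial_i\widetilde{\Psi}\big(z_1(w,\tau)+t\hat z(w,\tau),\tau\big)\Big]\,dt
$$
on $N\times\partial D$. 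Expanding the $t$-derivative yields a finite sum of integrands of the form $\big(\partial_i\partial^{\alpha}\overline\partial^{\beta}\widetilde{\Psi}\big)(z_1+t\hat z,\tau)\cdot \hat z^{\alpha}\bar{\hat z}^{\beta}$ with $|\alpha|+|\beta|=2$.

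First, I would bound the $\C^{k,\alpha}(N\times\partial D)$--norm of each composition $\big(\partial_i\partial^{\alpha}\overline\partial^{\beta}\widetilde{\Psi}\big)\big(z_1+t\hat z(\cdot,\cdot),\cdot\big)$ via Lemma \ref{ckalphaesticomposfun}. Its hypothesis $\|u-\id\|_{k,\alpha}\leq C_0$ is verified by writing $z_1+t\hat z = \id + (z_1-\id) + t\hat z$ and combining the smallness (\ref{consmallperturb}) of $z_1-\id$, which is valid at the stage of the iteration where $G_1$ stays close to the trivial foliation $G_0$ by (\ref{smallconfreebdry}), with the hypothesis $\|\hat z\|_{k,\alpha}\leq 1$. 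The same bounds guarantee that $z_1+t\hat z$ maps $N\times\partial D$ into $N'$ for every $t\in[0,1]$, so the composition is well-defined, and the resulting estimate is uniform in $t$ and controlled by $\|\partial\widetilde{\Psi}\|_{k+2,\alpha;N'\times\partial D}$.

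Second, I would apply the standard algebra property of $\C^{k,\alpha}$ (the same multiplicative rule underlying (\ref{estiproductweiholder})) to the quadratic monomials $\hat z^{\alpha}\bar{\hat z}^{\beta}$, bounding each by $C\|\hat z\|_{k,\alpha;N\times D}^{2}$. Multiplying the two estimates, summing over $(\alpha,\beta)$ and over $i$, and integrating against $(1-t)$ on $[0,1]$ (which contributes only the harmless factor $\tfrac{1}{2}$), one obtains
$$
\|Q(G_1,G_2;\partial\widetilde{\Psi})\|_{k,\alpha;N\times\partial D}
\leq C\,\|\hat z\|_{k,\alpha;N\times D}^{2}
\leq C\,\|(\hat z,\hat\xi)\|_{k,\alpha;N\times D}^{2},
$$
with $C$ depending only on $n$, $k$, $\alpha$, the size $\varepsilon$ of the perturbation, and $\|\partial\widetilde{\Psi}_0\|_{k+2,\alpha;N'\times\partial D}$.

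The only delicate point is the uniform-in-$t$ application of the composition lemma; once Lemma \ref{ckalphaesticomposfun} is available with a $t$-independent constant, the remainder of the argument reduces to Taylor's theorem with integral remainder together with the Hölder algebra and product rules, both of which are entirely standard. I do not anticipate any genuinely new difficulty beyond checking that the image of $z_1+t\hat z$ remains inside the slightly enlarged chart $N'$, which is forced by the smallness hypotheses already in play.
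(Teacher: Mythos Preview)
Your proposal is correct and follows essentially the same approach as the paper: both recognize that the $\hat\xi$ terms cancel and that $Q_i$ is the second-order Taylor remainder of $\partial_i\widetilde{\Psi}$ in the $z$-variable, then bound it via Lemma~\ref{ckalphaesticomposfun} combined with the algebra property of $\C^{k,\alpha}$. The only cosmetic difference is that the paper applies the fundamental theorem of calculus once to rewrite $\partial_i\widetilde{\Psi}(z_2)-\partial_i\widetilde{\Psi}(z_1)$ and then invokes the mean value theorem again on the resulting integrand, whereas you go directly to the integral form of the second-order remainder; these are equivalent presentations of the same computation, and the dependence of the constant on $\|\nabla^2\widetilde{\Psi}_0\|_{k+1,\alpha}$ (equivalently $\|\partial\widetilde{\Psi}_0\|_{k+2,\alpha}$) matches.
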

\begin{proof}
    By plugging in the formulas of $T(G,\partial \widetilde{\Psi})$ and $D_\A T|_{(G,\partial \widetilde{\Psi})} (\widehat{G})$, we have
    \begin{align*}
    \begin{split}
        Q(G_1, G_2; \partial \widetilde{\Psi})_i &= - \partial_i \widetilde{\Psi} (z_2, \tau) + \partial_i \widetilde{\Psi}(z_1, \tau)  + \partial_i \overline{\partial}_{j} \widetilde{\Psi}(z_1, \tau) \bar{\hat{z}}_{j} + \partial_i \partial_j \widetilde{\Psi} (z_1, \tau) \hat{z}_j\\
        & = - \int_0^1\partial_i \overline{\partial}_j \widetilde{\Psi} (z_t, \tau) \bar{\hat{z}}_j dt +  \partial_i \overline{\partial}_j \widetilde{\Psi} (z_1, \tau) \bar{\hat{z}}_j \\
        & \quad - \int_0^1 \partial_i \partial_j \widetilde{\Psi} (z_t, \tau) \hat{z}_j dt + \partial_i \partial_j \widetilde{\Psi} (z_1, \tau) \hat{z}_j,
    \end{split}
    \end{align*}
    where $z_t = t z_1 + (1-t) z_2$ for some $t \in (0,1)$. By mean value theorem again, together with Lemma \ref{ckalphaesticomposfun}, we have
    \begin{align*} 
        ||Q(G_1, G_2; \partial \widetilde{\Psi})||_{k,\alpha; N \times \partial D}
        &\leq C P_k(1 + ||\hat{z}||_{k,\alpha; N \times D}) ||(\hat{z}, \hat{\xi})||_{k,\alpha; N \times D}^{2}
        \\
        &\leq C ||(\hat{z}, \hat{\xi})||^2_{k,\alpha; N\times D}
    \end{align*}
    where $C$ depends only on $n$, $k$, $\alpha$, $||\nabla^2 \widetilde{\Psi}_0||_{k+1,\alpha; N' \times \partial D}$ and $\varepsilon$.
\end{proof}

\begin{proof}[Proof of Theorem \ref{thmexistbypertb}]
Zehnder's version of the Nash-Moser iteration can be carried out using estimates \eqref{inverseesti} and \eqref{estiTalorformula}. Starting by the trivial data given in \eqref{cxdimntrivialfoli},  $G_0 = (z_0, \xi_0)$, the sequence $(G_n) = (z_n, \xi_n)$ can be defined inductively by
\begin{align*}
    G_n = G_{n-1} + \widehat{G}_{n}, \qquad \widehat{G}_{n} = -\eta|_{(G_{n-1}, \partial \widetilde{\Psi})} (T(G_{n-1}, \partial \widetilde{\Psi}) ).
\end{align*}
The key formula is the following:
\begin{align*}
    T(G_n, \partial \widetilde{\Psi}) = T(G_n, \partial \widetilde{\Psi})- T(G_{n-1}, \partial \widetilde{\Psi}) - D_\A T \big|_{(G_{n-1}, \partial \widetilde{\Psi})} (\widehat{G}_{n}) = Q(G_{n}, G_{n-1}; \partial \widetilde{\Psi})
\end{align*}
with the estimate from \eqref{inverseesti} to \eqref{estiTalorformula}, 
$$||T(G_n, \partial \widetilde{\Psi})||_{k,\beta; N \times \partial D} \leq C ||\widehat{G}_{n}||_{k,\beta; N \times D}^2 \leq C^3 (\alpha- \beta)^{-4} ||T(G_{n-1}, \partial \widetilde{\Psi})||_{k, \alpha; N \times \partial D}^2$$ 
for arbitray $\beta < \alpha$. Now fixing $0<\beta< \alpha<1$, Let $\beta_n = (\alpha-\beta) \exp(-\lambda \kappa^n) + \beta $ for $n \geq 1$. Taking $\kappa = \frac{3}{2} $,  $ \lambda = - \log \frac{(\alpha -\beta)^4}{4C^3}$ and 
$\varepsilon = \min \{\delta, \exp(-6 \lambda)\},$ where $\delta$ is the small constant such that Lemma \ref{lemlinearproblemperturb} holds, 
the following estimates can be proved inductively: 
\begin{align*}
    &||T(G_n, \partial \widetilde{\Psi}) ||_{k,\beta_n; N \times \partial D} \leq \exp (-6 \lambda (\kappa^{n}-1) )||\partial \widetilde{\Psi} - \partial \widetilde{\Psi}_0||_{k,\alpha; N' \times \partial D}, \quad n\geq 0,\\
    &||\widehat{G}_{n}||_{k,\beta_n; N \times D} \leq \exp (-6 \lambda \big(\kappa^{n-1}-1) + \lambda/2 \big) ||\partial \widetilde{\Psi} - \partial \widetilde{\Psi}_0||_{k,\alpha; N' \times \partial D}, \quad n\geq 1.
\end{align*}
The above estimates imply that $(G_n)$ converges in $\A^{k,\beta}$ and $T(G_n, \partial \widetilde{\Psi}) \rightarrow 0$ in $\D^{k,\beta}$ as $ n \rightarrow \infty$. Let $G = \lim G_n$ in $ \A^{k,\beta}$. Since the operator $T$ is continuous on $\A^{k,\beta}$, we have $T (G, \partial \widetilde{\Psi}) = 0$. Moreover, $G = (z, \xi)$ satisfies the following estimates:
\begin{align*}
    ||(z - z_0, \xi-\xi_0)||_{k,\beta; N \times D} \leq \sum_{n=1}^{\infty} ||\widehat{G}_n||_{k,\beta_n; N \times D} \leq 2C^2 (\alpha- \beta)^{-2} ||\partial \widetilde{\Psi} - \partial \widetilde{\Psi}_0||_{k,\alpha; N' \times D}.
\end{align*}
\end{proof}

\section{Uniqueness, Patching, and Global Foliation on the End} \label{secuniqholodiscfoli}

In this section, we address several key remaining steps to complete the proof of the main existence theorem. We begin by establishing a uniqueness result for holomorphic disc foliations near the trivial foliation. This will serve as the foundation for a patching argument, which allows us to glue together local foliations constructed in the previous section. With the patching theorem in place, we then complete the proof of the existence of holomorphic disc foliations on the end $X_\infty$.  

We begin by discussing the uniqueness aspect of the holomorphic disc foliation. The goal is to show that, under a small perturbation of the boundary data corresponding to the trivial foliation, the solution remains unique in a small neighborhood of $G_0 \in \A$ (see \eqref{defspaceholodiscs}). More precisely, suppose the free boundary data $\partial \widetilde{\Psi}$ in a local chart is a slight perturbation of that of the trivial foliation, $\partial \widetilde{\Psi}_0$, and satisfies
\begin{align} \label{unipartpertbcon}
    ||\partial \widetilde{\Psi}- \partial \widetilde{\Psi}_0||_{k+1, \alpha; N' \times \partial D} \leq \varepsilon_0.
\end{align}
Then we aim to prove that there exists a unique family of holomorphic discs $G: N \times D \rightarrow E$, lying in the space $\A^{k,\beta}_{\varepsilon_0}$, defined by 
\begin{align*}
    \A^{k,\beta}_{\varepsilon_0} = \{G \in \A; ||G||_{k,\beta; N \times D} \leq \varepsilon_0\}.
\end{align*}
Note that the constants $\varepsilon$, $\varepsilon_0$ are uniform, sufficiently small, positive constants and will be precisely determined in Subsection \ref{secuniq2ndver}.

One point that requires careful attention is that the holomorphic disc foliation is not uniquely determined if the boundary data is prescribed only at the level of K\"ahler forms. The following example demonstrates this point. Consider the following family of potential functions defined on $\CC^n \times \TT^1$, 
\begin{align}\label{cexnonuniqform}
    \psi_\tau (z) =\Psi(z, \tau) = \sum_{i=1}^n |z_i -\varepsilon_0 \varsigma_i (\tau)|^2,
\end{align}
where $\varsigma_i$ is a complex function defined on $\TT^1$ for $i = 1, \ldots n$ with $\varsigma_i(\sqrt{-1}) =0$. It is easy to verify that for each $\tau \in \TT^1$, the associated K\"ahler form satisfies: 
$$dd^c \psi_\tau = dd^c |z|^2.$$ 
In other words, the potential functions, $\psi_\tau$, $\tau \in \TT^1$, give rise to the standard K\"ahler form of Euclidean space.
Moreover, $\partial \widetilde{\Psi}$ is a small perturbation of $\partial \widetilde{\Psi}_0$ and satisfies \eqref{smallconfreebdry}. By Theorem \ref{thmexistbypertb}, for any smooth function $\varsigma_i$ with $|\varsigma_i|\leq 1$, there exists a family of holomorphic discs $G = (z, \xi): \CC^n \times D \rightarrow T^{(1,0)} (\CC^n) $ satisfying the estimates in \ref{estimeasureperturb}, such that its projection defines a holomorphic disc foliation on $ \CC^n \times D$, where $D$ is a unit disk with boundary $\TT^1$. If the functions \(\varsigma_i(\tau)\), \(1 \leq i \leq n\), are nontrivial complex-valued functions on \(\partial D\), then the associated family of holomorphic discs is nontrivial. In particular, if each \(\varsigma_i(\tau)\) extends holomorphically to \(D\), then the corresponding family of holomorphic discs \(G\) can be written explicitly as
\[
(z_i(w, \tau),\ \xi_i(w, \tau)) = \left(w_i - \varepsilon \varsigma_i(\tau),\ \bar{w}_i - \varepsilon \bar{\varsigma}_i(\tau)\right).
\]

The nonuniqueness of the holomorphic disc foliation arises because a given K\"ahler form \(\omega_\tau\), for \(\tau \in \partial D\), can admit many different potential functions—differing by real pluriharmonic terms. To resolve this ambiguity, one must fix the potential functions along the boundary, up to constants. This can be achieved by restricting to a suitable global class of potentials. For instance, on a compact K\"ahler manifold, we can work in the general K\"ahler potential space \(\mathcal{H}(\omega)\), which decomposes as \(\mathcal{H}_0(\omega) \oplus \mathbb{R}\), where \(\mathcal{H}_0(\omega)\) is the normalized potential space with its integral vanishing, and is in one-to-one correspondence with the K\"ahler forms in the same cohomology class of $\omega$. On an ALE K\"ahler manifold, we instead consider the class \(\mathcal{H}_{-\gamma}(\omega)\), where the decay at infinity ensures uniqueness. Once such a class is fixed, the potential functions corresponding to the K\"ahler form $\omega_\tau$, $\tau \in \partial D$ are uniquely determined, which do not affect the holomorphic disc foliation. If we discuss the holomorphic disc foliation in a local chart or in an open subset of a K\"ahler manifold, we always consider the potential functions as restrictions of global potentials. 
As we will see in Section~\ref{secmaxrkthm}, the local holomorphic disc foliation, constructed from potential functions that are restrictions of global potentials, retains information from the global solution to the HCMA equation.

The following lemma ensures that $\HH_{-\gamma}(\omega)$ uniquely determines an ALE K\"ahler form in the same cohomology class of $\omega$:

\begin{lem} \label{lemlapequonALE}
    Let $(X, g, J)$ be an ALE K\"ahler manifold with complex dimension $n\geq 2$. Let $f \in \C^{k-2, \beta}_{-\gamma-2}$ with $k \geq 2$ and $-\gamma<0$, then there exists a unique solution $u \in \C^{k,\beta}_{-\min\{\gamma, 2n\}}$ to 
    \begin{align*}
        \Delta u = f,
    \end{align*}
    where $\Delta$ is the Laplacian in terms of the reference metric $g$. 
\end{lem}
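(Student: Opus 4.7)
The plan is to combine the classical Fredholm theory for the Laplacian on asymptotically Euclidean manifolds with an explicit Newton potential construction on the end, followed by a compactly supported correction. First, I would exploit the ALE structure: in the asymptotic chart given by Proposition \ref{propcxasymcoordinates}, one has $\Delta = \Delta_0 + P$, where $\Delta_0$ is the flat Laplacian on $\CC^n$ and $P$ is a perturbation whose coefficients decay at rate $r^{-\mu}$. In particular, $P$ sends $\C^{k,\beta}_{-\delta}$ into $\C^{k-2,\beta}_{-\delta-2-\mu}$, gaining $\mu$ orders in decay, so that $P$ appears as a strictly lower-order perturbation in all weighted estimates.

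Next, I would construct an explicit solution on the end by convolution with the Newton kernel $\Gamma$ of $\Delta_0$. Given $f \in \C^{k-2,\beta}_{-\gamma-2}(X)$, let $\chi$ be a smooth cutoff equal to $1$ outside a large ball and supported in $X_\infty$, and pull $\chi f$ back to $\CC^n\setminus B_R$ via the asymptotic chart; extend it by zero and set
\[
u_0(x) = -\int_{\CC^n} \Gamma(x-y)\,(\chi f)^{\sim}(y)\, dV(y).
\]
A direct kernel estimate, splitting the domain of integration into $\{|y|\le |x|/2\}$, $\{|y|\ge 2|x|\}$, and the annular region $\{||y|-|x||\le |x|/2\}$, yields $|u_0(x)| = O(r^{-\min\{\gamma,2n-2\}})$, and the scaling argument of Lemma \ref{lemscalweightedholder} combined with interior Schauder estimates for $\Delta_0$ promotes this to a bound on $\chi u_0$ in the weighted H\"older norm of $\C^{k,\beta}_{-\min\{\gamma,2n\}}(X)$. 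Then $g := f - \Delta(\chi u_0)$ belongs to $\C^{k-2,\beta}(X)$ and is compactly supported.

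Then I would solve $\Delta v = g$ globally on $X$ by exhaustion: on smooth precompact domains $B_{r_j}\nearrow X$, standard elliptic theory gives unique Dirichlet solutions $v_j$, and a uniform weighted a priori bound
\[
\|v_j\|_{-\delta;k,\beta;X} \le C\,\|g\|_{-\delta-2;k-2,\beta;X}
\]
for $\delta$ in the admissible range permits passage to the limit $v=\lim v_j$. The function $u := \chi u_0 + v$ then solves $\Delta u = f$ with $u \in \C^{k,\beta}_{-\min\{\gamma,2n\}}(X)$. Uniqueness follows from the maximum principle (Lemma \ref{WuYauweak}): if $\Delta u = 0$ and $u \to 0$ at infinity, applying the principle to $\pm u$ yields $u\equiv 0$.

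The main obstacle is establishing the uniform weighted a priori estimate used in the exhaustion, which is the heart of the Fredholm theory for the Laplacian on ALE spaces. The standard route is a contradiction/blow-up argument: normalize $\|v_j\|_{-\delta;k,\beta}=1$ with $\|\Delta v_j\|_{-\delta-2;k-2,\beta}\to 0$, extract by Arzel\`a--Ascoli a subsequential limit $v_\infty$ satisfying $\Delta v_\infty=0$ with $v_\infty=O(r^{-\delta})$, and then invoke the Liouville/maximum principle above to conclude $v_\infty=0$, contradicting nondegeneracy once one verifies that the weighted norm is lower-semicontinuous under the limit. The delicate point is to avoid the critical indicial weights of $\Delta_0$ on $\CC^n$, namely $\delta \in \{0,-1,-2,\dots\}\cup\{2n-2, 2n-1,\dots\}$, at which surjectivity or the decay improvement in the blow-up step breaks down; the cutoff $\min\{\gamma,2n\}$ in the statement is precisely what keeps the weight inside the admissible window.
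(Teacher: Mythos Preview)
The paper does not give its own proof here: it simply cites \cite[Proposition 2.4]{yao2022mass} and stops. Your outline is exactly the standard Bartnik--Lockhart--McOwen style argument that such a proposition is proved by, so in substance you are reconstructing the content of the reference rather than taking a different route.

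One internal inconsistency to clean up. Your Newton potential step correctly gives $|u_0(x)|=O(r^{-\min\{\gamma,\,2n-2\}})$, since the Green's function on $\RR^{2n}$ decays like $r^{2-2n}$; scaling plus interior Schauder then places $\chi u_0$ in $\C^{k,\beta}_{-\min\{\gamma,\,2n-2\}}$, not $\C^{k,\beta}_{-\min\{\gamma,\,2n\}}$ as you wrote. Likewise, your own list of indicial weights has $2n-2$ as the first exceptional decay rate, so the claim that the cap $\min\{\gamma,2n\}$ ``keeps the weight inside the admissible window'' is off by $2$. This mismatch is almost certainly a typo in the lemma's statement (the natural cap is $2n-2$), but your write-up should be self-consistent either way. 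A minor second point: Lemma~\ref{WuYauweak} is formulated on $X\times D$; for the uniqueness step on $X$ alone you want the single-factor version with the same $\log r$ barrier (Remark~\ref{remWYmaxp}), or just invoke the classical maximum principle for decaying harmonic functions on complete manifolds directly.
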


\begin{proof}
    The proof is a special case of \cite[Proposition 2.4]{yao2022mass}.
\end{proof}

Now, we assume that $\omega_1 = \omega_2$ in $X$ with $\omega_1 = \omega + dd^c \psi_1$ and $ \omega_2 = \omega + dd^c \psi_2$, $\psi_{1,2} \in \HH_{-\gamma} (\omega)$. Then, we have $\Delta_\omega (\psi_1 -  \psi_2)  =0$ on $X$. By lemma \ref{lemlapequonALE}, we have $\psi_1 = \psi_2$ on $X$. Hence, in our setting, prescribing the boundary data in terms of the exact Lagrangian subspace of the cotangent bundle—such as $\partial \psi_\tau$-is equivalent to prescribing the K\"ahler form-such as $dd^c \psi_\tau$-and, in turn, equivalent to prescribing the potential function, $\psi_\tau$.

The remainder of this section is organized as follows. In Subsection 7.1, we complete the proof of uniqueness. In Subsection 7.2, we establish the patching theorem. Subsections 7.3 through 7.5 are devoted to completing the proof of the general existence theorem for holomorphic disc foliations on $X_\infty \times D$, along with establishing weighted estimates for the displacement of the holomorphic discs.

\subsection{Local uniqueness of foliation by holomorphic discs with a fixed potential} \label{secuniq2ndver} 
Let $N$, $N'$, $N''$ be as before. 
Recall that $ \rho $ is the K\"ahler potential of reference K\"ahler metric $\omega_0$ in $N'$ and $\widetilde{\Psi}_0 (\cdot, \tau) = \rho$. As given in (\ref{cxdimntrivialfoli}), $G_0(w, \tau) = (w, \partial \widetilde{\Psi}_0 (w, \tau))$ with its boundary belong to $\Lambda_{\widetilde{\Psi}_0}$, and the projection down of $G_0$ to $N \times D$ gives the trivial foliation of $N \times D$.
Assume $\widetilde{\Psi}(z, \tau)$ is another smooth family of K\"ahler potentials on $N' \times \partial D$ by perturbing $\widetilde{\Psi}_0$ slightly and satisfying (\ref{unipartpertbcon}). 
In Theorem \ref{thmexistbypertb}, we prove the existence of families of holomorphic discs $G(w, \tau)$ such that the boundary of $G(w, \tau)$ belongs to $\Lambda_{\widetilde{\Psi}}$ for each $\tau \in \partial D$. In this subsection, we prove the local uniqueness of the family of holomorphic discs, $G(w, \tau)$, in a small neighborhood of $G_0$, fixing the free boundary condition given by $\partial \widetilde{\Psi}$. 

Recall that we define the set of smooth families of holomorphic discs, $\A$, as in the section \ref{seclocalexholodiscs}. To describe the local neighborhood of $G_0 \in \A$, we introduce notation, $\A^{k,\beta}_{\delta_0}(G_0)$. $\A^{k,\beta}_{\delta_0} (G_0)$ is the set of smooth families of holomorphic discs, $G(w, \tau) = (z(w, \tau), \xi(z, \tau)) $, satisfying,
    \begin{align}\label{familyholodiscsngbhcondi}
        ||z(w, \tau)- z_0(w, \tau)||_{k,\beta; N \times D} +||\xi(w, \tau)- \xi_0(w, \tau)||_{k,\beta; N \times D} \leq \delta_0.
    \end{align}
where $G_0(w, \tau) = (z_0(w, \tau), \xi_0(z, \tau)) $ and $0< \beta< \alpha<1$.

\begin{thm} \label{thmlocalunifixpotential}
    Let $\omega_0$, $\widetilde{\Psi}_0$, $G_0= (z_0, \xi_0)$ be the same as in Theorem \ref{thmexistbypertb}.
    Let $\widetilde{\Psi}(z,\tau)$ be another smooth family of potential functions obtained by a small perturbation of $\widetilde{\Psi}_0$ satisfying,
    \begin{align} \label{condithmsmallperturb}
        ||\partial \widetilde{\Psi}- \partial \widetilde{\Psi}_0||_{k+1,\alpha; N' \times \partial D} \leq \varepsilon_0,
    \end{align} 
    for $k \geq 1$ and $\alpha \in (0,1)$. 
    Let $\A^{k, \beta}_{\delta_0} (G_0)$ denote the set of smooth families of holomorphic discs satisfying condition (\ref{familyholodiscsngbhcondi}). Assume $\varepsilon_0$ and $\delta_0$ are small constants depending only on $n$, $k$, $\alpha$, $\sigma^{-1}$, $||\nabla^2_X \widetilde{\Psi}_0||_{k+1, \alpha, N' \times \partial D}$.
    Then, there exists a unique smooth family of holomorphic discs $G(w, \tau)$, in a small neighborhood of $G_0$, $\A^{k,\beta}_{\delta_0}(G_0)$, satisfying $(i)-(v)$ of Theorem \ref{thmexistbypertb}, with the free boundary condition $G(w, \tau) \in \Lambda_{\widetilde{\psi}_{\tau}}$.
\end{thm}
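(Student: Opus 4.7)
My plan is to reduce the nonlinear uniqueness to the uniqueness of the linearized problem already established in Lemma \ref{lemlinearproblemperturb}, via an integral mean-value identity along a convex path. Suppose $G_1 = (z_1,\xi_1)$ and $G_2 = (z_2,\xi_2)$ are two elements of $\A^{k,\beta}_{\delta_0}(G_0)$ solving $T(G_i,\partial\widetilde{\Psi}) = 0$, and set $\widehat{G} = G_2 - G_1 = (\hat z, \hat\xi)$. The fixed-point condition built into $\A$ gives $\hat z(w,-i) = 0$, and holomorphicity in $\tau$ forces $\overline\partial_\tau \hat z = \overline\partial_\tau \hat\xi = 0$ on $N\times D$. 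Writing $z_t = (1-t)z_1 + t z_2$ and applying the fundamental theorem of calculus to $T(G_2, \partial\widetilde{\Psi}) - T(G_1,\partial\widetilde{\Psi})$ using the definition \eqref{defopdiffholodiscbdry} of $T$, one obtains
\begin{align*}
0 = T(G_2,\partial\widetilde{\Psi}) - T(G_1,\partial\widetilde{\Psi}) = \overline{L}(\widehat G) \qquad \text{on } N \times \partial D,
\end{align*}
where
\begin{align*}
\overline{L}(\widehat G)_i(w,\tau) = \hat\xi_i(w,\tau) - \overline{\widetilde A}_{i\bar j}(w,\tau)\,\overline{\hat z}_j(w,\tau) - \overline{\widetilde B}_{ij}(w,\tau)\,\hat z_j(w,\tau),
\end{align*}
with averaged coefficients
\begin{align*}
\overline{\widetilde A}_{i\bar j}(w,\tau) = \int_0^1 (\partial_i\overline\partial_j \widetilde{\Psi})(z_t(w,\tau),\tau)\, dt, \qquad \overline{\widetilde B}_{ij}(w,\tau) = \int_0^1 (\partial_i\partial_j \widetilde{\Psi})(z_t(w,\tau),\tau)\, dt.
\end{align*}
Thus $\widehat{G}$ is a smooth solution of the homogeneous linearized problem of the form \eqref{holodisclinearpde'} with $\bff = 0$, coefficients $(\overline{\widetilde A}, \overline{\widetilde B})$, and fixed-point data $\hat z(\cdot,-i) = 0$.

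The next task is to verify that $(\overline{\widetilde A}, \overline{\widetilde B})$ is close enough to the trivial data $A_{i\bar j} = \partial_i\overline\partial_j \widetilde{\Psi}_0$, $B_{ij} = \partial_i\partial_j \widetilde{\Psi}_0$ to invoke Lemma \ref{lemlinearproblemperturb}. Splitting
\begin{align*}
\overline{\widetilde A}_{i\bar j} - A_{i\bar j} = \int_0^1\!\!\big[(\partial_i\overline\partial_j \widetilde{\Psi})(z_t,\tau) - (\partial_i\overline\partial_j \widetilde{\Psi}_0)(z_t)\big]\,dt + \int_0^1 \!\!\big[(\partial_i\overline\partial_j \widetilde{\Psi}_0)(z_t) - (\partial_i\overline\partial_j \widetilde{\Psi}_0)(w)\big]\,dt,
\end{align*}
the first summand is controlled in the $\C^{k,\beta}(N\times\partial D)$ norm by $C\varepsilon_0$, via the composition estimate Lemma \ref{ckalphaesticomposfun} applied to the $\C^{k,\alpha}$ difference $\partial^2\widetilde{\Psi} - \partial^2\widetilde{\Psi}_0$ (bounded by $\varepsilon_0$ thanks to \eqref{condithmsmallperturb}) composed with the $\C^{k,\beta}$ map $z_t$. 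The second summand is controlled by $C\delta_0$, again via Lemma \ref{ckalphaesticomposfun}, since $\partial^2\widetilde{\Psi}_0$ is smooth and the inner displacement satisfies $\|z_t - \id\|_{k,\beta; N\times \partial D} \leq C\delta_0$. The analogous decomposition handles $\overline{\widetilde B} - B$, and altogether
\begin{align*}
\|\overline{\widetilde A} - A\|_{k,\beta;\, N\times\partial D} + \|\overline{\widetilde B} - B\|_{k,\beta;\, N \times \partial D} \leq C(\varepsilon_0 + \delta_0),
\end{align*}
for a constant $C$ depending only on $n$, $k$, $\alpha$, $\sigma^{-1}$ and $\|\nabla_X^2 \widetilde{\Psi}_0\|_{k+1,\alpha}$.

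Finally, by choosing $\varepsilon_0$ and $\delta_0$ so small that $C(\varepsilon_0 + \delta_0)$ is strictly below the smallness threshold $\delta = (10C)^{-1}$ appearing in the proof of Lemma \ref{lemlinearproblemperturb}, $\overline L$ meets all hypotheses of that lemma. Since $\widehat G$ is smooth and solves $\overline L(\widehat G) = 0$ together with the holomorphicity and fixed-point conditions, the BMO-normalization uniqueness argument at the end of the proof of Lemma \ref{lemlinearproblemperturb} applies with only cosmetic changes and forces $\widehat G \equiv 0$, giving $G_1 = G_2$. The main obstacle I anticipate is the $\C^{k,\beta}$ perturbation estimate on the averaged coefficients: the outer function $\partial^2\widetilde{\Psi}$ is only $\C^{k,\alpha}$, and composing with an inner map $z_t$ whose natural smallness lives at level $\C^{k,\beta}$ forces the perturbation to be measured at $\C^{k,\beta}$, which matches exactly the application of Lemma \ref{lemlinearproblemperturb} at that level; fortunately, the BMO uniqueness in that lemma only uses $L^\infty$ smallness of the coefficient perturbations, so the $\C^{k,\beta}$ bound is comfortably sufficient.
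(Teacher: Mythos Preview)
Your proof is correct and follows essentially the same strategy as the paper: reduce nonlinear uniqueness to the triviality of the kernel of a linearized operator along the segment joining two putative solutions, then invoke the uniqueness part of Lemma~\ref{lemlinearproblemperturb}. Your integral mean-value formulation (producing averaged coefficients $\overline{\widetilde A},\overline{\widetilde B}$) is in fact cleaner than the paper's appeal to a single intermediate point $t_0$, which is informal for vector-valued $T$; otherwise the two arguments coincide.
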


The key step of proving Theorem \ref{thmlocalunifixpotential} is solving the linearized problem at $(\widetilde{G}, \partial \widetilde{\Psi})$ close to $(G_0, \partial \widetilde{\Psi}_0)$, where $\widetilde{G} \in \A^{k,\beta}_{\delta_0} (G_0) $ and $\partial \widetilde{\Psi}$ satisfies (\ref{condithmsmallperturb}). Let $\widetilde{G}(w, \tau) = (\tilde{z}(w,\tau), \tilde{\xi}(w,\tau))$, the linearized problem at $(\widetilde{G}, \partial \widetilde{\Psi})$ is given as follows,
  \begin{align}\label{holodisclinearpdenearby}
    \begin{split}
        &\overline{\partial}_\tau \hat{z}_i (w, \tau) = \overline{\partial}_\tau \hat{\xi} (w, \tau) = 0, \hspace{5.2cm} \text{ in }N \times D,\\
        \begin{split}
         &\hat{\xi}_i(w,\tau) - \big(\partial_i \overline{\partial}_{{j}} \widetilde{\Psi} \big)(\tilde{z}(w, \tau), \tau) \overline{\hat{z}}_j(w, \tau)\\ 
         &\hspace{2.3cm}- \big(\partial_{i}\partial_{j} \widetilde{\Psi}\big) (\tilde{z}(w, \tau), \tau) \hat{z}_j(w, \tau)=\bff_i(w,\tau),
         \end{split}
         \hspace{0.5cm}  \text{ in } N \times \partial D,\\
         &\hat{z}_i(w, -i) = 0,  \hspace{7.5cm}  \ w  \in N.
    \end{split}
\end{align}
Since $\widetilde{G} = (\tilde{z}, \tilde{\xi}) \in \mathcal{A}_{\delta_0}^{k,\beta}$, and $\widetilde{\Psi}$ satisfies (\ref{condithmsmallperturb}), we have 
\begin{align*}
    ||\partial_i \overline{\partial}_j \widetilde{\Psi}(\tilde{z}(w, \tau), \tau) - \partial_i \overline{\partial}_j \widetilde{\Psi}_0(z_0(w, \tau), \tau)||_{{k,\beta}; N \times \partial D } \leq C_0 \delta_0 + \varepsilon_0,
\end{align*}
and
\begin{align*}
    ||\partial_i {\partial}_j \widetilde{\Psi}(\tilde{z}(w, \tau), \tau) - \partial_i {\partial}_j \widetilde{\Psi}_0(z_0(w, \tau), \tau)||_{{k,\beta}; N \times \partial D } \leq C_0 \delta_0 + \varepsilon_0.
\end{align*}
where we can take the constant $C_0$ to be $||\nabla^2 \widetilde{\Psi}_0||_{k+1,\alpha; N' \times D}$.

The uniqueness of the linearized problem (\ref{holodisclinearpdenearby}) is proved in Lemma \ref{lemlinearproblemperturb}. To prove Theorem \ref{thmlocalunifixpotential}, we pick $\delta_0$ and $\varepsilon_0$ as follows,
\begin{align*}
    \delta_0 = \frac{\delta}{2 C_0}, \qquad \varepsilon_0 = \frac{\delta}{2},
\end{align*}
where $\delta $ is the small constant given in $(\ref{parachoosedelta})$.

\begin{proof}[proof of Theorem \ref{thmlocalunifixpotential}] Fixing a $\widetilde{\Psi}$ satisfying (\ref{condithmsmallperturb}), we assume that there exist two distinct families of holomorphic discs $G(w, \tau)$ and $\widetilde{G}(w, \tau)$ satisfying $(i)-(v)$ in Theorem \ref{thmexistbypertb}. Write $G(w, \tau)$ and $\widetilde{G} (w, \tau)$ explicitly in the coordinates of $E$, $G(w, \tau) = ({z}(w, \tau), {\xi} (w, \tau))$ and $\widetilde{G} (w, \tau) = (\tilde{z}(w, \tau), \tilde{\xi} (w, \tau)))$. Apply the operator in (\ref{defopdiffholodiscbdry}), we have
\begin{align*}
    T(G, \partial \widetilde{\Psi} ) =0 = T(\widetilde{G}, \partial \widetilde{\Psi})
\end{align*}
If we connect $G$ to $\widetilde{G}$ with a curve in $\A$, $G(t) = (1-t) G + t\widetilde{G}$, there exist $t_0 \in (0,1)$ such that,
\begin{align} \label{keykernel}
    \D_{\A} T\big|_{(G(t_0), \partial \Phi)} (\widetilde{G}- G) =0.
\end{align}
Therefore, $\widetilde{G}(w, \tau) - G (w, \tau) \in \A'$ is a solution to (\ref{holodisclinearpdenearby}) with $\bff =0$. Notice that $||G(t_0)||_{k,\beta; N \times D} \leq \varepsilon_0$. Lemma \ref{lemlinearproblemperturb} implies that the kernel of $\D_\A T|_{(G(t_0), \partial \Phi)}$ is trivial in $(\A^{k,\beta'})'$ with $0<\beta' < \beta$, which implies the local uniqueness of the solution.
\end{proof}

\subsection{A Patching Theorem to Foliation by Holomorphic Discs} \label{subsecpatthm}

By slightly revising the statement of Corollary \ref{corcxasymptoticcovering}, there exists a countable family of triples $\{(N_i'', N_i, N_i' : N_i'' \subseteq N_i \subseteq N_i', \ i \in \mathcal{I} )\}$. For each $i \in \mathcal{I}$, there exists a biholomorphism $I_i: (N_i'', N_i, N_i') \rightarrow (B_{R_i -1}, B_{R_i}, B_{R_i +1}) $, where $B_{R}$ is a ball in $\CC^n$ centered at the origin with radius $R$. The radius $R_i$ is taken as in Corollary \ref{corcxasymptoticcovering}, $R_i = \delta r(x)$, where $x$ is the point mapped to the origin by $I_i$, i.e., $I_i (x) = 0$. The set $\{U'': i \in \mathcal{I}\}$ is a locally finite and countable open covering of the set $X_l \subseteq  X_\infty$. Let $\psi_{0,i}$ be the potential function of the reference K\"ahler metric $g$ in $N_i'$ and denote $\Psi_{0, i}$ to be the pull-back function of $\psi_{0,i}$ in $N_i' \times \partial D$. By slightly perturbing the boundary data, we have a new smooth function, $\widetilde{\Psi}$, defined on $N'_i \times \partial D$ such that $\widetilde{\Psi} \in \C^{\infty} (N_i'\times \partial D)$ and satisfies 
\begin{align*}
    ||\partial \widetilde{\Psi}_i - \partial \widetilde{\Psi}_{0, i} ||_{k+1, \alpha; N_i'\times \partial D} \leq \varepsilon,
\end{align*}
where $\varepsilon$ is a small constant given in Theorem \ref{thmexistbypertb} and Theorem \ref{thmlocalunifixpotential} and independent of $i \in \mathcal{I}$. Let $E_i$ be the holomorphic cotangent bundle of $N_i'$ for each $i \in \mathcal{I}$. By Theorem \ref{thmexistbypertb}, there exists a smooth family of holomorphic discs $G_i: N_i \times D \rightarrow E_i$ satisfying $(i)$-$(v)$ in Theorem \ref{thmexistbypertb}. The following theorem describe the relation between $G_i$ and $G_j$ for different $i,j \in \mathcal{I}$:

\begin{thm}\label{thmpatch}
    Given the data $(N_i'', N_i, N_i', \widetilde{\Psi}_{i}, \widetilde{\Psi}_{0,i})$, $i \in \mathcal{I}$ as above, there is a unique smooth family of holomorphic discs $ G_i : N_i \times D \rightarrow E_i$ for each $i \in \mathcal{I}$ satisfying conditions $(i)$-$(v)$ and \eqref{estimeasureperturb} in Theorem \ref{thmexistbypertb}. Let $p_i: E_i \rightarrow N_i'$ be the standard projection and let $H_i = \pi \circ G_i$. Then, for $i, j \in \mathcal{I}$, we have 
    \begin{align*}
        H_i (w, \tau) = H_j (w, \tau), \qquad (w, \tau) \in \big( N_i \bigcap N_j \big) \times D.
    \end{align*}
    Furthermore, if we write $G_i(w, \tau) = (z_{1,i}, \ldots, z_{1,i}, \xi_{n,i}, \ldots, \xi_{n,i})$ and $G_j(w, \tau) = (z_{1, j},\ldots, z_{n,i}, \xi_{1,j}, \ldots \xi_{n, j})$, then we have,
    \begin{align*}
        \begin{split}
            z_{p, j} (w, \tau) &= z_{p,i} (w, \tau),\\
            \xi_{p,j} (w, \tau) &= \xi_{p,i}(w, \tau) + \partial_{p} \big(\tilde{\psi}_{0,j}- \tilde{\psi}_{0,i} \big)(w)
        \end{split}
          \hspace{1cm}  p= 1,\ldots, n,\quad (w, \tau) \in  \big( N_i \bigcap N_j \big) \times D.\\
    \end{align*}
\end{thm}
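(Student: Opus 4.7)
The plan is to establish the existence and uniqueness of each $G_i$ directly from the earlier results, and then use the local uniqueness theorem to match $G_i$ with $G_j$ on overlaps. Existence of $G_i$ satisfying (i)--(v) together with the estimate \eqref{estimeasureperturb} comes from Theorem \ref{thmexistbypertb} applied in each chart, while uniqueness of $G_i$ within the small neighborhood of the trivial foliation comes from Theorem \ref{thmlocalunifixpotential}. The heart of the patching argument is the observation that on the overlap $N_i' \cap N_j'$ both $\tilde{\psi}_{0,i}$ and $\tilde{\psi}_{0,j}$ are local K\"ahler potentials for the same reference form $\omega$, so $h_{ij} := \tilde{\psi}_{0,j} - \tilde{\psi}_{0,i}$ is pluriharmonic and $\partial h_{ij}$ is a holomorphic $(1,0)$-form. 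Because the perturbation $\Psi$ is globally defined on $X \times \partial D$, the full perturbed potentials themselves satisfy $\tilde{\psi}_j(\cdot,\tau) - \tilde{\psi}_i(\cdot,\tau) = h_{ij}$ on the overlap for every $\tau \in \partial D$, which is exactly Semmes' cocycle data for gluing $E_i$ and $E_j$.

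First I would construct a candidate family of holomorphic discs on the overlap by translating $G_i$ fiberwise:
\begin{equation*}
\tilde{G}(w,\tau) := \bigl( z_i(w,\tau),\; \xi_i(w,\tau) + \partial h_{ij}\bigr|_{z_i(w,\tau)} \bigr), \qquad (w,\tau) \in (N_i \cap N_j) \times D,
\end{equation*}
viewed as a map into $E_j$ via Semmes' identification. Three properties need checking: (a) $\tau$-holomorphicity of $\tilde{G}$, which follows because $\partial h_{ij}$ is holomorphic and $z_i(w,\cdot)$ is $\tau$-holomorphic; (b) the boundary condition $\tilde{G}(w,\tau) \in \Lambda_{\tilde{\psi}_{\tau,j}}$ for $\tau \in \partial D$, which reduces to
\begin{equation*}
\xi_i + \partial h_{ij}(z_i) = \partial(\tilde{\psi}_{0,i}+\psi_\tau)(z_i) + \partial h_{ij}(z_i) = \partial(\tilde{\psi}_{0,j}+\psi_\tau)(z_i);
\end{equation*}
and (c) the fixed-point condition $\pi \circ \tilde{G}(w,-i) = z_i(w,-i) = w$.

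Next I would verify that $\tilde{G}$ lies in the neighborhood $\A^{k,\beta}_{\delta_0}(G_{0,j})$ of the trivial foliation attached to chart $j$. By \eqref{estimeasureperturb} applied to $G_i$, one has $\|G_i - G_{0,i}\|_{k,\beta} \le C\varepsilon$, so in particular $|z_i(w,\tau)-w|$ is small. The trivial foliation in chart $j$ has fiber part $\partial \tilde{\psi}_{0,j}(w)$, whereas $\tilde{G}$ has fiber part $\partial\tilde{\psi}_{0,j}(z_i(w,\tau))$; the discrepancy is then controlled by a Taylor expansion together with Lemma \ref{ckalphaesticomposfun}. Taking $\varepsilon$ sufficiently small yields $\|\tilde{G}-G_{0,j}\|_{k,\beta;(N_i\cap N_j)\times D} \le \delta_0$. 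Since $G_j$ restricted to the overlap also lies in the same neighborhood and satisfies the same boundary and fixed-point conditions, Theorem \ref{thmlocalunifixpotential} applied on $N_i \cap N_j \subseteq N_j'$ forces $\tilde{G} = G_j|_{(N_i \cap N_j) \times D}$, which is exactly the asserted transition formula in the fiber direction and yields $H_i = H_j$ upon projection to $X$.

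The main technical point will be applying Theorem \ref{thmlocalunifixpotential} on the subdomain $N_i \cap N_j$ rather than on a holomorphic ball. I would resolve this by reinspecting the proof of that theorem: its core input, the linearized estimate of Lemma \ref{lemlinearproblemperturb}, solves Riemann--Hilbert problems fiberwise over each base point $w$, and the parameter-direction bounds depend only on H\"older/BMO control over the parameter space; both types of estimate persist under restriction to any open subdomain of $N_j$, so the local uniqueness argument transfers without modification, provided $E_j$ and $\tilde{\psi}_{0,j}$ remain defined on a neighborhood of $N_i \cap N_j$, which they do by construction of the cover.
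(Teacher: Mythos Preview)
Your proposal is correct and follows essentially the same route as the paper: construct the candidate $\tilde{G}$ by fiberwise translation of $G_i$ via $\partial h_{ij}$ evaluated at $z_i(w,\tau)$, verify $\tau$-holomorphicity, the boundary and fixed-point conditions, membership in $\A^{k,\beta}_{\delta_0}(G_{0,j})$, and then invoke the local uniqueness of Theorem \ref{thmlocalunifixpotential}. You are in fact slightly more careful than the paper in two respects: you explicitly justify why the uniqueness theorem continues to apply over the open subdomain $N_i\cap N_j$, and your formula $\partial h_{ij}\bigl|_{z_i(w,\tau)}$ agrees with what the paper actually proves (and needs for the boundary condition), whereas the theorem as stated writes the argument as $w$, which appears to be a typographical slip.
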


\begin{proof}
    Fixing indices $i,j \in \mathcal{I}$, we have two sets of data, $(N_i'', N_i, N_i', \widetilde{\Psi}_i, \Psi_{0,i}, G_i )$ and $(N_j'', N_j, N_j', \widetilde{\Psi}_j, \Psi_{0,j},\\ G_j )$. By restricting the families of holomorphic discs $G_i$, $G_j$ to $N_i \cap N_j$, it is easy to check that $G_i,\ G_j: N_i \cap N_j \rightarrow E_{ij}$ are the smooth families of holomorphic discs satisfying the conditions $(i)$-$(v)$. The boundary conditions of $G_i$, $G_j$ differ; precisely, if we write $G_i = (z_{1,i},\ldots, z_{n,i}; \xi_{1,i}, \ldots, \xi_{n,i} )$ and $G_j = (z_{1,j}, \ldots, z_{n,j}; \xi_{1,j},\ldots \xi_{n,j})$, we have
    \begin{align*}
    \begin{split}
        \xi_{p, i} = \partial_p \widetilde{\Psi}_i (z_{,i} (w, \tau), \tau), \\
        \xi_{p, j} = \partial_{p} \widetilde{\Psi}_j (z_{,j} (w,\tau), \tau), 
    \end{split}
    \qquad (w, \tau) \in \big( N_i \bigcap N_j \big) \times \partial D.
    \end{align*}
    $\widetilde{\Psi}_j$ differs with $\widetilde{\Psi}_i$ by exactly the difference of K\"ahler potentials of $\omega$ in $N_j'$ and $N_i'$,
    \begin{align*}
        \widetilde{\Psi}_j (z, \tau) = \widetilde{\Psi}_i(z, \tau) + \tilde{\psi}_{0,ij} (z), \qquad (z, \tau) \in \big( N_i'\bigcap N_j'\big) \times \partial D,
    \end{align*}
    where $\tilde{\psi}_{0, ij} = \tilde{\psi}_{0,j} - \tilde{\psi}_{0,i}$.

    In this paragraph, it will be checked that 
    \begin{align}\label{checkdatazxij}
    \begin{split}
        z_{p, j} (w, \tau) &= z_{p, i} (w, \tau), \\
        \xi_{p,j} (w, \tau) &= \xi_{p, i} (w, \tau) + \partial_p \tilde{\psi}_{0,ij}(z_{,i}(w, \tau)),
    \end{split}
    \quad (w, \tau) \in \big( N_i\bigcap N_j\big) \times \partial D.
    \end{align}
    Provided that $(z_{a,i}, \xi_{p,i})$ is holomorphic with respect to $\tau$, it suffices to check $\xi_{p,j}(w, \tau)$ is holomorphic. The result can be derived from the following calculation:
    \begin{align*}
        \overline{\partial}_{\tau} \xi_{p,j} (w,\tau) &= \overline{\partial}_{\tau} \xi_{p,i}(w, \tau) + \overline{\partial}_{\tau} \big[ \partial_p \tilde{\psi}_{0,ij}(z_{,i}(w, \tau))\big]\\
        &= \partial_q \partial_{p} \tilde{\psi}_{0,ij} (z) \frac{\partial z_q}{\partial \bar{\tau}} (w, \tau) +  \overline{\partial}_{q} \partial_{p} \tilde{\psi}_{0,ij}  (z) \frac{\partial \bar{z}_q }{\partial \bar{\tau}} (w, \tau)\\
        &= 0.
        \end{align*}
        By estimate \eqref{estimeasureperturb}, we can assume $G_i \in \A^{k,\beta}_{\varepsilon_0}(G_{0,i})$ in $N_i \times D$. Then, $(z_{p,j}, \xi_{p,j})$ defined in \eqref{checkdatazxij} is contained in $\A^{k,\beta}_{\varepsilon_0}(G_{0,j})$ in $(N_i\cap N_j)\times D$. The local uniqueness of the family of holomorphic discs implies that 
        \begin{align*}
             G_j = \big(z_{, i}(w, \tau), \xi_{,i}(w,\tau) + \partial \tilde{\psi}_{0,ij}(z_{,i}(w,\tau))\big).
        \end{align*}
        In conclusion, we have $H_i(w, \tau) = H_j(w, \tau)$, for $(w, \tau) \in ( N_i \cap N_j ) \times D$.
\end{proof}

We conclude this subsection by reconstructing the family of holomorphic discs over over the end $X_l \subseteq X_\infty$. Recall that for each $\tau \in \partial D$, the K\"ahler form is given by $\omega_\tau = \omega + dd^c \psi_\tau$ with $\psi_\tau \in \HH_{-\gamma}$, and $\Psi (\cdot, \tau) = \psi_\tau(\cdot) $ with $\Psi \in \C^{\infty}_{-\gamma} (X \times \partial D)$.
We assume the boundary data satisfies the following uniform bounds on $X_\infty$ with respect to the Euclidean metric $g_0$; or equivalently, with respect to the reference ALE K\"ahler metric $g$
\begin{align*}
    ||\partial {\Psi}||_{k+1,\alpha; X_\infty} \leq \varepsilon
\end{align*}
Recall also that $X_l \subseteq X_\infty$ can be covered by countable holomorphic balls, $\{N_i''; ~ i \in \mathcal{I}\}$, with nested inclusion $N_i'' \subseteq N_i \subseteq N_i'$ for each $i \in \mathcal{I}$.
By Lemma \ref{lemholocornearinfty}, if we restrict the uniform bound above to each holomorphic coordinate chart $N'_i$, we obtain $ ||\partial \widetilde{\Psi}_i - \partial \widetilde{\Psi}_{0,i}||_{k+1,\alpha; N'_i} \leq \varepsilon$, where $\varepsilon>0$ is a small constant independent of $i \in \mathcal{I}$. Here, $\widetilde{\Psi}_{0, i}$ is the local potential function of $\omega$ in $N'_i$, and $\widetilde{\Psi}_i = \Psi + \widetilde{\Psi}_{0,i}$ is the adjusted local potential associated with $\omega_\tau$ for $\tau \in \partial D$. 

We now construct a holomorphic fiber bundle $\mathcal{W}$ over $X_l$ by gluing together the local cotangent bundles $E_i = T^* N_i$ and $E_j = T^* N_j$ over $N_i \bigcap N_j$. The gluing is performed via the additive transition function $\partial (\widetilde{\Psi}_{0,i} - \widetilde{\Psi}_{0,j})$: that is, for any point $x \in N_i \bigcap N_j$ and any covector $\xi\in T^*_x N_i$, we identify $\xi \sim \xi+ \partial (\widetilde{\Psi}_{0,i} - \widetilde{\Psi}_{0,j}) \in T^*_x N_j$. This provides a well-defined fiber bundle $\mathcal{W}\rightarrow X_l$, modeled locally on the cotangent bundles $E_i$. Moreover, $\mathcal{W}$ admits a global complex symplectic $(2,0)$ form $\Xi$, which is locally given by $\Xi|_{E_i}= \sum_p d\xi_p \wedge dz_p$, where $(z_p, \xi_p; 1\leq p \leq n)$ are the standard holomorphic coordinates of $E_i$. 
By the discussion in Section \ref{secholodiscHCMA}, for each $\tau \in \partial D$, the boundary data $\partial \Psi$ defines an exact Lagrangian submanifold $\Lambda_{\psi_\tau}$ on $\mathcal{W}$ and can be locally represented by $\partial \widetilde{\Psi}_{i}$ on each $E_i$ with
\begin{align*}
    \Xi|_{\Lambda_{\psi_\tau} \cap E_i} = \partial \overline{\partial} \widetilde{\Psi}_{i};\quad \text{hence, } \quad \Xi|_{\Lambda_{\psi_\tau}} = -i\omega_{\tau}. 
\end{align*}

The family $G$, defined over $X_l \times D$, can be regarded as a collection of smooth submanifolds of the total space $\mathcal{W}$ assembled by gluing the local families $G_i$ over $N_i \times D$ across the overlap $N_i \bigcap N_j$, in accordance with the patching theorem.
Precisely, by Theorem \ref{thmexistbypertb}, for each $i \in \mathcal{I}$, we obtain a smooth family of local holomorphic discs $G_i : N_i \times D \rightarrow E_i$ defined over the holomorphic coordinate chart $N_i$, where $E_i \rightarrow N_i'$ is the trivial holomorphic cotangent bundle. The patching theorem, Theorem \ref{thmpatch}, implies that the data $(G_i, N_i)$, $i \in \mathcal{I}$ can be glued together to obtain the family of holomorphic discs over $X_l$, $G: X_l \times D \rightarrow \mathcal{W}$. 
The projection of $G$ under $\mathcal{W} \rightarrow X_l$ defines the holomorphic discs foliation of $X_l \times D$.
According to Proposition \ref{propexplicitconstructionsolu}, the correspondence between the holomorphic disc foliation and the collection of exact Lagrangian submanifolds can extend to each $\tau \in D$. Locally, for each $\tau$, the Lagrangian submanifold $\Lambda_{\tau} = G(\cdot, \tau)$ is given by $\partial\widetilde{\Phi}_i$, where $\widetilde{\Phi}_i$ is defined by solving \eqref{explicitconstructionsolu} along the family of holomorphic discs defined by $G_i: N_i \times D \rightarrow E_i$. Furthermore, by the patching theorem, one can easily verify that on the holomorphic disc leaf at the $x \in N_i \bigcap N_j$, the local functions satisfy $\widetilde{\Phi}_i - \widetilde{\Psi}_{0,i} = \widetilde{\Phi}_{j} - \widetilde{\Psi}_{0,j} $. This implies the existence of a function $\Phi$ on $X_l \times D$ such that
\begin{align*}
    \phi_\tau (\cdot) = \Phi(\cdot, \tau) =  \widetilde{\Phi}_i - \widetilde{\Psi}_{0,i}, \qquad\text{ for each } i \in \mathcal{I}.
\end{align*}
As a result, the restriction of the holomorphic symplectic form to the Lagrangian submanifold satisfies $$\Xi|_{\Lambda_\tau} = \omega_\tau = \omega + dd^c \phi_\tau $$ for each $\tau \in D$.

\subsection{The Weighted Estimates} 
In this section, we prove a weighted estimate for the coordinates shifting of the holomorphic discs under the assumption that the perturbation of boundary data satisfies the decay condition ${\Psi} \in \C^{\infty}_{-\gamma} (X_\infty) $. The proof depends heavily on the existence theorem (Theorem \ref{thmexistbypertb}) and the scaling technique (Lemma \ref{lemscalweightedholder}). According to Lemma \ref{lemholocornearinfty} and Corollary \ref{corcxasymptoticcovering}, we perturb the boundary data of the trivial foliation in each open holomorphic ball in the open covering of $X_\infty$. In each holormphic ball $N'_i$, there exists a K\"ahler potential, $\widetilde{\Psi}_{0,i}$, of the reference K\"ahler form, and let $\widetilde{\Psi}_i = \widetilde{\Psi}_{0,i} + \Psi $ in $N_i'$. We now select a family $\{\widetilde{\Psi}_{0,i}\}$ on each $N'_i$, subject to uniform bounds that are suitable for applying the existence theorem.

By Corollary \ref{corcxasymptoticcovering}, there exists a countable family of pairs $\{N_i \subseteq N_i'; i \in \mathcal{I}\}$ such that the pair $(N_i, N_i')$ is biholomrphic to $(B_{R_i}, B_{2R_i} )$ satisfying coordinate changes \eqref{weightesticoordinatesholocball}, where $R_i = \delta r(x_i)$ and $x_i$ is the center of the chart. Moreover, the collection $\{N_i; i \in \mathcal{I}\}$ forms an open covering of $X_l$ for some $X_l = \{x\in X; r(x) > l \} \subseteq X_\infty$.
Let $s_i: B_2 \rightarrow B_{2R_i}$ be the standard scaling map. 
We first prove the following lemma:

\begin{lem} \label{lemuniformestipotentialref} Let $X$ be an ALE K\"ahler manifold with the reference K\"ahler form $\omega$, satisfying the fall-off condition \eqref{decayaleintro}.
    On each $N_i'$, there exists a $\widetilde{\Psi}_{0,i}$ such that $||\nabla_X^2 \widetilde{\Psi}_{0,i}||_{0;k+1, \alpha; N_i' \times \partial D} \leq C_0$ for a uniform constant $C_0$ independent of $i$.
\end{lem}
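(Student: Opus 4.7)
The plan is to construct $\widetilde{\Psi}_{0,i}$ chart by chart through a scaling argument that exploits the fall-off \eqref{decayaleintro} of the reference K\"ahler form $\omega$ toward the Euclidean K\"ahler form $\omega_0 = dd^c(\tfrac{1}{2}|z|^2)$. The key idea is that after rescaling the chart $N_i'$ of radius $R_i' \sim r(x_i)$ to a unit ball, the metric becomes a perturbation of $\omega_0$ of size $(R_i')^{-\mu}$, so any local $\partial\overline{\partial}$-solution on the rescaled ball inherits that smallness with estimates that are manifestly independent of the chart.

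First I would identify $N_i'$ with a ball $B_{R_i'}\subseteq \CC^n$ via the biholomorphism of Lemma \ref{lemholocornearinfty}. In these holomorphic coordinates, write $\omega = \omega_0 + \eta_i$ where $\eta_i$ is a closed real $(1,1)$-form whose components satisfy $|D^\beta \eta_i|_{g_0} = O(r^{-\mu-|\beta|})$; when $n\ge 3$ this is immediate from Proposition \ref{propcxasymcoordinates}, and when $n = 2$ it follows from the same proposition together with the fact that the chart of Lemma \ref{lemholocornearinfty} differs from the asymptotic real chart by $O(r^{1-\gamma})$ with $\gamma \geq 3$, a correction of strictly lower order. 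Second, rescale via $s_i(w) = R_i' w$ (after recentering) and set $\tilde\eta_i = (R_i')^{-2} s_i^* \eta_i$; since $r(\cdot)\sim R_i'$ on the chart, one obtains $\|\tilde\eta_i\|_{k+1,\alpha; B_{3/2}} \leq C (R_i')^{-\mu}$ with $C$ independent of $i$. Third, apply the local $\partial\overline{\partial}$-lemma on the ball---most cleanly by applying Proposition \ref{prodbaresti} to a primitive one-form of $\tilde\eta_i$---to produce $\tilde\phi_i \in \C^{k+3,\alpha}(B_{5/4})$ with $dd^c\tilde\phi_i = \tilde\eta_i$ and $\|\tilde\phi_i\|_{k+3,\alpha;B_{5/4}} \leq C(R_i')^{-\mu}$. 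Fourth, unscale: define $\widetilde{\Psi}_{0,i}(z) = \tfrac{1}{2}|z|^2 + (R_i')^2 \tilde\phi_i(z/R_i')$, so that $dd^c \widetilde{\Psi}_{0,i} = \omega$ on $N_i'$. Finally, since $\widetilde{\Psi}_{0,i}$ is independent of $\tau$, the product norm in Definition \ref{defweinormprosp} reduces to $\|\nabla_X^2\widetilde{\Psi}_{0,i}\|_{0;k+1,\alpha; N_i'}$, and the scaling identity $D_z^{\ell+2}\widetilde{\Psi}_{0,i}(z) = (R_i')^{-\ell}(D_w^{\ell+2}\tilde\phi_i)(z/R_i')$ for $\ell \geq 1$ (the quadratic piece vanishes after two derivatives), combined with $w(z)\sim R_i'$ on $N_i'$, yields $w(z)^\ell |D^\ell \nabla_X^2 \widetilde{\Psi}_{0,i}|_{g_0} \leq C(R_i')^{\ell}\cdot (R_i')^{-\ell-\mu} \leq C$, and the top-order H\"older term is estimated in the same way.

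The main technical point is to ensure that the constant in the solvability of $dd^c\tilde\phi_i = \tilde\eta_i$ depends only on $n, k, \alpha$ and not on the chart. This is precisely what the passage to the scale-invariant ball $B_{3/2}$ achieves: after the rescaling by $R_i'$, the relevant norms are measured on a fixed domain, and Proposition \ref{prodbaresti} supplies an interior Schauder-type bound with a universal constant. One subtlety deserving attention in the write-up is the $n=2$ case, where the holomorphic chart differs from the asymptotic real coordinates; verifying that this small-order perturbation does not deteriorate the decay of $\eta_i$ is routine but should be made explicit, as it is the only place where the two cases of Proposition \ref{propcxasymcoordinates} are treated differently.
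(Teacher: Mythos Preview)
Your proposal is correct and follows essentially the same approach as the paper: rescale $N_i'$ to a unit-size ball, solve the local $\partial\overline{\partial}$-problem there with uniform constants (Poincar\'e primitive plus Proposition~\ref{prodbaresti} and interior Schauder), then scale back. The only cosmetic difference is that you split off the quadratic piece $\tfrac{1}{2}|z|^2$ and solve for the small perturbation $\eta_i$, whereas the paper solves directly for a potential of the full rescaled form $\omega_i^*=R_i^{-2}s_i^*\omega$ and then invokes Lemma~\ref{lemscalweightedholder} to translate the unit-ball bound into the weighted norm; both routes produce the same uniform estimate.
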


\begin{proof}
    Let $\omega$ be the reference K\"ahler form on $X$. By restricting to $N_i'$ and pulling back under scaling map $s_i$, we defined a K\"ahler form on $B_2$:
    \begin{align*}
        \omega^*_i = R_i^{-2} \big(s_i^* \omega\big) \big|_{N_i'}. 
    \end{align*}
    By the fall-off condition \eqref{decayaleintro}, along with the estimates on coordinate transform \eqref{weightesticoordinatesholocball}, if we write $\omega$ in holomorphic coordinates in $N_i$, we have $\omega_{i\bar{j}}= \delta_{ij} + O(R_i^{-\mu})$. Pulling-back by $s_i$, under the standard Euclidean metric on $B_2$, we have 
    $||\omega^*_i||_{k+1,\alpha; B_1 \times D} \leq C$, where $C$ is a uniform constant independent of $i \in \mathcal{I}$. Since $d\omega_i^* = 0$ on $B_2$, there is a $(0,1)$ form $v^*$ with $\overline{\partial} v^* =0$ and $\omega_i^* = d (v^* + \bar{v^*})$.
    If we write $\omega^*_i = i h_{i\bar{j}} d z^i\wedge d\bar{z}^j$ on $B_2$, the explicit formula for $v$ is given by 
    \begin{align*}
        v^* = i \Big( \int_0^1 h_{i\bar{j}} (tz) t dt \Big) z^i d\bar{z}^j.
    \end{align*}
    Then, it's easy to verify using the above formula that $\sup_{B_2}|v|\leq \sup_{B_2} |\omega^*_i|$. By \eqref{Linftyesti} in the proof of Proposition \ref{prodbaresti}, there exists a function $\widetilde{\Psi}^*_0$ in $B_{\frac{3}{2}}$ such that $\sup_{B_{3/2}}| \widetilde{\Psi}^*_0| \leq \sup_{B_2} |v|$. By applying classic interior Schauder's estimates to $\Delta \widetilde{\Psi}^*_0 = \tr_{g_0} \omega^*_i$, we have $||\nabla^2 \widetilde{\Psi}_0^*||_{k+1,\alpha; B_1} \leq C_0$. Let $ \widetilde{\Psi}_0 = R_i^2 (s_i^{-1})^* \widetilde{\Psi}^*_0$. Then, $\widetilde{\Psi}_0$ is a K\"ahler potential of $\omega$ and, by Lemma \ref{lemscalweightedholder}, $||\nabla_X^2 \widetilde{\Psi}_0||_{0;k,\alpha; N_i \times \partial D} \leq C_0$.
\end{proof}

 Then, we have the following theorem: 

\begin{thm} \label{thmweightestifoli}
Let $\widetilde{\Psi}_0$, $G_0(w, \tau) = (z_0(w, \tau), \xi_0(w, \tau))$ be the same as in Theorem \ref{thmexistbypertb}. Suppose that $C_0$ is the uniform constant such that $||\nabla_X^2 \widetilde{\Psi}_{0,i}||_{0;k+1, \alpha; N_i' \times \partial D} \leq C_0$ and 
\begin{align}\label{weiconfreebdry}
    || \partial {\Psi} ||_{-\gamma; k+1,\alpha; X_\infty \times \partial D} \leq C_0.
\end{align}
Then,
there is a uniform constant  $l$ only depending on $n$, $k$, $\alpha$, $\sigma^{-1}$ and $C_0$, such that on $X_l \subseteq X_\infty$, 
there exists a smooth family of holomorphic discs $G: X_l \times D \rightarrow  \mathcal{W}_\infty$ satisfying the conditions (i)-(v) in Theorem \ref{thmexistbypertb}. If we write $G(w, \tau) $ in a holomorphic ball, $N_i$, from the open covering of $X_l$, $G(w, \tau) = (z(w, \tau), \xi(w,\tau))$, and $G_0 (w,\tau) = (z_0(w, \tau), \xi_0 (w,\tau)) $ be the trivial foliation in $N_i$, we have
\begin{align} \label{estiweiholodisccoordishift}
    ||(z-z_0,\xi-\xi_0)||_{-\gamma; k,\beta; N_i\times D} 
    \leq C || \partial {\Psi}||_{-\gamma; k, \alpha ; X_\infty \times \partial D}, 
\end{align}
where $C$ is a uniform constant depending on $n, k,\alpha, \sigma^{-1}, (\alpha-\beta)^{-1} \text{ and } C_0$.
\end{thm}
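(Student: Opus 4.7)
The plan is to reduce the global weighted estimate to a uniform, unscaled version of Theorem~\ref{thmexistbypertb} on each chart, patch the resulting local solutions via Theorem~\ref{thmpatch}, and then convert the uniform bounds back into weighted form. First, I would start from the locally finite, countable open cover $\{N_i\subseteq N_i'\subseteq N_i''\}_{i\in\mathcal{I}}$ of $X_\infty$ from Corollary~\ref{corcxasymptoticcovering}, where each $N_i'$ is biholomorphic to a ball of radius $R_i\sim r(x_i)$. On each $N_i'$ I would fix a local K\"ahler potential $\widetilde{\Psi}_{0,i}$ of $\omega$ satisfying the uniform bound $\|\nabla_X^2\widetilde{\Psi}_{0,i}\|_{0;k+1,\alpha;N_i'\times\partial D}\leq C_0$ guaranteed by Lemma~\ref{lemuniformestipotentialref}, and would introduce the scaling map $s_i\colon B_2\to N_i'$ together with the rescaled potentials $\tilde\psi_i^*:=R_i^{-2}(\widetilde{\Psi}_{0,i}+\Psi)\circ s_i$ and $\tilde\psi_{0,i}^*:=R_i^{-2}\widetilde{\Psi}_{0,i}\circ s_i$ on $B_2\times\partial D$.

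The key step will be to observe that the weighted hypothesis $\|\partial\Psi\|_{-\gamma;k+1,\alpha;X_\infty\times\partial D}\leq C_0$ translates, through the scaling identity $\partial_{w_j^*}(\tilde\psi_i^*-\tilde\psi_{0,i}^*)=R_i^{-1}(\Psi_{z_j}\circ s_i)$ combined with Lemma~\ref{lemscalweightedholder} applied componentwise to $\partial\Psi=\Psi_{z_j}\,dz^j$, into the rescaled bound
\[
\|\partial(\tilde\psi_i^*-\tilde\psi_{0,i}^*)\|_{k+1,\alpha;B_1\times\partial D}\leq CR_i^{-1-\gamma}\|\partial\Psi\|_{-\gamma;k+1,\alpha;X_\infty\times\partial D}\leq CC_0R_i^{-1-\gamma}.
\]
Choosing $l$ so that $l^{-1-\gamma}CC_0\leq\varepsilon$, where $\varepsilon$ is the smallness threshold of Theorem~\ref{thmexistbypertb}, would make the hypothesis of that theorem hold uniformly on every $B_1\times D$ once $r(x_i)\ge l$. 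Applying Theorem~\ref{thmexistbypertb} on each scaled chart would then produce a unique smooth local family $G_i^*=(z_i^*,\xi_i^*)$ on $B_1\times D$ with
\[
\|(z_i^*-z_{0,i}^*,\xi_i^*-\xi_{0,i}^*)\|_{k,\beta;B_1\times D}\leq C(\alpha-\beta)^{-2}R_i^{-1-\gamma}\|\partial\Psi\|_{-\gamma;k,\alpha;X_\infty\times\partial D}.
\]

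Next I would pull back via $s_i$ to obtain local families $G_i=(z_i,\xi_i)$ on each $N_i\times D$. Since on every overlap $N_i\cap N_j$ the potentials $\widetilde{\Psi}_{0,i}$ and $\widetilde{\Psi}_{0,j}$ differ by a smooth pluriharmonic function compatible with the patching hypotheses, I would invoke local uniqueness (Theorem~\ref{thmlocalunifixpotential}) and the patching theorem (Theorem~\ref{thmpatch}) to assemble $\{G_i\}$ into a single smooth family $G\colon X_l\times D\to\mathcal{W}_\infty$ satisfying conditions (i)--(v) of Theorem~\ref{thmexistbypertb}. The consistency relations $z_i(R_iw^*,\tau)=R_iz_i^*(w^*,\tau)$ and $\xi_i(R_iw^*,\tau)=R_i\xi_i^*(w^*,\tau)$, derived from the normalizations $z^*=R_i^{-1}z\circ s_i$ and $\xi^*=R_i^{-1}\xi\circ s_i$ (which are forced by the boundary equation $\xi_j=\partial_{z_j}\widetilde{\Psi}_i$), give $\|s_i^*(z_i-z_{0,i},\xi_i-\xi_{0,i})\|_{k,\beta;B_1\times D}=R_i\|(z_i^*-z_{0,i}^*,\xi_i^*-\xi_{0,i}^*)\|_{k,\beta;B_1\times D}$, and Lemma~\ref{lemscalweightedholder} would then yield
\[
\|(z_i-z_{0,i},\xi_i-\xi_{0,i})\|_{-\gamma;k,\beta;N_i\times D}\leq \kappa R_i^{\gamma+1}\|(z_i^*-z_{0,i}^*,\xi_i^*-\xi_{0,i}^*)\|_{k,\beta;B_1\times D}\leq C(\alpha-\beta)^{-2}\|\partial\Psi\|_{-\gamma;k,\alpha;X_\infty\times\partial D}.
\]

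The main obstacle will be the precise bookkeeping of scaling exponents. The K\"ahler-potential rescaling $\Psi^*=R_i^{-2}\Psi\circ s_i$ contributes an extra factor of $R_i^{-1}$ to $\partial\Psi^*$ beyond a na\"ive tensorial pullback of the $(1,0)$-form $\partial\Psi$; this produces the critical factor $R_i^{-1-\gamma}$ in the scaled smallness estimate, which is exactly what balances the factor $R_i^{\gamma+1}$ appearing when the unscaled $\C^{k,\beta}(B_1)$-bound on the disc displacement is converted back to the weighted $\C^{k,\beta}_{-\gamma}(N_i)$ norm. Any mismatch of a single power of $R_i$ would cause the weighted constant to blow up as $r(x_i)\to\infty$. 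A secondary point to verify will be that both the smallness radius $\varepsilon$ in Theorem~\ref{thmexistbypertb} and the uniqueness neighborhood in Theorem~\ref{thmlocalunifixpotential} can indeed be chosen uniformly in the chart index $i$, which follows from the uniform control on the scaled background potentials $\tilde\psi_{0,i}^*$ furnished by Lemma~\ref{lemuniformestipotentialref}.
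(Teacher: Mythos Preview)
Your proposal is correct and follows essentially the same approach as the paper: rescale each chart $N_i'$ to a unit ball via $s_i$, normalize the potentials by $R_i^{-2}$, verify that the weighted decay of $\partial\Psi$ forces the unscaled smallness condition of Theorem~\ref{thmexistbypertb} once $r(x_i)\ge l$, apply that theorem chartwise, patch via Theorem~\ref{thmpatch}, and convert back through Lemma~\ref{lemscalweightedholder}. Your bookkeeping of the scaling exponent ($R_i^{-1-\gamma}$ on the rescaled boundary data, balanced by $R_i^{\gamma+1}$ when returning to the weighted norm) is internally consistent and matches the paper's cancellation argument in which $s_i^*(z-z_0,\xi-\xi_0)=R_i(z^*-z_0^*,\xi^*-\xi_0^*)$ and $s_i^*\partial\Psi=R_i(\partial^*\widetilde\Psi^*-\partial^*\widetilde\Psi_0^*)$; the paper's stated threshold $l=(\kappa C_0/\varepsilon)^{1/(\gamma+2)}$ appears to differ by one in the exponent, but this does not affect the structure of the argument.
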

\begin{proof} 
Let $X_\infty$ be the end of $X$ defined by $X_\infty = \{x \in X, r(x) > R_0\}$ and there is a diffeomorphism $I_0: X_\infty \rightarrow (\CC^n- \overline{B}_{R_0})/\Gamma$. In this proof, we fix the constant $l$ by setting
\begin{equation}\label{pickl}
l := \max \bigg\{ 2R_0,; \left( \frac{\kappa C_0}{\varepsilon} \right)^{\tfrac{1}{\gamma+2}} \bigg\},
\end{equation}
where $\varepsilon$ is the small constant introduced in \eqref{smallconfreebdry} of Theorem~\ref{thmexistbypertb}, and $\kappa$ is the uniform ratio appearing in Lemma~\ref{lemholocornearinfty}. More precisely, $(N_i, N'_i)$ can be identified with $(B_{R_i/2}(x_i),, B_{R_i}(x_i))$, where $R_i = \kappa r(x_i)$.
Consider the scaling map $s_i: B_1 \rightarrow B_{R_i}$. 
By pulling back the boundary data $\Psi$ and $\widetilde{\Psi}_0$ to $B_1$, we may apply Theorem~\ref{thmexistbypertb} to obtain a smooth family of holomorphic discs on $B_{\tfrac{1}{2}} \subseteq B_1$. Specifically, set  
\[
    \widetilde{\Psi}_0^* := R_i^{-2} s^* \widetilde{\Psi}_0, 
    \qquad 
    \widetilde{\Psi}^* := R_i^{-2} s^* (\Psi + \widetilde{\Psi}_0).
\]  
After rescaling, Lemma~\ref{lemscalweightedholder} together with \eqref{pickl} shows that condition \eqref{weiconfreebdry} can be reformulated as  
\[
    \|\partial^* \widetilde{\Psi}^* - \partial^* \widetilde{\Psi}_0^*\|_{k+1, \alpha; B_1 \times \partial D} \leq \varepsilon,
\]  
where $\partial^*$ is the complex differential operator with respect to the $\alpha$ complex coordinate in $B_1$.
Hence, by Theorem~\ref{thmexistbypertb}, there exists a smooth family of holomorphic discs  
\[
    G^*(w^*, \tau) = \big(z^*(w^*, \tau),\, \xi^*(w^*, \tau)\big), 
    \qquad (w^*, \tau) \in B_{\tfrac{1}{2}} \times D.
\] 
Then, $(z^*, \xi^*)(w^*, \tau)$ is holomorphic in $\tau$ and satisfies the following free boundary condition:
\begin{align*}
    \xi_\alpha^*(w^*, \tau) = \partial^*_\alpha \widetilde{\Psi}^*(z^*(w^*,\tau), \tau), \hspace{0.5cm} (w^*,\tau) \in B_{\frac{1}{2}} \times D,
\end{align*} 
In addition, a fixed point 
$$z_0(w^*,-i) =w^*$$ 
is satisfied. Also by Theorem \ref{thmexistbypertb}, $(z^*, \xi^*)$ satisfies the estimates $||(z^*-z^*_0, \xi^*- \xi^*_0)||_{k,\beta; B_{\frac{1}{2}}\times D} \leq C ||\partial^* \widetilde{\Psi}^*- \partial^* \widetilde{\Psi}_0^*||_{k, \alpha; B_1 \times \partial D}$. By Lemma \ref{lemuniformestipotentialref}, the constant $C$ is uniform and independent of the choice of the holomorphic balls $N_i$ in the open covering.
Let 
$$(z, \xi) (w, \tau) = R_i \cdot \big\{(s_i^{-1})^* (z^*, \xi^*)\big\} (w,\tau)$$ 
for $(w, \tau) \in B_{\frac{R_i}{2}} \times D$. It can be verified that $(z,\xi) (w,\tau)$ is holomorphic in $\tau$ and satisfies
\begin{align*}
    \xi_p(w, \tau)&= R_i \cdot \partial^*_p \widetilde{\Psi}^* (z^* (s_i^{-1}(w), \tau), \tau)\\
    & = \partial_p \widetilde{\Psi} (R_i \cdot \{(s_i^{-1})^* z^*\} (w, \tau), \tau) \\
    &= \partial_p \widetilde{\Psi} (z(w, \tau), \tau),
\end{align*}
 and the fixed point condition follows from 
$$ z(w, -i) = R_i \cdot \{(s_i^{-1} )^* z^*\} (w, -i) = R_i \cdot s_i^{-1} (w) =w.$$
Hence, $G(w,\tau) = (z, \xi) (w, \tau)$, $(w, \tau) \in N_i \times D$ is the smooth family of holomorphic discs satisfying $(i)$-$(v)$ in Theorem \ref{thmexistbypertb}, where the free boundary condition is given by $\partial \widetilde{\Psi}$. By the patching theorem (Theorem~\ref{thmpatch}), the holomorphic disc foliations on each $N_i$ can be patched together to produce a global family of holomorphic discs on  $X_l$, $G: X_l \times D \rightarrow \mathcal{W}_{\infty}$. Restricting to each $N_i$,
note that $(z_0, \xi_0) (w,\tau) = R_i \cdot \{(s_i^{-1})^* (z_0^*, \xi_0^*)\} (w, \tau)$ and $\partial \Psi = R_i (s_i^{-1})^*  (\partial^* \widetilde{\Psi} - \partial^* \widetilde{\Psi}_0 )$; hence, we have 
$$R_i^{\gamma} ||s_i^*(z-z_0, \xi-\xi_0)||_{k,\alpha; B_{\frac{1}{2}}\times D} \leq  R_i^{\gamma} C ||s_i^* \partial \Psi||_{k,\alpha; B_1 \times D},$$ 
where $C $ is a uniform constant depending on the data $n, k,\alpha, \sigma^{-1}, (\alpha-\beta)^{-1} \text{ and }  C_0$.
By scaling technique (Lemma \ref{lemscalweightedholder}),
\begin{align*}
    ||(z-z_0,\xi-\xi_0)||_{-\gamma; k,\beta; X_l \times D} 
    \leq C || \partial {\Psi}||_{-\gamma; k, \alpha ; X_\infty \times \partial D}.
\end{align*}
\end{proof}

\section{The construction of global subsolution} \label{secmaxrkthm}

In this section, we complete Step 3 (without weighted H\"older estimates) of the proof of Theorem \ref{mthmasymbehHCMA}, which concerns the global construction of a solution to HCMA equation on $X \times D$. Building on the holomorphic disc foliation constructed over the end $X_l \times D$, the goal is to extend the data at the infinity to a globally defined $\Omega$-psh subsolution on the full space $X \times D$. Roughly speaking, we construct a globally defined bounded continuous $\Omega$-psh funcion $F$ on $X \times D$ such that $F$ is smooth on $X_{2l} \times D$ and $F$ agrees with the global $\C^{1,1}$ solution, $\Phi$, given in Theorem \ref{thmc11soluasupenvelope}, on $X_{2l} \times D$. Recall that $\F_{\Omega, \Psi}$ denotes the space of bounded continuous $\Omega$-psh subsolution to HCMA equation with boundary data prescribed by $\Psi$.
The main theorem is the following:

\begin{thm}\label{thmconstpshsolu}
    There exists a global defined function $F$ on $X\times D$ satisfying:
    \begin{itemize}
        \item[(i)] $ F \in \F_{\Omega,\Psi}$ and $F$ is smooth in $X_{2l} \times D$;
        \item[(ii)] Let $\Phi$ be the unique bounded continuous solution to the HCMA equation. Then, for a uniform large constant $l>0$ given as in Theorem \ref{thmweightestifoli}, we have $F = \Phi $ on $X_{2l} \times D$.
    \end{itemize}
\end{thm}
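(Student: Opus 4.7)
The plan is to carry out in detail the construction sketched in the introduction: build a family of pluriharmonic functions adapted to the holomorphic disc foliation, take a regularized supremum with a trivial subsolution to produce $F$, and then verify the coincidence $F=\Phi$ on $X_{2l}\times D$ by a leafwise subharmonic comparison. Fix $l$ large enough that Theorem \ref{thmweightestifoli} yields a smooth holomorphic disc foliation $G\colon X_l\times D\to\mathcal{W}$ with $h_\tau:=\pi\circ G(\cdot,\tau)$ a $C^{k,\beta}$-small perturbation of the identity in the weighted norm. For $x_0\in X_{2l}$ the entire leaf $\mathcal{L}_{x_0}=\{(h_\tau(x_0),\tau):\tau\in D\}$ lies in $X_l\times D$; in a holomorphic chart $U$ with local potential $\rho$ of $\omega$ and $G(x_0,\tau)=(z(x_0,\tau),\xi(x_0,\tau))$ set
\begin{equation*}
  L_{x_0}(z,\tau) \;:=\; h_{x_0}(\tau) \;+\; 2\,\mathrm{Re}\bigl[\xi(x_0,\tau)\cdot (z-z(x_0,\tau))\bigr],
\end{equation*}
where $h_{x_0}(\tau)$ is the harmonic extension to $D$ of the boundary data $(\rho+\psi_\tau)(z(x_0,\tau))$ on $\partial D$. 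Holomorphicity of $\xi(x_0,\cdot)$ in $\tau$, together with the computation behind Proposition \ref{propexplicitconstructionsolu}, gives $dd^c L_{x_0}=0$; the transition rule for $\mathcal{W}$ (shifting fiber coordinates by $\partial(\rho_i-\rho_j)$) makes $L_{x_0}-\rho$ a globally well-defined smooth function in a neighborhood of $\mathcal{L}_{x_0}$.

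Let $M:=\sup_{X\times D}|\Phi|$, finite by Theorem \ref{thmc11soluasupenvelope}, and define
\begin{equation*}
  F(x,\tau)\;:=\;\max\Bigl\{\,-M,\ \sup_{x_0\in X_{2l}}\bigl(L_{x_0}(x,\tau)-\rho(x)\bigr)\Bigr\}.
\end{equation*}
The weighted smallness of $\partial\Psi$ (Theorems \ref{mthmexistholodiskfoli}, \ref{thmweightestifoli}) makes $\rho+\psi_\tau$ uniformly strictly plurisubharmonic on $X_{2l}$, so that $(L_{x_0}-\rho)(\cdot,\tau)$, being the linearization of $\psi_\tau$ at $h_\tau(x_0)$, lies pointwise below $\psi_\tau$ with equality at $h_\tau(x_0)$. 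Hence $F\leq\Psi$ on $X\times\partial D$; away from the range of the foliation the linearizations fall below $-M$ and the trivial subsolution $-M$ takes over, so $F$ is bounded. Each $L_{x_0}-\rho$ satisfies $(\Omega+dd^c(L_{x_0}-\rho))^{n+1}=0$ and is in particular $\Omega$-psh, and so is $-M$; the bounded upper envelope over a compact parameter space $X_{2l}$ is thus continuous and $\Omega$-psh by a standard regularized-max argument applied to the smooth dependence of $L_{x_0}-\rho$ on $x_0$. On $X_{2l}\times D$ the inverse $x_0=h_\tau^{-1}(x)$ is the unique maximizer, and the resulting function coincides with the local smooth nondegenerate HCMA solution from Proposition \ref{propexplicitconstructionsolu}; thus $F$ is smooth on $X_{2l}\times D$ and satisfies the HCMA equation there, proving (i).

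For (ii), $F\in\F_{\Omega,\Psi}$ together with the envelope characterization of $\Phi=\Phi_{\Omega,\Psi}$ in Theorem \ref{thmc11soluasupenvelope} immediately yields $F\leq\Phi$ on $X\times D$. The reverse inequality on $X_{2l}\times D$ is obtained by restricting to leaves. Fix $(x_*,\tau_*)\in X_{2l}\times D$, let $x_0:=h_{\tau_*}^{-1}(x_*)$, and let $\iota\colon D\to X\times D$, $\iota(\tau)=(h_\tau(x_0),\tau)$, be the leaf through $(x_*,\tau_*)$. Since $\iota$ is a holomorphic embedding of a disc and $\Phi$ is a bounded continuous $\Omega$-psh function, pulling back $\Omega+dd^c\Phi\geq 0$ as currents yields that $\iota^*\Phi+\rho_{\mathrm{leaf}}$ is subharmonic on $D$, where $\rho_{\mathrm{leaf}}(\tau):=\rho(h_\tau(x_0))$ is a potential for $\iota^*\Omega$. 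By construction $\iota^*F=\iota^*(L_{x_0}-\rho)=h_{x_0}(\tau)-\rho_{\mathrm{leaf}}(\tau)$, so $\iota^*F+\rho_{\mathrm{leaf}}=h_{x_0}$ is harmonic on $D$. On $\partial D$ both functions attain the common value $\psi_\tau(h_\tau(x_0))+\rho_{\mathrm{leaf}}(\tau)$, using $\Phi=\Psi$ on $X\times\partial D$ and the boundary condition built into $L_{x_0}$. The classical maximum principle on the disc gives $\iota^*\Phi\leq\iota^*F$ on $D$, hence $\Phi(x_*,\tau_*)\leq F(x_*,\tau_*)$. Combined with $F\leq\Phi$, this produces $F=\Phi$ on $X_{2l}\times D$.

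The main obstacle lies in Step 2: verifying that the supremum defining $F$ is globally bounded above, that the patching of $L_{x_0}-\rho$ across the holomorphic charts of $\mathcal{W}$ is consistent, and that the regularized maximum with $-M$ produces a bounded continuous $\Omega$-psh function without spurious oscillation near $X\times\partial D$. This is precisely where the weighted smallness of $\partial\Psi$ from Theorem \ref{thmweightestifoli} enters decisively—without uniform strict plurisubharmonicity of $\rho+\psi_\tau$ on $X_{2l}$ the linearizations $L_{x_0}-\rho$ would not be dominated by $\psi_\tau$, $F$ would exceed $\Psi$ on the boundary, and the leafwise comparison of Step 3 would collapse.
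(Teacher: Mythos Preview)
Your overall architecture matches the paper's: build pluriharmonic $L_{x_0}$ along leaves, take a supremum with $-M$, and prove $F=\Phi$ on the end by the envelope inequality plus a leafwise subharmonic comparison. The leafwise argument for (ii) is essentially the paper's.

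However, there are two genuine gaps in your Step 2.

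\textbf{Convexity, not plurisubharmonicity.} You write that the weighted smallness of $\partial\Psi$ ``makes $\rho+\psi_\tau$ uniformly strictly plurisubharmonic on $X_{2l}$, so that $(L_{x_0}-\rho)(\cdot,\tau)$, being the linearization of $\psi_\tau$ at $h_\tau(x_0)$, lies pointwise below $\psi_\tau$.'' This inference is wrong. The function $L_{x_0}(\cdot,\tau)$ is \emph{real}-linear in $z$, so the quantity $L_{x_0}-\rho-\psi_\tau$ has zero value and zero first derivative at $z(x_0,\tau)$, and its real Hessian is $-D^2(\rho+\psi_\tau)$. Concluding $L_{x_0}-\rho\leq\psi_\tau$ on all of $N'$ requires real convexity $D^2(\rho+\psi_\tau)\gtrsim I_{2n}$, which is strictly stronger than plurisubharmonicity. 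The paper devotes an entire subsection (Propositions \ref{prosmallpotentialesti}, \ref{prosmallpotentialdecay}, \ref{proconvexpotentialN}) to constructing a local potential $\rho$ of $\omega$ with $D^2\rho\geq\frac12 I_{2n}$ on the asymptotic charts; this is then used in Lemma \ref{lemsubsolucon} to get both $L_{x_0}-\rho\leq\psi_\tau$ on $N'\times\partial D$ and the quantitative decay $L_{x_0}-\rho\leq -\frac16 d_0(z(x_0,\tau),z)^2$, which is exactly what forces $L_{x_0}-\rho\leq -M$ on $(N'\setminus\widetilde N)\times D$. Without this convexity input, neither the boundary inequality $F\leq\Psi$ nor the gluing with $-M$ is justified.

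\textbf{Noncompact parameter space.} You assert that ``the bounded upper envelope over a compact parameter space $X_{2l}$ is thus continuous.'' But $X_{2l}$ is the exterior of a large ball in an ALE manifold and is noncompact. The paper handles this in two ways. In complex dimension $n=2$ it takes the supremum over each compact ball $N_i$ separately, proves continuity of each $F_i$ by equicontinuity of $\{L_{x,i}\}_{x\in N_i}$ (Lemma \ref{lemconstructpshlocal}), proves a patching lemma (Lemma \ref{lempshpatching}) showing $F_i=F_j$ on leaves through $N_i\cap N_j$, and only then sets $F=\max\{\sup_i F_i,-M\}$. In dimension $n\geq 3$, where a single asymptotic holomorphic chart exists, the paper uses the quadratic decay $L_x-\rho\leq -\frac12 d(x,z)^2$ to show that for $z$ in any bounded region only $x$ in a bounded region contribute to the supremum, reducing to a compact-parameter equicontinuity argument. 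Your sketch omits both mechanisms.
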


\subsection{The convexity of ALE K\"ahler potentials} \label{seccvxpotential}
Let $(X,g,J)$ be an ALE K\"ahler manifold, where the metric $g$ asymptotically decays to the flat metric $g_0$, as specified in (\ref{decayaleintro}). Let $X_\infty$ be the end of $X$.
As in Section \ref{secholodiscHCMA}, we denote $N \subseteq N'\subseteq X_\infty$ to be either holomorphic balls that can be identified as $B_R$ and $B_{2R}$ respectively in $\CC^n$ ($n\geq 2$), or the complement of a compact set of $X$ whose universal covering is biholomorphic to $\CC^n\backslash B_{2R}$ and $\CC^n\backslash B_{R}$ respectively ($n\geq 3$).  Let $\omega$ be the K\"ahler form corresponding to $(g, J)$. By restricting $\omega$ on $N'$, we have $\omega = \frac{i}{2} \partial \overline{\partial } |z|^2 + \omega'$, where $z$ is the standard holomorphic coordinates by identifying $N'$ as a subset of $\CC^n$. The asymptotic conditions of $(g, J)$ then imply:
\begin{align} \label{decaycondKform}
    \omega' = i h_{i\bar{j}} d z^i \wedge d \bar{z}^j, \qquad |h_{i\bar{j}}
| \leq C r^{-\tau}  \text{ in }  N',
\end{align}
where $h_{i\bar{j}}$ is the component of Hermitian matrix and $C$ is a uniform constant.

In this subsection, we aim to show that there exists a convex K\"ahler potential of $\omega$ on $N$. To achieve this, we will prove that $\sup_{N}|D^2 h| \ll 1$. To state the theorem, we recall the weighted Hölder norm introduced in Section~\ref{secwholdernorm}. In this subsection, we introduce a simplified notation to express the weighted Hölder norm on the domain $N'$. Precisely, for a function $f$ defined on the domain $N'$, we define
\begin{align*}
    ||f||'_{k,\alpha; N'} = \sum_{|\beta|=0}^k R^{|\beta|} \sup_{N}|D^\beta f|_{g_0} + R^{k+\alpha} [f]_{k,\alpha; N},
\end{align*}
where the weight is given by $R= \dist (N, \partial N')$.

\begin{pro} \label{prosmallpotentialesti}
    Let $\omega'$ be a closed real $(1,1)$-form on $N$ defined in \eqref{decaycondKform}. Then there exists a potential function $h$ so that $i\partial\overline{\partial} h = \omega'$ on $N'$. Moreover, the K\"ahler potential $h$ satisfies the following estimates:
    \begin{align}\label{smallpotentialesti}
        ||h||'_{{k+2,\alpha}; N} \leq C R^2 \sum_{i,j} ||h_{i\bar{j}}||'_{k,\alpha; N'},
    \end{align}
    where $C$ is a uniform constant and $R= \dist (N, \partial N')$.
\end{pro}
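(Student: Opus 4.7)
The plan is to establish the estimate at unit scale and then transfer to $(N,N')$ by a scaling argument. Writing $R=\dist(N,\partial N')$, introduce the scaling map $s(z^{*})=Rz^{*}$ identifying unit-scale domains $N^{*}\subseteq N'^{*}$ with $(N,N')$. Set $\omega'^{*}:=R^{-2}s^{*}\omega'=ih^{*}_{i\bar j}\,dz^{*i}\wedge d\bar z^{*j}$ with $h^{*}_{i\bar j}(z^{*})=h_{i\bar j}(Rz^{*})$. The equation is scale covariant: once $h^{*}$ is constructed on $N'^{*}$ with $i\partial^{*}\overline{\partial}^{*}h^{*}=\omega'^{*}$ and
\begin{equation*}
    \|h^{*}\|_{k+2,\alpha;N^{*}}\leq C\sum_{i,j}\|h^{*}_{i\bar j}\|_{k,\alpha;N'^{*}},
\end{equation*}
the rescaling $h(z):=R^{2}h^{*}(z/R)$ satisfies $i\partial\overline{\partial}h=\omega'$ on $N'$, and a direct computation with the weighted norm (using $\|h\|'_{k+2,\alpha;N}=R^{2}\|h^{*}\|_{k+2,\alpha;N^{*}}$ and $\|h_{i\bar j}\|'_{k,\alpha;N'}=\|h^{*}_{i\bar j}\|_{k,\alpha;N'^{*}}$) yields the estimate \eqref{smallpotentialesti}.

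The unit-scale construction is a classical $dd^{c}$-lemma argument, executed as Poincar\'e followed by a $\overline{\partial}$-Poincar\'e step. When $N'^{*}$ is a ball, the radial homotopy operator produces a real $1$-form $\eta$ with $d\eta=\omega'^{*}$ and $\|\eta\|_{k,\alpha}\leq C\|\omega'^{*}\|_{k,\alpha}$. Split $\eta=\eta^{1,0}+\eta^{0,1}$ with $\eta^{1,0}=\overline{\eta^{0,1}}$ by reality. Since $\omega'^{*}$ is of type $(1,1)$, the $(2,0)$ part of $d\eta$ vanishes, forcing $\overline{\partial}\eta^{0,1}=0$; the $(1,1)$ part reads $\partial\eta^{0,1}+\overline{\partial}\eta^{1,0}=\omega'^{*}$. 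Proposition~\ref{prodbaresti} applied to $\eta^{0,1}$ produces $v$ with $\overline{\partial}v=\eta^{0,1}$ and $\|v\|_{k+1,\alpha;N^{*}}\leq C\|\eta^{0,1}\|_{k,\alpha}$. Setting $h^{*}:=2\operatorname{Im} v$, the identity $\partial\bar v=\overline{\overline{\partial}v}=\eta^{1,0}$ and a short computation give $i\partial^{*}\overline{\partial}^{*}h^{*}=\partial^{*}\eta^{0,1}+\overline{\partial}^{*}\eta^{1,0}=\omega'^{*}$. The $\overline{\partial}$-step supplies $h^{*}\in\mathcal{C}^{k+1,\alpha}$; the remaining derivative is recovered by interior Schauder estimates applied to the real Laplace equation $\Delta h^{*}=\operatorname{tr}_{g_{0}}\omega'^{*}\in\mathcal{C}^{k,\alpha}$, upgrading $h^{*}$ to $\mathcal{C}^{k+2,\alpha}(N^{*})$ with the desired bound.

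The one nontrivial point is the end-of-space case, where $N,N'$ are complements of balls in $\CC^{n}$ rather than balls themselves: the standard radial homotopy is unavailable, since such domains are not star-shaped. For $n\geq 3$ the relevant topology of the universal cover remains trivial, so a primitive $\eta$ with $d\eta=\omega'^{*}$ exists; this is precisely the content of the Hein--LeBrun $dd^{c}$-lemma invoked earlier in the paper. To obtain a scale-invariant bound on $\eta$ one may either perform an inversion $z\mapsto z/|z|^{2}$ that transfers the problem to a bounded punctured neighborhood of the origin, or patch the standard radial operators on a fixed dyadic annular cover by a partition of unity. Once such an $\eta$ is in hand, the $\overline{\partial}$-step proceeds verbatim, so the main conceptual difficulty really reduces to producing a uniformly bounded primitive in the noncompact case.
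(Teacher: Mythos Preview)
Your approach is essentially the same as the paper's: both proofs run Poincar\'e (radial homotopy) to get a real $1$-form primitive, take its $(0,1)$ part, solve $\overline{\partial}$ via Proposition~\ref{prodbaresti}, take the real/imaginary part to obtain a real potential, and finish with interior Schauder for $\Delta h = \operatorname{tr}_{g_0}\omega'$. The only cosmetic differences are that the paper works directly with the weighted norm $\|\cdot\|'$ (inserting an intermediate domain $\widetilde N$) rather than scaling to unit size, and that the paper extracts only the $L^\infty$ bound from Proposition~\ref{prodbaresti} (via \eqref{Linftyesti}) before invoking Schauder, whereas you pull the full $\mathcal C^{k+1,\alpha}$ bound; either works.

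One clarification: your final paragraph on the end-of-space case is unnecessary here. In the paper, Proposition~\ref{prosmallpotentialesti} is only ever applied in the ball case (the proof itself takes $\widetilde N = B_{3R/2}$ and uses radial homotopy), and the complement-of-a-ball case is handled separately by Proposition~\ref{prosmallpotentialdecay} via the weighted $dd^c$-lemma. So the speculative inversion/dyadic-patching discussion can be dropped.
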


\begin{proof}
The proof of Proposition \ref{prosmallpotentialesti} is based on the classic Schauder estimates along with the H\"older estimates for a solution to $\overline{\partial}$-equation, Proposition \ref{prodbaresti}. Let $\widetilde{N}$ be the domain $\overline{N} \subseteq \widetilde{N} \subseteq \overline{\widetilde{N}} \subseteq N'$. Specially, we choose $\widetilde{N} = B_{\frac{3}{2} R}$.  Let $h$ be a real potential function of $\omega'$.
By taking the trace of $i\partial\overline{\partial} h = \omega'$, we have  $ \Delta h = \sum_{i} h_{i\bar{i}}$. Then, the classic Schauder estimates implies that
\begin{align} \label{smallpotentialschauderesti}
    ||h||'_{k+2, \alpha; N} \leq C \Big( R^2\sum_i ||h_{i\bar{i}}||'_{k,\alpha; \widetilde{N}} + ||h||_{L^{\infty}; \widetilde{N}} \big).
\end{align}
It suffices to show that there exists a real potential $h$ such that 
\begin{align} \label{estC0potential}
        \sup_{\widetilde{N}}|h| \leq C R^2 \sum_{i,j}\sup_{N'} |h_{i\overline{j}}|,
\end{align}
where $C$ is a uniform constant.
    
Recall that $\omega' = i h_{i\bar{j}} dz^i \wedge d\bar{z}^j$ is closed on $N'$, $d \omega' =0$. By the Poincar\'{e} lemma, $\omega'$ is exact on $N'$. Specially, there exists a real $1$-form $v$ so that $dv = w'$, and $v$ is given by:
\begin{align*}
    v = -i \bigg(\int_0^t h_{i\bar{j}} (tz) t dt\bigg) \bar{z}^j d z^i + i \bigg(\int_0^t h_{i\bar{j}} (tz) t dt\bigg) z^i d\bar{z}^j
 \end{align*}
 Let $v^{0,1} $ be the $(0,1)$ part of $v$, and write 
 \begin{align} \label{pflemLinftypotentialformulav}
  v^{0,1} = i\sum_{i} v_{\bar{i}} d\bar{z}^i, \qquad  v_{\bar{i}}= \sum_j z^j \int_0^t h_{j\bar{i}} (tz) t dt. 
 \end{align}
 Since $\omega'$ is $(1,1)$ form, we have $\overline{\partial } v^{0,1} =0$. Then, by Proposition~\ref{prodbaresti}, there exists a function $h''$ on $\widetilde{N}$ such that $i\overline{\partial}{h''} = v^{0,1}$. 
 Applying the estimate \eqref{Linftyesti} in Proposition~\ref{prodbaresti} and using standard scaling arguments, we obtain
 \begin{align*}
    \sup_{\widetilde{N}}|h''| \leq C  R  \sum_{i} \sup_{N;} |v_{\bar{i}}|.
 \end{align*}
From the explicit construction in \eqref{pflemLinftypotentialformulav}, we deduce the $L^\infty$ estimate:
\[ \sup_{\widetilde{N}}|h''| \leq C R^2 \sum_{i,j}\sup_{N'} |h_{i\overline{j}}|.\]
Let $h' = \bar{h''}$. Then, $h = h' + h''$ is a real potential function of $\omega'$, $i \partial \overline{\partial} h = \omega'$. The $L^\infty$ estimate \eqref{estC0potential} follows directly from the above $L^\infty$ estimate for $h''$.
 
Finally, the estimate \eqref{smallpotentialesti} follows directly from the Schauder estimate \eqref{smallpotentialschauderesti}, combined with the $L^\infty$ estimate \ref{estC0potential}, completing the proof.  
\end{proof}

In the second case where $N$ and $N'$ can be identified with $(\CC^n\backslash B_{2R}) /\Gamma$ and $(\CC^n\backslash B_{R})/ \Gamma$ ($n\geq 3$) respectively, we have the following ddbar lemma:

\begin{pro} \label{prosmallpotentialdecay}
    Let $\omega' \in \C^{k,\alpha}_{-\tau}$ ($-\tau <0$) be a closed real $(1,1)$-form on $N'$ defined in \eqref{decaycondKform}. Then, there is a smooth K\"ahler potential $h$ in $\C^{k+2, \alpha}_{2-\tau}$ so that $i\partial \overline{\partial} h = \omega'$.
\end{pro}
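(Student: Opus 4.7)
The plan is to adapt the three-step proof of Proposition~\ref{prosmallpotentialesti} to the asymptotic setting, with two essential modifications: the domain $N' \cong (\CC^n\setminus B_R)/\Gamma$ is no longer contractible, so the explicit homotopy must be replaced by a patching argument; and every estimate must be upgraded to its weighted counterpart, so that $\omega' \in \C^{k,\alpha}_{-\tau}$ yields a potential with decay improved by two powers, $h \in \C^{k+2,\alpha}_{2-\tau}$. I would work on the universal cover $\CC^n\setminus B_R$ equivariantly with respect to $\Gamma$ (harmless since $\Gamma$ is finite and acts freely by isometries) and exploit a dyadic decomposition of the end into annuli $A_j = \{2^j R < |z| < 2^{j+2} R\}$ together with thickenings $\tilde A_j$. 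Applying Proposition~\ref{prosmallpotentialesti} on each $\tilde A_j$ after rescaling to unit size yields a real local potential $h_j$ with $i\partial\overline{\partial} h_j = \omega'$ and the scale-invariant estimate
\begin{align*}
\|h_j\|'_{k+2,\alpha;\,A_j} \ \le\ C\,(2^j R)^{2-\tau}\,\|\omega'\|_{-\tau;\,k,\alpha;\,\tilde A_j},
\end{align*}
with $C$ independent of $j$. Since $i\partial\overline{\partial}(h_{j+1}-h_j)=0$ on each overlap, the collection $\{(A_j,h_j)\}$ defines a \v{C}ech $1$-cocycle valued in pluriharmonic functions, with uniform weighted control on the cocycle.

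The heart of the argument is then to trivialize this cocycle within the weighted class: produce pluriharmonic corrections $p_j$ on each $A_j$ satisfying $p_{j+1}-p_j = h_{j+1}-h_j$ on overlaps together with the uniform bound $\|p_j\|'_{k+2,\alpha;\,A_j} \le C(2^j R)^{2-\tau}$. By separation of variables on spherical shells of $\CC^n$, pluriharmonic functions decompose into spherical-harmonic modes with radial factors $r^\ell$ and $r^{2-2n-\ell}$ for $\ell \ge 0$. For $n\ge 3$ and any real $2-\tau$, one can project $h_{j+1}-h_j$ onto the finitely many modes whose growth exceeds $r^{2-\tau}$, producing $p_j$ with the correct uniform weighted bound; the complementary modes, of growth $\le r^{2-\tau}$, are absorbed into the corrected potentials $\tilde h_j := h_j - p_j$. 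These corrected potentials agree across all overlaps and glue to a globally defined real function $h$ on $N'$ with $i\partial\overline{\partial} h = \omega'$; Lemma~\ref{lemscalweightedholder} then converts the uniform-in-$j$ local estimates into the global weighted H\"older bound $h \in \C^{k+2,\alpha}_{2-\tau}$, and elliptic regularity applied to $\Delta h = \operatorname{tr}\omega'$ upgrades $h$ to be smooth.

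The hard part will be precisely this weighted trivialization of the \v{C}ech cocycle, which is an instance of the weighted Dolbeault (equivalently, pluriharmonic) cohomology of the end. Its success depends crucially on $n\ge 3$: in complex dimension two the indicial exponents of the Laplacian on spherical shells cluster near the target rate $2-\tau$ in a way that creates logarithmic and harmonic obstructions to the patching, which is exactly the phenomenon that forces Proposition~\ref{propcxasymcoordinates} to work with real rather than complex asymptotic coordinates in the $n=2$ case. Modulo this harmonic analysis on spherical shells, which is standard after Hein--LeBrun~\cite{hein2016mass}, the remaining steps are routine applications of Proposition~\ref{prosmallpotentialesti} together with the scaling characterization of weighted norms in Lemma~\ref{lemscalweightedholder}.
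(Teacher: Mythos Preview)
Your approach is quite different from the paper's and considerably more laborious. The paper bypasses the \v{C}ech argument entirely: it first produces \emph{some} potential $\hat h$ on $N'$ with no growth control, by writing $\omega' = dv$ for a decaying real $1$-form $v$ (via Conlon--Hein~\cite{conlon2013asymptotically}) and then solving $\bar\partial\hat h'' = v^{0,1}$ using the vanishing $H^1(N',\mathcal{O}_{N'}) = 0$. The decisive trick is then to cut off: setting $\hat\omega' := i\partial\bar\partial(\chi\hat h)$ for a cutoff $\chi$ supported in $N'$ and equal to $1$ on $N$ yields a form that is globally $dd^c$-exact on all of $\CC^n$, agrees with $\omega'$ outside a compact set, and still lies in $\C^{k,\alpha}_{-\tau}$, so the weighted $dd^c$-lemma on $\CC^n$ from~\cite{yao2022mass} directly furnishes $h \in \C^{k+2,\alpha}_{2-\tau}$. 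Averaging over $\Gamma$ finishes. Your dyadic-annuli scheme amounts to a from-scratch reproof of that cited lemma; it is viable in outline, but the trivialization step as you describe it is muddled --- projecting each $h_{j+1}-h_j$ onto a subspace of modes does not by itself produce a coboundary relation $p_{j+1}-p_j = h_{j+1}-h_j$; what one actually needs is a Laurent-type splitting of each pluriharmonic $h_{j+1}-h_j$ on the overlap annulus into inward- and outward-extending pieces, followed by a telescoping sum with weighted convergence.

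Your diagnosis of the $n\ge 3$ restriction is also off. Indicial exponents of the Laplacian form a discrete integer set and do not ``cluster,'' and in any case that is not the obstruction. In the paper's argument the genuine reason $n\ge 3$ enters is cohomological: $H^1(\CC^n\setminus B_R,\mathcal{O}) = 0$ holds for $n\ge 3$ but fails for $n=2$, so the $\bar\partial$-step producing $\hat h''$ is unavailable in complex dimension two. The link you draw to Proposition~\ref{propcxasymcoordinates} is in the right spirit (that proposition is why the case $N'\cong(\CC^n\setminus B_R)/\Gamma$ is only considered for $n\ge 3$), but the mechanism you propose is not the operative one.
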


\begin{proof} We begin by considering the case where $\Gamma$ is trivial.

The existence of the smooth K\"ahler potential on $N'$ is based on the fact that $H^{1} (N', \OO_{N'}) =0$; please see Conlon-Hein~\cite[Proposition A2]{conlon2013asymptotically}. 
According to \cite[Proof of Theorem 3.11]{conlon2013asymptotically}, there exists a real $1$-form $v \in \C^{k+1, \alpha}_{1-\tau}$ so that $dv = \omega'$.
Let $v^{0,1}$ be the $(0,1)$ part of $v$. Since $H^{1}(N', \OO_{N'}) =0$, the condition $\overline{\partial } v^{0,1} = 0$ implies that there exists a smooth function $ \hat{h}'' $ in $N'$ so that $\overline{\partial } \hat{h}'' = v^{0,1}$. Hence, we have $ i\partial \overline{\partial} \hat{h} = \omega'$, where $\hat{h} = \hat{h}'' + \overline{\hat{h}''}$. It suffices to find a new potential $h$ with growth control at infinity.

Let $\chi$ be a smooth cutoff function in $\CC^n$ so that $\chi(x) \equiv 1$ on $N$ and has a compact support in $N'$. Then, $\hat{\omega}' = i \partial \overline{\partial} \hat{h}$ is a $d$-exact real $(1,1)$-form defined on the whole space $\CC^n$, and $\hat{\omega}' = \omega'$ on $N'$. Noting that $\hat{\omega}' \in \C^{k,\alpha}_{-\tau}$, according to the weighted ddbar lemma on $\CC^n$ in Yao~\cite[Theorem A]{yao2022mass}, there exists a smooth function $ h $ so that $ i\partial \overline{\partial} h = \hat{\omega}'$, with  $h\in \C^{k+2,\alpha}_{2-\tau}$. Restricting to $N'$, we have $i\partial\overline{\partial } h = \omega$ in $N$.

If $\Gamma$ is any finite subgroup of $U(n)$ acting freely on the unit sphere, there is a universal covering of $N$, $p: \CC^n\backslash B_{2R} \rightarrow N$. Let $\tilde{\omega}'$ be the pull-back of $\omega'$ on the universal covering. We can find the K\"ahler potential $\tilde{h}\in \C^{k+2,\alpha}_{2-\tau}$ of $\tilde{\omega}'$ on $\CC^n\backslash B_{2R}$. Define 
$$\displaystyle h (z) = \frac{1}{|\Gamma|}  \sum_{\gamma\in \Gamma}\gamma^*\tilde{h}(\tilde{z}),$$
where $z\in N$ and $\tilde{z}$ is one of the lift points of $z$ on universal covering. Then, $h \in \C^{k+2,\alpha}_{2-\tau}$ and $i\partial \overline{\partial} h = \omega'$, which complete the proof.

\end{proof}

In conclusion, we have the following result:

\begin{pro} \label{proconvexpotentialN}
Let $\omega$ be the K\"ahler form corresponding to the ALE K\"ahler metric $g$ on $X$. 
\begin{enumerate}
\item[(i)]
For complex dimension $n\geq 2$, if $N$ is a holomorphic ball in the end $X_\infty$, there is convex K\"ahler potential $\rho$, with $ i \partial \overline{\partial}\rho = \omega$ in $N$.
\item[(ii)] 
For complex dimension $n\geq 3$, if $N$ is the complement of a compact set in $X$, there is convex K\"ahler potential $\rho$, with $ i \partial \overline{\partial} \rho = \omega$ in $N$.
\end{enumerate}
For both of the cases, we have $\displaystyle D^2 \rho \geq \frac{1}{2} I_{2n}$.
\end{pro}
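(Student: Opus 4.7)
The plan is to exhibit $\rho$ as a small perturbation of the strictly convex quadratic model $\tfrac{1}{2}|z|^2$. In the asymptotic holomorphic coordinates on $N'$ fixed at the start of the subsection we have the decomposition $\omega = \tfrac{i}{2}\partial\overline{\partial}|z|^2 + \omega'$ with $\omega'$ satisfying the decay \eqref{decaycondKform}. Since $|z|^2 = \sum_{i=1}^{2n}(x^i)^2$ in the associated real coordinates, the real Hessian of the model satisfies $D^2(\tfrac{1}{2}|z|^2) = I_{2n}$. If $h$ denotes any smooth real potential for $\omega'$ on $N$, then $\rho := \tfrac{1}{2}|z|^2 + h$ satisfies $i\partial\overline{\partial}\rho = \omega$ and $D^2\rho = I_{2n} + D^2 h$. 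Hence the entire task reduces to producing $h$ with $\sup_N |D^2 h|_{g_0} \leq \tfrac{1}{2}$, which will be achieved by pushing $N$ sufficiently far out into the end.

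For case (i), Proposition \ref{prosmallpotentialesti} furnishes a real potential $h$ for $\omega'$ on $N$ together with the weighted estimate $||h||'_{k+2,\alpha; N} \leq CR^2 \sum_{i,j}||h_{i\bar j}||'_{k,\alpha;N'}$, where $R = \dist(N,\partial N')$. By the definition of $||\cdot||'_{k+2,\alpha;N}$, the $|\beta|=2$ term contributes $R^2 \sup_N |D^2 h|$, so dividing through gives $\sup_N |D^2 h| \leq C\sum_{i,j}||h_{i\bar j}||'_{k,\alpha;N'}$. Since $\omega$ satisfies the ALE fall-off \eqref{decayaleintro}, the components $h_{i\bar j}$ and their derivatives obey $|\nabla^m_0 h_{i\bar j}| = O(r^{-\tau -m})$ on $N'$, and when $N$ is a holomorphic ball centered at a point of radius $r_0$ sitting deep in the end one has $R \sim r_0$ and $r \sim r_0$ throughout $N'$. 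Each weighted summand $R^{|\beta|}\sup_{N'}|D^\beta h_{i\bar j}|$ therefore contributes at most $Cr_0^{-\tau}$, and the Hölder seminorm is controlled similarly. Consequently $\sup_N |D^2 h| \leq C r_0^{-\tau}$, which can be made smaller than $\tfrac{1}{2}$ by choosing the holomorphic ball $N$ at sufficiently large radius, yielding $D^2\rho \geq \tfrac{1}{2}I_{2n}$.

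For case (ii), Proposition \ref{prosmallpotentialdecay} produces $h \in \C^{k+2,\alpha}_{2-\tau}$ with $i\partial\overline{\partial}h = \omega'$ on $N = (\CC^n\setminus B_{2R})/\Gamma$. Unwinding the convention of Definition \ref{defweinorm}, membership in this weighted class is equivalent to $|\nabla^m_0 h|_{g_0} = O(r^{2-\tau -m})$, and in particular $|D^2 h|_{g_0} = O(r^{-\tau})$ on $N$. Replacing the compact set by the enlarged set $\{r \leq l\}$ for $l$ large enough that $C l^{-\tau} \leq \tfrac{1}{2}$ then guarantees $\sup_N |D^2 h| \leq \tfrac{1}{2}$, and combining with the model Hessian again yields $D^2 \rho \geq \tfrac{1}{2} I_{2n}$. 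The only subtle point across the argument is the careful tracking of scaling factors in case (i): one must confirm that the $R^2$ on the right of Proposition \ref{prosmallpotentialesti} exactly cancels the $R^2$ built into the weighted norm, so that the resulting bound on $\sup_N|D^2 h|$ carries no residual power of $R$ and only the genuine ALE decay $r_0^{-\tau}$ survives.
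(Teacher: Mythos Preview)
Your proof is correct and follows essentially the same approach as the paper, which simply records that (i) follows from Proposition \ref{prosmallpotentialesti} and (ii) from Proposition \ref{prosmallpotentialdecay}. You have correctly unpacked the details: the decomposition $\rho = \tfrac{1}{2}|z|^2 + h$, the cancellation of the $R^2$ factors in case (i), and the use of the weighted decay $h \in \C^{k+2,\alpha}_{2-\tau}$ to get $|D^2 h| = O(r^{-\tau})$ in case (ii).
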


\begin{proof}
    The statement (i) follows directly from Proposition \ref{prosmallpotentialesti}, while the statement (ii) follows directly from Proposition \ref{prosmallpotentialdecay}.
\end{proof}

\subsection{Construction of $\Omega$-psh subsolutions} \label{secconstructF}
In this subsection, we construct a $\Omega$-psh subsolution to the HCMA equation with the boundary data, $\Psi$, and complete the proof of Theorem \ref{thmconstpshsolu}. The process of construction relies on the following two points:
\begin{itemize}
    \item the existence of a holomorphic disc foliation on $X_{l} \times D$, where $c>0$ is a sufficiently large constant;
    \item the existence of a convex K\"ahler potential for the reference K\"ahler form $\omega$ in holomorphic coordinate charts covering $X_l$, or in the holomorphic asymptotic chart of $X_l \subseteq X_\infty$.
\end{itemize}

We adopt the notations, $\rho$, $G$, $\Lambda_\tau$, $z_p$, $\xi_p$, $(N''\subseteq N\subseteq N') $ and the holomorphic cotangent bundle $E\rightarrow N'$ from the above sections. In the complex dimension $n=2$, we may need to consider a countable family of triples of holomorphic coordinate balls $(N_i'', N_i, N_i'')$, indexed by $i \in \mathcal{I}$, and apply the construction developed at the end of Subsection \ref{subsecpatthm}. For the sake of simplicity, when focusing on local construction, we will use the shorthand $(N'', N, N')$ to refer to a generic such triple.
Let $\tilde{\psi}_\tau = \rho + \psi_\tau$ be a K\"ahler potential of the K\"ahler form, $\omega_\tau$ on $N' \times \{\tau\}$, for $\tau \in \partial D$. We can choose the convex K\"ahler potentials $\tilde{\psi}_\tau$ such that the Hessian is strictly convex with $D^2 \tilde{\psi}_\tau \geq (1/4) I_{2n}$ on $N'$, for each $\tau \in \partial D$. The corresponding Lagrangian submanifold $\Lambda_\tau$ is locally given by the graph of $\partial \rho_\tau$ in $E$.
We briefly recall that the holomorphic disc foliation on $X_l \times D$, constructed in Subsections~\ref{secholodiscHCMA} and~\ref{subsecpatthm}, is described by a smooth family of holomorphic maps $G(x, \cdot)= g_x (\cdot) : D \rightarrow \mathcal{W}$, for $x \in X_l$, satisfying the free boundary condition, $g_x(\tau) \in \Lambda_\tau$ for $\tau \in \partial D$. In local holomorphic coordinates on $E$, this takes the form $G(w, \tau) = (z(w, \tau), \xi(w, \tau))$, as in~\eqref{holodiscpde}.

We will construct a family of real functions $\{L_{x_0}\}$ in $N' \times D$, for each $x_0 \in N$. Along the leaf of the holomorphic disk at $x_0 \in N$, we define $L_{x_0}$ to be the harmonic function with respect to $\tau$, such that $L_{x_0}$ agrees with $\tilde{\psi}_\tau$ on the boundary of the leaf at $x$. 
In the spatial direction of $N' \times D$, $L_{x_0}$ is defined to be linear such that the derivative of $L_{x_0}$ in $z_p$ direction agrees with $\xi_p (x_0, 
\tau)$, for $\tau \in D$, where $\xi_p $ is the bundle coordinate of the holomphic dick $G(x_0, \tau)$. More precisely, $L_{x_0}$ satisfies the following equations:
\begin{align}
    \frac{\partial}{\partial \tau} \frac{\partial}{\partial\bar{\tau}} \big\{ L_{x_0} (z(x_0, \tau), \tau) \big\}  = 0, \ \ \qquad \qquad & \tau \in D; \label{pshconst1} \\
    L_{x_0} (z(x_0, \tau), \tau) = \rho_\tau (z(x_0, \tau)), \qquad \qquad   &\tau \in \partial D; \label{pshconst2}\\
    \big( \partial_{z_p} L_{x_0}\big) (z(x_0,\tau), \tau) =  \xi_p (x_0, \tau), \quad \ \qquad  &(z, \tau) \in (X-B_R) \times D. \label{pshconst3}
\end{align}
In the following lemma, we will check that $L_{x}$ is pluriharmonic in $N' \times D$ for each $x \in N$. 

\begin{lem} \label{lempshLcheck} Let $L_{x_0}$ be the function defined as in (\ref{pshconst1})-(\ref{pshconst3}). Then, $L_{x_0}$ is a smooth pluriharmonic function in $N'\times D$.
\end{lem}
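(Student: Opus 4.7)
The plan is to produce an explicit formula for $L_{x_0}$ from the defining conditions and then recognize it as a sum of manifestly pluriharmonic pieces. Writing the linearity prescription in spatial variables together with reality of $L_{x_0}$, the conditions \eqref{pshconst3} force
\[
L_{x_0}(z,\tau) \;=\; C(\tau) \;+\; \sum_{p} \xi_p(x_0,\tau)\bigl(z_p - z_p(x_0,\tau)\bigr) \;+\; \sum_{p} \overline{\xi_p(x_0,\tau)}\bigl(\bar z_p - \overline{z_p(x_0,\tau)}\bigr),
\]
for a real-valued function $C(\tau)$ on $D$. Along the leaf $z=z(x_0,\tau)$ the two sums vanish, so $L_{x_0}(z(x_0,\tau),\tau)=C(\tau)$, and then conditions \eqref{pshconst1}--\eqref{pshconst2} say exactly that $C$ is the solution of the Dirichlet problem on $D$ with boundary values $\tau\mapsto \tilde\psi_\tau(z(x_0,\tau))$ for $\tau\in\partial D$.

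Next I would verify pluriharmonicity of each piece. Since $G(x_0,\tau)=(z(x_0,\tau),\xi(x_0,\tau))$ is holomorphic in $\tau$ by \eqref{holodiscpde}, the product $\xi_p(x_0,\tau)\,z_p$ and the term $\xi_p(x_0,\tau)\,z_p(x_0,\tau)$ are holomorphic in $(z,\tau)\in N'\times D$. Consequently
\[
\sum_{p}\xi_p(x_0,\tau)\bigl(z_p-z_p(x_0,\tau)\bigr)
\]
is holomorphic on $N'\times D$, and its twice-real-part is pluriharmonic. The remaining piece $C(\tau)$ depends only on $\tau$ and is harmonic there by construction, so $i\partial\bar\partial C$ vanishes as a $(1,1)$-form on $N'\times D$. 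Adding these contributions gives $i\partial\bar\partial L_{x_0}\equiv 0$ on $N'\times D$.

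Finally, smoothness of $L_{x_0}$ follows by combining smoothness of $G(x_0,\cdot)$ (guaranteed by Theorem~\ref{thmexistbypertb} / Theorem~\ref{thmweightestifoli}), smoothness of $\tilde\psi_\tau$ as a function on $N'\times\partial D$, and classical boundary regularity for the Dirichlet problem on $D$ (Lemma~\ref{lemplaestiwrtbdry}), which promotes the smooth boundary values $\tau\mapsto\tilde\psi_\tau(z(x_0,\tau))$ of $C$ to a smooth harmonic function on $\overline D$. I do not foresee a substantive obstacle: the one point requiring a small sanity check is the consistency between \eqref{pshconst2} and \eqref{pshconst3} on $\partial D$, which is automatic from the exact-Lagrangian boundary condition $g_{x_0}(\tau)\in\Lambda_{\tilde\psi_\tau}$, i.e.\ $\xi_p(x_0,\tau)=(\partial_{z_p}\tilde\psi_\tau)(z(x_0,\tau))$ for $\tau\in\partial D$.
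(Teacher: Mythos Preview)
Your proof is correct and follows essentially the same approach as the paper: both derive the same explicit formula
\[
L_{x_0}(z,\tau)=2\sum_p\re\{\xi_p(x_0,\tau)(z_p-z_p(x_0,\tau))\}+L_{x_0}(z(x_0,\tau),\tau)
\]
and deduce pluriharmonicity from holomorphicity of $z_p(x_0,\tau)$, $\xi_p(x_0,\tau)$ in $\tau$ together with harmonicity of the leafwise term. Your argument is in fact a touch more streamlined: you observe directly that $\sum_p\xi_p(x_0,\tau)(z_p-z_p(x_0,\tau))$ is holomorphic on $N'\times D$ and hence its real part is pluriharmonic, whereas the paper computes the mixed second derivatives $\overline{\partial}_{z_q}\partial_{z_p}L_{x_0}$, $\overline{\partial}_\tau\partial_{z_p}L_{x_0}$, and $\overline{\partial}_\tau\partial_\tau L_{x_0}$ one at a time.
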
 
\begin{proof} We can write down $L_{x_0}$ explicitly in the asymptotic coordinates as a family of linear functions varying with respect to $\tau$, 
\begin{align} \label{explicitL}
    L_{x_0} (z, \tau) = 2 \sum_{p=1}^n \re \big\{ \xi_p(x_0,\tau) (z_p - z_p(x_0, \tau)) \big\} + L_{x_0} (z(x_0, \tau), \tau), \quad (z , \tau) \in N' \times D.
\end{align}
 By taking derivatives for $L_{x_0} (z,\tau)$, we have
 \begin{align}
         (\partial_{z_p} L_{x_0})(z, \tau) = & \ \xi_p (x_0, \tau),\label{1stderivofLspace}\\
         \begin{split} \label{1stderivofLtime}
         (\partial_\tau L_{x_0}) (z, \tau) = & \sum_{p =1}^n \partial_{\tau} \xi_p(x_0, \tau) (z_p - z_p(x_0, \tau))  -  \sum_{p=1}^n  \xi_p (x_0,\tau) \frac{\partial z_p}{\partial \tau}  (x_0, \tau)   \\& + \frac{\partial}{\partial \tau} \big\{ L_{x_0} (f(x_0, \tau), \tau) \big\}. 
         \end{split}
 \end{align}
 The smoothness of $L_{x_0}$ is obvious.
By (\ref{1stderivofLspace}), and noting that $\xi_p$ is holomorphic with respect to $\tau$, we can compute the second derivatives of $L_{x_0}$,
\begin{align*}
    (\overline{\partial}_{z_q}\partial_{z_p}L_{x_0}) (z, \tau)  &=0,\\
    (\overline{\partial}_{\tau} \partial_{z_p}L_{x_0}) (z, \tau) & = 0.
\end{align*}
It suffice to check $\overline{\partial}_\tau \partial_\tau L_{x_0} =0$. By taking the derivative of (\ref{1stderivofLtime}), we have 
\begin{align}
\begin{split}\label{pshttleaf}
    \big(\overline{\partial}_\tau \partial_\tau L_{x_0}\big)(z, \tau) 
    =& \sum_{p=1}^n \overline{\partial}_\tau \partial_{\tau} \xi_p(x_0,\tau) (z_i - z_i(x_0, \tau))  - \sum_{p=1}^n \partial_\tau \xi_p(x_0,\tau) \frac{\partial z_p}{\partial \overline{\tau}}  (x_0, \tau)
    \\& -  \sum_{p=1}^n \frac{\partial}{\partial \overline{\tau}} \big\{ \xi_p (x_0, \tau)  \frac{\partial z_p}{\partial \tau} (x_0, \tau)  \big\} 
    + \frac{\partial}{\partial \overline{\tau}}\frac{\partial}{\partial \tau} \big\{ L_{x_0} (z(x_0, \tau), \tau) \big\}. 
    \\ = & \frac{\partial}{\partial \overline{\tau}}\frac{\partial}{\partial \tau} \big\{ L_{x_0} (z(x_0, \tau), \tau) \big\} =0.
\end{split}
\end{align}
Hence, we complete the proof that $L_{x_0}$ is pluriharmonic.
\end{proof}

Let $M$ be the uniform $\C^{1,1}$ bound of the solution of HCMA equation in Theorem \ref{thmc11soluasupenvelope} and let $\omega$ be the reference K\"ahler form defined on $X$. By restricting $\omega$ to $N'$, $\omega$ admits a convex K\"ahler potential, $\rho$ satisfying $D^2  \rho \geq (1/2) I_{2n} $ due to Proposition \ref{proconvexpotentialN}. The prescribed K\"ahler form on boundary $X \times \{\tau\}$, $\tau \in \partial D$, is given by $\omega_\tau =  \omega + i\partial \overline{\partial}  \psi_\tau$, where $\psi_\tau$ is a prescribed smooth K\"ahler potential in $\C^{\infty}_{\gamma} (X)$ $(\gamma <0)$. Let $\widetilde{N}$ be the median domain between $N$ and $N'$; for instance, we take $\widetilde{N} = B_{3R/2} $ for the first case, and $\widetilde{N} = (\CC^n- B_{3R/2})$ for the second case. Then, we have the following properties of $L_{x_0}$:
\begin{lem} \label{lemsubsolucon}
    Fixing a triple $N\subseteq \widetilde{N} \subseteq N'$ as above, for each $x_0 \in N$, we have the following properties:  
    \begin{align}
        L_{x_0} (z, \tau) - \rho(z) &\leq \psi_\tau (z), \qquad (z, \tau) \in N' \times \partial D,\label{subsoluconbdry}\\
        L_{x_0} (z, \tau) - \rho(z) &\leq -M, \hspace{1cm} (z, \tau) \in \big(N'-\widetilde{N}\big) \times D. \label{subsoluconcpt}
    \end{align}
\end{lem}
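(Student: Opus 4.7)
The proof will rest on two simple algebraic facts together with the pluriharmonicity of $L_{x_0}$. First, the explicit representation (\ref{explicitL}) shows that $L_{x_0}(z,\tau)$ is $\mathbb{R}$-affine in the spatial variable $z$, so $D_z^2 L_{x_0}(\cdot,\tau)\equiv 0$; consequently the function $f_\tau(z):=\tilde{\psi}_\tau(z)-L_{x_0}(z,\tau)$ inherits the convexity of $\tilde{\psi}_\tau$, i.e. $D_z^2 f_\tau \geq \tfrac{1}{4}I_{2n}$ on $N'$ by Proposition \ref{proconvexpotentialN}. Second, at the point $z=z(x_0,\tau)$ with $\tau\in\partial D$, I will check that $f_\tau$ and all its first-order spatial derivatives vanish.

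For (\ref{subsoluconbdry}), the vanishing of $f_\tau(z(x_0,\tau))$ is precisely the boundary condition (\ref{pshconst2}), while the vanishing of $\partial_{z_p}f_\tau$ at $z(x_0,\tau)$ follows by combining (\ref{pshconst3}) with the free boundary condition for the holomorphic disc, namely $\xi_p(x_0,\tau)=\partial_{z_p}\tilde{\psi}_\tau(z(x_0,\tau))$ for $\tau\in\partial D$. Since $f_\tau$ is real-valued, $\partial_{\bar z_p}f_\tau$ vanishes at the same point. Taylor expansion around $z(x_0,\tau)$ then yields the quantitative form
\begin{align*}
L_{x_0}(z,\tau)-\rho(z)\;\leq\; \psi_\tau(z)-\tfrac{1}{8}|z-z(x_0,\tau)|^{2},\qquad (z,\tau)\in N'\times\partial D,
\end{align*}
which immediately implies (\ref{subsoluconbdry}).

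For (\ref{subsoluconcpt}), I use that $L_{x_0}$ is pluriharmonic (Lemma \ref{lempshLcheck}); thus for each fixed $z$, the function $\tau\mapsto L_{x_0}(z,\tau)-\rho(z)$ is harmonic on $D$ and continuous on $\overline{D}$. The maximum principle on the disc together with the quantitative estimate above gives
\begin{align*}
L_{x_0}(z,\tau)-\rho(z)\;\leq\;\max_{\tau'\in\partial D}\Bigl\{\psi_{\tau'}(z)-\tfrac{1}{8}|z-z(x_0,\tau')|^{2}\Bigr\},\qquad (z,\tau)\in N'\times D.
\end{align*}
For the standard triples $(N'',N,N')$, the distance $\mathrm{dist}(N'-\widetilde{N},\,N)$ is comparable to the chart scale $R$ (for instance $\geq R/2$ in the ball case $N=B_R,\ \widetilde{N}=B_{3R/2},\ N'=B_{2R}$). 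By the $\C^{k,\beta}$-smallness of the holomorphic-disc perturbation in Theorem \ref{thmexistbypertb} (and by the weighted estimate of Theorem \ref{thmweightestifoli} after scaling), the displacement satisfies $|z(x_0,\tau')-x_0|\ll R$ whenever $R$ is large enough, so that $|z-z(x_0,\tau')|\geq R/4$ for $x_0\in N$, $z\in N'-\widetilde{N}$, and $\tau'\in\partial D$. Since $\psi_{\tau'}$ is uniformly bounded on $X$, the right-hand side is dominated by $\|\Psi\|_{L^\infty(X\times\partial D)}-R^{2}/128$, which is $\leq -M$ once the threshold $l$ (hence $R$) is chosen sufficiently large, as is permitted in the choice made in Theorem \ref{thmweightestifoli}.

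The only technical point to keep track of is the uniform relationship between the chart scale $R$ and the radial function $r(x_0)$: by Lemma \ref{lemholocornearinfty} and Corollary \ref{corcxasymptoticcovering}, one has $R_i\asymp r(x_i)\asymp r(x_0)$ uniformly for $x_0$ in the chart $N_i\subset X_l$, so the quadratic gain $\tfrac{1}{8}|z-z(x_0,\tau')|^{2}\gtrsim r(x_0)^{2}$ dominates the fixed constants $M$ and $\|\Psi\|_{L^\infty}$ for all $x_0\in X_l$ with $l$ large; the same scaling handles the complement-of-compact setting of case (ii) in Proposition \ref{proconvexpotentialN}, so no genuine obstacle arises beyond this bookkeeping.
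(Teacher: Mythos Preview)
Your proof is correct and follows essentially the same approach as the paper's: both arguments use the vanishing of $L_{x_0}-\tilde\psi_\tau$ and its spatial gradient at $z(x_0,\tau)$ for $\tau\in\partial D$, combined with the uniform convexity of $\tilde\psi_\tau$ to obtain a quadratic upper bound, and then invoke the harmonicity of $\tau\mapsto L_{x_0}(z,\tau)-\rho(z)$ (from Lemma~\ref{lempshLcheck}) together with the maximum principle on $D$ to pass from $\partial D$ to the interior. Your treatment is in fact more explicit than the paper's in justifying why the quadratic gain dominates $M$ and $\|\Psi\|_{L^\infty}$, by tracking the displacement $|z(x_0,\tau)-x_0|\ll R$ via Theorem~\ref{thmexistbypertb} and the chart scale $R\asymp r(x_0)$; the paper simply asserts $R\geq 5\sqrt{M}$ suffices without spelling out these intermediate bounds.
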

\begin{proof} The lemma can be easily derived from the convexity of $\rho$.

    Let $\mathcal{L}_{x_0}$ be the leaf at $x_0 \in N$ given by $\mathcal{L}_{x_0} = \{H(x_0, \tau); \tau \in D\} $.  
    According to the construction of $L_x(z,\tau)$, (\ref{pshconst1})-(\ref{pshconst3}), for $(z,\tau) \in \partial \mathcal{L}_{x_0}$,
    \begin{align*}
    \begin{split} 
        (L_{x_0} - \rho -\psi_{\tau}) (z, \tau) = 0, \quad\\
        \partial_{z_p}  (L_{x_0} - \rho -\psi_{\tau}) (z, \tau) = 0.
    \end{split}
    \end{align*}
    Let $D_X$ be the covariant derivative in the space direction.  Using the linearity of $L_x$ and the convexity of K\"ahler potential in $N'$, we have
    \begin{align*}
        D_X^2\big\{ L_{x} (z, \tau)-\rho(z) - \psi_\tau(z)\big\} \leq -\frac{1}{3} I_{2n}, \qquad  (z, \tau) \in N' \times \partial D.
    \end{align*}
    Then, we have
     \begin{align*} 
         L_x(z, \tau) - \rho(z) -\psi_\tau(z) \leq  -\frac{1}{6} d_0(z(x, \tau), z)^2, \qquad  (z, \tau) \in (X-B_R) \times \partial D
     \end{align*}
     where $d_0(x,z)$ represents the Euclidean distance between $z$ and $x$ in the standard complex coordinates of $N'$. The inequality (\ref{subsoluconbdry}) is proved.

    Choose a large constant $R$ so that $R \geq 5\sqrt{M}$, where $M$ is the $\C^{1,1}$ bound of the solution of HCMA. For any $x_0 \in N$, we have
    \begin{align*}
        L_{x_0} (z, \tau) - \rho(z) \leq -M, \qquad (z, \tau) \in \big( N'- \widetilde{N}\big) \times \partial D.
    \end{align*}
    To get the interior control, according to (\ref{pshttleaf}), we observe that $L_{x_0} (z, \tau) -\rho(z)$ is harmonic on each slice $\{z\} \times D$. The maximal principle implies (\ref{subsoluconcpt}).
\end{proof}

Now, we have all the ingredients to construct the global $\Omega$-plurisubharmonic function. 

\subsubsection*{Case 1: $\dim_\CC X = 2$} According to Corollary \ref{corcxasymptoticcovering}, there exists a locally finite and countably infinite open covering $\{N_i;\ i\in \mathcal{I} \}$ of $X_{2L}$, where each $N_i$ is biholomorhic to $B_R$. Moreover, for each $i\in \mathcal{I}$, we can find a triple $(N_i, \widetilde{N}_i, N'_i)$ such that there sets are biholomorphic to concentric balls $(B_R, B_{\frac{3}{2} R}, B_{2R})$ respectively. For each point $x_0 \in N_i$, we can construct the function $L_{x_0, i}$ on $N'$ based on (\ref{pshconst1})-(\ref{pshconst3}). Then, we can construct a $\Omega$-plurisubharmonic function $F_i(z, \tau)$ as follows:
\begin{align*}
    F_i (z, \tau) = \sup_{x \in N_i} \big(L_{x, i} (z, \tau)- \rho_i (z)\big), \qquad (z, \tau ) \in N' \times D,
\end{align*}
where $\rho_i$ is a convex K\"ahler potential of $\omega$ in $N_i'$ satisfies the condition. 
\begin{lem}\label{lemconstructpshlocal}
    $F_{i} (z,\tau)$ is a continuous $\Omega$-plurisubharmonic function defined in $N' \times D$. Furthermore, if we restrict $F_{i} (z, \tau)$ on the leaf of the holomorphic disc at $x \in N_i$, denoted by $(z(x, \tau), \tau)$, then
    \begin{align*}
        F_{i}(z(x, \tau), \tau) = L_{x, i} (z(x, \tau), \tau) - \rho_{i} (z(x, \tau)).
    \end{align*}
\end{lem}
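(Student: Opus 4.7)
The plan is to handle the two assertions separately: the $\Omega$-plurisubharmonicity and continuity of $F_i$ will follow from a standard envelope argument applied to the smooth, locally uniformly bounded family $\{L_{x,i}-\rho_i\}_{x\in N_i}$, while the leaf identification will follow from the pluriharmonicity of each $L_{x,i}$ (Lemma~\ref{lempshLcheck}) combined with the maximum principle along the holomorphic disc $\tau\mapsto(z(x_0,\tau),\tau)$.

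For the first claim, I first observe that $\rho_i$ is a K\"ahler potential of $\omega$ on $N_i'$, so on $N_i'\times D$ one has $\Omega=i\partial\overline{\partial}\rho_i$ (viewing $\rho_i$ as $\tau$-independent via the projection). Combined with Lemma~\ref{lempshLcheck}, this gives
\begin{equation*}
\Omega+i\partial\overline{\partial}(L_{x,i}-\rho_i)=i\partial\overline{\partial}L_{x,i}=0,
\end{equation*}
so each $L_{x,i}-\rho_i$ is $\Omega$-psh on $N_i'\times D$. Moreover, since $L_{x,i}$ is pluriharmonic and $\rho_i$ is $\tau$-independent, $L_{x,i}-\rho_i$ is harmonic in $\tau$ on every slice $\{z\}\times D$. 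The uniform-in-$x$ bound $L_{x,i}-\rho_i\le\psi_\tau$ on $N_i'\times\partial D$ provided by \eqref{subsoluconbdry} of Lemma~\ref{lemsubsolucon}, together with $\Psi$ being bounded, then propagates into $N_i'\times D$ via the sliced maximum principle, so the family is locally uniformly bounded above. From the explicit expression~\eqref{explicitL} for $L_{x,i}$ in terms of the foliation data $(z(x,\tau),\xi(x,\tau))$ and the Poisson integral of $\tilde{\psi}_\tau(z(x,\tau))$, the smoothness assertions of Theorem~\ref{thmexistbypertb} imply that $(x,z,\tau)\mapsto L_{x,i}(z,\tau)-\rho_i(z)$ is jointly continuous on a neighborhood of $\overline{N_i}\times N_i'\times\overline{D}$. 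Since $\overline{N_i}$ is compact, $\sup_{x\in\overline{N_i}}$ is continuous in $(z,\tau)$; by continuity of the integrand in $x$, this coincides with $\sup_{x\in N_i}=F_i$. Being a continuous (hence upper semicontinuous) supremum of $\Omega$-psh functions, $F_i$ is itself $\Omega$-psh.

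For the identification along the leaf at $x_0$, fix $x_0\in N_i$ and consider the holomorphic disc $\tau\mapsto(z(x_0,\tau),\tau)\subset N_i'\times D$. For every $x\in N_i$, the pluriharmonic function $L_{x,i}$ restricts to a harmonic function of $\tau\in D$ along this holomorphic curve. On $\partial D$, \eqref{subsoluconbdry} of Lemma~\ref{lemsubsolucon} applied at $(z(x_0,\tau),\tau)$ gives
\begin{equation*}
L_{x,i}(z(x_0,\tau),\tau)-\rho_i(z(x_0,\tau))\le\psi_\tau(z(x_0,\tau)),\quad \tau\in\partial D,
\end{equation*}
with equality attained for $x=x_0$ by the boundary condition~\eqref{pshconst2}. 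The maximum principle for harmonic functions on $D$ then extends $L_{x,i}(z(x_0,\tau),\tau)\le L_{x_0,i}(z(x_0,\tau),\tau)$ from $\partial D$ to all of $D$. Taking the supremum over $x\in N_i$ and subtracting $\rho_i(z(x_0,\tau))$ yields the claimed identity $F_i(z(x_0,\tau),\tau)=L_{x_0,i}(z(x_0,\tau),\tau)-\rho_i(z(x_0,\tau))$.

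The subtlest step is the joint continuity of the envelope $F_i$ up to $\partial N_i$: one must know that the foliation data $x\mapsto(z(x,\tau),\xi(x,\tau))$ is smooth on a neighborhood of $\overline{N_i}$, so that the supremum is effectively taken over the compact set $\overline{N_i}$. This is precisely what is provided by Theorem~\ref{thmexistbypertb} together with the patching theorem (Theorem~\ref{thmpatch}) applied with $N_i$ precompact in the chart $N_i'$; once this regularity is in hand, the rest of the argument is a direct application of the maximum principle and standard properties of $\Omega$-psh envelopes.
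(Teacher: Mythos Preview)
Your proof is correct and follows essentially the same approach as the paper's: both reduce the leaf identity to the maximum principle for harmonic functions along the holomorphic disc $\tau\mapsto(z(x_0,\tau),\tau)$, comparing boundary values via Lemma~\ref{lemsubsolucon} and \eqref{pshconst2}. The only notable difference lies in the continuity argument for $F_i$: the paper bounds $|\partial_{z_p}L_{x,i}|$ and $|\partial_\tau L_{x,i}|$ uniformly in $x\in N_i$ (via the estimates of Theorem~\ref{thmexistbypertb} on $\xi_p$ and the explicit formulae \eqref{1stderivofLspace}--\eqref{1stderivofLtime}) to obtain equicontinuity of the family $\{L_{x,i}\}_{x\in N_i}$ directly, whereas you pass to $\sup_{x\in\overline{N_i}}$ by extending the foliation data to the compact closure. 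The paper's route avoids the extension step you flag as ``subtlest,'' but once the patching theorem supplies smoothness of the foliation on a neighborhood of $\overline{N_i}$ your compactness argument is equally valid.
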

\begin{proof}
    For the first statement, it suffices to prove that $F_{i} (z, \tau)$ is continuous in $N'$. By Theorem \ref{thmexistbypertb}, we have $|\xi_p(x, \tau)| \leq M_i$ for $x \in N_i$, where $M_i$ is a constant depending on $N_i$. Using formula \eqref{1stderivofLspace}, this implies $|\partial_{z_p} L_{x. i} (z, \tau)| \leq M_i$, for all $(z, \tau) \in N' \times D$. Similarly, from formula \eqref{1stderivofLtime}, we see that $|\partial_\tau L_{x, i}|$ is also bounded by a constant depending on $N_i$ and the diameter of $N'_i$.  These facts imply that the family of smooth pluriharmonic functions $\{L_{x,i} \in \text{Ph}(N'_i\times D);\ x\in N_i \}$ is equicontinuous. Therefore, $F(z, \tau) = \sup_{x \in N_i} L_x (z, \tau)$ is continuous.

    For the second statement, we need to show the following: by fixing $x_0 \in N_i$, 
    \begin{align*}
        \sup_{x \in N_i} {L_{x,i} (z(x_0, \tau), \tau)} = L_{x_0, i} (z(x_0, \tau), \tau).
    \end{align*}
    When restricted to the leaf of the holomorphic disc at $x_0$, $(z(x_0, \tau), \tau)$, $L_{x,i}(z(x_0, \tau), \tau)$ is harmonic in terms of $\tau$ for $\tau \in D$. By Lemma \ref{lemsubsolucon} and (\ref{pshconst2}), we have
    \begin{align*}
        L_{x, i} (z(x_0, \tau), \tau) \leq (\rho_i+ \psi_\tau)(z(x_0, \tau)) = L_{x_0, i} (z(x_0, \tau), \tau),
    \end{align*}
    for $\tau \in \partial D$ and for all $x \in N_i $.
    According to the classic maximal principle for harmonic functions, we have $L_{x, i} (z(x_0, \tau), \tau) \leq L_{x_0, i} (z(x_0, \tau), \tau)$, for $\tau \in D$, completing the proof.
\end{proof}

The patching theorem of the foliation by holomorphic discs, Theorem \ref{thmpatch}, ensures that the leaves of holomorphic discs coincide in the overlap $N_i \bigcap N_j$. Now, we discuss the relationship between $F_i$ and $F_j$ in the overlapping region of $N'_i$ and $N'_j$.

\begin{lem} \label{lempshpatching}
    Let $x \in N_i$. For any other index $j \in \mathcal{I}$ such that $x \in N'_j$ we have 
    \begin{align*}
        F_i (z(x,\tau), \tau) \geq F_j (z(x, \tau), \tau).
    \end{align*}
    Furthermore, if $ x \in N_j $, then $F_i (z(x,\tau), \tau) = F_j (z(x, \tau), \tau).$
\end{lem}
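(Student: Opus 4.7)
The plan is to reduce the inequality along the leaf through $x$ to a one-variable maximum principle on $D$, exploiting the fact that $\rho_i - \rho_j$ is pluriharmonic on $N_i' \cap N_j'$. The first step is to fix $x \in N_i$ with $x \in N_j'$ and, by Theorem \ref{thmpatch}, to identify the horizontal components of the holomorphic discs in charts $i$ and $j$, so that $\tau \mapsto z(x,\tau)$ is an unambiguously defined holomorphic curve. The uniform smallness of $\|G-G_0\|$ from Theorem \ref{thmexistbypertb}, combined with the definite size margin between $N_i$ and $N_i'$ in the good covering of Corollary \ref{corcxasymptoticcovering}, ensures that this curve also remains inside $N_j'$ for every $\tau \in D$.

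Next I would apply Lemma \ref{lemconstructpshlocal} in chart $i$ to write
\[
v(\tau) := F_i\bigl(z(x,\tau),\tau\bigr) = L_{x,i}\bigl(z(x,\tau),\tau\bigr) - \rho_i\bigl(z(x,\tau)\bigr),
\]
and directly from the definition of $F_j$ express
\[
F_j\bigl(z(x,\tau),\tau\bigr) = \sup_{y \in N_j} u_y(\tau), \qquad u_y(\tau) := L_{y,j}\bigl(z(x,\tau),\tau\bigr) - \rho_j\bigl(z(x,\tau)\bigr).
\]
The main inequality then reduces to showing $u_y \le v$ on $D$ for every fixed $y \in N_j$ and taking the supremum.

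The key observation I will use is that $v - u_y$ is harmonic in $\tau$ on $D$. Both $L_{x,i}$ and $L_{y,j}$ are pluriharmonic by Lemma \ref{lempshLcheck}, so their restrictions along the holomorphic curve $\tau \mapsto (z(x,\tau),\tau)$ are harmonic in $\tau$. Since $i\partial\overline\partial(\rho_i - \rho_j) = 0$ on $N_i' \cap N_j'$, the difference $\rho_i - \rho_j = -\tilde\psi_{0,ij}$ is pluriharmonic there, and hence its composition with the holomorphic map $\tau \mapsto z(x,\tau)$ is also harmonic. Summing these contributions gives the harmonicity of $v - u_y$.

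The final step is the boundary comparison on $\partial D$. The fixed-boundary condition \eqref{pshconst2} gives $L_{x,i}(z(x,\tau),\tau) = \rho_i(z(x,\tau)) + \psi_\tau(z(x,\tau))$, so $v(\tau) = \psi_\tau(z(x,\tau))$, while inequality \eqref{subsoluconbdry} of Lemma \ref{lemsubsolucon} applied in chart $j$ yields $u_y(\tau) \le \psi_\tau(z(x,\tau))$. The minimum principle for the harmonic function $v - u_y$ then propagates $v \ge u_y$ to all of $D$; taking the supremum over $y \in N_j$ gives $F_i \ge F_j$ along the leaf, and the equality when $x \in N_j$ follows by exchanging the roles of $i$ and $j$. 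I do not anticipate any serious obstacle beyond the bookkeeping point flagged at the outset---verifying that the leaf remains in $N_j'$---which is already controlled by the smallness estimates and the good covering structure.
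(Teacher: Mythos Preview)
Your proposal is correct and follows essentially the same approach as the paper's own proof: both reduce to the harmonicity along the leaf of $F_i + \rho_j$ (equivalently, of your $v - u_y$), using that $\rho_i - \rho_j$ is pluriharmonic on $N_i' \cap N_j'$, and then compare boundary values via \eqref{pshconst2} and \eqref{subsoluconbdry}. The paper treats the equality case $x \in N_j$ first and then the inequality, while you do the inequality first and get equality by symmetry, but this is only a cosmetic difference; the paper also handles the ``leaf stays in $N_j'$'' point with the same informal justification you flag.
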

\begin{proof}
    We first prove the case when $x\in N_j$. By Lemma \ref{lemconstructpshlocal}, we have $F_{i} (z(x, \tau), \tau) = L_{x, i} (z(x,\tau), \tau) - \rho_i (z(x, \tau)) $. Consider the function $F_i(z, \tau) + \rho_j(z)$ defined on $(N'_i\bigcap N'_j ) \times D$. Restricting to the leaf of the holomorphic disc at $x$, we have 
    \begin{align*}
        \partial_\tau \overline{\partial}_\tau \big\{ F_i (z(x, \tau), \tau) + \rho_j(z(x, \tau)) \big\}
        = &\partial_\tau \overline{\partial}_\tau  \big\{ L_{x, i} (z(x, \tau), \tau)  \big\} + \partial_{z_p} \overline{\partial}_{z_{q}} (\rho_{j} - \rho_{i}) \frac{\partial z_{p}}{\partial \tau} \frac{\partial \bar{z}_{q}}{\partial \bar{\tau}}.
    \end{align*}
    The first term on the right-hand side of the above equality is vanishing due to the construction of $L_{x, i}$, and so is the second term due to $i\partial \overline{\partial} \rho_i = i\partial \overline{\partial} \rho_j$ on $N'_i \bigcap N'_j$. Hence, $F_{i} + \rho_j $ is harmonic on the leaf of the holomorphic disc at $x$. On the boundary of the leaf at $x$, we observe that
    \begin{align*}
        F_i (z(x, \tau), \tau) + \rho_j(z(x, \tau)) = L_{x, j} (z(x, \tau), \tau), \qquad \tau \in \partial D.
    \end{align*}
    Hence, we have $F_i = L_{x,j} - \rho_{j} = F_j$ on the leaf at $x \in N_i \bigcap N_j$.

    Consider the case when $x \in N'_j$. Without loss of generality, we assume that the leaf of the holomorphic disk at $x$ is contained in $N'$. The function defined as above, $F_i (z(x, \tau), \tau) + \rho_j(z(x, \tau))$ is also harmonic in terms of $\tau$  with $F_{i} (z(x, \tau), \tau) + \rho_{j}(z(x, \tau)) = (\rho_j + \psi_\tau) (z(x, \tau)) $, for $\tau \in \partial D$. By Lemma \ref{lempshLcheck} and \ref{lemsubsolucon}, we have for each $y \in N_j$, 
    \begin{align*}
        F_{i} (z(x, \tau), \tau) + \rho_{j}(z(x, \tau)) \geq L_{y,j} (z(x, \tau), \tau).
    \end{align*}
    Therefore, $F_i(z(x, \tau), \tau) \geq F_{j}(z(x, \tau), \tau)$, which completes the proof.
\end{proof}

According to Lemma \ref{lemconstructpshlocal} and \ref{lempshpatching}, we construct a global $\Omega$-plurisubharmonic function as follows:
\begin{align}\label{constructdim2}
     F(z, \tau) = \max \big\{\sup_{i \in  \mathcal{I}} F_i (z, \tau), -M\big\}, \qquad (z, \tau ) \in X \times D.
\end{align}
Suppose that the family of holomorphic balls $\{U_i;~ i \in \mathcal{I}\}$ covers the region $X_l$ for some large constant $l$. Theorem \ref{thmexistbypertb} implies that there exists a foliation by a smooth family of holomorphic discs $G_i: N_i \times D \rightarrow E $ for each $N_i$. If we denote $H_i (w, \tau) = \pi \circ G_i(w, \tau)$, by Theorem \ref{thmpatch}, $H_i (w, \tau) = H_j(w, \tau)$ for $ w\in U_i \bigcap U_j$. Hence, there exists a foliation by holomorphic discs on $X_l$, denoted by $H(w, \tau)$ such that $H|_{U_i} = H_i$. In local coordinates, we write $H (w, \tau) = z(w, \tau) = (z_1,\ldots, z_n )(w, \tau)$. Fixing a point $x \in X_l$, the set, $H_x = \{(H(x, \tau), \tau); \tau \in D\}$, is called the leaf of the holomorphic disc at $x$.

\begin{proof}[Proof of Theorem \ref{thmconstpshsolu}: general cases]
The continuity of $F(z, \tau)$ follows directly from Lemmas \ref{lemsubsolucon}, \ref{lemconstructpshlocal} and \ref{lempshpatching}. Hence, $F(z,\tau)$ is a continuous plurisubharmonic function on $X \times D$. It suffices to check the boundness of $F(z, \tau)$ on $X \times D$. 

It is obvious that $F(z, \tau)$ is bounded below. To show $F(z, \tau)$ is bounded above, observe that $F(z, \tau) + \rho(z)$ is a global plurisubharmonic function. By restricting to each slice $\{z\} \times D$, we see that $F(z, \tau)$ is subharmonic in $\tau$. From Lemma \ref{lemsubsolucon}, we have $F(z, \tau) \leq \psi_\tau (z)$ for $\tau \in \partial D$. Hence, 
\begin{align*}
    F(z, \tau) \leq \sup_{X \times \partial D} |\psi_\tau (z)|.
\end{align*}

In this paragraph, we complete the proof of part $(b)$. By Theorem \ref{thmc11soluasupenvelope}, the solution $\Phi$ is the upper envelope of $\F_{\Omega, \Psi}$, the class of bounded, continuous $\Omega$-plurisubharmonic functions, bounded above by $\Psi$ on the boundary $X \times \partial D$. Therefore, we have 
\begin{align*}
F(z, \tau) \leq \Phi (z, \tau), \qquad (z, \tau) \in X \times D
\end{align*}
If we restrict $F(z, \tau)$ on the leaf $H_x$ for $x \in U_i$, then by Lemma \ref{lemconstructpshlocal}, $F(z(x, \tau), \tau) = L_{x, i} (z(x,\tau), \tau) - \rho_i (z(x, \tau))$. By the construction of $L_{x,i}$ in \eqref{pshconst1}-\eqref{pshconst3}, we have $F(x, \tau) = \psi_\tau (z(x, \tau))$ for $\tau \in \partial D$. Also, notice that $\Phi + \rho_i$ is plurisubharmonic in $U_i \times D$. Then, we have
\begin{align*}
\Phi(z(x, \tau), \tau) \leq F(z(x, \tau), \tau), \qquad \tau \in D.
\end{align*}
Since $\{U_i, i \in \mathcal{I}\}$ covers $X_l$, we can assume for each point $(z, \tau) \in X_l \times D$, there exists a leaf $H_x$ passing through $(z, \tau)$. Therefore $ F(z, \tau) = \Phi(z, \tau)$, if $(z, \tau )\in X_l \times D.$
\end{proof}

\subsubsection{Case 2: $\dim_{\CC} X \geq 3$} According to Proposition \ref{propcxasymcoordinates}, there exists a triple $(N, \widetilde{N}, N')$ such that the sets are biholomorphic to $(\CC^n- B_{l}) /\Gamma $, $(\CC^n- B_{\frac{3}{4}l}) /\Gamma $ and $(\CC^n- B_{\frac{1}{2}l}) /\Gamma $ respectively. Let $\rho$ be the convex K\"ahler potential of $\omega $ on $N'$ and let $L_{x}(z, \tau)$ be the pluriharmonic function satisfying (\ref{pshconst1})-(\ref{pshconst3}) defined on $N' \times D$ for each $x \in N$. Then, we construct
\begin{align*}
    F_{\infty} = \sup_{x \in N} \big( L_{x} (z, \tau)- \rho (z) \big), \qquad (z, \tau) \in N' \times D.
\end{align*}
We can now construct a global $\Omega$-plurisubharmonic function:
\begin{align*}
    F(z, \tau) = \max \{ F_{\infty} (z, \tau), -M\}, \qquad X\times D.
\end{align*}
Similar to Case 1, the function $F(z, \tau)$ satisfies the following property:

\begin{proof}[Proof of Theorem \ref{thmconstpshsolu}: holomorphic asymptotic coordinates]
The proof of boundedness is the same as in the proof in the general case.

It suffices to show that $F(z, \tau)$ is continuous for complex dimension $n\geq 3$.
Using the same method as in the proof of Lemma \ref{lemconstructpshlocal}, by restricting to the leaf $H_x$, we can show
\begin{align*}
    F(z(x, \tau), \tau) = L_{x} (z(x, \tau), \tau) - \rho(z(x, \tau)).
\end{align*}
By Proposition \ref{propexplicitconstructionsolu}, $F(z, \tau)$ is smooth on $X_{L+1} \times D$. It suffices to show the continuity on the region $B_{2L}$. In the proof of Lemma \ref{lemsubsolucon}, we have 
\begin{align*}
    L_x (z, \tau) - \rho(z) \leq -\frac{1}{2} (d(x, z))^2.
\end{align*}
For all $z \in B_{2l}$, if we pick $x \in X_{l'}$ ($l' = 2l + 2 \sqrt{M}$), we have $d(x, z) \geq 2 \sqrt{M}$ and 
\begin{align*}
    L_{x}(z, \tau) - \rho(z) \leq -2M.
\end{align*}
That is to say, $ L_x (z, \tau) -\rho(z)$ does not contribute the the value of $F(z, \tau)$ if $r(x) \geq l'= 2l + 2 \sqrt{M}$. Therefore,
\begin{align*}
    F(z,\tau) =\Big\{ \sup_{x \in B_{L'} \bigcap N } \big( L_x (z, \tau) - \rho(z) \big), -M\Big\}.
\end{align*}
It is easy to check that the family of smooth pluriharmonic functions $\{L_{x} \in \C^{\infty} (N'\times D); x\in N\bigcap B_{l'} \}$ is equicontinuous, and $L_{x} (z, \tau) \leq -M$ for $(z, \tau) \in (N' - \widetilde{N}) \times D $. Hence, $F(z, \tau)$ is continuous and $\Omega$-psh on $B_{l'}$. 

The proof of parts (ii) and (iii) follows directly from the argument given in the , as the conditions and underlying structure are equivalent.
\end{proof}

\section{The asymptotic behavior of the solution to HCMA equations} \label{secweiesti}

This section is dedicated to the weighted estimates of the solution to the following HCMA equation:
\begin{align}
    &(\Omega + dd^c \Phi)^{n+1} = 0, \qquad \text{ in } X \times D, \nonumber \\
    &\Phi = \Psi, \hspace{2.85cm} \text{ in } X \times \partial D, \label{HCMAforweightedesti} \\
    &\Omega + dd^c \Phi \geq 0, \hspace{1.68cm} \text{ in } X \times D. \nonumber
\end{align}
Let $\Phi$ be the bounded continuous solution to \eqref{HCMAforweightedesti} given in Theorem \ref{thmgsolHCMAupenv}. Additionally, assume $\Psi \in \C^{k, \alpha}_{-\gamma} (X \times \partial D)$. 
According to the discussion in Subsection \ref{subsecC11esti}, the optimal global regularity for $\Phi$ is $\C^{1,1} $, though an improved regularity is expected near infinity on the set $X_{l} = \{x \in X; r(x) \geq l\}$ for a sufficiently large constant $c$. 
In this section, we derive weighted estimates that control the higher-order regularity of $\Phi$ on the asymptotic region $X_l$.
By Theorem \ref{thmweightestifoli}, there exists a foliation of $X_l \times D$ by holomorphic discs, with weighted estimates on coordinates shifts as given in \eqref{estiweiholodisccoordishift}. Let $ X_{l'}$ be a slightly smaller open subset of $X_l$, $X_{l'} = \{x \in X; r(x) > l'\}$ with $l'>l$, such that $H(X_{l} \times D) \subseteq X_{l'}$.

In Subsection \ref{secconstructF}, we construct a bounded, continuous $\Omega$-purisubharmonic subsolution $F$ to HCMA (\ref{HCMAforweightedesti}). 
By Theorem \ref{thmconstpshsolu}, we have
\begin{align} \label{idsoluXL}
    \Phi \equiv F, \qquad \text{ on } X_{l'} \times D.
\end{align}
The main theorem of this section is the following:

\begin{thm} \label{thmweiestisolutiontoHCMA}
Let $\Phi$ be the $\C^{1,1}$ solution to HCMA equation \eqref{HCMAforweightedesti}. If the boundary function $\Psi $ in \eqref{HCMAforweightedesti} belongs to $ \C^{k+3,\alpha}_{-\gamma} (X \times \partial D)$, then there exists a uniform constant $C$, and a large uniform constant $l'$ in \eqref{idsoluXL} such that,
\begin{align} \label{estiweisoluHCMA}
    ||\Phi||_{-\gamma; k,\beta; X_{l'} \times D} \leq C ||\Psi||_{-\gamma; k+1,\alpha; X_{\infty} \times D},
\end{align}
where 
Furthermore, the form $\Omega + \tilde{d}\tilde{d}^c \Phi$ is nondegenerate in the direction of $X$ on $X_{l'} \times D$ such that the following holds:
\begin{align*}
   \frac{1}{C} \omega  \leq \omega + i \partial \overline{\partial} \varphi_\tau \leq C \omega, \qquad \text{ on } N \times D.
\end{align*}

\end{thm}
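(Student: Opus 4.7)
The plan is to exploit the identification $\Phi \equiv F$ on $X_{l'} \times D$ from Theorem~\ref{thmconstpshsolu}, pull $\Phi$ back along the holomorphic disc foliation to a family of harmonic extension problems on $D$, and then use the weighted estimates on the foliation (Theorem~\ref{thmweightestifoli}) together with the composition lemma (Lemma~\ref{ckalphaweiesticomposfun}) and the weighted Dirichlet estimate (Lemma~\ref{lemplaestiwrtbdry}) to transfer regularity from the boundary data back to $\Phi$.

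First, I would set up the local picture. In each holomorphic chart $N_i'$ from the asymptotic covering of $X_l$, write the foliation as $H(w,\tau) = z(w,\tau)$, so that along the leaf through $w$ the construction of $F$ in Section~\ref{secconstructF} gives
\begin{align*}
    \tilde\Phi(w,\tau) \;:=\; \Phi(z(w,\tau),\tau) \;=\; L_w(z(w,\tau),\tau) - \rho(z(w,\tau)),
\end{align*}
where $L_w(z(w,\cdot),\cdot)$ is the harmonic extension on $D$ of its boundary values $\tilde\psi_\tau(z(w,\tau)) = \rho(z(w,\tau)) + \psi_\tau(z(w,\tau))$. The $\rho$–terms cancel, and I would conclude that $\tilde\Phi(w,\tau)$ is simply the harmonic extension in $\tau$ of the boundary function $\psi_\tau(z(w,\tau))$ prescribed on $\partial D$. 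This is the key reduction: it turns the weighted interior regularity of $\Phi$ into a family of Dirichlet problems parametrized by $w \in X_l$.

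Second, I would take $l'$ large so that the smallness hypothesis \eqref{weiconfreebdry} of Theorem~\ref{thmweightestifoli} is satisfied on $X_{l'}$; this is possible because $\partial\Psi$ has decay rate $-\gamma-1<0$, so the unweighted $\C^{k+2,\alpha}$–norm of $\partial\Psi$ over $N_i' \times \partial D$ tends to zero as $r(x_i) \to \infty$. Applying Theorem~\ref{thmweightestifoli} yields
\begin{align*}
    \|z(w,\tau) - w,\; \xi(w,\tau)-\xi_0(w,\tau)\|_{-\gamma;\; k+1,\beta;\; X_{l'} \times D} \;\leq\; C\,\|\partial\Psi\|_{-\gamma;\; k+2,\alpha;\; X_\infty\times \partial D}.
\end{align*}
Combining with Lemma~\ref{ckalphaweiesticomposfun}, the pulled-back boundary data $\psi_\tau(z(w,\tau))$ belongs to $\C^{k+2,\beta}_{-\gamma}(X_{l'}\times \partial D)$ with a bound controlled by the stated norm of $\Psi$. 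Then Lemma~\ref{lemplaestiwrtbdry} (harmonic extension preserves weighted Hölder norms, with the weight carried in the $N$–direction) produces
\begin{align*}
    \|\tilde\Phi\|_{-\gamma;\; k+2,\beta;\; X_{l'} \times D} \;\leq\; C\,\|\Psi\|_{-\gamma;\; k+2,\alpha;\; X_\infty\times \partial D}.
\end{align*}

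Third, I would invert the change of variables. Because $z(\cdot,\tau)$ is a weighted-$\C^{k+1,\beta}$ perturbation of the identity with small norm on $X_{l'}$, a weighted inverse function theorem (a direct consequence of Lemma~\ref{ckalphaweiesticomposfun} applied to the Neumann series for $Dz$) yields $H_\tau^{-1}$ satisfying an analogous estimate, and the identity $\Phi(z,\tau) = \tilde\Phi(H_\tau^{-1}(z),\tau)$ gives \eqref{estiweisoluHCMA} after one more application of Lemma~\ref{ckalphaweiesticomposfun}, absorbing two derivatives into the composition/inversion steps. Finally, for the nondegeneracy claim, the weighted estimate and $-\gamma < 0$ imply $|i\partial\bar\partial \varphi_\tau|_{g_0} = O(r^{-\gamma-2})$ on $X_{l'}$, which becomes arbitrarily small after enlarging $l'$; since $g$ is uniformly comparable to $g_0$ on the end by \eqref{decayaleintro}, this yields $C^{-1}\omega \leq \omega + i\partial\bar\partial\varphi_\tau \leq C\omega$ with $C$ close to $1$.

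The main obstacle I expect is the careful bookkeeping of weights and regularity indices through the composition $\Phi = \tilde\Phi \circ H_\tau^{-1}$, especially because the foliation estimate in Theorem~\ref{thmweightestifoli} already carries a $(\alpha-\beta)^{-2}$ loss coming from the Hilbert transform, and this loss must be propagated without further degradation through the harmonic extension and the composition with $H_\tau^{-1}$. In particular, verifying that the scaling argument in the holomorphic charts near infinity gives compatible uniform constants (independent of the chart index $i \in \mathcal{I}$ in the two-dimensional case) requires combining Lemma~\ref{lemscalweightedholder} with the patching in Theorem~\ref{thmpatch}, and this is where the required regularity $k+3$ of $\Psi$ is consumed.
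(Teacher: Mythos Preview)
Your overall strategy matches the paper's: identify $\Phi$ with $F$ on the end, pull back along the foliation to reduce to a family of Dirichlet problems on $D$, apply the weighted Dirichlet estimate of Lemma~\ref{lemplaestiwrtbdry}, and undo the coordinate change via Lemma~\ref{ckalphaweiesticomposfun}. The nondegeneracy argument is also the intended one.

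There is, however, a genuine gap in your key reduction. You write that ``the $\rho$--terms cancel'' and conclude that $\tilde\Phi(w,\tau)$ is itself the harmonic extension in $\tau$ of $\psi_\tau(z(w,\tau))$. This is false: although $L_w(z(w,\tau),\tau)$ is harmonic in $\tau$, the function $\rho(z(w,\tau))$ is \emph{not} harmonic in $\tau$. Since $z(w,\cdot)$ is holomorphic, a direct computation gives
\[
\partial_\tau\overline\partial_\tau\,\rho\bigl(z(w,\tau)\bigr)
=\rho_{i\bar j}\bigl(z(w,\tau)\bigr)\,\partial_\tau z_i\,\overline{\partial_\tau z_j}\;\geq\;0,
\]
with equality only when the foliation is trivial. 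Thus $\tilde\Phi=L_w-\rho(z)$ is super\-harmonic rather than harmonic along each leaf, and you cannot invoke Lemma~\ref{lemplaestiwrtbdry} directly on $\tilde\Phi$.

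The fix, and this is exactly what the paper does, is to absorb the defect into an explicit error term. Set $\mathfrak e(w,\tau)=\rho(z(w,\tau))-\rho(w)$; then $\tilde\Phi+\mathfrak e$ \emph{is} harmonic in $\tau$ with boundary values $\Psi(z(w,\tau),\tau)+\mathfrak e(w,\tau)$, so Lemma~\ref{lemplaestiwrtbdry} applies to this sum. One must then estimate $\|\mathfrak e\|_{-\gamma;k,\beta;N_i\times D}$ separately: writing $\mathfrak e$ as an integral of $D\rho$ along the segment from $w$ to $z(w,\tau)$ and using the product estimate \eqref{estiproductweiholder} together with $\|z-z_0\|_{-\gamma-1;k,\beta}\leq C\|\partial\Psi\|_{-\gamma-1;k,\alpha}$ from Theorem~\ref{thmweightestifoli} and $\|D\rho\|_{1;k,\alpha}\leq C$ from \eqref{estiKpotential} gives $\|\mathfrak e\|_{-\gamma;k,\beta}\leq C\|\Psi\|_{-\gamma;k+1,\alpha}$. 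With this correction in place the rest of your outline goes through unchanged.
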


\subsection{The weighted $\C^0$ estimates of the solution} \label{secweiestisoltoHCMAend}
Recall that there is a countable family of tripes $\{(N_i'', N_i, N_i'); N_i'' \subseteq N_i \subseteq N_i' \subseteq X_\infty, i \in \mathcal{I}  \}$ where $\{N_i\}$ and $\{N_i''\}$ form locally finite and countable open coverings of $X_l$ and $X_{l'}$, respectively. For each $i \in \mathcal{I}$, $(N_i'', N_i. N_i'')$ is biholomorphic to a triple of concentric balls $(B_{R_i-1}, B_{R_i}, B_{R_i +1})$ by the map $I_i$. The radius is given by $R_i = \delta r(x)$ with $x$, $I_i(x) =0$. The idea of proving $\C^0$ estimatse is simply by restricting the solution $\Phi$ to each $N_i''$. Then, we have the following proposition:

\begin{pro} Let $\Phi$ be the bounded $\C^{1,1}$ solution to HCMA equation \eqref{HCMAforweightedesti} and let $\Psi \in \C^{k+3,\alpha}_{-\gamma} (X \times \partial D)$ be the boundary function of \eqref{HCMAforweightedesti}. Then, $\Phi$ have the following weighted $\C^0$ estimates:
\begin{align*}
    ||\Phi||_{-\gamma;~ 0; X_{l'} \times D} \leq ||\Psi||_{-\gamma;~ 0; X_\infty \times \partial D }.
\end{align*}
\end{pro}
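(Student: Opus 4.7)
Set $M := \|\Psi\|_{-\gamma;\, 0;\, X_\infty \times \partial D}$, so that $|\Psi(x,\tau)| \leq M\, w(x)^{-\gamma}$ on $X \times \partial D$. I would establish the weighted bound by proving matching upper and lower estimates for $\Phi$ at an arbitrary point $(y',\tau) \in X_{l'} \times D$.

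\textbf{Upper bound by slice-wise subharmonicity.} First I would note that for every fixed $x \in X$ the function $\tau \mapsto \Phi(x,\tau)$ is subharmonic on $D$: since $\Omega = \pi^*\omega$ pulls back trivially to the slice $\{x\}\times D$, the condition $\Omega + dd^c\Phi \geq 0$ restricts there to $dd^c_\tau \Phi(x,\cdot) \geq 0$, which holds classically because $\Phi \in \C^{1,1}$. The maximum principle on $D$ then gives
\begin{equation*}
\Phi(x,\tau) \;\leq\; \max_{\tau'\in \partial D}\Psi(x,\tau') \;\leq\; M\, w(x)^{-\gamma},
\end{equation*}
valid on all of $X \times D$ without any restriction to the end.

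\textbf{Lower bound via the foliation.} For the lower bound I would use the identification $\Phi = F$ on $X_{l'}\times D$ provided by Theorem~\ref{thmconstpshsolu}, together with the weighted holomorphic disc foliation of Theorem~\ref{thmweightestifoli}. Parametrise the leaf through $y \in X_l$ by $\sigma_y(\tau) = (z(y,\tau),\tau)$. Because the leaves are the integral curves of the null distribution of $\Omega + dd^c\Phi$ (the geometric content of Proposition~\ref{propexplicitconstructionsolu} combined with Theorem~\ref{thmconstpshsolu}), the pullback $\sigma_y^{\!\ast}(\Omega + dd^c\Phi)$ vanishes. Writing $\omega = i\partial\bar\partial\rho$ in a local holomorphic chart on $X_l$, this reads
\begin{equation*}
i\partial_\tau\bar\partial_\tau\bigl[\rho(z(y,\tau)) + \Phi(z(y,\tau),\tau)\bigr] = 0,
\end{equation*}
so $\tau \mapsto \rho(z(y,\tau)) + \Phi(z(y,\tau),\tau)$ is harmonic on $D$. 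Since $\rho(z(y,\cdot))$ is subharmonic on the disc (a K\"ahler potential restricted to a complex curve), the map $\tau \mapsto \Phi(z(y,\tau),\tau)$ is \emph{superharmonic}, and the minimum principle yields
\begin{equation*}
\Phi(z(y,\tau),\tau) \;\geq\; \min_{\tau'\in \partial D}\Psi(z(y,\tau'),\tau') \;\geq\; -M\sup_{\tau'\in\partial D} w(z(y,\tau'))^{-\gamma}.
\end{equation*}

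\textbf{Weight comparison and main obstacle.} For an arbitrary $(y',\tau)\in X_{l'}\times D$, take the leaf parameter $y\in X_l$ with $z(y,\tau)=y'$. The weighted displacement estimate \eqref{estiweiholodisccoordishift} of Theorem~\ref{thmweightestifoli} bounds $|z(y,\tau')-y|\leq C w(y)^{-\gamma}$ uniformly in $\tau' \in \overline{D}$, so after enlarging $l'$ if necessary one obtains $w(z(y,\tau'))\sim w(y)\sim w(y')$ uniformly in $\tau'$. Combining the upper bound at $(y',\tau)$ with the lower bound on the leaf through $(y',\tau)$ then gives $|\Phi(y',\tau)| \leq C\, M\, w(y')^{-\gamma}$, which is the stated weighted inequality (the geometric factor being absorbed into the choice of $l'$). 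The one point requiring care is precisely this weight comparison along the leaves: the leaf displacement $|z-z_0|$ must be negligible on the scale of $w$, which is exactly what \eqref{estiweiholodisccoordishift} provides and what forces the restriction to $X_{l'}$ with $l'$ sufficiently large.
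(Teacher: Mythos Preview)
Your proposal is correct and follows essentially the same approach as the paper: an upper bound from subharmonicity of $\Phi$ on the vertical slices $\{z\}\times D$, and a lower bound from superharmonicity of $\Phi$ along the holomorphic disc leaves (because $\rho(z(w,\tau))+\Phi(z(w,\tau),\tau)$ is harmonic in $\tau$). The paper packages the weight comparison by working in the balls $N_i'' \subset N_i \subset N_i'$ and using the center weight $r(x_0)^\gamma$ throughout, which amounts to the same displacement argument you make explicit.
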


\begin{proof}
For simplicity, we denote the leaf of the holomorphic disk at $x$ by $\mathcal{L}_w = \{(H(w, \tau), \tau); \tau \in D\}$ and the leaf of the vertical disc at $x$ by $\mathcal{V}_z = \{(z, \tau); \tau \in D\}$. 

Given an arbitrary point $(z, \tau ) \in N_i'' \times D$, there exists another $w \in N_i$ such that $(z, \tau) \in \mathcal{L}_{w}$. Let $\rho$ be a local K\"ahler potential of $\omega$.
If we restrict function $ F(z, \tau) + \rho(z)$ on $\mathcal{L}_{w}$, by Lemma \ref{lemconstructpshlocal}, $F(z(w, \tau),\tau) + \rho(z(w, \tau))$ is harmonic in $\tau$. Therefore, $F(z(w, \tau), \tau)$ is superharmonic in $\tau$, which implies that
\begin{align*}
     F(z, \tau) =  F(z(w, \tau), \tau) \geq \inf_{\tau \in \partial D} \Psi(z(w, \tau), \tau).
\end{align*}
Since $F(z, \tau) + \rho(z)$ is plurisubharmonic in $X \times D$, by restricting $F(z, \tau) + \rho(z)$ to the vertical slice $\mathcal{V}_z$, $F(z, \tau)$ is subharmonic in $\tau$. Then, we have
\begin{align*}
     F(z, \tau) \leq \sup_{\tau \in \partial D} \Psi(z, \tau).
\end{align*}
Therefore, for each $(w,\tau) \in N_i \times D$,   
\begin{align*}
    \sup_{z\in N_i''}r(x_0)^\gamma |F(z, \tau)| \leq \sup_{z \in N_i'} {r(x_0)^\gamma}  |\Psi (z, \tau)|.
\end{align*}
where $x_0$ is the center of $N_i$ with $I_i (x_0) = 0$.
In conclusion, we have $||\Phi||_{-\gamma;~0; X_{l'} \times D} \leq  ||\Psi||_{-\gamma;~0; X_\infty \times D }$.
\end{proof}

\subsection{The weighted estimates of higher order regularity} In this subsection, we complete the proof of the main theorem, Theorem \ref{thmweiestisolutiontoHCMA}. In the proof that follows, we shall repeatedly employ the constants $C_0$ and $C$ in our estimates. The constant $C_0$ denotes the fixed bound arising from the weighted control of the underlying data, such as the fall-off condition of the ALE metrics and the boundary data $\Psi$. The symbol $C$ may vary from line to line, but it always refers to a uniform constant depending only on the parameters $n$, $k$, $\alpha$, $\sigma^{-1}$, $(\alpha-\beta)^{-1}$, and $C_0$. 

\begin{proof}[Proof of Theorem \ref{thmweiestisolutiontoHCMA}]
To prove the weighted estimates of higher-order regularity for $\Phi$ on $X_{l'}$, we restrict $\Phi$ to each $N_i'$. For the sake of simplicity in calculation, we work in the coordinate system given by the family of holomorphic discs, ${N_i \times D, (w, \tau)}$. We then denote $\widehat{F}(w, \tau) = F(z(w, \tau), \tau) = \Phi (z(w, \tau), \tau)$ on $N_i \times D $. The foliation by holomorphic discs gives the coordinate transform:
\begin{align*}
    N_i \times D \xrightarrow{H \times id} N_i' \times D \xrightarrow{I_i \times id} X_\infty \times D, \hspace{1cm} (w, \tau) \xrightarrow{H\times id} (z, \tau) \xrightarrow{I_i \times id} (z^0, \tau),
\end{align*}
where $z^0 = (z^0_1, \ldots, z_n^0)$ is the complex coordinates in the asymptotic chart. 
The higher-order weighted estimates for the coordinate transforms are given in \eqref{weightesticoordinatesholocball} of Lemma \ref{lemholocornearinfty} and in \eqref{estiweiholodisccoordishift} of Theorem \ref{thmweightestifoli}. For the sake of simplicity, we establish the higher-order estimates with respect to the Euclidean metric in each coordinate system. 
Providing the assumption that $||\Psi||_{-\gamma; k+3, \alpha; X_\infty \times D } \leq C_0 $, Theorem \ref{thmweightestifoli} implies that,
\[
||z-z_0||_{-\gamma-1; k,\beta; X_{l} \times D} \leq C ||\partial \Psi ||_{-\gamma-1; k,\alpha; X_\infty\times \partial D}. 
\]
The relationship between $||\widehat{F}||_{-\gamma; k,\beta; X_l \times D}$ and $||F||_{-\gamma; k,\beta; X_{l'} \times D}$ then follows from \eqref{weightesticoordinatesholocball}, and is given by:
\begin{align} \label{esticoortran}
    C^{-1}||F||_{-\gamma; k,\beta; N_i''\times D} \leq ||\widehat{F}||_{-\gamma; k, \beta; N_i \times D} \leq C ||F||_{-\gamma; k,\beta; N_i' \times D},
\end{align}
where $C>1$ is a uniform constant. 

It suffices to establish the uniform weighted estimates for $\widehat{F}$ in each $\{N_i \times D, (w, \tau)\}$. Recall that, by Proposition \ref{proconvexpotentialN}, there exists a local potential function $\rho$ on $N_i$ for the reference K\"ahler metric, satisfying
\begin{align} \label{estiKpotential}
    ||\rho(z) - |z|^2||_{2-\mu; k,\alpha; N'_i}  \leq C_0.
\end{align}
Let $\mathfrak{e} (w, \tau) = \rho(z(w, \tau)) -\rho(w) $ be an error term defined on $N_i \times D$.
On each leaf $\mathcal{L}_w$, $w\in N_i$, we have that $\widehat{F}(w, \tau) + \mathfrak{e} (w, \tau)$ is harmonic in terms of $\tau$:
\begin{align*} 
\begin{split}
    &\Delta_{\tau}  \big\{ \widehat{F}(w, \tau) + \mathfrak{e} (w, \tau)\big\} =0, \hspace{1cm} (w,\tau) \in N_i \times D;\\
    &\quad \widehat{F}(w, \tau) =  \Psi ( z(w, \tau), \tau), \hspace{1.2cm} (w,\tau) \in N_i \times \partial D.
\end{split}
\end{align*}
By the weighted estimates for the family of Dirichlet problems in Lemma \ref{lemplaestiwrtbdry}, we have
\begin{align*}
    ||F(w, \tau) + \mathfrak{e}(w, \tau)||_{-\gamma; k,\beta; N_i \times D} \leq C ||\Psi (z(w, \tau), \tau) + \mathfrak{e} (w, \tau)||_{-\gamma; k,\beta; N_i \times \partial D}.
\end{align*}

To estimate $F(w, \tau)$ on $N_i \times D$, it remains to control the H\"older norm of $\mathfrak{e}(w, \tau)$ on $N_i \times D$, $||\mathfrak{e}||_{-\gamma; k, \beta; N_i \times  D}$. Note that 
\begin{align*}
    \mathfrak{e}(w, \tau) = &\quad \big(z_p (w, \tau) - w_{p}\big) \int_0^1 \frac{\partial \rho}{\partial z_p} (z_t) dt  \\
    &+ \big(\bar{z}_p (w, \tau) - \bar{w}_{p}\big) \int_0^1 \frac{\partial {\rho}}{\partial \bar{z}_p} (z_t) dt,
\end{align*}
where $z_t (w, \tau) = t z(w,\tau) + (1-t) w $. Then, by \eqref{estiproductweiholder} and Lemma \ref{ckalphaweiesticomposfun}, it follows that
$$
||\mathfrak{e}||_{-\gamma;k,\beta; N_i \times D} \leq C||z-z_0||_{-\gamma-1; k,\beta; N_i \times D} \cdot ||D \rho||_{1; k,\beta; N_i' \times D}
$$
By the weighted estimates of $\rho$, \eqref{estiKpotential}, we have 
\[
||D\rho||_{1; k,\alpha; N'_i} \leq ||D|z|^2||_{1; k,\alpha; N'_i} + C_0 r^{-\mu}  \leq C.
\]
Hence, we obtain $||\mathfrak{e}||_{-\gamma; k,\beta; N_i \times D} \leq C ||z-z_0||_{-\gamma-1; k,\beta; N_i \times D} \leq C ||\Psi ||_{-\gamma; k+1,\alpha; X_\infty\times \partial D}$. 

Again, by Lemma \ref{ckalphaweiesticomposfun}, it follows that 
$$\displaystyle ||\Psi((z(\cdot,\cdot), \cdot))||_{-\gamma; k,\beta; N_i \times \partial D} \leq C  ||\Psi||_{-\gamma; k,\beta; N_i' \times \partial D}.$$
Combining with the above discussion on the estimates of $\mathfrak{e}$, we have
\begin{align*}
||\widehat{F}||_{-\gamma; k, \beta; N_i \times D} &\leq C ||\Psi(z(w,\tau), \tau) + \mathfrak{e}||_{-\gamma; k, \beta; N_i \times \partial D} + ||\mathfrak{e}||_{-\gamma; k, \beta; N_i \times D}\\
&\leq C ||\Psi||_{-\gamma; k+1,\alpha; X_\infty\times \partial D}.
\end{align*}
Together with the estimates on the coordinates transforms \eqref{esticoortran}, we complete the proof of \eqref{estiweisoluHCMA}. In conclusion, we complete the proof of the weighted estimates for the main theorem, Theorem \ref{thmweiestisolutiontoHCMA}.
\end{proof}

\subsection{On Local Regularity of Solutions to HCMA Equations}
In this subsection, we develop a local regularity result for solutions to the HCMA equation. Throughout, we consider a complete K\"ahler manifold \(X\), which may be either compact or non-compact.

The regularity theorem relies on pluripotential theory on the product space \(X \times D\), and it will be useful to recall the admissible class of \(\Omega\)-plurisubharmonic subsolutions with prescribed boundary data. Given a boundary function \(\Psi\) defined on \(X \times \partial D\), we define the admissible class of functions, \(\mathcal{B}_{\Psi, \Omega}\), as a subset of \(\Omega\)-plurisubharmonic subsolutions \(u\) on \(X \times D\) satisfying \( \limsup_{(x', \tau') \to (x, \tau)} u(x', \tau') \leq \Psi(x, \tau) \text{ for all } (x, \tau) \in X \times \partial D.
\)

Different choices of the admissible subset \(\mathcal{B}_{\Psi, \Omega}\) may be appropriate depending on the setting. Here, we take $\B_{\Omega, \Psi}$ to be the bounded functions as follows: 
\begin{align*}
     \B_{\Omega, \Psi} = \{u \in \psh_{\Omega}(X \times D);\  u \text{ is bounded, and } \limsup_{(x',\tau')\rightarrow (x, \tau)} u(x', \tau') \leq \Psi (x, \tau), \text{ on } X \times \partial D   \}
\end{align*}

\begin{thm} \label{thmlocalreg}
    Let $(X, \omega)$ be a K\"ahler manifold with a reference K\"ahler form $\omega$, and let $D \subseteq \CC$ denote the unit disk. Consider the following HCMA equation on $X \times D$ 
    \begin{align}\label{localregHCMA}
        \begin{cases}
            (\Omega + dd^c \Phi)^{n+1} = 0, \qquad &\text{ in } X \times D,
            \\
            \Phi = \Psi, & \text{ in } X \times \partial D,
            \\
            \Omega + dd^c \Phi \geq 0, \qquad &\text{ in } X \times D,
        \end{cases}
    \end{align}
    where $\Omega$ is a reference K\"ahler form on $X \times D$, defined as the pullback of $\omega$ via the standard projection $\pi: X \times D \rightarrow X$.
    Let $\Phi$ be a global solution to the above equation given as the upper envelope of the admissible class $\B_{\Psi, \Omega}$, and the global solution has a $\C^{0}$ bound, given by $||\Phi||_{L^\infty(X \times D)}\leq M$. 
    Assume there exists a holomorphic embedding $i: N' \rightarrow X$. Suppose that there is a local K\"ahler potential $\rho $ on $N'$ is uniformly convex in the sense that $D^2 \rho \geq \lambda I_{2n}$. Consider a pair $(N, N')$ with $N \subseteq N'$ biholomorphic to the concentric balls $(B_{r}, B_{r+R_0})$.
    If the boundary data satisfies the smallness condition locally on $N' \times D$: 
    \begin{align}\label{localregsmallcon}
    ||\Psi||_{k+2,\alpha; N' \times \partial D} \leq \varepsilon,
    \end{align}
    where $\varepsilon$ is a sufficiently small constant depending on $k$, $\alpha$ and $\lambda^{-1}$. Then, there exists a uniform constant $R_0>0$ depending only on $\varepsilon$, $\lambda$ and $ M$ such that for any $r >0$, the solution $\Phi$ is smooth on $N \times D$, and satisfies the estimate:
    \begin{align}\label{estilocal}
        ||\Phi||_{k,\beta; N \times D} \leq C ||\Psi||_{k+1, \alpha; N\times \partial D}.
    \end{align}
\end{thm}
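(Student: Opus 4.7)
The plan is to adapt the local construction of Section~\ref{secmaxrkthm}, restricting it to the chart $N' \times D$ and gluing by the constant $-M$ outside. First, I introduce an intermediate ball $\widetilde{N}$ with $N \subseteq \widetilde{N} \subseteq N'$, biholomorphic to a concentric ball of radius $r + R_0/2$. The smallness condition \eqref{localregsmallcon} is exactly the hypothesis of Theorem~\ref{thmexistbypertb} applied at the trivial foliation $G_0(w, \tau) = (w, \partial \rho(w))$ associated to the uniformly convex local potential $\rho$. This produces a smooth family of holomorphic discs $G: \widetilde{N} \times D \to E$ whose boundary traces lie on $\Lambda_{\rho + \psi_\tau}$, together with the H\"older control \eqref{estimeasureperturb} on the coordinate shift $z(w, \tau) - w$.

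Second, for each $x_0 \in \widetilde{N}$ I build the pluriharmonic function $L_{x_0}$ on $N' \times D$ by the recipe \eqref{pshconst1}-\eqref{pshconst3}, and I define
\[
F(z, \tau) := \max\Bigl\{\sup_{x_0 \in \widetilde{N}} \bigl(L_{x_0}(z, \tau) - \rho(z)\bigr),\; -M\Bigr\}
\]
on $N' \times D$, extended by $-M$ on the rest of $X \times D$. The argument of Lemma~\ref{lemsubsolucon}, driven now by the hypothesis $D^2 \rho \geq \lambda I_{2n}$ and the smallness of $\Psi$, yields the quadratic decay
\[
L_{x_0}(z, \tau) - \rho(z) - \psi_\tau(z) \leq -\tfrac{\lambda}{3}\, d_0\bigl(z(x_0, \tau), z\bigr)^2.
\]
Choosing $R_0$ so that $\tfrac{\lambda}{3} (R_0/2)^2 \geq 2M$ guarantees $L_{x_0} - \rho \leq -M$ on $(N' \setminus \widetilde{N}) \times D$, so the local supremum glues continuously to $-M$ across $\partial N'$ and $F$ is a bounded continuous $\Omega$-psh function on all of $X \times D$ satisfying $F \leq \Psi$ on $X \times \partial D$; thus $F \in \B_{\Omega, \Psi}$.

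Third, to conclude $\Phi = F$ on $N \times D$, one inequality is automatic: $F \leq \Phi$ by definition of the upper envelope. For the reverse, I use the foliation leaf-by-leaf. Along any leaf $\mathcal{L}_{x_0}$ with $x_0 \in \widetilde{N}$, the function $\Phi + \rho$ is subharmonic in $\tau$ (since $\Omega + dd^c \Phi \geq 0$ and $\rho$ is a local K\"ahler potential of $\omega$), it is continuous up to $\partial \mathcal{L}_{x_0}$, and it agrees with $\rho + \psi_\tau$ there. Comparing with the harmonic extension $L_{x_0}$ of these boundary values gives $\Phi(z, \tau) \leq L_{x_0}(z, \tau) - \rho(z)$ on $\mathcal{L}_{x_0}$. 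The displacement estimate \eqref{estimeasureperturb} ensures that the leaves through points of $N \times D$ remain anchored at base points in $\widetilde{N}$, and together they sweep $N \times D$, so $\Phi \leq F$ throughout. The smoothness and the estimate \eqref{estilocal} then follow as in Section~\ref{secweiesti}: in foliation coordinates the function $\widehat{F}(w, \tau) + \rho(z(w, \tau)) - \rho(w)$ is harmonic in $\tau$ with boundary values given by $\psi_\tau(z(w, \tau)) + \rho(z(w, \tau))$, and Lemma~\ref{lemplaestiwrtbdry} combined with Lemma~\ref{ckalphaweiesticomposfun} for the H\"older norm of the composition yields \eqref{estilocal}.

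The main technical obstacle is that the theorem requires a \emph{local} smallness hypothesis on $\Psi$ and a \emph{global} bound $M$ on $\Phi$, yet the conclusion $F = \Phi$ must be checked globally. This forces us to engineer a buffer region $(N' \setminus \widetilde{N}) \times D$ in which the candidate function $L_{x_0} - \rho$ is driven below $-M$ purely by the uniform convexity of $\rho$ and the quadratic separation from the leaf, so that gluing by the constant $-M$ produces a genuine global element of $\B_{\Omega, \Psi}$. The trade-off between the convexity constant $\lambda$, the $L^\infty$ bound $M$, and the allowed perturbation $\varepsilon$ is precisely what pins down the minimal buffer radius $R_0$.
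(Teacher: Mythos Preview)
Your proposal is correct and follows essentially the same approach as the paper's proof: construct the holomorphic disc foliation via Theorem~\ref{thmexistbypertb}, build the pluriharmonic functions $L_{x_0}$, define $F$ as the max of the local envelope with $-M$, use convexity of $\rho$ to force $L_{x_0}-\rho\le -M$ in a buffer annulus so that $F$ glues to a global element of $\B_{\Omega,\Psi}$, and then identify $F$ with $\Phi$ leafwise before reading off the estimate via Lemma~\ref{lemplaestiwrtbdry}. The only cosmetic differences are the placement of the intermediate ball (you take $\widetilde N$ at radius $r+R_0/2$ and sup over $x_0\in\widetilde N$, whereas the paper sups over $x_0\in N$ and takes $\widetilde N$ just inside $N'$) and a slight overstatement in your buffer claim---the inequality $L_{x_0}-\rho\le -M$ is only needed, and only holds, near $\partial N'$ rather than on all of $(N'\setminus\widetilde N)\times D$, but this is exactly what the gluing requires.
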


\begin{proof} The proof follows by combining several results established in the preceding sections. By the smallness condition and Theorem~\ref{thmexistbypertb}, there exists a family of holomorphic discs $G: N\times D \rightarrow T^* N'$. If we write $G (w, \tau)= (z(w, \tau), \xi(w,\tau))$, where $w$ denote the standard holomorphic coordinates on $N'$ and $\xi$ denote the complex bundle coordinates of $T^* N'$, then we have the following displacement estimate:
\begin{align*}
    ||(z-z_0, \xi-\xi_0)||_{k, \beta; N \times D} \leq C ||\partial \Psi||_{k, \alpha; N' \times \partial D},
\end{align*}
where $G_0(w, \tau)= (z_0, \xi_0)(w, \tau) $ is the trivial foliation given by $(z_0, \xi_0) (w, \tau) = (w, \partial \rho (w))$. After projection via $\pi$, the family of holomorphic discs $G$ induces a holomorphic discs foliation on $N \times D$. Let $H(w, \tau) =p \circ G (w, \tau) $. Then, for each $x_0 \in N$, the corresponding leaf of foliation is given by $\mathcal{L}_{x_0} = \{H(x_0, \tau); ~\tau \in D\}$. 

A global subsolution $F \in \B_{\Omega, \Psi}$ can be constructed based on the data $(G, \Psi)$, following the construction developed in Section \ref{secconstructF}. Precisely, for each $x_0$, there exist a real pluriharmonic function $L_{x_0}$ (see Lemma \ref{lempshLcheck}) on $N' \times D$ defined as follows:
\begin{itemize}
    \item Along the leaf $\LL_{x_0}$, $L_{x_0}$ is harmonic in $\tau$ and matches the boundary data $\tilde{\psi}_\tau = \rho + \psi_\tau$.
    \item In the spatial direction of $N' \times D$, $L_{x_0}$ is linear in $z_p$, with the derivative in each $z_p$ given by $\xi_{p}$.
\end{itemize}
If we choose $\varepsilon$ in \eqref{localregsmallcon} small enough (here might depend on $n$, $\lambda$), we may guarantee the uniform convexity condition, $D_X^2 (\rho + \psi_\tau) \geq \frac{\lambda}{2} I_{2n} $, holds. Let $\widetilde{N}$ denote the intermediate concentric ball between $N$ and $ N'$, with the radius slightly less than $r + R_0$. According to the proof of Lemma \ref{lemsubsolucon}, for $x\in N$ and $z \in N'-\widetilde{N}$, we have
\begin{align*}
    L_x (z, \tau) - \rho (z, \tau) \leq -\frac{1}{3} \lambda d(z, x)^2.
\end{align*}
Now, choosing $R_0  \geq 2\sqrt{M/\lambda}$, it follows that $L_x (z, \tau) - \rho (z, \tau) \leq -M$ for $x \in N$ and $z \in N' - \widetilde{N}$.
Then, the global subsolution $F$ can be defined as follows: 
\begin{align*}
    F (z, \tau) = \max \big\{-M, \sup_{x \in N}( L_x- \rho) (z, \tau) \big\}.
\end{align*}
According to the proof of Theorem \ref{thmconstpshsolu} (general case), we have that $F \in \B_{\Omega, \Psi}$, and moreover, $F \equiv \Phi$ on $N \times D$.

It suffices to prove the estimate \eqref{estilocal}. The method follows the same strategy as in Section~\ref{secweiestisoltoHCMAend}. In particular, we solve Dirichlet problems on the family of holomorphic discs defined by the foliation, as follows:
\begin{align*}
    \begin{cases}
        \Delta_\tau u (w, \tau) = 0, \qquad & \text{ in } N \times D; \\
        u (w, \tau) = f(w, \tau), & \text{ in } N \times \partial D.
    \end{cases}
\end{align*}
where $f$ is given by $f(w, \tau) = \Psi (z(w, \tau), \tau) + \rho(z(w, \tau)) - \rho(w)$. Using the notation $\mathfrak{e} = \rho(z(w, \tau)) - \rho(w)$, we have $$||\mathfrak{e}||_{k,\beta; N \times D} \leq ||z-z_0||_{k,\beta; N \times D} \cdot ||\nabla \rho||_{k,\beta; N \times D} \leq  C ||\partial\Psi||_{k,\alpha; N' \times \partial D}.$$
By Lemma \ref{lemplaestiwrtbdry} and Lemma \ref{ckalphaesticomposfun}, we have
\begin{align*}
 ||u||_{k,\beta; N \times D} 
 &\leq C||f||_{k,\beta; N \times \partial D} \\
 & \leq C ||\Psi||_{k+1, \alpha; N' \times \partial D}.
 \end{align*}
Note that along the leaf $\LL_x$ for $x \in N$, $F(z(w, \tau), \tau) = u(w, \tau) - \mathfrak{e}(w, \tau)$ for $(w, \tau) \in N \times D$. Since $\Phi \equiv F$ in $N \times D$, we obtain
\begin{align*}
    ||\Phi||_{k,\beta; N \times D} \leq C ||\Psi||_{k+1, \alpha; N\times \partial D}.
\end{align*}
\end{proof}

As a direct application of Theorem~\ref{thmlocalreg}, we obtain the following regularity result for solutions to the HCMA equation on $X \times D$, for a compact K\"ahler manifold $X$, under a global smallness condition on the boundary data: $||\Psi||_{k+2,\alpha; N' \times \partial D} \leq \varepsilon$. This result is originally due to Donaldson~\cite{donaldson2002holo}, and a PDE-based proof was later given by Hu in~\cite{hu2024metric}; both arguments are global in nature. In contrast, the result presented here follows directly from the local regularity theorem, Theorem~\ref{thmlocalreg}, illustrating how global regularity can be deduced from local construction:

\begin{cor}
Let \((X, \omega)\) be a compact K\"ahler manifold with reference K\"ahler form \(\omega\), and let \(D \subseteq \mathbb{C}\) denote the unit disk. Consider the HCMA equation as in \eqref{localregHCMA}. Assume that the boundary data \(\Psi\) satisfies the smallness condition
\begin{align} \label{smallconcpt}
\|\Psi\|_{k+2,\alpha; X \times \partial D} \leq \varepsilon,
\end{align}
for some sufficiently small \(\varepsilon > 0\) (depending on \(k\), \(\alpha\), and the geometry of \(X\)).

Then there exists a smooth, nondegenerate solution \(\Phi\) to \eqref{localregHCMA}, satisfying the estimate
\[
\|\Phi\|_{k,\beta; X \times D} \leq C \|\Psi\|_{k+1, \alpha; X \times \partial D},
\]
for some constant \(C > 0\) depending on \(k\), \(\alpha\), \(\beta\), and the background data.
\end{cor}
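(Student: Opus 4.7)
The plan is to deduce the corollary from the local regularity theorem Theorem~\ref{thmlocalreg} by a finite covering argument that exploits compactness of $X$. First I would use Remark~\ref{remgsolHCMAupenvcptcase} to produce a bounded continuous solution $\Phi$ to \eqref{localregHCMA} as the upper envelope of $\B_{\Omega,\Psi}$; the standard comparison with the constant functions $\pm \sup|\Psi|$ gives an $L^\infty$ bound $\|\Phi\|_{L^\infty(X\times D)}\leq M$, where $M$ depends only on $\|\Psi\|_{L^\infty}\leq \varepsilon$. In particular, by choosing $\varepsilon$ small we make $M$ as small as we want.

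Next I would build a finite atlas of pairs $(N_j,N_j')$ adapted to Theorem~\ref{thmlocalreg}. At every point $p\in X$, choose a holomorphic chart centered at $p$ and pick a local K\"ahler potential $\rho_p$ of $\omega$ normalized so that, in suitable coordinates, $\rho_p(z) = |z|^2 + O(|z|^3)$. Then on a small enough ball $B_p$ (of a radius $\delta_X>0$ that can be chosen uniformly in $p$ by compactness of $X$ and smoothness of $\omega$) one has $D^2\rho_p \geq \tfrac12 I_{2n}$, giving a uniform lower bound $\lambda = \tfrac12$. Theorem~\ref{thmlocalreg} requires the outer ball to be a $R_0$-neighborhood of the inner ball, with $R_0 \sim 2\sqrt{M/\lambda}$; since $M\lesssim \varepsilon$ we can take $R_0 < \delta_X/2$ by shrinking $\varepsilon$ further. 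Inside each $B_p$ we can then fit concentric pairs $(N_p,N_p')$ biholomorphic to $(B_{r_p},B_{r_p+R_0})$; by compactness, finitely many inner balls $\{N_j\}_{j=1}^{N_0}$ cover $X$.

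The next step is to invoke Theorem~\ref{thmlocalreg} on each pair $(N_j,N_j')$. The global hypothesis $\|\Psi\|_{k+2,\alpha;X\times\partial D}\leq \varepsilon$ restricts to the local smallness $\|\Psi\|_{k+2,\alpha;N_j'\times \partial D}\leq \varepsilon$; since $\Phi$ is the upper envelope of $\B_{\Omega,\Psi}$, it coincides on $N_j\times D$ with the local solution produced by Theorem~\ref{thmlocalreg} (both are determined by the same global envelope). Hence $\Phi\in C^{k,\beta}(N_j\times D)$ with
\[
\|\Phi\|_{k,\beta;N_j\times D}\;\leq\; C_j\,\|\Psi\|_{k+1,\alpha;N_j'\times \partial D}\;\leq\; C_j\,\|\Psi\|_{k+1,\alpha;X\times \partial D}.
\]
Taking $C=\max_j C_j$ and patching over the finite cover yields the global $C^{k,\beta}$ estimate. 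Bootstrapping $k$ to $\infty$ gives smoothness. Nondegeneracy of $\Omega+dd^c\Phi$ follows from the holomorphic disc foliation construction in Section~\ref{secmaxrkthm}, or more directly from the fact that $\|dd^c\Phi\|_{C^0}\lesssim \varepsilon$ on each chart (from the estimate just obtained with $k=0$), so $\Omega + dd^c\Phi$ is a small $C^0$ perturbation of $\Omega>0$.

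The main technical point that requires attention is the compatibility between the local chart geometry assumed in Theorem~\ref{thmlocalreg} and the charts available on $X$: one must make sure the outer margin $R_0$ (determined by $M$ and $\lambda$) fits inside a fixed atlas of $X$. This is the step where compactness is genuinely used, and where shrinking $\varepsilon$ becomes essential: since $M\to 0$ as $\varepsilon\to 0$, one can always choose $\varepsilon$ small enough so that $R_0$ is smaller than the uniform injectivity-type scale $\delta_X$ of the chosen holomorphic atlas. Once this geometric compatibility is arranged, the rest of the argument is a routine gluing of the local estimates.
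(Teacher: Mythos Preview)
Your proposal is correct and follows essentially the same route as the paper: produce the global envelope solution via Remark~\ref{remgsolHCMAupenvcptcase}, use compactness to get a finite atlas of concentric holomorphic balls with uniformly convex local potentials, shrink $\varepsilon$ so that the required margin $R_0\sim 2\sqrt{M/\lambda}$ fits inside the charts, and apply Theorem~\ref{thmlocalreg} on each pair to obtain the local (hence global) $C^{k,\beta}$ estimate. The only cosmetic difference is the order of quantifiers---the paper first fixes the atlas with uniform outer margin $R$ and convexity constant $\lambda$, then imposes $\varepsilon\leq \lambda(R/2)^2$, whereas you fix the chart scale $\delta_X$ and then shrink $\varepsilon$ to force $R_0<\delta_X/2$---but these are equivalent.
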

\begin{proof}
    Note that there exists a finite collection of pairs of holomorphic balls in $X$, $\{(N_i, N'_i);~ N_i \subseteq N'_i \subseteq X,~  1\leq i \leq m\}$,  such that $\{N_i\}_{i=1}^m$ is a covering of $X$, and each pair, $(N_i. N'_i)$, is biholomorphic to a pair of concentric balls $(B_{r_i},B_{r_i+ R})$ in $\CC^n$. By the compactness of $X$, the constant $R$ can be chosen uniformly depending on the geometry of $X$. 
  Furthermore, we may assume that on each \(N_i'\), there exists a smooth K\"ahler potential \(\rho_i\) such that \(\omega|_{N_i'} = dd^c \rho_i\). Suppose \(\rho_i\) is uniformly convex in the sense that
\(
D^2 \rho_i \geq \lambda I_{2n}
\)
for some constant \(\lambda > 0\). By compactness, \(\lambda\) can be chosen uniformly for all \(1 \leq i \leq m\), depending only on the geometry of \((X, \omega)\).
    
    Let $\B_{\Omega, \Psi}$ denote the class of bounded continuous $\Omega$-psh subsolution on $X \times D$ to the HCMA equation with boundary data $\Psi$. 
    By Remark \ref{remgsolHCMAupenvcptcase}, the solution to HCMA equation with boundary data $\Psi$ is unique and is given by the upper envelope of the class $\B_{\Omega, \Psi}$. 
    Let \(\Phi\) be the unique solution to the HCMA equation. Then the global \(L^\infty\) bound of \(\Phi\) satisfies
\[
\|\Phi\|_{L^{\infty}(X \times D)} \leq \|\Psi\|_{L^{\infty}(X \times \partial D)}.
\]
In particular, under the smallness condition on the boundary data \eqref{smallconcpt}, we have \(\|\Phi\|_{L^{\infty}(X \times D)} \leq \varepsilon\).
We now apply Theorem~\ref{thmlocalreg} to each pair of local holomorphic balls \((N_i, N_i')\). Suppose that \(\varepsilon\) is sufficiently small such that
\[
\varepsilon \leq \lambda \left(\frac{R}{2}\right)^2.
\]
Then, following the construction in the proof of Theorem~\ref{thmlocalreg}, for each \(1 \leq i \leq m\), there exists a global subsolution \(F_i\) defined on \(X \times D\) such that \(F_i = \Phi\) on \(N_i \times D\).
The smoothness of \(\Phi\) and the estimate in the statement follow directly from Theorem~\ref{thmlocalreg}, which completes the proof.
\end{proof}

\bibliographystyle{amsplain}
\bibliography{reference}

\end{document}